\numberwithin{equation}{section}
\let\al=\alpha
\let\b=\beta
\let\g=\gamma
\let\s=\sigma
\let\f=\frac
\let\om=\omega
\let\Om=\Omega
\let\na=\nabla
\let\pa=\partial
\let\ep=\epsilon
\let\ka=\kappa
\def\As{A^{\sigma}}
\def\tu{\widetilde{u}}
\def\R{\mathbf R}
\def\no{\noindent}
\def\eqdef{\buildrel\hbox{\footnotesize def}\over =}
\def\bbT{\mathbb{T}}
\newcommand{\beq}{\begin{equation}}
\newcommand{\eeq}{\end{equation}}
\newcommand{\ben}{\begin{eqnarray}}
\newcommand{\een}{\end{eqnarray}}
\newcommand{\beno}{\begin{eqnarray*}}
\newcommand{\eeno}{\end{eqnarray*}}
\newtheorem{theorem}{Theorem}[section]
\newtheorem{lemma}[theorem]{Lemma}
\newtheorem{proposition}[theorem]{Proposition}
\newtheorem{remark}[theorem]{Remark}
\begin{document}

\title{Stability threshold of the 2D Couette flow in Sobolev spaces}
\author{Nader Masmoudi}
\address{Department of Mathematics, New York University in Abu Dhabi, Saadiyat Island, P.O. Box 129188, Abu Dhabi, United Arab Emirates. Courant Institute of Mathematical Sciences, New York University, 251 Mercer Street, New York, NY 10012, USA,}
\email{masmoudi@cims.nyu.edu}
\author{Weiren Zhao}
\address{Department of Mathematics, New York University in Abu Dhabi, Saadiyat Island, P.O. Box 129188, Abu Dhabi, United Arab Emirates.}
\email{zjzjzwr@126.com, wz19@nyu.edu}

\date{\today}

\maketitle

\begin{abstract}
We study the stability threshold of the 2D Couette flow in Sobolev spaces at high Reynolds number $Re$. 
We prove that if the initial vorticity $\Om_{in}$ satisfies $\|\Om_{in}-(-1)\|_{H^{\s}}\leq \ep Re^{-1/3}$, then the solution of the 2D Navier-Stokes equation approaches to some shear flow which is also close to Couette flow for time $t\gg Re^{1/3}$ by a mixing-enhanced dissipation effect and then converges back to Couette flow when $t\to +\infty$. 
\end{abstract}


\section{Introduction}
In this paper, we consider the 2D incompressible Navier-Stokes equations in $\bbT\times \R$:
\beq
\label{eq:NS}
\left\{
\begin{array}{l}
\pa_tV+V\cdot\nabla V+\nabla P-\nu\Delta V=0,\\
\na\cdot V=0,\\
V|_{t=0}=V_{in}(x,y).
\end{array}\right.
\eeq
where $\nu$ denote the viscosity which is the multiplicative inverse of the Reynolds number $Re$.  $V=(U^1,U^2)$ and $P$ denote the velocity and the pressure of the fluid respectively. 
Let $\Om=\pa_xU^2-\pa_yU^1$ be the vorticity, which satisfies
\beq
\label{eq:vorticity}
\Om_t+V\cdot\nabla \Om-\nu\Delta \Om=0.
\eeq
The Couette $(y,0)$ is a steady solution of \eqref{eq:NS}. 

Now we introduce the perturbation, let $\Om=\om-1$ and $V=(y,0)+(U^x,U^y)$ then $\om=\pa_xU^y-\pa_yU^x$ satisfies 
\ben\label{eq:NS2}
\left\{
\begin{array}{l}
\pa_t\omega+y\pa_x\omega-\nu\Delta\om=-U\cdot\na \om,\\
\om|_{t=0}=\om_{in}(x,y),
\end{array}\right.
\een
and $U=(U^x,U^y)=(-\pa_y\psi, \pa_x\psi)$ with $\Delta\psi=\om$. 

The study of \eqref{eq:NS2} for small perturbations is an old problem in hydrodynamic stability, considered by both Rayleigh \cite{Ray} and Kelvin \cite{Kel}, as well as by many modern authors with new perspectives(see e.g. the classical texts \cite{Dra,Yag} and the references therein). Rayleigh and Kelvin both studied the linearization of \eqref{eq:NS2}, which is simply 
\ben\label{eq:LNS2}
\left\{
\begin{array}{l}
\pa_t\omega+y\pa_x\omega-\nu\Delta\om=0,\\
\Delta\psi=\om,\\
\om|_{t=0}=\om_{in}(x,y).
\end{array}\right.
\een
Indeed, if we denote by $\hat{\om}(t,k,\eta)$ the Fourier transform of $\om(t,x,y)$, then the solution of \eqref{eq:LNS2} can be write as
\beq\label{eq: Lin-sol}
\begin{split}
&\hat{\om}(t,k,\eta)=\hat{\om}_{in}(k,\eta+kt)\exp\left(-\nu\int_0^t|k|^2+|\eta-ks+kt|^2ds\right),\\
&\hat{\psi}(t,k,\eta)=\f{-\hat{\om}_{in}(k,\eta+kt)}{k^2+\eta^2}\exp\left(-\nu\int_0^t|k|^2+|\eta-ks+kt|^2ds\right),
\end{split}
\eeq
which gives that
\ben\label{eq: ED_and_ID}
\begin{split}
&\|\pa_yP_{\neq}\psi\|_{L^2}+\langle t\rangle \|\pa_xP_{\neq}\psi\|_{L^2}\leq C\langle t\rangle^{-1}e^{-c\nu t^3}\|P_{\neq}\om_{in}\|_{H^{2}},\\
&\|P_{\neq}\om\|_{L^2}\leq C\|P_{\neq}\om_{in}\|_{L^2}e^{-c\nu t^3},
\end{split}
\een
here we denote by $P_{\neq}f=f(x,y)-\f{1}{2\pi}\int_{\mathbb{T}}f(x,y)dx$ the projection to nonzero mode of $f$. The first inequality in \eqref{eq: ED_and_ID} is the inviscid damping  and the second one is the enhanced dissipation. These two results both are related to the vorticity mixing effect. 

In \cite{Orr}, Orr observed an important phenomenon that the velocity will tend to 0 as $t\to \infty$, even for a time reversible system such as the Euler equations($\nu=0$). This phenomenon is so-called inviscid damping, which is the analogue in hydrodynamics of Landau damping found by Landau \cite{Lan}, which predicted the rapid decay of the electric field of the linearized Vlasov equation around homogeneous equilibrium. Mouhot and Villani \cite{MV} made a breakthrough and proved nonlinear Landau damping for the perturbation in Gevrey class(see also \cite{BMM}).In this case, the mechanism leading to the damping is the vorticity mixing driven by shear flow or Orr mechanism \cite{Orr}. See \cite{Ry} for similar phenomena in various system. 
We point out that the inviscid damping for general shear flow is a challenge problem even in linear level due to the presence of the nonlocal operator for general shear flow.
For the linear inviscid damping we refer to \cite{Zill,WZZ1,Jia,GNRS2018} for the results of general monotone flows. 
For non-monotone flows such as the Poiseuille flow and the Kolmogorov flow, another dynamic phenomena should be taken into consideration, which is so called the vorticity depletion phenomena, predicted by Bouchet and Morita \cite{BouMor} and later proved by Wei, Zhang and Zhao \cite{WZZ2,WZZ3}. 
Due to possible nonlinear transient growth, it is a challenging task extending linear damping to nonlinear damping. 
Even for the Couette flow there are only few results. Moreover, nonlinear damping is sensitive to the topology of the perturbation. 
Indeed, Lin and Zeng \cite{LZ} proved that nonlinear inviscid damping is not true for the perturbation of the Couette flow in $H^s$ for $s<\f32$. 
Bedrossian and Masmoudi \cite{BM1} proved nonlinear inviscid damping around the Couette flow in Gevrey class $2_-$. 
Recently Deng and Masmoudi \cite{DM}  proved that the instability for initial perturbations in Gevrey class $2_+$. 
We refer to \cite{IJ,Jia2} and references therein for other related interesting results. 
Moreover it is also observed by Orr that, if we rewrite the linearized system by the change of coordinates $f(t,z,y)=\om(t,z+ty,y)$, then the Fourier transform of the stream function $\phi(t,z,y)=\psi(t,z+yt,y)$ is 
\ben\label{eq: stream-Four}
\hat{\phi}(t,k,\eta)=\f{\hat{f}(t,k,\eta)}{(\eta-kt)^2+k^2}, 
\een
The denominator of \eqref{eq: stream-Four} is minimized at $t=\f{\eta}{k}$ which is known as the Orr critical times. 

The second phenomenon --- enhanced dissipation is sometimes referred to modern authors as the ‘shear-diffusion mechanism’. This decay rate is much faster than the diffusive decay of $e^{-\nu t}$. The mechanism leading to the enhanced dissipation is also due to vorticity mixing. 

However, for the nonlinear system, the Orr mechanism is known to interact poorly with the nonlinear term, creating a weakly nonlinear effect referred to as an echo. 
The basic mechanism is straight-forward: a mode which is near its critical time is creating most of the velocity field and at this point can interact with the enstrophy which as already mixed to transfer enstrophy to a mode which is un-mixing. 
When this third mode reaches its critical time, the result of the nonlinear interaction becomes very strong (the time delay explains the terminology ‘echo’). 
There are two necessary ways to control(compete against) the echo cascades. 
One is to assume enough smallness of the initial perturbations such that the rapid growth of the enstrophy may not happen before enhanced dissipative time-scale $\nu^{-\f13}$. 
The other is to assume enough regularity (Gevery class) of the initial perturbations such that one can pay enough regularity to control the growth caused by the echo cascade. 

In this work, we are interested in the first method to stabilize the system and  studying the long time behavior of \eqref{eq:NS2} for small initial perturbations $\om_{in}$. We are aimed at finding the largest perturbation (threshold) in Sobolev spaces below which the Couette flow is stable. More precisely, we are studying the following classical question: 

{\it Given a norm $\|\cdot\|_X$, find a $\beta=\beta(X)$ so that
\beno
&&\|\om_{in}\|_{X}\leq \nu^{\b} \Rightarrow \text{stability},\\
&&\|\om_{in}\|_{X}\gg \nu^{\b} \Rightarrow \text{instability}.
\eeno
}
Another interesting question which is related to this problem is the nonlinear enhanced dissipation and inviscid damping which can be proposed in the following two ways:

1. {\it Given a norm $\|\cdot\|_{X}$($X\subset L^2$), determine a $\beta=\beta(X)$ so that for the initial vorticity $\|\om_{in}\|_{X}\ll \nu^{\beta}$ and for $t>0$
\ben\label{eq: enha-invis}
\|\om_{\neq}\|_{L^2_{x,y}}\leq C\|\om_{in}\|_{X}e^{-c\nu^{\f13}t}\quad \text{and}\quad
\|V_{\neq}\|_{L^2_{t,x,y}}\leq C\|\om_{in}\|_{X},
\een
or the weak enhanced dissipation type estimate
\ben\label{eq: enha-invis-weak}
\|\om_{\neq}\|_{L^2_tL^2_{x,y}}\leq C\nu^{-\f16}\|\om_{in}\|_{X}
\een
holds for the Navier-Stokes equation \eqref{eq:NS2}.}

2. {\it Given $\beta$, is there an optimal function space $X\subset L^2$ so that if the initial vorticity satisfies $\|\om_{in}\|_{X}\ll \nu^{\beta}$, then \eqref{eq: enha-invis} or \eqref{eq: enha-invis-weak} hold for the Navier-Stokes equation \eqref{eq:NS2}?}

These two problems(find the smallest $\beta$ or find the largest function space $X$) are related to each other, since one can gain regularity in a short time by a standard time-weight argument if the initial perturbation is small enough. 

We summarize the results as follows: 
\begin{itemize}
\item For $\beta=0$, Bedrossian, Masmoudi and Vicol \cite{BMV} showed that if $X$ is taken as Gevery-$m$ with $m<2$, then the Couette flow is stable and  \eqref{eq: enha-invis-weak} holds. 
\item For $\beta=\f12$, Bedrossian, Vicol and Wang \cite{BWV} proved the Couette flow is stable as well as the nonlinear enhanced dissipation and inviscid damping for the perturbation of initial vorticity in $H^s, s>1$. 
\item For $\beta=\f12$, recently in \cite{MZ1}, we proved the nonlinear enhanced dissipation and inviscid damping for the perturbation of initial vorticity in the almost critical space $H^{log}_xL^2_{y}\subset L^2_{x,y}$.
\end{itemize}

Let us also mention some other recent progress \cite{BGM1,BGM2,BGM3,CEW2019,DingLin,GNRS2018,LWZ,LX,IMM,WZ2018,WZZ3,ZEW} on the stability problem of different types of shear flows in different domains. 

In this paper, we find a smaller $\beta(=\f13)$ such that the Couette flow is stable and the nonlinear enhanced dissipation and inviscid damping hold, when $X$ takes a Sobolev spaces. Our main result is stated as follows. 
\begin{theorem}\label{Thm: main}
For $\s\geq 40$, $\nu>0$, there exist $0<\ep_0,\nu_0<1$, such that for all $0<\nu\leq \nu_0$ and $0<\ep\leq \ep_0$, if $\om_{in}$ satisfies $\|\om_{in}\|_{H^{\s}}\leq \ep\nu^{\f13}$, then the solution $\om(t)$ of \eqref{eq:NS2} with initial data $\om_{in}$ satisfies the following properties: \\
1. Global stability in $H^{\s}$,  
\beq
\left\|\om\left(t,x+ty+\Phi(t,y),y\right)\right\|_{H^{\s}}\leq C\ep\nu^{\f13},
\eeq
where $\Phi(t,y)$ is given explicitly by 
\beno
\Phi(t,y)=\int_0^te^{\nu(t-\tau)\pa_y^2}\left(\f{1}{2\pi}\int_{\mathbb{T}}U^x(\tau,x,y)dx\right)d\tau.
\eeno
2. Inviscid damping, 
\beq
\left\|P_{\neq}U^x\right\|_2+\langle t\rangle\left\|U^y\right\|_2\leq \f{C\ep\nu^{\f13}}{\langle \nu t^3\rangle}\langle t\rangle^{-1}.
\eeq
3. Weak enhanced dissipation, 
\beq\label{eq: enha-thm}
\left\|P_{\neq}\om(t)\right\|_2\leq \f{C\ep\nu^{\f13}}{\langle \nu t^3\rangle}.
\eeq
The constant $C$ is independent of $\nu$ and $\ep$. 
\end{theorem}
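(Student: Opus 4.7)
The plan is to work in the moving frame that undoes both the Couette shear and the slow evolution of the zero mode, by setting $f(t,z,y) = \omega(t, z + ty + \Phi(t,y), y)$, so that $f$ solves a transport–diffusion equation in which the shearing enters only through the nonlocal stream-function inversion associated to the twisted Laplacian $\partial_z^2 + (\partial_y - t\partial_z)^2$. Projecting onto the zero and nonzero $z$-modes decouples the system into an equation for the mean profile $f_0$, driven by a quadratic forcing from the nonzero modes, and an equation for $f_{\neq}$ whose nonlinearity is transport by the nonzero velocity plus a reaction term proportional to $\partial_y f_0$.

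The theorem will be proved by a bootstrap/continuity argument on the maximal time interval $[0,T^*)$ on which a set of a priori estimates holds; these assumptions encode exactly conclusions (1)--(3), namely $H^{\sigma}$ boundedness of $f$, inviscid damping of $U_{\neq}$, and the weak enhanced dissipation bound $\|P_{\neq}\omega\|_2 \lesssim \ep \nu^{1/3}/\langle \nu t^3\rangle$. One then shows that each of the bootstrap constants can be improved by a factor of $2$, forcing $T^* = +\infty$.

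To close the estimates, I would introduce a time-dependent Fourier multiplier weight $A(t,k,\eta)$ acting on $f_{\neq}$, designed to grow across nearby Orr critical times $t \approx \eta/k$ so as to compensate for the transfer of enstrophy between neighbouring $(k,\eta)$ modes induced by the echo mechanism. Writing the evolution for $A f_{\neq}$ produces a dissipative commutator term from $\partial_t A/A$ which must dominate the nonlinear contributions. The nonlinear term is then split into transport $U\cdot\nabla f$ and reaction $U^y_{\neq}\partial_y f_0$; each is further decomposed by paraproducts into low–high, high–low, and resonant pieces, and estimated using the inviscid damping and enhanced dissipation factors built into $A$. The velocity decay statement then follows from \eqref{eq: stream-Four} applied to $f_{\neq}$ together with the $H^{\sigma}$ bound on the profile, and the weak enhanced dissipation \eqref{eq: enha-thm} is obtained from a hypocoercive energy combining $\|P_{\neq}f\|_2^2$ with a weighted $\|(\partial_y - t\partial_z)P_{\neq}f\|_2^2$ and exploiting the coercivity of the twisted Laplacian.

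The main obstacle is closing everything \emph{without} Gevrey regularity: in a Sobolev space the multiplier $A$ can only absorb a bounded amount of echo-type growth, so the smallness hypothesis $\|\omega_{in}\|_{H^{\sigma}} \le \ep \nu^{1/3}$ must be used crucially to keep the accumulated nonlinear contribution of order $\ep$ over the critical time window $t \lesssim \nu^{-1/3}$. The delicate piece is the reaction term $U^y_{\neq}\partial_y f_0$: as $f_0$ develops small-scale $y$-structure from its nonzero-mode forcing, $\partial_y f_0$ grows, and it is precisely this feedback that sets the threshold exponent $\beta = 1/3$. Handling it requires a coupled $H^{\sigma-1}$ estimate on $f_0$ via the heat semigroup, where the dissipative gain from enhanced dissipation of $f_{\neq}$ (worth a factor $\nu^{-1/3}$) is balanced against the quadratic smallness $(\ep \nu^{1/3})^2$ of the forcing to recover a linear-in-$\ep$ bound on $\partial_y f_0$ with constant independent of $\nu$.
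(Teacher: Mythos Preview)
Your outline has the right skeleton --- bootstrap, Fourier multiplier to absorb echoes, paraproduct splitting, and a separate mechanism for enhanced dissipation --- and you correctly locate the $\nu^{1/3}$ threshold in the interaction between the nonzero modes and $\partial_y f_0$. But two concrete ingredients of the paper's proof are missing or mis-specified, and without them the argument does not close at $\beta=\tfrac13$.

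\textbf{Coordinate change.} You only change the horizontal variable, keeping $y$. The paper changes both: $(x,y)\to(z,v)$ with $v(t,y)$ solving the heat-forced equation for $\langle U^x\rangle$. This is not cosmetic. The full change makes the residual transport velocity $u=(0,g)^T + v'\nabla^{\perp}P_{\neq}\phi$ time-integrable (since $g$ decays like $t^{-2}$), and it isolates the deformation of the shear into the scalar unknowns $h=v'-1$ and $\bar h$, which are propagated through their own bootstrap hypotheses in $H^{\sigma}$ and $H^{\sigma-1}$. In your $z$-only frame the elliptic operator picks up $(\partial_y\Phi)$-coefficients that you never estimate, and the transport by the zero mode does not become integrable; you would in effect have to rediscover the $(g,h,\bar h)$ system anyway.

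\textbf{Multiplier design.} This is the real gap. Saying $A$ ``grows across nearby Orr critical times'' describes the Gevrey weight of Bedrossian--Masmoudi, whose total growth is $\sim e^{c|\eta|^{1/2}}$ and must be paid for in regularity. In $H^{\sigma}$ that is fatal. The paper's weight $w_k(t,\eta)$ is built from a toy model \emph{in which enhanced dissipation has already been inserted}: each echo step on $I_{m,\eta}$ is damped by the factor $\langle \nu^{1/3}t_{m,\eta}\rangle^{-(1+\beta)}$, so that the cumulative product $\prod_m G_m(\eta)$ is uniformly $O(1)$ (their Lemma on total growth), hence $A^{\sigma}_k(t,\eta)\approx\langle k,\eta\rangle^{\sigma}$ and the weight never costs regularity. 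It is precisely this coupling of the ghost weight to the $\nu^{1/3}$-scale dissipation that makes $\beta=\tfrac13$ possible; your proposal does not contain it. Relatedly, the zero-mode feedback is absorbed not via a heat-semigroup estimate on $f_0$ but through the $CK_w$ term and the pairing $\sqrt{\partial_t w/w}$ on both factors in the reaction integral, together with the auxiliary control $\sqrt{\nu t}\,\|\partial_v f_0\|_{H^s}\lesssim \ep\nu^{1/3}$.

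A smaller difference: the enhanced dissipation in the paper is obtained not from a hypocoercive functional but from the multiplier $D(t,\eta)=\tfrac13\nu|\eta|^3+\tfrac{1}{24}\nu(t^3-8|\eta|^3)_+$, whose growth is dominated pointwise by the dissipation on the set $t\ge 2|\eta|$. Your hypocoercive approach could in principle be substituted here, but you would then have to check that it interacts correctly with the echo weight $w$ in the reaction estimates.
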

\begin{remark}
By replacing $D(t,\eta)$ by $D(t,\eta)^{\al}$ with $\al\geq 1$ in the proof and assuming $\s$ large enough(depending on $\al$), one can obtain the stronger enhanced dissipation of the following from: 
\ben\label{eq: enhan-stro}
\left\|P_{\neq}\om(t)\right\|_2\leq \f{C\ep\nu^{\f13}}{\langle \nu t^3\rangle^{\al}}.
\een
However, the weak enhanced dissipation of the same decay rate as in the Theorem \ref{Thm: main} is enough for the proof of the Sobolev stability. 
Both \eqref{eq: enha-thm} and \eqref{eq: enhan-stro} are far from the exponential decay of the linear case. 
\end{remark}

Let us now outline the main ideas in the proof of Theorem \ref{Thm: main}. First, we provide a (well chosen) change of variable that adapts to the solution as it evolves and yields a new ‘relative’ velocity which is time-integrable. Second, we will construct a new multiplier which can be regarded as a ghost weight in phase space and helps us control the growth caused by echo cascades. 

\section{Proof of Theorem \ref{Thm: main}}
In this section, we will present several key propositions and complete the proof of Theorem \ref{Thm: main} by admitting those propositions. 

\subsection{Notation and conventions}

See Section \ref{Sec: L-P} for the Fourier analysis conventions we are taking. A convention we generally use is to denote the discrete $x$ (or $z$) frequencies as subscripts. By convention we always use Greek letters such as $\eta$ and $\xi$ to denote frequencies in the $y$ or $v$ direction and lowercase Latin characters commonly used as indices such as $k$ and $l$ to denote frequencies in the $x$ or $z$ direction (which are discrete). Another convention we use is to denote $M,N,K$ as dyadic integers $M,N,K\in \mathbb{D}$ where 
\beno
\mathbb{D}=\left\{\f12,1,2,4,8,...,2^j,...\right\}.
\eeno
When a sum is written with indices $K,M,M',N$ or $N'$ it will always be over a subset of $\mathbb{D}$. 
We will mix use same $A$ for $Af=(A(\eta)\hat{f}(\eta))^{\vee}$ or $A\hat{f}=A(\eta)\hat{f}(\eta)$, where $A$ is a Fourier multiplier. 

We use the notation $f\lesssim g$ when there exists a constant $C>0$ independent of the parameters of interest such that $f\leq Cg$(we analogously $g\gtrsim f$ define). Similarly, we use the notation $f\approx g$ when there exists $C>0$ such that $C^{-1}g\leq f\leq Cg$. 

We will denote the $l^1$ vector norm $|k,\eta|=|k|+|\eta|$, which by convention is the norm taken in our work. Similarly, given a scalar or vector in $\R^n$ we denote 
\beno
\langle v\rangle = (1+|v|^2)^{\f12}. 
\eeno
We use a similar notation to denote the $x$ or $z$ average of a function: $<f>=\f{1}{2\pi}\int f(x,y)dx=f_0$. We also frequently use the notation $f_{\neq}=P_{\neq}f=f-f_0$. We denote the standard $L^2$ norms by $\|\cdot\|_2$. The norm of Sobolev space $H^{\s}$ is given by 
\beno
\|f\|_{H^{\s}}=\left\|(\langle \eta\rangle^{\s}\hat{f})^{\vee}\right\|_2
\eeno
The norm space-time Sobolev space $L^{p}_{T}(H^{\s})$ is given by
\beno
\|f\|_{L^{p}_{T}(H^{\g})}=\left\{
\begin{aligned}
&\sup_{t'\in[1,t]}\|f(t')\|_{H^{\s}},\quad p=\infty,\\
&\left(\int_1^t\|f(t')\|_{H^{\s}}^pdt'\right)^{\f1p},\quad 1\leq p<\infty.
\end{aligned}\right.
\eeno

For $|m|=0,1,2,...$ and $m\eta\geq 0$, let 
\ben
t_{m,\eta}=\f{2\eta}{2m+1}.
\een 
We then use 
\ben
I_{m,\eta}\eqdef [t_{m,\eta},t_{m-1,\eta}],
\een 
for $m=1,2,...,$ to denote any resonant interval and its left and right part with $\eta\geq (2m+1)m$. For $|\eta|\geq 3$, we denote $E(\sqrt{|\eta|})$ the largest integer satisfying $(2E(\sqrt{|\eta|})+1)E(\sqrt{|\eta|})\leq |\eta|$ and then $E(\sqrt{|\eta|})\approx  \sqrt{|\eta|}$. Let $t(\eta)=\f{2\eta}{2E(\sqrt{|\eta|})+1}\approx  \sqrt{|\eta|}$ be the starting of the resonant interval. Then we denote
\ben
I_t(\eta)\eqdef [t(\eta), 2|\eta|]=\bigcup_{m=1}^{E(\sqrt{\eta})}I_{m,\eta}
\een
the whole resonant interval. 

For a statement $Q$, $1_{Q}$ or $\chi^{Q}$ will denote the function that equals $1$ if $Q$ is true and $0$ otherwise. 

\subsection{Coordinate transform}
We will use the same change of coordinates as in \cite{BMV} which allows us to simultaneously 'mod out' by the evolution of the time-dependent background shear flow and treat the mixing of this background shear as a perturbation of the Couette flow (in particular, to understand the nonlinear effect of the Orr mechanism). 

The change of coordinates used is $(t,x,y)\to (t,z,v)$, where
 $z(t,x,y)=x-tv(t,y)$ and $v(t,y)$ satisfies
\beno
(\pa_t-\nu\pa_{yy})\left(t(v(t,y)-y)\right)=< U^x>(t,y),
\eeno
with initial data $\lim\limits_{t\to 0}t(v(t,y)-y)=0$. Where $< U^x>(t,y)=\f{1}{2\pi}\int_{\mathbb{T}}U^x(t,x,y)dx$. 

Define the following quantities
\begin{align}
C(t,v(t,y))&=v(t,y)-y,\label{eq: C}\\
v'(t,v(t,y))&=(\pa_yv)(t,y),\label{eq: v'}\\
v''(t,v(t,y))&=(\pa_{yy}v)(t,y),\label{eq: v''}\\
[\pa_tv](t,v(t,y))&=(\pa_tv)(t,y),\label{eq: v_t}\\
f(t,z(t,x,y),v(t,y))&=\om(t,x,y),\label{eq: om}\\
\phi(t,z(t,x,y),v(t,y))&=\psi(t,x,y),\label{eq: psi}\\
\tu(t,z(t,x,y),v(t,y))&=U^x(t,x,y).\label{eq: U^x}
\end{align}
Thus we get
\beq\label{eq: phi f}
\Delta_t\phi\eqdef\pa_{zz}\phi+(v')^2(\pa_v-t\pa_z)^2\phi+v''(\pa_v-t\pa_z)\phi=f,
\eeq
and
\beq\label{eq: f1}
\begin{split}
&\pa_tf+[\pa_tv]\pa_vf-\nu v''t\pa_zf
+v'\na_{z,v}^{\bot}P_{\neq}\phi\cdot\na_{z,v}f=\nu\Delta_tf,
\end{split}
\eeq
where $\na_{z,v}^{\bot}=(-\pa_v,\pa_z)$, $\na_{z,v}=(\pa_z,\pa_v)$ and $P_{\neq}\phi=\phi-< \phi>$, $\tu_0(t,v)=\f{1}{2\pi}\int_{\mathbb{T}_{2\pi}}\tu(t,z,v)dz$. 

We also obtain that 
\beq\label{eq: tu_0}
\pa_t\tu_0+[\pa_t v]\pa_v\tu_0+< v'\na^{\bot}_{z,v}P_{\neq 0}\phi\cdot\na \tu>=\nu\Delta_t\tu_0.
\eeq

Define the auxiliary function
\beno
g(t,v)=\f{1}{t}(\tu_0(t,v)-C(t,v)),
\eeno
which implies that
\begin{align*}
&[\pa_tv]=g+\nu v'',\\
&v'\pa_vC(t,v)=v'(t,v)-1,\\
&\pa_tC+[\pa_tv]\pa_vC=[\pa_tv],\\
&v'\pa_vv'=v''=\Delta_tC,
\end{align*}
and that $g$ satisfies
\beq\label{eq: g}
\pa_tg+\f{2g}{t}+g\pa_vg=-\f{v'}{t}< \na_{z,v}^{\bot}P_{\neq}\phi\cdot\na_{z,v}\tu>+\nu(v')^2\pa_{vv}g.
\eeq

If we denote $h=v'-1$, we get that
\ben
\pa_th+g\pa_v h=\f{-f_0-h}{t}+\nu\widetilde{\Delta}_th.
\een
Let $\bar{h}=\f{-f_0-h}{t}$, thus we obtain that
\ben
\pa_t\bar{h}+g\pa_v\bar{h}=-\f{2}{t}\bar{h}+\f{v'}{t}< \na_{z,v}^{\bot}P_{\neq}\phi\cdot\na_{z,v}f>+\nu\widetilde{\Delta}_t\bar{h}.
\een

It gives that
\beq\label{eq: f-new}
\pa_tf+u\cdot\na_{z,v}f=\nu\widetilde{\Delta}_tf,
\eeq
where 
\beno
u(t,z,v)=\left(\begin{aligned}0\\g\end{aligned}\right)+v'\na_{z,v}^{\bot}P_{\neq}\phi
\eeno 
and $\widetilde{\Delta}_tf=\pa_{zz}f+(v')^2(\pa_v-t\pa_z)^2f$.

By the changing of the coordinates we deduce our problem to studying the following system: 
\begin{align}\label{eq: main f}
&\left\{
\begin{aligned}
&\pa_tf+u\cdot\na_{z,v}f=\nu\widetilde{\Delta}_tf, \\
&u(t,z,v)=\left(\begin{aligned}0\\g\end{aligned}\right)+v'\na_{z,v}^{\bot}P_{\neq}\phi,\\
&\Delta_t\phi=f, \quad v''=v'\pa_vv', \quad h=v'-1,
\end{aligned}\right.\\
\label{eq: coor}
&\left\{
\begin{aligned}
&\pa_tg+\f{2g}{t}+g\pa_vg=-\f{v'}{t}< \na_{z,v}^{\bot}P_{\neq}\phi\cdot\na_{z,v}\tu>+\nu(v')^2\pa_{vv}g,\\
&\pa_t\bar{h}+\f{2}{t}\bar{h}+g\pa_v\bar{h}=\f{v'}{t}< \na_{z,v}^{\bot}P_{\neq}\phi\cdot\na_{z,v}f>+\nu(v')^2\pa_{vv}\bar{h},\\
&\pa_th+g\pa_v h=\bar{h}+\nu(v')^2\pa_{vv}h,\\
&\tu=-v'(\pa_v-t\pa_z)\phi.
\end{aligned}
\right.
\end{align}

\subsection{Main energy estimate}
In light of the previous section, our goal is to control solution to \eqref{eq: main f} and \eqref{eq: coor} uniformly in a suitable norm as $t\to \infty$. The key idea we use for this is the carefully designed time-dependent norm written as 
\beno
\left\|\As(t,\na)f\right\|_2^2=\sum_k\int_{\eta}\left|\As_k(t,\eta)\hat{f}_k(t,\eta)\right|^2d\eta,
\eeno
where $\As_k(t,\eta)$ is defined in \eqref{eq: Asigma}. 

We also introduce another time-dependent norm for $8\leq s\leq \s-10$,
\beno
\left\|A_E^{s}(t,\pa_k,\pa_v)f\right\|_{2}^2=\sum_{k\neq 0}\int_{\eta}\left|A_E^{s}(t,k,\eta)\hat{f}_k(t,\eta)\right|^2d\eta,
\eeno 
which quantifies the enhanced dissipation effect with 
\beno
A_E^{s}(t,k,\eta)=\langle k,\eta\rangle^sD(t,\eta),
\eeno
with 
\beno
D(t,\eta)=\f13\nu|\eta|^3+\f{1}{24}\nu(t^3-8|\eta|^3)_+. 
\eeno
Here $E$ stands for enhanced dissipation.

We define our higher Sobolev energy: 
\beq\label{eq: Energy}
\mathcal{E}^{\s}(t)=\f{1}{2}\left\|\As(t)f(t)\right\|_2^2+\mathcal{E}_{v}(t),
\eeq
where
\beq
\mathcal{E}_{v}(t)=\|g\|_{H^{\s}}^2+\nu^{\f13}\|h\|_{H^{\s}}^2+\nu^{\f13}\|\bar{h}\|_{H^{\s}}^2+\|h\|_{H^{\s-1}}^2+\|\bar{h}\|_{H^{\s-1}}^2
\eeq

By the well-posedness theory for 2D Navier-Stokes equation in Sobolev spaces we may safely ignore the time interval (say) $[0,1]$ by further restricting the size of the initial data. That is we have the following lemma. 
\begin{lemma}\label{Lem: LWP}
For $\ep>0$, $\nu>0$ and $\s\geq 40$, there exists $\ep'>0$ such that if $\|\om_{in}\|_{H^{\s}}\leq \ep'\nu^{\f13}$, then
\beno
\sup_{t\in [0,1]}\mathcal{E}^{\s}(t)\leq (\ep\nu^{\f13})^2. 
\eeno 
\end{lemma}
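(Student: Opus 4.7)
This is a standard local well-posedness claim on the fixed interval $[0,1]$, before the paper's delicate echo-control machinery becomes active. My plan is to reduce the problem to classical 2D Navier--Stokes estimates in $H^{\s}$ and then to transfer the resulting bounds to the transformed energy $\mathcal{E}^{\s}$, using that on $[0,1]$ the tailored multiplier $\As$ is uniformly comparable (in $\nu\leq 1$) to the flat Sobolev weight $\langle k,\eta\rangle^{\s}$.

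First I would work directly with \eqref{eq:NS2} in the original $(x,y)$ coordinates. A standard commutator energy estimate in $H^{\s}$ (the shear term $y\pa_x$ contributes only lower-order growth after integration by parts in Fourier against the $\langle\eta\rangle^{2\s}$ weight, the viscosity is dissipative, and the nonlinearity is handled via Biot--Savart and Sobolev embedding) yields a differential inequality of the form $\f{d}{dt}\|\om\|_{H^{\s}}^2\leq C_{\s}\|\om\|_{H^{\s}}^2(1+\|\om\|_{H^{\s}})$. With $\|\om_{in}\|_{H^{\s}}\leq \ep'\nu^{\f13}\leq \ep'$ small, Gr\"onwall gives $\|\om(t)\|_{H^{\s}}\leq 2\|\om_{in}\|_{H^{\s}}$ on $[0,1]$.

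Next I would control the coordinate change. The function $\Phi(t,y)=t(v(t,y)-y)$ solves $(\pa_t-\nu\pa_{yy})\Phi=\langle U^x\rangle$ with $\Phi|_{t=0}=0$, so standard heat estimates combined with $\|\langle U^x\rangle\|_{H^{\s+1}}\lesssim \|\om\|_{H^{\s}}$ (Biot--Savart) give $\|\Phi(t)\|_{H^{\s+1}}\lesssim t\|\om_{in}\|_{H^{\s}}$ on $[0,1]$. Hence $h=v'-1$ is small in $H^{\s}$, the map $(x,y)\mapsto(z,v)$ is a smooth $H^{\s}$-diffeomorphism close to the identity, and $\|f\|_{H^{\s}}\approx \|\om\|_{H^{\s}}$. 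The remaining coordinate quantities $\bar h=-(f_0+h)/t$ and $g=(\tu_0-C)/t$ are then controlled by the same argument: their numerators vanish in $H^{\s}$ at rate $O(t)$ uniformly in $\nu$, so the $1/t$ factors still produce bounded output. Finally, since $\As$ is built around the Orr critical time $t\approx \eta/k$, which is $\geq 1$ for the frequencies we care about, $\As(t,k,\eta)$ is comparable to $\langle k,\eta\rangle^{\s}$ on $[0,1]$ up to an absolute constant. Plugging everything into the definition of $\mathcal{E}^{\s}$ and using $\nu^{\f13}\leq 1$ for the weighted coordinate terms yields $\mathcal{E}^{\s}(t)\leq C_{\s}\|\om_{in}\|_{H^{\s}}^2\leq C_{\s}(\ep'\nu^{\f13})^2$, and choosing $\ep'=\ep'(\ep,\s)$ small closes the estimate.

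The delicate technical point I expect is the handling of the $1/t$ factors in $\bar h$ and $g$: the construction is designed so the numerators vanish at $t=0$ at the correct rate, but one must verify this uniformly in $\nu$ in the relevant Sobolev norm. A clean workaround is to first produce a smooth solution $\om$ on $[0,1]$ via classical $H^{\s}$ well-posedness and then read off the bounds on $h,\bar h,g$ directly from their defining formulas applied to this smooth $\om$, thereby bypassing any need to integrate the singular transformed system near $t=0$.
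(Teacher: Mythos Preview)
The paper does not actually prove this lemma; it simply states it after the remark ``By the well-posedness theory for 2D Navier--Stokes equation in Sobolev spaces we may safely ignore the time interval (say) $[0,1]$ by further restricting the size of the initial data.'' Your outline is exactly the kind of argument the paper is invoking implicitly: classical $H^{\s}$ energy estimates for 2D Navier--Stokes on the bounded interval $[0,1]$, followed by a transfer to the transformed variables via the composition lemma and the fact (Lemma~\ref{lem: total-growth}) that $\As_k(t,\eta)\approx\langle k,\eta\rangle^{\s}$ uniformly. So there is no discrepancy to report --- you have supplied a sketch where the paper gives none.

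Two minor comments on the sketch itself. First, your remark that $\As\approx\langle k,\eta\rangle^{\s}$ ``on $[0,1]$'' undersells things: by Lemma~\ref{lem: total-growth} this holds for all $t$, so no special short-time structure of $w$ is needed. Second, the point you correctly flag as delicate --- the $1/t$ factors in $g$ and $\bar h$ --- is handled in the paper's setup exactly as you suggest, by reading off these quantities from a smooth solution $\om$ rather than integrating the transformed system from $t=0$; the key identity $g=\f{1}{t}(\tu_0-C)$ with $C$ and $\tu_0$ both $O(t)$ in $H^{\s}$ makes this work.
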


We define the following controls referred to in the sequel as the bootstrap hypotheses for $t\geq 1$. 

\no{\bf Higher regularity: main system}
\begin{equation}\label{eq: B1-Main-High}
\|\As f(t)\|_{2}^2
+\nu\int_1^t\left\|\sqrt{-\Delta_L}\As f(t')\right\|_{2}^2dt'
+\int_1^tCK_w(t')dt'\leq (8\ep\nu^{\f13})^2,
\end{equation}
where the $CK$ stands for 'Cauchy-Kovalevskaya' 
\beno
CK_w(t)=\sum_k\int \f{\pa_tw_k(t,\eta)}{w_k(t,\eta)}\left|\As_k(t,\eta)\hat{f}_k(t,\eta)\right|^2d\eta.
\eeno
\no{\bf Higher regularity: coordinate system}
\beq\label{eq: B1-Coo-High}
\begin{split}
&\langle t\rangle \|g\|_{H^{\s}}+\int_1^t\|g(t')\|_{H^{\s}}dt'\leq 8\ep\nu^{\f13},\\
&t^{3}\|\As\bar{h}(t)\|_2^2
+\int_1^tt'^{3}\left\|\sqrt{\f{\pa_tw}{w}}{\As\bar{h}}\right\|^2_2dt'\\
&\quad\quad \quad +\f14\int_1^tt'^{2}\|\As\bar{h}\|_2^2dt'
+\f14\nu\int_1^tt'^{3}\|\pa_v\As\bar{h}\|_2^2dt'
\leq 8\ep(\ep\nu^{\f16})^2,\\
&\|h(t)\|_{H^{\s}}^2+\nu\int_1^t\|\pa_vh(t')\|_{H^{\s}}^2dt'\leq 8(10\ep\nu^{\f16})^2
\end{split}
\eeq
\no{\bf Lower regularity: enhanced dissipation}
\beq\label{eq: B2-enhan}
\|A^s_Ef(t)\|_2^2+\f25\nu \int_1^t\left\|\sqrt{-\Delta_L}A^s_E f(t')\right\|_{2}^2dt'\leq  (8\ep\nu^{\f13})^2.
\eeq
\no{\bf Lower regularity: decay of zero mode}
\beq\label{eq: B2-zero}
\begin{split}
&\langle t\rangle^4\|g(t)\|_{H^{\s-6}}^2+\nu \int_1^tt'^4\|\pa_vg(t')\|_{H^{\s-6}}^2dt'\leq (8\ep\nu^{\f13})^2,\\
&\langle t\rangle^4\|\bar{h}(t)\|_{H^{\s-6}}^2+\nu \int_1^tt'^4\|\pa_v\bar{h}(t')\|_{H^{\s-6}}^2dt'\leq (8\ep\nu^{\f13})^2,\\
&\|f_0\|_{H^s}^2+\f{t\nu}{2}\|\pa_vf_0\|_{H^s}^2+
\nu\int_1^t\left(\|\pa_vf_0(t')\|_{H^s}^2+\f{t'\nu}{2}\|\pa_vf_0(t')\|_{H^s}^2\right)dt'\leq (8\ep\nu^{\f13})^2.
\end{split}
\eeq
\no{\bf Assistant estimates}
\beq\label{eq:assistant}
\begin{split}
&\langle t\rangle \|\bar{h}\|_{H^{\s-1}}+\int_1^t\|\bar{h}(t')\|_{H^{\s-1}}dt'\leq 8\ep\nu^{\f13},\\
&\|h(t)\|_{H^{\s-1}}^2+\nu\int_1^t\|\pa_vh(t')\|_{H^{\s-1}}^2dt'\leq 8(10\ep\nu^{\f13})^2.
\end{split}
\eeq

The following proposition follows from the bootstrap hypotheses, elliptic estimates and the properties of the multipliers: $\As$ and $A_E^s$. 
\begin{proposition}\label{prop: basic estimate}
Under the bootstrap hypotheses, the following inequalities hold: 
\begin{align}
\label{eq: high energy}&\|f\|_{H^{\s}}
+\nu^{\f12} \left\|\sqrt{-\Delta_L} f\right\|_{L^2_{T}(H^{\s})}+\left\|\sqrt{\f{\pa_tw}{w}}f\right\|_{L^2_{T}(H^{\s})}\lesssim \ep \nu^{\f13},\\
\label{eq: enha}&\|f_{\neq}\|_{H^s}+\nu^{\f12}\left\|\sqrt{-\Delta_L} f_{\neq}\right\|_{L^2_T(H^{s})}\lesssim \f{\ep \nu^{\f13}}{\langle \nu t^3\rangle},
\end{align}
and the inviscid damping results
\begin{align}\label{eq: inviscid}
&\|P_{\neq}\phi\|_{H^{\s-4}}\lesssim \f{\ep \nu^{\f13}}{\langle t^2\rangle},\quad
\|\tu_{\neq}\|_{H^{\s-3}}\lesssim \f{\ep \nu^{\f13}}{\langle t\rangle}. 
\end{align}
\end{proposition}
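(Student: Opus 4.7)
The three groups of bounds all follow by extracting pointwise lower estimates on the multipliers $\As$ and $A_E^s$ from the bootstrap hypotheses, combined with an elliptic inversion of $\Delta_t\phi=f$ for the inviscid-damping part. I expect the main difficulty to be the variable-coefficient elliptic estimate that underlies \eqref{eq: inviscid}; the other two bounds are essentially mechanical.

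For the higher-order bound \eqref{eq: high energy}, the multiplier $\As_k(t,\eta)$ is constructed so that $\As_k(t,\eta)\gtrsim \langle k,\eta\rangle^{\s}$ uniformly in $t$, and substituting this pointwise lower bound into each of the three terms on the left of \eqref{eq: B1-Main-High} reproduces the three terms of \eqref{eq: high energy}, all bounded by $\ep\nu^{1/3}$. For the enhanced dissipation \eqref{eq: enha}, the crucial pointwise identity is $D(t,\eta)\geq \tfrac{1}{24}\nu t^3$ for every $(t,\eta)$: if $t\leq 2|\eta|$ the positive part vanishes and $D=\tfrac13\nu|\eta|^3\geq \tfrac{1}{24}\nu t^3$, while if $t>2|\eta|$ algebraic cancellation yields exactly $D=\tfrac{1}{24}\nu t^3$. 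Inserting this into \eqref{eq: B2-enhan} gives $\|f_{\neq}\|_{H^s}\lesssim \ep\nu^{-2/3}t^{-3}$; taking the minimum with the trivial estimate $\|f_{\neq}\|_{H^s}\leq \|f\|_{H^{\s}}\lesssim \ep\nu^{1/3}$ already supplied by \eqref{eq: high energy} then produces the claimed decay $\ep\nu^{1/3}/\langle \nu t^3\rangle$. The $\sqrt{-\Delta_L}$-weighted piece is handled identically using the second term of \eqref{eq: B2-enhan}.

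For the inviscid damping \eqref{eq: inviscid}, I would first write
\beno
\Delta_t=\Delta_L+(2h+h^2)(\pa_v-t\pa_z)^2+v''(\pa_v-t\pa_z),
\eeno
and, using the smallness of $h$ (hence of $v''=v'\pa_vv'$) furnished by \eqref{eq:assistant}, treat the last two terms as perturbations of $\Delta_L$ to obtain the elliptic bound $|\hat\phi_k|\lesssim |\hat f_k|/(k^2+(\eta-kt)^2)$ for $k\neq 0$. I would then split the nonzero-$k$ frequency region into $\{|\eta-kt|\geq |kt|/2\}$, where $k^2+(\eta-kt)^2\gtrsim t^2$ directly yields a $1/t^2$ factor, and its complement $\{|\eta|\geq |kt|/2\}\supset\{|\eta|\geq t/2\}$, where the generic bound $|\hat\phi_k|\leq |\hat f_k|/k^2$ is combined with $\langle k,\eta\rangle^{\s-4}\leq 4\langle k,\eta\rangle^{\s-2}/t^2$ at the price of two extra derivatives on $f$. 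Summing the two contributions gives $\|P_{\neq}\phi\|_{H^{\s-4}}\lesssim \|f\|_{H^{\s-2}}/\langle t\rangle^2\lesssim \ep\nu^{1/3}/\langle t\rangle^2$. The bound for $\tu_{\neq}=-v'(\pa_v-t\pa_z)\phi$ proceeds along the same lines, except that only one factor of $|\nabla_L|$ is gained from the inversion, producing the $1/\langle t\rangle$ decay.

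The technical heart of the argument is the variable-coefficient elliptic inversion: to absorb $(2h+h^2)(\pa_v-t\pa_z)^2$ and $v''(\pa_v-t\pa_z)$ into $\Delta_L$ at the $H^{\s-4}$ level, I will need product and commutator estimates for which the smallness of $h$ and $\pa_v h$ imposed by \eqref{eq:assistant} is essential; once this is in place, the remaining manipulations are routine Fourier arithmetic driven by the multiplier lower bounds.
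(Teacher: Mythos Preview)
Your proposal is correct and follows essentially the same route as the paper. The paper packages the three ingredients as Lemma~\ref{lem: total-growth} (giving $\As_k\approx\langle k,\eta\rangle^{\s}$), Lemma~\ref{lem: D-D} (giving $D(t,\eta)\gtrsim\nu t^3$), and Lemmas~\ref{lem: lin-inv-dam}--\ref{lem: tu} (the perturbative elliptic inversion of $\Delta_t$), and then the proof of the proposition is a one-line citation of each. Your frequency split $\{|\eta-kt|\geq|kt|/2\}$ versus $\{|\eta|\gtrsim|kt|\}$ for the inviscid damping is a harmless variant of the paper's use of the weight $\langle\eta/k\rangle^{s'}$ in \eqref{eq: Lin-inv-dam}; both encode the same dichotomy and yield the same loss of derivatives.
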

This proposition together with Lemma \ref{lem: composition} implies Theorem \ref{Thm: main}. 
\begin{proof}
By Lemma \ref{lem: total-growth}, we get 
$\As_{k}(t,\eta)\approx \langle k,\eta\rangle^{\s}$. Thus we have $\|\As f\|_2\approx \|f\|_{H^{\s}}$ which implies \eqref{eq: high energy}. 

By Lemma \ref{lem: D-D}, we get $D(t,\eta)\geq \nu t^3$, thus $\|A_E^sf\|_2\gtrsim \nu t^3\|f\|_{H^{s}}$ which gives \eqref{eq: enha}.  

The inviscid damping result \eqref{eq: inviscid} follows form Lemma \ref{lem: lin-inv-dam} and Lemma \ref{lem: tu}. 
\end{proof}

For the enhanced dissipation and the inviscid damping in Sobolev norm, we also have the following remark. 
\begin{remark}\label{Rmk: basic estimate}
Under the bootstrap hypotheses, it holds that
\beno
\|\om_{\neq}(t,x+ty+\Phi(t,y),y)\|_{H^s}\lesssim \f{\ep \nu^{\f13}}{\langle \nu t^3\rangle},
\eeno
and
\begin{align*}
&\left\|U^{y}(t,x+ty+\Phi(t,y),y)\right\|_{H^{\s-4}}\lesssim \f{\ep \nu^{\f13}}{\langle t^2\rangle},\\
&\left\|U^x_{\neq}(t,x+ty+\Phi(t,y),y)\right\|_{H^{\s-4}}\lesssim \f{\ep \nu^{\f13}}{\langle t\rangle}. 
\end{align*}
\end{remark}
Recall that 
\beno
&&f(t,z(t,x,y),v(t,y))=\om(t,x,y) \Rightarrow \om(t,x+ty+\Phi(t,y),y)=f(t,x,v(t,y)),\\
&&\tu(t,z(t,x,y),v(t,y))=U^x(t,x,y) \Rightarrow U^x(t,x+ty+\Phi(t,y),y)=\tu(t,x,v(t,y)),\\
&&\pa_z\phi(t,z(t,x,y),v(t,y))=U^{y}(t,x,y) \Rightarrow 
U^y(t,x+ty+\Phi(t,y),y)=(\pa_z\phi)(t,x,v(t,y)),
\eeno
The remark follows directly from \eqref{eq: enha}, \eqref{eq: inviscid}, the composition Lemma \ref{lem: composition} and the bootstrap hypotheses for the regularity of the coordinate system. \\

By Lemma 2.1, for the rest of the proof we may focus on times $t\geq 1$. Let $I^*$ be the connected set of times $t\geq 1$ such that the bootstrap hypotheses \eqref{eq: B1-Main-High}-\eqref{eq:assistant} are all satisfied. We will work on regularized solutions for which we know $\mathcal{E}^{\s}(t)$ takes values continuously in time, and hence $I^*$ is a closed interval $[1,T^*]$ with $T^*\geq 1$. The bootstrap is complete if we show that $I^*$ is also open, which is the purpose of the following proposition, the proof of which constitutes the majority of this work. 

\begin{proposition}\label{prop: bootstrap}
For $\s\geq 40$, $\nu>0$ and $8\leq s\leq \s-10$, there exist $0<\ep_0,\nu_0<1$, such that for all $0<\nu\leq \nu_0$ and $0<\ep\leq \ep_0$, such that if on $[1,T^*]$ the bootstrap hypotheses \eqref{eq: B1-Main-High}-\eqref{eq:assistant} hold, then for any $t\in [1,T^*]$, \\
1. Vorticity boundedness, 
\begin{align*}
&\|\As f(t)\|_{2}^2
+\nu\int_1^t\|\sqrt{-\Delta_L}\As f(t')\|_{2}^2dt'
+\int_1^tCK_w(t')dt'\leq (6\ep\nu^{\f13})^2,
\end{align*}
2. Control of coordinates system,
\begin{align*}
&\langle t\rangle \|g\|_{H^{\s}}+\int_1^t\|g(t')\|_{H^{\s}}dt'\leq 6\ep\nu^{\f13},\\
&t^{3}\|\As\bar{h}(t)\|_2^2
+\int_1^tt'^{3}\left\|\sqrt{\f{\pa_tw}{w}}{\bar{h}}\right\|^2_{H^{\s}}dt'\\
&\quad\quad \quad +\f14\int_1^tt'^{2}\|\As\bar{h}\|_2^2dt'
+\f14\nu\int_1^tt'^{3}\|\pa_v\As\bar{h}\|_2^2dt'
\leq 6\ep(\ep\nu^{\f16})^2,\\
&\|h(t)\|_{H^{\s}}^2+\nu\int_1^t\|\pa_vh(t')\|_{H^{\s}}^2dt'\leq 6(10\ep\nu^{\f16})^2.
\end{align*}
3. Enhanced dissipation,
\begin{align*}
&\|A^s_Ef(t)\|_2^2+\f25\nu \int_1^t\|\sqrt{-\Delta_L}A^s_E f(t')\|_{2}^2dt'\leq (6\ep\nu^{\f13})^2,
\end{align*}
4. Decay of zero mode, 
\begin{align*}
&\langle t\rangle^4\|g(t)\|_{H^{\s-6}}^2+\nu \int_1^tt'^4\|\pa_vg(t')\|_{H^{\s-6}}^2dt\leq (6\ep\nu^{\f13})^2,\\
&\langle t\rangle^4\|\bar{h}(t)\|_{H^{\s-6}}^2+\nu \int_1^tt'^4\|\pa_v\bar{h}(t')\|_{H^{\s-6}}^2dt\leq (6\ep\nu^{\f13})^2,\\
&\|f_0(t)\|_{H^s}^2+\f{t\nu}{2}\|\pa_vf_0\|_{H^s}^2+
\nu\int_1^t\left(\|\pa_vf_0(t)\|_{H^s}^2+\f{t'\nu}{2}\|\pa_vf_0(t')\|_{H^s}^2\right)dt'\leq (6\ep\nu^{\f13})^2,
\end{align*}
5. Assistant estimate,
\begin{align*}
&\langle t\rangle \|\bar{h}\|_{H^{\s-1}}+\int_1^t\|\bar{h}(t')\|_{H^{\s-1}}dt'\leq 6\ep\nu^{\f13},\\
&\|h(t)\|_{H^{\s-1}}^2+\nu\int_1^t\|\pa_vh(t')\|_{H^{\s-1}}^2dt'\leq 6(10\ep\nu^{\f13})^2,
\end{align*}
from which it follows that $T^*=+\infty$. 
\end{proposition}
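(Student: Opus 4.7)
The plan is to close the bootstrap by a weighted energy method: for each unknown I would perform an $L^2$-type estimate with the appropriate Fourier multiplier, derive a differential inequality, and show that all nonlinear contributions are absorbed, with a small factor $\ep$, into the coercive quantities already on the left-hand side (viscous dissipation $\nu\|\sqrt{-\Delta_L}\cdot\|_2^2$, the Cauchy--Kovalevskaya term $CK_w$ coming from $\pa_tw/w$, and the growth of $D(t,\eta)$ in $A^s_E$). Throughout, Proposition \ref{prop: basic estimate} converts the hypotheses into usable size bounds on $f$, $\tu$, $P_{\neq}\phi$, $g$, $h$, $\bar h$; the goal is to strictly improve the constants in \eqref{eq: B1-Main-High}--\eqref{eq:assistant} from $8$ (resp.\ $10$) to $6$, so that openness of $I^*$ together with closedness and non-emptiness forces $I^*=[1,+\infty)$.

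\emph{Top-regularity vorticity estimate.} Testing \eqref{eq: f-new} against $(\As)^2f$ yields, after summing in $k$ and integrating in $\eta$,
\beno
\f{1}{2}\f{d}{dt}\|\As f\|_2^2+\nu\|\sqrt{-\Delta_L}\As f\|_2^2+CK_w(t)=-\langle \As(u\cdot\na_{z,v}f),\As f\rangle+R_{\widetilde{\Delta}},
\eeno
where $R_{\widetilde{\Delta}}$ collects lower-order commutators produced by $(v')^2-1$ and $v''$ inside $\widetilde{\Delta}_t$ and is absorbed using the coordinate bounds on $h$ and $\bar h$. Splitting $u=(0,g)+v'\na_{z,v}^{\perp}P_{\neq}\phi$ and paradifferentially decomposing $u\cdot\na f=T_u\na f+T_{\na f}u+R(u,\na f)$, the transport piece is a commutator on $\As$ which is controlled by $CK_w$ and the $\As$-dissipation as in \cite{BMV,BWV,MZ1}; the remainder is handled by $L^\infty$--$L^2$ bounds on $u$ via Biot--Savart and the inviscid damping \eqref{eq: inviscid}; the zero-mode contribution of the reaction is bounded using the decay of $g$ provided by \eqref{eq: B2-zero}.

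\emph{Enhanced dissipation and coordinate estimates.} Pairing the $k\neq0$ projection of \eqref{eq: f-new} with $(A^s_E)^2f$, the pointwise inequality $\pa_tD(t,\eta)\ge c\,\nu t^2\mathbf{1}_{t\ge 2|\eta|}$ combined with $\nu\|\sqrt{-\Delta_L}A^s_Ef\|_2^2$ gives the dissipative reservoir required to absorb the nonlinear transport of $f_{\neq}$, producing the improved constant in \eqref{eq: B2-enhan}. For \eqref{eq: coor}, the algebraic decay comes from the $2/t$ prefactor: testing $g$ against $t^4g$ in $H^{\s-6}$ converts $\f{2g}{t}$ into a coercive source, while the quadratic forcing $\f{v'}{t}\langle\na_{z,v}^{\perp}P_{\neq}\phi\cdot\na\tu\rangle$ is controlled by \eqref{eq: inviscid} and \eqref{eq: enha}, which jointly provide a $t^{-2}$ gain. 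The $\bar h$-equation is analogous, with source $\f{v'}{t}\langle\na_{z,v}^{\perp}P_{\neq}\phi\cdot\na f\rangle$; the $h$-equation is then integrated along characteristics of $g\pa_v$, with $\bar h$ acting as a manageable source. The zero-mode estimate on $f_0$ is a standard weighted heat estimate, and the assistant bounds \eqref{eq:assistant} follow from the same arguments carried out one derivative lower with slacker constants.

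\emph{Main obstacle.} The genuinely hard step is the reaction estimate
\beno
\left|\langle \As(T_{\na f}(v'\na_{z,v}^{\perp}P_{\neq}\phi)),\As f\rangle\right|\lesssim \ep\bigl(CK_w(t)+\nu\|\sqrt{-\Delta_L}\As f\|_2^2\bigr).
\eeno
Near the Orr critical windows $I_{m,\eta}$ the Biot--Savart denominator $(\eta-kt)^2+k^2$ in \eqref{eq: stream-Four} collapses to $k^2$, so a high-frequency piece of $f$ at wavenumber $\eta$ generates a resonant velocity mode that transfers enstrophy to its neighbour $\eta\pm k$; this is the echo cascade. The ghost weight $w$ is constructed so that $\pa_tw/w$ carries precisely the size needed to dominate this transfer, and the coefficient appearing in front of it, after applying Proposition \ref{prop: basic estimate}, is $\|f\|_{H^{\s}}\lesssim \ep\nu^{1/3}$. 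It is exactly this smallness that makes the reaction estimate consistent and that pins the Sobolev threshold at $\beta=1/3$; all other estimates close with room to spare. The corresponding echo bound for $\bar h$, whose source $\f{v'}{t}\langle \na_{z,v}^{\perp}P_{\neq}\phi\cdot\na f\rangle$ is itself quadratic in $f$, is what forces the $\nu^{1/6}$ weights in \eqref{eq: B1-Coo-High}. Once these estimates are proved, strict improvement of the constants together with continuity of $\mathcal{E}^{\s}(t)$ yields openness of $I^*$ and hence $T^*=+\infty$.
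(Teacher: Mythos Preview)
Your high-level architecture is right---energy identities, paraproduct decomposition into transport/reaction/remainder, and the reaction term as the crux---and matches the paper's layout. But two of the mechanisms you invoke are not the ones that actually close the argument, and as written the scheme would not succeed.

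\textbf{Transport.} You say the transport commutator is ``controlled by $CK_w$ and the $\As$-dissipation as in \cite{BMV,BWV,MZ1}.'' That template applies when the weight carries a decaying radius (Gevrey), so that $CK$ is large enough to absorb transport. Here the weight $w_k(t,\eta)$ is \emph{bounded} ($w\approx 1$, Lemma~\ref{lem: total-growth}), so $CK_w$ is far too weak for that role. The paper controls transport instead by a Lipschitz-type estimate on $w$ (Lemma~\ref{lem: w-w}, together with Lemma~\ref{lem: rho}), which yields
\[
|w(t,\eta)-w(t,\xi)|\lesssim \frac{|\xi-\eta|}{\langle\eta\rangle}\times\Big(\nu^{-1/3}\mathbf 1_{t\lesssim\nu^{-1/3}}+\nu^{\beta/3}t^{1-\beta}\mathbf 1_{t\gtrsim\nu^{-1/3}}\Big),
\]
and then the $\nu^{-1/3}$ loss is paid for by the decay of the low-frequency velocity $\|u\|_{H^4}\lesssim\ep\nu^{1/3}\langle t\rangle^{-2}$ (inviscid damping and decay of $g$). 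The resulting bound is $\sum_N|T_N|\lesssim\ep\|\As f\|_2^2/\langle t\rangle^2$, which is simply time-integrable; neither $CK_w$ nor the viscous term is touched (Proposition~\ref{Prop: transport}).

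\textbf{Reaction.} You write the closing coefficient as $\|f\|_{H^{\s}}\lesssim\ep\nu^{1/3}$. That is not the quantity that appears; if it were, the estimate would fail by a factor $\nu^{-1/3}$ on the resonant set. In the paper (Section~\ref{Sec: reaction}) the resonant pieces are bounded by
\[
\Big\|\sqrt{\tfrac{\pa_tw}{w}}\As f\Big\|_2\Big\|\sqrt{\tfrac{\pa_tw}{w}}\chi_R\As\Delta_L\Delta_t^{-1}f_{\neq}\Big\|_2\times\text{(low-frequency factor)},
\]
and the low-frequency factor is \emph{not} $\|f\|_{H^\s}$ but either $\nu^{-1/3}\langle\nu^{1/3}t\rangle^{1+\beta}\|f_{\neq}\|_{H^4}$ (nonzero low mode) or $\nu^{-1/3}\|\nu^{1/2}t^{1/2}\pa_v f_0\|_{H^3}$ (zero low mode). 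These are $\lesssim\ep$ precisely because of the enhanced dissipation \eqref{eq: B2-enhan} and, respectively, the $\tfrac{t\nu}{2}\|\pa_v f_0\|_{H^s}^2$ piece of the zero-mode hypothesis \eqref{eq: B2-zero}. This is the actual coupling that pins the threshold at $\nu^{1/3}$: without feeding the lower-regularity decay estimates back into the top-order reaction, the echo terms do not close. Your proposal does not mention this loop, and in particular does not account for why the unusual term $\tfrac{t\nu}{2}\|\pa_v f_0\|_{H^s}^2$ is included in $\mathcal E_{L,0}$. You also need the nontrivial elliptic Lemma~\ref{lem: elliptical highest} to pass from $\Delta_L\Delta_t^{-1}f_{\neq}$ back to $f_{\neq}$ in the $\sqrt{\pa_tw/w}$-weighted norm, which is where the $H^{\s}$ control of $h$ (with its $\nu^{1/6}$ scaling) enters.
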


The remainder of the paper is devoted to the proof of Proposition \ref{prop: bootstrap}, the primary step being to show that on $[1,T^*]$, we have the following estimates: 
\begin{align}
\nonumber\|\As f(t)\|_{2}^2
+\nu\int_1^t\|\sqrt{-\Delta_L}\As f(t')\|_{2}^2dt'
&+\int_1^tCK_w(t')dt'\label{eq: aim1}\\
&\leq 2\|\As f(1)\|_{2}^2+C\ep^3\nu^{\f23},\\
\label{eq: aim2}
\langle t\rangle \|g\|_{H^{\s}}
+\int_1^t\|g(t')\|_{H^{\s}}dt'
&\leq 2\|g(1)\|_{H^{\s}}+C\ep^2\nu^{\f13},\\
t^{3}\|\As\bar{h}(t)\|_2^2
+\int_1^tt'^{3}\left\|\sqrt{\f{\pa_tw}{w}}{\bar{h}}\right\|^2_{H^{\s}}dt'
&+\f14\int_1^tt'^{2}\|\As\bar{h}\|_2^2dt'\nonumber\\
+\f14\nu\int_1^tt'^{3}\|\pa_v\As\bar{h}\|_2^2dt'
&\leq 2\|\bar{h}(1)\|_{H^{\s}}+C\ep^4\nu^{\f13},\label{eq: aim3}\\
\|h(t)\|_{H^{\s}}^2+\nu\int_1^t\|\pa_vh(t')\|_{H^{\s}}^2dt'
&\leq 2\|{h}(1)\|_{H^{\s}} +C\ep^3\nu^{\f13},\label{eq: aim4}\\
\label{eq: aim5}
\|A^s_Ef(t)\|_2^2+\f25\nu \int_1^t|\sqrt{-\Delta_L}A^s_E f(t')\|_{2}^2dt'
&\leq 2\|A^s_Ef(1)\|_2^2+C\ep^3\nu^{\f23},\\
\label{eq: aim6}
\langle t\rangle^4\|g(t)\|_{H^{\s-6}}^2
+\nu \int_1^tt'^4\|\pa_vg(t')\|_{H^{\s-6}}^2dt'
&\leq 2\|g(1)\|_{H^{\s-6}}^2+ C\ep(\ep\nu^{\f13})^2,\\
\label{eq: aim7}
\langle t\rangle^4\|\bar{h}(t)\|_{H^{\s-6}}^2
+\nu \int_1^tt'^4\|\pa_v\bar{h}(t')\|_{H^{\s-6}}^2dt
&\leq 2\|\bar{h}(1)\|_{H^{\s-6}}^2+C\ep(\ep\nu^{\f13})^2,\\
\|f_0(t)\|_{H^s}^2+\f{t\nu}{2}\|\pa_vf_0\|_{H^s}^2+
\nonumber
\nu\int_1^t\Big(\|\pa_vf_0(t)\|_{H^s}^2&+\f{t'\nu}{2}\|\pa_vf_0(t')\|_{H^s}^2\Big)dt'\\
&\leq 2\|f_0(1)\|_{H^s}^2+\nu\|\pa_vf_0(1)\|_{H^s}^2+ C\ep^3\nu^{\f23},\label{eq: aim8}\\
\label{eq: aim9}
\langle t\rangle \|\bar{h}\|_{H^{\s-1}}+\int_1^t\|\bar{h}(t')\|_{H^{\s-1}}dt'
&\leq 2\|\bar{h}(1)\|_{H^{\s-1}}
+C\ep^2\nu^{\f13},\\
\label{eq: aim10}
\|h(t)\|_{H^{\s-1}}^2+\nu\int_1^t\|\pa_vh(t')\|_{H^{\s-1}}^2dt'
&\leq 2\|{h}(1)\|_{H^{\s-1}}^2+8\|\bar{h}\|_{L^1_T(H^{\s-1})}^2+C\ep^2\nu^{\f13}.
\end{align}
for some constant $C$ independent of $\ep,\nu$ and $T^*$. If $\ep$ is sufficiently small then \eqref{eq: aim1}-\eqref{eq: aim10} implies Proposition \ref{prop: bootstrap}. 

It is natural to compute the time evolution of the following quantities:
\beno
\mathcal{E}_{H,f}=\|\As f(t)\|_{2}^2,\quad 
\mathcal{E}_{H,g}=t\|g\|_{H^{\s}},\quad 
\mathcal{E}_{H,\bar{h}}=t^3\|\As\bar{h}\|_{H^{\s}}^2,\quad
\mathcal{E}_{H,h}=\|h(t)\|_{H^{\s}}^2,
\eeno 
and
\beno
\mathcal{E}_{L,\neq}=\|A^s_Ef(t)\|_2^2,\quad
\mathcal{E}_{L,g}=t^4\|g(t)\|_{H^{\s-6}}^2,\quad
\mathcal{E}_{L,\bar{h}}=t^4\|\bar{h}\|_{H^{\s-6}}^2,\quad
\mathcal{E}_{L,0}=\|f_0(t)\|_{H^s}^2+\f{t\nu}{2}\|\pa_vf_0\|_{H^s}^2,
\eeno
and
\beno
\mathcal{E}_{as,\bar{h}}=t^2\|\bar{h}\|_{H^{\s-1}}^2,\quad
\mathcal{E}_{as,h}=\|h(t)\|_{H^{\s-1}}^2,
\eeno
where $H$ stands for the highest regularity, $L$ stands for the lower regularity, $as$ stands for assistant. 

The most difficult part in the proof is to control the energy $\mathcal{E}_{H,f}$. 
Here we present the calculations of the time evolution of $\mathcal{E}_{H,f}$. 
The calculations of the time evolution of $\mathcal{E}_{H,g}$, $\mathcal{E}_{as,\bar{h}}$, $\mathcal{E}_{as,{h}}$, $\mathcal{E}_{H,\bar{h}}$ and $\mathcal{E}_{H,h}$ are in Section \ref{Sec: higher regular controls}. 
The calculations of the time evolution of $\mathcal{E}_{L,g}$ and $\mathcal{E}_{L,\bar{h}}$ are in Section \ref{Sec: Lower energy estimate}. 
The calculations of the time evolution of $\mathcal{E}_{L,\neq}$ and $\mathcal{E}_{L,0}$ are in Section \ref{Sec: Decay estimate of vorticity}. 

The rest part of this section will give an outline of the proof of \eqref{eq: aim1}. 

The proof of \eqref{eq: aim2} can be found in Section \ref{sec: g-high};

The proof of \eqref{eq: aim3} and \eqref{eq: aim4} can be found in Section \ref{sec: high-h-barh}; 

The proof of \eqref{eq: aim5} can be found in Section \ref{sec: f-low-enha}; 

The proof of \eqref{eq: aim6} can be found in Section \ref{sec: g-low};

The proof of \eqref{eq: aim7} can be found in Section \ref{sec: bar h};

The proof of \eqref{eq: aim8} can be found in Section \ref{sec: f-low-0};

The proof of \eqref{eq: aim9} and \eqref{eq: aim10} can be found in Section \ref{sec: barh and h-high}.

Form the time evolution of $\mathcal{E}_{H,f}$ we get
\beq\label{eq: dten}
\f{1}{2}\f{d}{dt}\int_{\mathbb{T}\times \R}\left|\As f(t)\right|^2dvdz
=-CK_{w}
-\int \As f\As(u\na f)dzdv
+\nu \int \As f\As\left(\tilde{\Delta}_tf\right)dzdv.
\eeq
where the $CK$ stands for 'Cauchy-Kovalevskaya' 
\ben
CK_w=\sum_k\int \f{\pa_tw_k(t,\eta)}{w_k(t,\eta)}\left|\As_k(t,\eta)\hat{f}_k(t,\eta)\right|^2d\eta.
\een

To treat the second term in \eqref{eq: dten}, we have
\begin{align*}
\int \As f\As(u\na_{z,v}f)dzdv=-\f12\int \na\cdot u|\As f|^2dvdz
+\int \As f\left[\As(u\cdot\na f)-u\cdot\na \As f\right]dzdv. 
\end{align*}
Notice that the relative velocity is not divergence free: 
\beno
\na\cdot u=\pa_vg+\pa_z\phi\pa_vv'=\pa_vg+\pa_zP_{\neq}\phi\pa_vh.
\eeno
The first term is controlled by the bootstrap hypothesis \eqref{eq: B2-zero}. For the second term we use the elliptic estimates, Lemma \ref{lem: lin-inv-dam}, which shows that under the bootstrap hypotheses we have
\beq
\|P_{\neq}\phi\|_{H^{\s-4}}\lesssim \f{\ep\nu^{\f13}}{\langle t\rangle^2}. 
\eeq
Therefore, by the Sobolev embedding, $\s>40$ and the bootstrap hypotheses,
\beq\label{eq: in-by-part-term}
\begin{split}
\left|\int \na\cdot u|\As f|^2dvdz\right|
&\lesssim \|\na u\|_{L^{\infty}}\|\As f\|_2^2\\
&\lesssim \big(\|g\|_{H^2}+(1+\|h\|_{H^{2}})\|P_{\neq}\phi\|_{H^3}\big)\|\As f\|_2^2\lesssim \f{\ep\nu^{\f13}}{\langle t\rangle^2}\|\As f\|_2^2.
\end{split}
\eeq
To handle the commutator, $\int \As f\left[\As(u\cdot\na f)-u\cdot\na \As f\right]dzdv$, we use a paraproduct decomposition. Precisely, we define three main contributors: transport, reaction and remainder: 
\ben\label{eq: com-bony}
\int \As f\left[\As(u\cdot\na f)-u\cdot\na \As f\right]dzdv
=\f{1}{2\pi} \sum_{N\geq 8}T_N+\f{1}{2\pi} \sum_{N\geq 8}R_N+\f{1}{2\pi}\mathcal{R},
\een
where 
\begin{align*}
&T_N=2\pi \int \As f\left[\As(u_{<N/8}\cdot \na f_N)-u_{<N/8}\cdot\na \As f_N\right]dzdv\\
&R_N=2\pi \int \As f\left[\As (u_{N}\cdot \na f_{<N/8})-u_{N}\cdot\na \As f_{<N/8}\right]dzdv\\
&\mathcal{R}=2\pi\sum_{N\in \mathbb{D}}\sum_{\f18N\leq N'\leq 8N} \int \As f\left[\As(u_{N}\cdot \na f_{N'})-u_{N}\cdot\na \As f_{N'}\right]dzdv
\end{align*}
Here $N\in \mathbb{D}=\{\f12,1,2,4,...,2^j,...\}$ and $g_N$ denote the $N$-th Littlewood-Paley projection and $g_{<N}$ means the Littlewood-Paley projection onto frequencies less than $N$. 

For the last term, we get
\begin{align}
\nonumber
\nu \int \As f\As \left(\tilde{\Delta}_tf\right)dzdv
&=\nu \int \As f\As \left({\Delta}_Lf\right)dzdv
-\nu \int \As f\As \left((1-(v')^2)(\pa_v-t\pa_z)^2f\right)dzdv\\
\nonumber
&=-\nu \left\|\sqrt{-\Delta_L}\As f\right\|_2^2\\
\nonumber
&\quad -\nu \int \As f_{\neq}\As \left((1-(v')^2)(\pa_v-t\pa_z)^2f_{\neq}\right)dzdv\\
\nonumber
&\quad -\nu \int \As f_{0}\As \left((1-(v')^2)\pa_v^2f_{0}\right)dv\\
&=-\nu \left\|\sqrt{-\Delta_L}\As f\right\|_2^2
+E^{\neq}+E^0. \label{eq: E_0andEneq}
\end{align} 

The next four propositions together with \eqref{eq: in-by-part-term} imply \eqref{eq: aim1}. At first, we deal with the dissipation term. In Section \ref{Sec: dissipation}, we will prove the following proposition. 
\begin{proposition}\label{prop: dissipation}
Under the bootstrap hypotheses, 
\beno
\nu \int_1^{t}\left(\int \As f\As \left(\tilde{\Delta}_tf\right)dzdv\right) dt'\leq -\f78\nu \int_1^{t} \left\|\sqrt{-\Delta_L}\As f(t')\right\|_2^2dt'+C\ep^3\nu^{\f23}.
\eeno
\end{proposition}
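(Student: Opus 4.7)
The plan is to use the decomposition \eqref{eq: E_0andEneq} to reduce the proposition to showing that, after time integration,
\[
\int_1^t\bigl(E^{\neq}+E^0\bigr)\,dt' \leq \tfrac{1}{8}\nu\int_1^t\left\|\sqrt{-\Delta_L}\As f\right\|_2^2\,dt' + C\ep^3\nu^{2/3}.
\]
Writing $v'=1+h$ so that $1-(v')^2=-h(2+h)$, both $E^{\neq}$ and $E^0$ are bilinear in $h$ and a second-order derivative of $f$, with an overall prefactor $\nu$. The key smallness I will exploit is $\|h\|_{L^\infty}\lesssim \|h\|_{H^{\s}}\lesssim \ep\nu^{1/6}\ll 1$ (Sobolev embedding for $\s>40$ together with the bootstrap), combined with the integrated dissipation $\nu\int_1^t\|\sqrt{-\Delta_L}\As f\|_2^2\,dt'\lesssim \ep^2\nu^{2/3}$ implicit in \eqref{eq: B1-Main-High}.

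For $E^{\neq}$ I would perform a paraproduct decomposition of $h(2+h)\,(\pa_v-t\pa_z)^2 f_{\neq}$ into transport (low-frequency coefficient, high-frequency $f$), reaction (high-frequency coefficient, low-frequency $f$), and remainder pieces. In the transport piece I commute $\As$ past the low-frequency factor $[h(2+h)]_{<N/8}$ and integrate by parts once in $(\pa_v-t\pa_z)$; since $h$ depends only on $v$, $(\pa_v-t\pa_z)h=\pa_v h$, and the principal contribution collapses to
\[
-\nu\int h(2+h)\,\bigl|(\pa_v-t\pa_z)\As f_{\neq}\bigr|^2\,dz\,dv + (\text{cross terms involving }\pa_v h).
\]
The first summand is bounded in absolute value by $C\|h\|_{L^\infty}\,\nu\|\sqrt{-\Delta_L}\As f_{\neq}\|_2^2$ and, for $\ep\nu^{1/6}$ small, is absorbed into $\tfrac{1}{8}\nu\|\sqrt{-\Delta_L}\As f\|_2^2$. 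The cross term I bound by Cauchy--Schwarz in $(z,v)$ and then in $t$, using $\|\As f\|_{L^\infty_T L^2}\lesssim \ep\nu^{1/3}$, the dissipation bootstrap, and $\nu\|\pa_v h\|_{L^2_T H^{\s}}^2\lesssim (\ep\nu^{1/6})^2$ from \eqref{eq: B1-Coo-High}; this yields an $O(\ep^3\nu^{5/6})$ contribution. The reaction and remainder pieces I handle by paying $\s$ derivatives on the high-frequency $h$-factor (via $\|h\|_{H^{\s}}\lesssim \ep\nu^{1/6}$) while distributing the remaining derivatives on the low-frequency $f_{\neq}$-factor symmetrically through $\sqrt{-\Delta_L}$; combined with the $\nu$ prefactor these produce $O(\ep^3\nu^{2/3})$ after time integration.

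For $E^0$, since $f_0$ depends only on $v$, I integrate by parts once to obtain the principal term $\nu\int h(2+h)|\pa_v\As f_0|^2\,dv$ plus commutator and cross terms involving $\pa_v h$ and $v''$. The principal term is bounded by $\|h\|_{L^\infty}\nu\|\pa_v\As f_0\|_2^2$, and after integrating in $t$ using the zero-mode bootstrap \eqref{eq: B2-zero} it gives at most $\lesssim \ep\nu^{1/6}\cdot(\ep\nu^{1/3})^2 = \ep^3\nu^{5/6}$. The commutator and cross pieces are analogous but simpler since there is no shearing $(\pa_v-t\pa_z)$ to manage.

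The main obstacle I anticipate is the reaction piece of $E^{\neq}$: because the weight $\As$ can grow substantially at high frequencies due to the echo correction $w$, one cannot naively pair $\As$ with the high-frequency factor of $h$ and invoke $\|h\|_{H^{\s}}$. The argument must exploit a paraproduct-friendly multiplier inequality of the form $\As_k(t,\eta)\lesssim \As_k(t,\eta-\xi)\langle\xi\rangle^{\s}$ (or its echo-adapted variant built into $w$), moving the weight onto the low-frequency $f$-factor while the high-frequency $h$-factor is measured only in $H^{\s}$. Together with the dissipative gains hidden in $CK_w$ and in $\nu\|\sqrt{-\Delta_L}\As f\|_2^2$, this is what closes the absorption at the critical $\ep^3\nu^{2/3}$ level.
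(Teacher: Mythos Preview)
Your overall plan---decompose via \eqref{eq: E_0andEneq} and then paraproduct $E^{\neq}$ into LH/HL/HH pieces---matches the paper's Section~\ref{Sec: dissipation}, and your treatment of the transport (LH) piece, the remainder (HH) piece, and the zero mode $E^0$ is essentially correct (though for $E^0$ you should cite the high-regularity dissipation in \eqref{eq: B1-Main-High}, not the low-regularity \eqref{eq: B2-zero}, since the term involves $\pa_v\As f_0$).

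The gap is in the reaction term $E^{\neq}_{HL}$, specifically in the sub-case where the $v$-frequency of $h$ dominates $|k|$ (the paper's $E^{\neq,v}_{HL}$, i.e.\ $|\eta|>16|k|$). There $\As_k(\eta)\approx\langle\eta-\xi\rangle^{\s}$ already exhausts the full $H^{\s}$ budget on $h$, and you are left with $(\pa_v-t\pa_z)^2$ acting on the low-$v$-frequency factor $(f_{\neq})_{<M/8}$. Your proposed ``symmetric distribution through $\sqrt{-\Delta_L}$'' amounts to writing $|\xi-kt|^2\lesssim\langle\xi-\eta\rangle\sqrt{k^2+|\eta-kt|^2}\,\sqrt{k^2+|\xi-kt|^2}$, but in this regime $\langle\xi-\eta\rangle$ is \emph{large} (of order $M$), so this costs one extra derivative on $h$ that you do not have. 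The paper does not attempt to distribute the derivatives this way; it instead bounds the piece crudely by
\[
|E^{\neq,v}_{HL}|\lesssim\nu\,\|f_{\neq}\|_{H^{\s}}\,\|\pa_v h\|_{H^{\s-1}}\,\langle t\rangle^2\|f_{\neq}\|_{H^{5}},
\]
accepting an apparent $\langle t\rangle^2$ loss, and then compensates via the enhanced-dissipation bootstrap \eqref{eq: B2-enhan}, which gives $\|f_{\neq}\|_{H^5}\lesssim\ep\nu^{1/3}\langle\nu t^3\rangle^{-1}$. This yields $\langle t\rangle^2\|f_{\neq}\|_{H^5}\lesssim\ep\nu^{1/3}\langle\nu^{1/3}t\rangle^{-1}$, which is square-integrable in time and pairs with $\nu^{1/2}\|\pa_v h\|_{L^2_T(H^{\s-1})}\lesssim\ep\nu^{1/3}$ from \eqref{eq:assistant} to close at $\ep^3\nu^{2/3}$. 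You never invoke enhanced dissipation, and it is the missing ingredient.

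Finally, the obstacle you anticipate---that the echo weight $w$ prevents moving $\As$ across the paraproduct---is not an issue in this proposition: by Lemma~\ref{lem: total-growth} one has $\As_k(t,\eta)\approx\langle k,\eta\rangle^{\s}$ uniformly, so $\As$ behaves like a standard Sobolev weight here and the $CK_w$ term plays no role.
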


Next we control the transport part. In Section \ref{Sec: Transport}, we will prove the following proposition. 

\begin{proposition}\label{Prop: transport}
Under the bootstrap hypotheses, 
\beno
\int_1^t\sum_{N\geq 8}|T_N(t')|dt'
\lesssim \ep\sup_{t'\in[1,t]}\|\As f(t')\|_2^2. 
\eeno
\end{proposition}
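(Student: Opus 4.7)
The plan is to use the frequency localization in the paraproduct together with a commutator gain, then split $u$ into the zero-mode piece $(0,g)^\top$ and the stream-function piece $v'\nabla_{z,v}^\bot P_{\ne}\phi$, and bound each piece with the appropriate decay from the bootstrap.

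On the Fourier side, a typical summand in $T_N$ has the form
\beno
\int \overline{\As_k(t,\eta)\hat f_k(t,\eta)}\bigl[\As_k(t,\eta)-\As_{k-l}(t,\eta-\xi)\bigr]\,\widehat{u}_{<N/8}(l,\xi)\cdot i(k-l,\eta-\xi)\,\widehat{f_N}(k-l,\eta-\xi)\,dl\,d\xi,
\eeno
with $|(l,\xi)|\lesssim N/8$ and $|(k-l,\eta-\xi)|\sim|(k,\eta)|\sim N$. I would apply the mean value theorem to write $\As_k(t,\eta)-\As_{k-l}(t,\eta-\xi)=(l,\xi)\cdot\nabla_{k,\eta}\As_{k'}(t,\eta')$ at some intermediate point, then split the multiplier gradient as
\beno
\nabla_{k,\eta}\As = (\nabla_{k,\eta}\langle k,\eta\rangle^\s)\,w_k + \langle k,\eta\rangle^\s\,\nabla_{k,\eta} w_k,
\eeno
so that the top-regularity part ($\nabla\langle\cdot\rangle^\s$) and the ghost-weight part ($\nabla w$) can be treated separately. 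The $\nabla\langle\cdot\rangle^\s$ piece trades a derivative on $f_N$ for a derivative on $u_{<N/8}$, which is harmless because on frequency support $(l,\xi)$ is the low frequency; this term is then bounded by $\|u_{<N/8}\|_{H^2_{x,y}}\|\As f\|_2\|\As f_N\|_2$ (up to Littlewood--Paley square-function sums).

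For the velocity $u$, I would handle the two pieces separately. The contribution from $u_g=(0,g)^\top$ uses the decay hypothesis \eqref{eq: B2-zero}, which gives $\|g(t)\|_{H^{\s-6}}\lesssim \ep\nu^{1/3}\langle t\rangle^{-2}$; after integrating in time, one gains a factor $\ep\nu^{1/3}$ and the Littlewood--Paley sum in $N$ collapses by Cauchy--Schwarz. The contribution from $u_\phi=v'\nabla_{z,v}^\bot P_{\ne}\phi$ uses the inviscid damping estimate \eqref{eq: inviscid}, $\|P_{\ne}\phi\|_{H^{\s-4}}\lesssim \ep\nu^{1/3}\langle t\rangle^{-2}$, together with $\|v'\|_{H^{\s-1}}\lesssim 1+\|h\|_{H^{\s-1}}\lesssim 1$ from \eqref{eq:assistant}; again the time decay $\langle t\rangle^{-2}$ is integrable and produces a factor $\ep\nu^{1/3}$, which combined with one copy of $\|\As f\|_2$ and the supremum bound gives the required $\ep\sup\|\As f\|_2^2$.

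The principal obstacle is the ghost-weight piece $\langle k,\eta\rangle^\s\nabla_{k,\eta} w_k(t,\eta)$. Here I would exploit the defining structure of $w$: its $\eta$-derivative is largest precisely on the echo intervals $I_{m,\eta}$ identified in Section 2.1, and on those intervals the design of $w$ forces
\beno
\frac{|\pa_\eta w_k(t,\eta)|}{w_k(t,\eta)} \lesssim \sqrt{\frac{\pa_t w_k(t,\eta)}{w_k(t,\eta)}}\cdot(\text{bounded symbol})
\eeno
up to an acceptable polynomial factor in $t$ or $\langle\eta\rangle$ absorbed by the Sobolev index. This allows the dangerous contribution to be dominated by
\beno
\int \sqrt{\tfrac{\pa_t w}{w}}|\As f|\cdot |u_{<N/8}|\cdot \sqrt{\tfrac{\pa_t w}{w}}|\As f_N|,
\eeno
and then Cauchy--Schwarz in $(t,z,v)$ yields $\|u_{<N/8}\|_{L^\infty_{t,z,v}}$ times the $CK_w$ integrals controlled by \eqref{eq: B1-Main-High}. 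Since $\|u\|_{L^\infty_{t,z,v}}\lesssim \ep\nu^{1/3}$ by \eqref{eq: inviscid}, \eqref{eq: B2-zero}, and Sobolev embedding, summing in $N$ with a standard almost-orthogonality argument for paraproducts closes the estimate. The combinatorics of the resonant intervals $I_{m,\eta}$ and verifying the crucial inequality $|\pa_\eta w|/w\lesssim \sqrt{\pa_t w/w}$ on them is the delicate heart of the argument, and will rely on the explicit formulas for $w$ recorded in earlier sections.
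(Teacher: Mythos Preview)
Your treatment of the Sobolev part of the commutator (what the paper calls $T_N^1$) is correct and matches the paper. The gap is in the ghost-weight piece.

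The inequality you assert,
\[
\frac{|\pa_\eta w_k(t,\eta)|}{w_k(t,\eta)}\ \lesssim\ \sqrt{\frac{\pa_t w_k(t,\eta)}{w_k(t,\eta)}}\cdot(\text{bounded symbol}),
\]
is not established and does not hold uniformly. The weight $w$ has $\pa_t w/w$ supported only on the resonant set $I_t(\eta)$, whereas $w(t,\eta)-w(t,\xi)$ is nonzero in many regimes where one of $\pa_t w(t,\eta)$, $\pa_t w(t,\xi)$ vanishes (for instance $t(\eta)\le t\le t(\xi)$, or $t>2|\eta|$). Hence any pointwise bound of the commutator by $\sqrt{\pa_t w(\eta)/w(\eta)}\sqrt{\pa_t w(\xi)/w(\xi)}$ fails outright. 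Even on the common resonant interval, your passage from a single $\sqrt{\pa_t w/w}$ at the intermediate point $\eta'$ to the product of two square roots at $\eta$ and $\xi$ is not justified: the ratio $\pa_t w(t,\eta')/\sqrt{\pa_t w(t,\eta)\,\pa_t w(t,\xi)}$ involves $\sqrt{1+(t-\eta/l)^2}\sqrt{1+(t-\xi/l)^2}/(1+(t-\eta'/l)^2)$, which is unbounded when one of $\eta,\xi$ sits at the resonance center and the other does not. Consequently the route through $CK_w$ does not close the transport estimate.

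The paper's argument is structurally different and never invokes $CK_w$ for $T_N$. It proves a sharp difference estimate for the weight itself (Lemma~\ref{lem: w-w}):
\[
|w(t,\eta)-w(t,\xi)|\ \lesssim\ \frac{|\xi-\eta|}{\langle\eta\rangle}\times
\begin{cases}
\nu^{-1/3}, & t\lesssim\nu^{-1/3},\\
\nu^{\beta/3}t^{1-\beta}, & t\gtrsim\nu^{-1/3}.
\end{cases}
\]
This is obtained by a case analysis over the five possible positions of $t$ relative to $I_t(\eta)$ and $I_t(\xi)$; the dangerous $\nu^{-1/3}$ loss arises only from the $\arctan$ difference in the case $t\in I_{l,\eta}\cap I_{l,\xi}$. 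After passing from $w$ to $w_k$ via $\varrho(k,\eta)$ (Lemma~\ref{lem: rho}), this gives
\[
\left|\frac{w_l(t,\xi)}{w_k(t,\eta)}-1\right|\ \lesssim\ \frac{|k-l,\xi-\eta|}{|l,\xi|}\bigl(\nu^{-1/3}\chi_{t\lesssim\nu^{-1/3}}+\nu^{\beta/3}t^{1-\beta}\chi_{t\gtrsim\nu^{-1/3}}\bigr).
\]
The $\nu^{-1/3}$ is then exactly cancelled by the size $\|g\|_{H^4}+\|u_{\neq}\|_{H^4}\lesssim\ep\nu^{1/3}\langle t\rangle^{-2}$ of the low-frequency velocity, yielding an integrable-in-time bound
\[
|T_N|\ \lesssim\ \ep\,\|\As f_{\sim N}\|_2\|\As f_N\|_2\Bigl(\langle t\rangle^{-2}\chi_{t\lesssim\nu^{-1/3}}+\nu^{\beta/3+1/3}\langle t\rangle^{-(1+\beta)}\chi_{t\gtrsim\nu^{-1/3}}\Bigr),
\]
from which the proposition follows by summing in $N$ and integrating in $t$. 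The key idea you are missing is that the commutator of $w$ carries a genuine $\nu^{-1/3}$ loss which must be paid for by the $\nu^{1/3}$ smallness of the data, not absorbed by $CK_w$.
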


Next we control the remainder part. In Section \ref{Sec: Remainder}, we will prove the following proposition. 

\begin{proposition}\label{Prop: Remainder}
Under the bootstrap hypotheses, 
\beno
|\mathcal{R}(t)| \lesssim \f{\ep\nu^{\f13}}{\langle t\rangle^2}\|\As f\|_2^2. 
\eeno
\end{proposition}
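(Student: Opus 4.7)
\textbf{Plan for Proposition \ref{Prop: Remainder}.} The remainder $\mathcal{R}$ is the ``balanced'' piece of the Bony decomposition in \eqref{eq: com-bony}: both $u_N$ and $f_{N'}$ live at comparable dyadic frequencies ($N'\sim N$), so the product $u_N\cdot \nabla f_{N'}$ is Fourier-supported at frequencies $\lesssim N$. This is the simplest of the three pieces to control, because the commutator cancellation can be discarded; one can distribute $\As$ across the product by simply bounding it pointwise via $\As \approx \langle k,\eta\rangle^\s$ (Lemma \ref{lem: total-growth}), paying no resonance price. The target $\langle t\rangle^{-2}$ decay will come directly from the inviscid damping of $P_{\neq}\phi$ and the decay of the zero mode $g$; the resulting estimate is pointwise in $t$, not integrated.

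First I would discard the commutator structure by the triangle inequality, bounding each $N$-piece of $\mathcal{R}$ by $\|\As f\|_2(\|\As(u_N\cdot \nabla f_{N'})\|_2+\|u_N\cdot \nabla \As f_{N'}\|_2)$. Since both quantities inside the parentheses are Fourier-supported at frequencies $\lesssim N$, Lemma \ref{lem: total-growth} allows absorbing $\As$ into a factor $\langle N\rangle^\s$, and Bernstein's inequality on $\mathbb{T}\times \R$ gives $\|u_N\|_{L^\infty}\lesssim N\|u_N\|_2$. Using the equivalence $\langle N\rangle^\s\|f_{N'}\|_2\approx \|\As f_{N'}\|_2$ on the dyadic piece, this yields $|R_N|\lesssim N^2\|u_N\|_2\,\|\As f_{N'}\|_2\,\|\As f\|_2$. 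A dyadic Cauchy--Schwarz in $N$ then collapses this to $|\mathcal{R}|\lesssim \|u\|_{H^2}\|\As f\|_2^2$.

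It remains only to verify that $\|u\|_{H^2}\lesssim \ep\nu^{1/3}/\langle t\rangle^2$. Writing $u=(0,g)+v'\nabla^{\bot}_{z,v}P_{\neq}\phi$, the bootstrap control of $h$ in \eqref{eq: B1-Coo-High} gives $\|v'\|_{L^\infty}=\|1+h\|_{L^\infty}\lesssim 1$. Then the zero-mode decay $\|g\|_{H^{\s-6}}\lesssim \ep\nu^{1/3}/\langle t\rangle^2$ from \eqref{eq: B2-zero} together with the inviscid damping $\|P_{\neq}\phi\|_{H^{\s-4}}\lesssim \ep\nu^{1/3}/\langle t\rangle^2$ from Proposition \ref{prop: basic estimate} yield $\|u\|_{H^2}\lesssim \ep\nu^{1/3}/\langle t\rangle^2$, since $\s\geq 40$ provides ample regularity. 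Plugging this back closes the proposition.

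The main technical point will be the rigorous handling of the anisotropic, time-dependent multiplier $\As_k(t,\eta)$: its dyadic behavior on Fourier-localized inputs must be controlled uniformly through Lemma \ref{lem: total-growth}. An additional mild subtlety is that the nonlinear quantity $v'\nabla^{\bot}_{z,v}P_{\neq}\phi$ is not itself a clean dyadic object, so strictly speaking one must insert an auxiliary paraproduct inside $u_N$ to justify the Bernstein step above; this is routine because the needed control on $u$ is at the very low level $H^2$, well below the bootstrap thresholds available for $h$ and for $P_{\neq}\phi$.
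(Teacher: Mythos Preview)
Your proposal is correct and follows essentially the same route as the paper's proof in Section~\ref{Sec: Remainder}: discard the commutator, use $\As_k(\eta)\approx\langle k,\eta\rangle^{\s}$ from Lemma~\ref{lem: total-growth} together with $|l,\xi|\approx|k-l,\eta-\xi|$ on the support of the integrand to distribute the weight, and then close with the decay $\|u\|_{H^{s}}\lesssim \ep\nu^{1/3}\langle t\rangle^{-2}$ (the paper's \eqref{eq: u H^s}). The only cosmetic differences are that the paper works directly on the Fourier side via a Young/$L^1$-of-Fourier bound and lands on $\|u\|_{H^3}$, whereas you use Bernstein on the physical side and obtain $\|u\|_{H^2}$; both are well below the available regularity, so the arguments are interchangeable.
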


At last, we control the reaction part. In Section \ref{Sec: reaction}, we will prove the following proposition. 
\begin{proposition}\label{prop: reaction}
Under the bootstrap hypotheses, 
\beno
\int_1^t\sum_{N\geq 8}|R_N(t')|dt'
\lesssim \ep\sup_{t'\in[1,t]}\|\As f(t')\|_2^2+\ep \int_1^tCK_w(t')dt'+\ep^3\nu^{\f23}. 
\eeno
\end{proposition}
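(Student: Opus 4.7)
The plan is to decompose the relative velocity as $u = (0,g) + v'\na_{z,v}^\perp P_{\neq}\phi$ and split $R_N = R_N^g + R_N^\phi$ accordingly. The $R_N^g$ piece is the easier one: the commutator symbol satisfies, for $(l,\xi)$ at frequency $\sim N$ (from $g_N$) and $(k-l,\eta-\xi)$ at frequency $< N/8$ (from $f_{<N/8}$),
\[
\As(k,\eta)-\As(l,\xi) \lesssim |k-l,\eta-\xi|\cdot |\na \As(l,\xi)|,
\]
so the factor on $f_{<N/8}$ cancels its gradient and one lands on $\|\As f\|_2\|\As g_N\|_2\|f_{<N/8}\|_{H^3}$. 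After Littlewood--Paley summation and using $\|f\|_{H^2}\lesssim \|\As f\|_2$, the decay $\langle t\rangle\|g\|_{H^\s}\lesssim \ep\nu^{1/3}$ and the $L^1_t$ bound $\|g\|_{L^1_tH^\s}\lesssim \ep\nu^{1/3}$ from \eqref{eq: B1-Coo-High} produce a contribution $\lesssim \ep\sup_{t'}\|\As f(t')\|_2^2$.

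For $R_N^\phi$, one uses $\Delta_t\phi = f$ to transfer the top regularity from $\phi$ back onto $f$. By a further paraproduct inside $v'\na^\perp P_{\neq}\phi$, the dominant term has $v'$ at low frequency (close to $1$), so that $(v'\na^\perp P_{\neq}\phi)_N \approx \na^\perp(P_{\neq}\phi)_N$, and the elliptic inversion gives, at leading order,
\[
\widehat{(P_{\neq}\phi)_N}(k,\eta)\approx -\frac{\widehat{f_N}(k,\eta)}{k^2+(\eta-kt)^2}.
\]
The corrections involving $h$ and $v''$ fall under the bootstrap bounds \eqref{eq:assistant} for $h,\bar h$ in $H^{\s-1}$ and yield integrable-in-$t$ remainders in the sum of $R_N^\phi$. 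This reduces the estimation of $R_N^\phi$ to a quadratic form in $\As f$ weighted by the dangerous kernel $(k^2+(\eta-kt)^2)^{-1}$, multiplied by the ratio $\As(k,\eta)/\As(l,\xi)$ of the weight at nearby frequencies.

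The main obstacle is the Orr resonant regime $t\in I_{m,\eta}$, where $(k^2+(\eta-kt)^2)^{-1}$ is as large as $k^{-2}$ and drives the echo cascade. This is exactly what the ghost weight $w$ inside $\As$ is tuned to absorb: on each resonant interval the ratio $\sqrt{\pa_t w_k/w_k}$ reaches a size comparable to the resonance factor, while away from resonance the standard inviscid damping $\|P_{\neq}\phi\|_{H^{\s-4}}\lesssim \ep\nu^{1/3}\langle t\rangle^{-2}$ from \eqref{eq: inviscid} is already integrable in $t$. The strategy is to split the time integral into resonant and non-resonant pieces, and on the resonant pieces apply Cauchy--Schwarz so as to distribute one factor of $\sqrt{\pa_t w/w}$ onto $\As f$ (giving $\sqrt{CK_w}$) and the other onto the frequency-$N$ factor after the elliptic reduction (giving another $\sqrt{CK_w}$); the residual coefficient is of size $\|\As f\|_2\lesssim \ep\nu^{1/3}$, which produces the $\ep\int_1^t CK_w(t')\,dt'$ term. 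The leftover cubic contribution $\ep^3\nu^{2/3}$ collects the non-resonant time integral, the part controlled by the enhanced dissipation bound $\nu\|\sqrt{-\Delta_L}\As f\|_2^2$ from the bootstrap \eqref{eq: B1-Main-High}, and the $h,\bar h,v''$ correction terms, each estimated by the bootstrap hypotheses and yielding an $\ep$-small prefactor after integration.
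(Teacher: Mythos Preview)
Your overall strategy mirrors the paper's: split $u$ into its $g$ and $v'\na^\perp P_{\neq}\phi$ components, put the top weight on $g_N$ for the $g$ piece, reduce the $\phi$ piece via $\Delta_t\phi=f$, and on resonant intervals trade the Orr kernel for two factors of $\sqrt{\pa_t w/w}$. However, two quantitative steps are missing, and as written the argument does not close.

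First, the resonant bound carries a $\nu^{-1/3}$ loss that you do not account for. The pointwise inequality actually used (see the paper's treatment of $R_{N,=}^{R,R}$ and of $R_N^{NR,R}$ in the case $k'=l$) is
\[
\frac{|l|}{l^2+(\xi-lt)^2}\;\lesssim\;\nu^{-1/3}\,\langle\xi-\eta\rangle\,\sqrt{\tfrac{\pa_t w(t,\eta)}{w(t,\eta)}}\sqrt{\tfrac{\pa_t w(t,\xi)}{w(t,\xi)}}\;\times\;(\text{time weight}),
\]
because $\pa_t w/w$ itself carries a factor $\nu^{1/3}$ (cf.\ \eqref{eq: w_t/w}). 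After Cauchy--Schwarz the low-frequency factor $f_{<N/8}$ must therefore absorb $\nu^{-1/3}$ together with a time weight, not merely contribute $\|\As f\|_2\lesssim\ep\nu^{1/3}$. The paper handles this by distinguishing whether the low-frequency mode is $f_0$ (the case $k=l$) or $f_{\neq}$ (the case $k\neq l$): for $f_0$ one invokes the bootstrap decay $\sqrt{t\nu}\,\|\pa_v f_0\|_{H^s}\lesssim\ep\nu^{1/3}$ from \eqref{eq: B2-zero}, while for $f_{\neq}$ one uses the pointwise enhanced dissipation $\|f_{\neq}\|_{H^s}\lesssim\ep\nu^{1/3}\langle\nu t^3\rangle^{-1}$ from \eqref{eq: B2-enhan} to beat the weight $\langle\nu^{1/3}t\rangle^{1+\b}$. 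Your sketch does not make this zero/non-zero split and does not identify which decay mechanism supplies the missing $\nu^{1/3}$.

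Second, your claim that the frequency-$N$ factor ``gives another $\sqrt{CK_w}$'' after elliptic reduction is not immediate: what actually appears is $\big\|\sqrt{\pa_t w/w}\,\chi_R\,\As\Delta_L\Delta_t^{-1}f_{\neq}\big\|_2$, not $\sqrt{CK_w}$. Passing from the former to the latter at the top regularity $H^\s$ requires a separate elliptic lemma (Lemma~\ref{lem: elliptical highest}), which itself generates an error $\ep^2\nu^{1/2}\langle\nu t^3\rangle^{-1}$ that must be tracked through the time integral.

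A minor point on $R_N^g$: the commutator does \emph{not} gain a derivative in the reaction regime. The symbol difference is $\As_k(\eta)-\As_{k-l}(\eta-\xi)$, not $\As_k(\eta)-\As_l(\xi)$, and since $|k-l,\eta-\xi|\ll|l,\xi|$ this difference is of full size $\langle l,\xi\rangle^\s$. The correct observation (which the paper uses directly in $R_N^2$ and $R_N^3$, without any commutator) is simply that $\As_k(\eta)\approx\langle\xi\rangle^\s$ lands on $g_N$, while $\na f_{<N/8}$ sits at low frequency and costs only $\|f\|_{H^3}$.
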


Let us admit the above propositions and finish the proof of \eqref{eq: aim1}. 
\begin{proof}
We then get by \eqref{eq: dten} that
\begin{align*}
&\|\As f(t)\|_2^2+2\int_1^tCK_{w}(t')dt'\\
&=\|\As f(1)\|_2^2-2\int_1^t\int \As f\As(u\na f)dzdvdt'
+\nu 2\int_1^t\int \As f\As\left(\tilde{\Delta}_tf\right)dzdvdt'\\
&\leq \|\As f(1)\|_2^2-\f{7}{4}\nu \int_1^{t} \left\|\sqrt{-\Delta_L}\As f(t')\right\|_2^2dt'+C\ep^3\nu^{\f23}\\
&\quad+C\int_1^t\bigg[\left|\int \na\cdot u|\As f|^2dvdz\right|
+\sum_{N\geq 8}|T_N(t')|+|\mathcal{R}(t')| +\sum_{N\geq 8}|R_N(t')|\bigg]dt'.
\end{align*}
Thus by \eqref{eq: in-by-part-term} and the above propositions, we have
\begin{align*}
&\|\As f(t)\|_2^2+2\int_1^tCK_{w}(t')dt'+\f{7}{4}\nu \int_1^{t} \left\|\sqrt{-\Delta_L}\As f(t')\right\|_2^2dt'\\
&\leq \|\As f(1)\|_2^2+C\ep^3\nu^{\f23}
+C\ep \sup_{t'\in [1,t]}\|\As f(t')\|_2^2
+C\ep \int_1^tCK_{w}(t')dt'.
\end{align*}
Thus by taking $\ep$ small enough, we proved \eqref{eq: aim1}. 
\end{proof}

\section{Toy model and the nonlinear growth}
\subsection{The toy model}
According to the change of coordinate, the relative velocity now is time integrable. The growth may come from the reaction term. In each time interval $I_{m,\eta}$ which contain only one Orr critical time $t=\f{\eta}{m}$, 
it is necessary to study the following toy model
\begin{align*}
\pa_t\widehat{f}(t,m,\eta)&+\nu(k^2+(\eta-mt)^2)\widehat{f}(t,m,\eta)\\
&=\int_{|\eta-\xi|\leq 1}\sum_{m-l=\pm 1}\f{\pm \eta}{l^2+(\eta-lt)^2}\widehat{f}(t,l,\eta)\widehat{f}(t,\pm 1,\xi-\eta)d\eta.\\
\pa_t\widehat{f}(t,m\pm 1,\eta)&+\nu((m\pm1)^2+(\eta-(m\pm1)t)^2)\widehat{f}(t,m,\eta)\\
&=\int_{|\eta-\xi|\leq 1}\f{\eta}{m^2+(\eta-mt)^2}\widehat{f}(t,m,\eta)\widehat{f}(t,\pm 1,\xi-\eta)d\eta.
\end{align*}
Since the $\widehat{f}(t,\pm 1,\xi-\eta)$ is restricted to the lower frequency $|\xi-\eta|\leq 1$, we can regard it as a constant in $\xi$ valuable. Moreover, $\widehat{f}(t,\pm 1,\xi-\eta)$ also has enhanced dissipation. 
As $t\in I_{m,\eta}$, $(m\pm1)^2+(\eta-(m\pm1)t)^2\approx \f{\eta^2}{m^2}$, thus we deduce to the following simplified toy model
\begin{align}
\pa_t\widehat{f}(t,m,\eta)
+\nu(m^2+(\eta-mt)^2)\widehat{f}(t,m,\eta)
&=\f{\ka e^{-c\nu^{\f13}t}m^2}{|\eta|}\widehat{f}(t,m\pm 1,\eta),\label{eq: toy-1}\\
\pa_t\widehat{f}(t,m\pm 1,\eta)
+\f{\nu\eta^2}{m^2}\widehat{f}(t,m\pm 1,\eta)
&=\f{\ka|\eta| e^{-c\nu^{\f13}t}}{m^2(1+(\f{\eta}{m}-t)^2)}\widehat{f}(t,m,\eta)\label{eq: toy-2}.
\end{align}
where $\ka$ stands for the smallness assumption of the initial data. Our goal is to find the largest $\ka$ such that we can control the total growth caused by the toy model. Thus we assume the enhanced dissipation is $e^{-c\nu^{\f13}t}$. 

The next step of simplification bases on the following observations: 
\begin{itemize}
\item When $t\gg \nu^{-\f13}$, the enhanced dissipation will offer a small coefficient which makes the Orr mechanism weaker. So we focus on the time region $t\lesssim \nu^{-\f13}$. The resonant time region is $I_t(\eta)\approx [\sqrt{|\eta|},2|\eta|]$. So we are interested in the case $|\eta|\lesssim \nu^{-\f23}$ so that $I_t(\eta)\cap [1,C\nu^{-\f13}]\neq \emptyset$. During this time region $e^{-c\nu^{\f13}t}\approx 1$. 
\item The rapid growth of $\widehat{f}(t,m\pm 1,\eta)$ happens when $|t-\f{\eta}{m}|\approx 1$. 
\item The coefficient in front of $\widehat{f}(t,m,\eta)$ on the right hand side of \eqref{eq: toy-2} is much bigger than the coefficient in front of $\widehat{f}(t,m\pm 1,\eta)$ on the right hand side of \eqref{eq: toy-1}. It means that $\widehat{f}(t,m\pm 1,\eta)$ will grow faster than $\widehat{f}(t,m,\eta)$. We may replace $\widehat{f}(t,m,\eta)$ by $\widehat{f}(t,m\pm 1,\eta)$ in the second equation. 
\item Since $|\eta|\lesssim \nu^{-\f23}$, when $|t-\f{\eta}{m}|\approx 1$, the dissipation coefficient in \eqref{eq: toy-2} $\f{\nu\eta^2}{m^2}\lesssim \f{\nu^{\f13}\eta}{m^2}$ is not bigger than the coefficient of the right hand sider if $\ka\approx \nu^{\f13}$. Thus we can remove the dissipation term. 
\end{itemize}

Thus we deduce to the following toy model 
\begin{align*}
\pa_t\widehat{f}(t,m\pm 1,\eta)
=\f{\ka|\eta| e^{-c\nu^{\f13}t}}{m^2(1+(\f{\eta}{m}-t)^2)}\widehat{f}(t,m\pm1,\eta).
\end{align*}

For $t\in I_{m,\eta}$, let $\tau=t-\f{\eta}{m}$, then $\tau\in [-D_{m,\eta}^-,D_{m,\eta}^+]$ where $D_{m,\eta}^-=\f{\eta}{(2m+1)m}=\f{\eta}{m}-t_{m,\eta}$ and $D_{m,\eta}^+=\f{\eta}{(2m-1)m}=t_{m-1,\eta}-\f{\eta}{m}$ for $m\geq 1$, then $D_{m,\eta}^{\pm}\approx  \f{\eta}{m^2}$. 

At last we use the following model to control the entropy growth in each critical time region. 
\beq\label{eq: toy model}
\left\{\begin{aligned}
&\pa_{\tau}g_m= \langle \nu^{\f13}t_{m,\eta}\rangle^{-(1+\b)}\f{\nu^{\f13}\f{\eta}{m^2}}{1+\tau^2}g_m,\\
&g_m(-D_{m,\eta}^-)=1.
\end{aligned}\right.
\eeq
We need to point out that in the toy model $e^{-c\nu^{\f13}t}$ is replaced by $\langle \nu^{\f13}t_{m,\eta}\rangle^{-(1+\b)}$ with $0<\b\leq \f12$ due to some technical reasons when we deal with zero mode (see \eqref{eq: beta<1/2}). The condition $\b>0$ ensures the total growth is bounded (see Lemma \ref{lem: total-growth}). 
 
For $m\eta>0$ and $|m|\in [1,E(\sqrt{|\eta|})]$, with $|\eta|\geq 3$, we define for $0<\b\leq \f12$, 
\beq\label{eq: g_m}
g_m(\tau,\eta)=\exp\Big( \langle\nu^{\f13}t_{m,\eta}\rangle^{{-(1+\b)}}\f{\nu^{\f13}\eta}{m^2}\big(\arctan(\tau)+\arctan(D_{m,\eta}^-)\big)\Big),
\eeq
then $g_m$ solves \eqref{eq: toy model}.

Then we have 
\beno
g_m(D_{m,\eta}^+,\eta)=G_{m}(\eta)\underbrace{g_m(-D_{m,\eta}^-,\eta)}_{=1}.
\eeno
with 
\beno
G_{m}(\eta)=\exp\Big( \langle \nu^{\f13}t_{m,\eta}\rangle^{-(1+\b)}\f{\nu^{\f13}\eta}{m^2}\big(\arctan(D_{m,\eta}^+)+\arctan(D_{m,\eta}^-)\big)\Big)
\eeno
Otherwise, we let $g_{m}(\tau,\eta)=1$.

\subsection{Key multiplier}
In this subsection, we will define the key multiplier which govern the growth. 

We define $w(t,\eta)$ in the following way: 
\begin{itemize}
\item For $t\leq t(\eta)$, $w(t,\eta)=1$;
\item For $t\in I_{j,\eta}$ with $|j|\in [1, E(\sqrt{|\eta|})]$ and $j\eta>0$, $w(t,\eta)=w(t_{j,\eta},\eta)g_{j}(t-\f{\eta}{j},\eta)$;
\item For $t\geq 2|\eta|$, $w(t,\eta)=w(2|\eta|,\eta)$.
\end{itemize}

According the definition of $g_{m}$, we get
\beq\label{eq: w_t/w}
\f{\pa_tw(t,\eta)}{w(t,\eta)}\approx \f{\langle \nu^{\f13}t\rangle^{-(1+\b)}\nu^{\f13}\f{\eta}{m^2}}{1+(t-\f{\eta}{m})^2}1_{t\in I_{m,\eta}}\approx \f{\langle \nu^{\f13}t\rangle^{-\b}m^{-1}}{1+(t-\f{\eta}{m})^2}1_{t\in I_{m,\eta}}
\eeq

Next for $m\eta>0$ and $|m|\in [1,E(\sqrt{|\eta|})]$, with $|\eta|\geq 3$, we will construct a continuous function $\varrho(m,\eta)$ taking value almost as $\f{m}{|m|}\max\{|m|,|\eta|\}$. First let $\rho(x)$ be a bounded smooth function such that 
\beq\label{eq: rho}
\rho(x)=\left\{\begin{aligned}
&1, \quad x\geq \f{1}{10},\\
&\text{ smooth connected}, \quad x\in \big[\f{1}{20}, \f{1}{10}\big],\\
&0, \quad x\leq \f{1}{20},
\end{aligned}\right.
\eeq
We also let $\rho$ satisfy 
\beno
\int_{\f{1}{20}}^{\f{1}{10}}\rho(x)dx=\f{1}{20}. 
\eeno
Let $\rho_k(x)=\rho(\f{x}{k})$ and 
\beq
w_{k}(t,\eta)=w\left(t, \varrho(k,\eta)\right).
\eeq
where 
\beq\label{eq: rho(k,eta)}
\varrho(k,\eta)=\left\{
\begin{aligned}
&\f{k}{20}+\int_{0}^{\eta}\rho_k(x)dx, \quad k\neq 0, \\
&\eta, \quad k=0.
\end{aligned}\right.
\eeq 
Then we get that for $|\eta|\leq \f{|k|}{20}$, $\varrho(k,\eta)=\f{k}{20}$ and 
$w_{k}(t,\eta)=w\left(t,\f{k}{20}\right)$; 
and for $|\eta|\geq \f{|k|}{10}$, $\varrho(k,\eta)=\eta$ and $w_{k}(t,\eta)=w(t,\eta)$. 

\begin{lemma}\label{lem: rho}
It holds that 
\beno
\varrho(k,\eta)\approx \langle k,\eta\rangle.
\eeno
For $|k-l,\xi-\eta|\leq \f{1}{100}|l,\xi|$, it holds that
\beno
|\varrho(k,\eta)-\varrho(l,\xi)|\lesssim |k-l,\xi-\eta|.
\eeno
\end{lemma}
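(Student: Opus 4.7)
The proof naturally splits into the two claims of the lemma. For the first claim, $\varrho(k,\eta)\approx \langle k,\eta\rangle$, I would argue by direct inspection of \eqref{eq: rho(k,eta)} together with the defining properties of $\rho$ in \eqref{eq: rho}. The case $k=0$ is immediate. For $k\neq 0$, the cut-off structure of $\rho$ partitions the problem into three subcases according to the ratio $\eta/k$. When $|\eta|\leq |k|/20$ the integrand $\rho_k$ vanishes on $[0,\eta]$, so $\varrho(k,\eta) = k/20$, which is comparable to $|k|+|\eta|$ because $|\eta|$ is dominated by $|k|/20$. When $|\eta|\geq |k|/10$, the substitution $u=x/k$ together with the normalization $\int_{1/20}^{1/10}\rho(u)\,du = 1/20$ gives $\varrho(k,\eta) = \eta$, comparable to $|k|+|\eta|$ because $|\eta|$ dominates $|k|/10$. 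In the intermediate strip $\varrho(k,\eta)$ lies between $k/20$ and $k/10$, still comparable to $|k|$ and hence to $|k|+|\eta|$.

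For the Lipschitz-type estimate, the hypothesis $|k-l,\xi-\eta|\leq |l,\xi|/100$ forces $k$ and $l$ to have the same sign and comparable magnitude whenever both are nonzero (and the boundary case $l=0$ forces $|k|\leq |\xi|/100$ and $|\eta-\xi|\leq |\xi|/100$, which places us immediately in the regime $\varrho(k,\eta)=\eta$ and reduces the claimed bound to $|\eta-\xi|$). In the main case I would split the difference by the triangle inequality
\[
|\varrho(k,\eta) - \varrho(l,\xi)| \leq |\varrho(l,\eta) - \varrho(l,\xi)| + |\varrho(k,\eta) - \varrho(l,\eta)|.
\]
The first term is at most $|\eta-\xi|$ since $0\leq \rho_l \leq 1$. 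For the second I would write
\[
\varrho(k,\eta) - \varrho(l,\eta) = \frac{k-l}{20} + \int_0^\eta \bigl(\rho(x/k)-\rho(x/l)\bigr)\,dx,
\]
and note that the integrand is supported where at least one of $x/k$, $x/l$ lies in $[1/20,1/10]$; on that set the pointwise bound $|\rho(x/k)-\rho(x/l)| \lesssim \|\rho'\|_\infty\,|x|\,|k-l|/|kl| \lesssim |k-l|/|l|$, combined with a support length of order $|l|$, yields an $O(|k-l|)$ contribution. Adding the two pieces gives the desired bound.

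The only delicate step is the bookkeeping for the $k$-variation: one must verify that the transition regions of $\rho_k$ and $\rho_l$ overlap and both have widths proportional to $|l|\approx |k|$, a fact that relies directly on the closeness hypothesis. The rest reduces to routine integration and sign-checking in the degenerate $l=0$ case, so this support/overlap estimate is where I expect to spend the most care in writing the proof.
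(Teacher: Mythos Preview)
Your proof is correct, and for the Lipschitz estimate it takes a different route than the paper. The paper argues by case analysis on the regime of $(l,\xi)$: when $|\xi|\geq |l|$ both $\varrho(k,\eta)$ and $\varrho(l,\xi)$ collapse to their second argument; when $|\xi|\leq |l|/100$ both collapse to the first-argument value $k/20$, $l/20$; and in the remaining regime $|\xi|\approx|l|\approx|\eta|\approx|k|$ the paper substitutes $u=x/k$, writes $\varrho(k,\eta)=k/20+k\int_0^{\eta/k}\rho$, and estimates the difference directly. Your decomposition via the triangle inequality into the $\eta$-variation at fixed $l$ (trivially $\leq|\eta-\xi|$ since $0\leq\rho_l\leq 1$) and the $k$-variation at fixed $\eta$ (handled by the mean value theorem on $\rho$ together with the overlap of the transition intervals) is more streamlined and avoids the case split. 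The ``delicate step'' you flag --- that the transition regions of $\rho_k$ and $\rho_l$ overlap --- is exactly the place where the closeness hypothesis $|k-l|\leq |l,\xi|/100$ is used, and your sketch of that point is correct. One small remark: you treat the degenerate case $l=0$ explicitly, but the symmetric degenerate case $k=0$, $l\neq 0$ deserves the same one-line treatment (it forces $|\xi|\gtrsim |l|$, hence $\varrho(l,\xi)=\xi$ and $\varrho(0,\eta)=\eta$, so the bound is just $|\eta-\xi|$).
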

\begin{proof}
It is easy to obtain that $\varrho(k,\eta)\lesssim \langle k,\eta\rangle$. 
The lower bound follows from the fact that for $\f{|k|}{20}\leq |\eta|\leq \f{|k|}{10}$, $\varrho(k,\eta)\gtrsim \f{|k|}{20}\geq \f{\eta}{2}$. 

If $|\xi|\geq |l|$, then $|k-l,\xi-\eta|\leq \f{1}{50}|\xi|$, 
$|\eta|\geq \f{49}{50}|\xi|$ and $|k|\leq |k-l|+|l|\leq \f{51}{50}|\xi|\leq 2|\eta|$. Thus 
\beno
|\varrho(k,\eta)-\varrho(l,\xi)|=|\xi-\eta|.
\eeno
If $|\xi|\leq \f{|l|}{100}$, then $|k-l,\xi-\eta|\leq \f{101}{10000}|l|$, $|k|\geq \f{9899}{10000}|l|$ and $|\eta|\leq |\xi|+|\xi-\eta|\leq \f{201}{10000}|l|\leq \f{|k|}{20}$. Thus 
\beno
|\varrho(k,\eta)-\varrho(l,\xi)|=\f{1}{20}|k-l|.
\eeno
Then we only need to focus on $|\xi|\approx |l|\approx |\eta|\approx |k|$. Thus
\begin{align*}
|\varrho(k,\eta)-\varrho(l,\xi)|
&=\left|\f{k}{20}-\f{l}{20}+k\int_{0}^{\f{\eta}{k}}\rho(x)dx-l\int_{0}^{\f{\xi}{l}}\rho(x)dx\right|\\
&\lesssim |k-l|+|k-l|\int_{0}^{\f{\eta}{k}}\rho(x)dx+\left|l\int_{\f{\eta}{k}}^{\f{\xi}{l}}\rho(x)dx\right|\\
&\lesssim |k-l|+\f{|\eta l-\xi k|}{|k|}
\lesssim |k-l|+\f{|\eta,k||k-l,\eta-\xi|}{|k|}\\
&\lesssim |k-l,\xi-\eta|.
\end{align*}
Thus we proved the lemma. 
\end{proof}

With $w_{k}(t,\eta)$, now we can define our key multiplier $\As_k(t,\eta)$, 
\beq\label{eq: Asigma}
\As_k(t,\eta)=\f{\langle k,\eta\rangle^{\s}}{w_k(t,\eta)}.
\eeq

\subsection{Basic estimate for the multiplier}
The following lemma expresses the well-separation of critical times. 
\begin{lemma}[\cite{BM1}]\label{lem: 3.2}
Let $\xi,\eta$ be such that there exists some $\al\geq 1$ with $\al^{-1}|\xi|\leq |\eta|\leq \al |\xi|$ and let $k,n$ be such that $t\in I_{k,\eta}\cap I_{n,\xi}$, then $k\approx n$ and moreover at least one of the following holds:  
\begin{itemize}
\item [(a)] $k=n$; 
\item [(b)] $|t-\f{\eta}{k}|\geq \f{1}{10\al}\f{\eta}{k^2}$ and $|t-\f{\xi}{n}|\geq \f{1}{10\al}\f{\xi}{n^2}$;
\item [(c)] $|\eta-\xi|\gtrsim_{\al} \f{|\eta|}{|n|}$. 
\end{itemize}
\end{lemma}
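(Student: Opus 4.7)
First I would establish that $k\approx n$. By hypothesis $t\in I_{k,\eta}=[2\eta/(2k+1),2\eta/(2k-1)]$, so $t = \eta/k + O(\eta/k^{2})$, and in particular $k = \eta/t + O(1)$. Similarly $n = \xi/t + O(1)$. Since $\eta$ and $\xi$ are comparable up to the factor $\alpha$, the ratios $\eta/t$ and $\xi/t$ are also comparable up to a factor depending only on $\alpha$, so $k\approx n$ with implicit constants depending on $\alpha$. This handles the first assertion and also shows $|\xi|/n^{2}\approx_\alpha |\eta|/k^{2}$, which I will use freely in the final step.

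Next, assume $k\neq n$ (otherwise (a) holds). The heart of the argument is the identity for the spacing between the two Orr critical times,
\beno
\frac{\eta}{k}-\frac{\xi}{n} \;=\; \frac{n\eta - k\xi}{kn} \;=\; \frac{k(\eta-\xi) + (n-k)\eta}{kn}.
\eeno
If (c) fails, i.e.\ $|\eta-\xi|\ll_\alpha |\eta|/|n|$, then $k|\eta-\xi|$ is much smaller than $|(n-k)\eta|\geq |\eta|$ (using $|n-k|\geq 1$), so
\beno
\left|\frac{\eta}{k}-\frac{\xi}{n}\right| \;\gtrsim_\alpha\; \frac{|\eta|}{kn} \;\approx_\alpha\; \frac{|\eta|}{k^{2}}.
\eeno
This is the key geometric input: distinct Orr critical times with comparable parameters are forced to be well separated.

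Finally I would derive (b) from this lower bound by the triangle inequality. Suppose (c) fails; I must show (b) holds. If instead (b) also failed, then without loss of generality $|t-\eta/k| < \tfrac{1}{10\alpha}|\eta|/k^{2}$. But then
\beno
\left|t-\frac{\xi}{n}\right| \;\geq\; \left|\frac{\eta}{k}-\frac{\xi}{n}\right| - \left|t-\frac{\eta}{k}\right| \;\gtrsim_\alpha\; \frac{|\eta|}{k^{2}} \;\approx_\alpha\; \frac{|\xi|}{n^{2}},
\eeno
where in the last step I use $|\eta|\approx_\alpha |\xi|$ and $k\approx_\alpha n$. Choosing the implicit constants in the definition of "$\ll_\alpha$" in the failure of (c) small enough (and absorbing factors of $10\alpha$) makes the right-hand side exceed $\tfrac{1}{10\alpha}|\xi|/n^{2}$, restoring (b) and giving a contradiction. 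Hence if (a) and (c) both fail, (b) holds.

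The argument is essentially a clean triangle-inequality reduction once the algebraic identity for $\eta/k - \xi/n$ is in hand; the only delicate point is bookkeeping the $\alpha$-dependent constants consistently so that the implicit constant in "(c) fails" is small enough to be absorbed into the $\tfrac{1}{10\alpha}$ appearing in (b). I do not expect any substantive obstacle beyond this constant-tracking.
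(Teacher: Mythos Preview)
The paper does not prove this lemma; it is quoted from \cite{BM1}. Your plan---establish $k\approx_\alpha n$, then use $\eta/k-\xi/n=[(n-k)\eta+k(\eta-\xi)]/(kn)$ to separate the critical times when (c) fails---is correct and matches the standard argument.

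There is, however, a logical gap in your contradiction step. You assume (b) fails, take without loss of generality $|t-\eta/k|<\tfrac{1}{10\alpha}\,\eta/k^{2}$, and deduce $|t-\xi/n|\gtrsim_\alpha \xi/n^{2}$, claiming this ``restores (b)''. It does not: (b) requires \emph{both} lower bounds, and you have only verified the second while the first remains violated by hypothesis---no contradiction has been reached. The triangle inequality on the centers only shows that the two inequalities in (b) cannot fail \emph{simultaneously}, which is strictly weaker than (b). What is missing is to play your lower bound against the \emph{upper} bound $|t-\xi/n|\le D_{n,\xi}^{+}\approx \xi/(2n^{2})$ coming from $t\in I_{n,\xi}$; but if you track constants, the triangle-inequality lower bound is only of size $c(\alpha)\,\xi/n^{2}$ with $c(\alpha)$ possibly much smaller than $1/2$ for large $\alpha$, so this does not close directly either. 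The clean fix uses that the intervals $I_{j,\eta}$ partition $[t(\eta),2\eta]$: if $t$ lies in the core of $I_{k,\eta}$ (i.e.\ at distance $\gtrsim \eta/k^{2}$ from $\partial I_{k,\eta}$) then it is at that same distance from every $I_{n,\eta}$ with $n\ne k$, and since the endpoints of $I_{n,\xi}$ differ from those of $I_{n,\eta}$ by at most $\approx|\eta-\xi|/n$, the hypothesis $t\in I_{n,\xi}$ forces $|\eta-\xi|\gtrsim_\alpha \eta/n$, which is exactly (c). This geometric step is absent from your sketch.
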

Now we will present a lemma about the upper and lower bounds estimates of $w(t,\eta)$. 

\begin{lemma}\label{lem: total-growth}
It holds that
\begin{align*}
w(t,\eta)\approx 1.
\end{align*}
As a consequence, $A^{\s}_k(t,\eta)\approx \langle k,\eta\rangle^{\s}$. 
\end{lemma}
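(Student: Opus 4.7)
\medskip

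\textbf{Proof proposal.} The plan is to bound $w(t,\eta)$ from above and below by absolute constants, using the explicit product formula coming from the definition of $w$ and the subexponential growth of each $G_m$.

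First, I would establish the trivial lower bound $w(t,\eta)\geq 1$. By construction $w$ is built by multiplying the running value by $g_j(\tau,\eta)$ on each critical interval $I_{j,\eta}$ (with $j\eta>0$), and from \eqref{eq: g_m} the exponent of $g_j$ is a product of nonnegative quantities (note that on $I_{j,\eta}$ we have $\tau=t-\eta/j\in[-D_{j,\eta}^-,D_{j,\eta}^+]$, so $\arctan(\tau)+\arctan(D_{j,\eta}^-)\geq 0$, and the prefactor $\nu^{1/3}\eta/j^2$ has the sign of $j\eta\geq 0$). Hence $g_j\geq 1$ pointwise and $w(t,\eta)\geq 1$.

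For the upper bound, since $w$ is constant for $t\geq 2|\eta|$, it suffices to bound $w(2|\eta|,\eta)$. By telescoping across the resonant intervals $I_{m,\eta}$ with $m=1,\dots,E(\sqrt{|\eta|})$, we get
\begin{equation*}
\log w(2|\eta|,\eta)=\sum_{m=1}^{E(\sqrt{|\eta|})}\log G_m(\eta)\leq \pi\sum_{m=1}^{E(\sqrt{|\eta|})}\langle\nu^{1/3}t_{m,\eta}\rangle^{-(1+\beta)}\frac{\nu^{1/3}|\eta|}{m^2},
\end{equation*}
using $\arctan(D_{m,\eta}^+)+\arctan(D_{m,\eta}^-)\leq \pi$. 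Since $t_{m,\eta}\approx |\eta|/m$, I would split the sum at the threshold $m_0\approx \nu^{1/3}|\eta|$ (the value where $\nu^{1/3}t_{m,\eta}$ transitions from $\geq 1$ to $\leq 1$). For $m\geq m_0$ the weight $\langle\nu^{1/3}t_{m,\eta}\rangle^{-(1+\beta)}$ is $\approx 1$ and a geometric/telescoping estimate gives $\sum_{m\geq m_0}\nu^{1/3}|\eta|/m^2\lesssim \nu^{1/3}|\eta|/m_0\lesssim 1$. For $m\leq m_0$, substituting $\langle\nu^{1/3}t_{m,\eta}\rangle^{-(1+\beta)}\approx (m/(\nu^{1/3}|\eta|))^{1+\beta}$ turns each term into $m^{\beta-1}/(\nu^{1/3}|\eta|)^{\beta}$, and $\sum_{m=1}^{m_0}m^{\beta-1}\lesssim m_0^{\beta}/\beta\approx (\nu^{1/3}|\eta|)^\beta/\beta$. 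Both contributions are bounded by an absolute constant; note that the condition $\beta>0$ is essential here — it prevents a logarithmic divergence in the resonant regime, which is the most delicate point of the argument.

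Combining the two bounds gives $w(t,\eta)\approx 1$ uniformly in $t$ and $\eta$. For the consequence, $w_k(t,\eta)=w(t,\varrho(k,\eta))\approx 1$ by the same bound applied at the shifted frequency, and by Lemma \ref{lem: rho} we have $\varrho(k,\eta)\approx \langle k,\eta\rangle$, hence
\begin{equation*}
\As_k(t,\eta)=\frac{\langle k,\eta\rangle^{\s}}{w_k(t,\eta)}\approx \langle k,\eta\rangle^{\s},
\end{equation*}
as claimed. The main obstacle, as noted, is the uniform control of the dyadic sum in the resonant window $m\leq \nu^{1/3}|\eta|$; this is exactly what motivates introducing the factor $\langle \nu^{1/3}t_{m,\eta}\rangle^{-(1+\beta)}$ with $\beta>0$ in the toy model \eqref{eq: toy model}.
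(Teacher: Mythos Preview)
Your proposal is correct and follows essentially the same approach as the paper: both establish the trivial lower bound $w\geq 1$ from the positivity of the exponents in $g_j$, and for the upper bound both take logarithms, use $\arctan(D_{m,\eta}^+)+\arctan(D_{m,\eta}^-)\leq \pi$, and split the resulting sum $\sum_{m}\langle\nu^{1/3}t_{m,\eta}\rangle^{-(1+\beta)}\nu^{1/3}|\eta|/m^2$ at the scale $m\approx \nu^{1/3}|\eta|$, exploiting $\beta>0$ to avoid a logarithmic loss. The only cosmetic difference is that the paper writes out three separate cases according to whether $\nu^{1/3}|\eta|$ lies below $1$, between $1$ and $E(\sqrt{|\eta|})$, or above $E(\sqrt{|\eta|})$, whereas your single threshold $m_0$ handles all of these implicitly.
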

\begin{proof}
We have for any $t,\eta$, 
\begin{align*}
1&\leq w(t,\eta)
\leq \prod_{m=E(\sqrt{|\eta|})}^{1}G_{m}(\eta)
\leq \exp\left(\sum_{m=E(\sqrt{|\eta|})}^{1}\f{\pi\nu^{\f13}\eta}{m^2}(1+\nu^{\f13}t_{m,\eta})^{-(1+\b)}\right)\\
&\lesssim \left\{\begin{aligned}
&\exp\left(\sum_{m=E(\sqrt{|\eta|})}^{1}\f{\nu^{\f13}\eta}{m^2}\right)\quad \nu^{\f13}\eta\leq 1\\
&\exp\left(\sum_{m=\nu^{\f13}\eta}^{E(\sqrt{|\eta|})}\f{\nu^{\f13}\eta}{m^2}
+\sum_{m=\nu^{\f13}\eta}^{1}\f{m^{-1+\b}}{(\nu^{\f13}\eta)^{\b}}\right)\quad 1\leq \nu^{\f13}\eta\leq E(\sqrt{|\eta|})\\
&\exp\left(\sum_{m=E(\sqrt{|\eta|})}^{1}\f{m^{-1+\b}}{(\nu^{\f13}\eta)^{\b}}\right)\quad 1\leq E(\sqrt{|\eta|})\leq \nu^{\f13}\eta
\end{aligned}\right.\\
&\lesssim  \left\{\begin{aligned}
\nu^{\f13}\eta\lesssim &1,\quad |\eta|\lesssim \nu^{-\f13},\\
&1, \quad  \nu^{-\f13}\lesssim |\eta|\lesssim \nu^{-\f23},\\
\f{1}{(\nu^{\f13}\sqrt{|\eta|})^{\b}}\lesssim &1, \quad \nu^{-\f23}\lesssim|\eta|.
\end{aligned}\right.
\end{align*}
Thus we proved the lemma. 
\end{proof}
The above lemma gives that for all $t$,
\beq\label{eq: As}
\As_{k}(t,\eta)\approx \langle k,\eta\rangle^{\s}.
\eeq

Next we introduce several lemma related to the properties of $D$. The first lemma can be found in \cite{BMV} which will be useful in the proof of the commutator estimate in Section \ref{Sec: Decay estimate of vorticity}. 
\begin{lemma}[\cite{BMV}]\label{lem: D-D}
Uniformly in $\nu$, $\eta,\xi$ and $t\geq 1$ we have: 
\beno
D(t,\eta)\gtrsim \nu \max\{|\eta|^3,t^3\},
\eeno
and
\beno
\f{D(t,\xi)}{D(t,\eta)}\lesssim \langle \eta-\xi\rangle^3,\quad
|D(t,\xi)-D(t,\eta)|\lesssim \f{D(t,\xi)}{\langle \xi\rangle+\langle \eta\rangle}\langle \eta-\xi\rangle^3
\eeno
\end{lemma}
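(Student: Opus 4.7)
The plan is to unwind the explicit formula $D(t,\eta) = \tfrac{\nu}{3}|\eta|^3 + \tfrac{\nu}{24}(t^3 - 8|\eta|^3)_+$ and reduce each of the three statements to elementary inequalities about cubes, exploiting the single case split $t \leq 2|\eta|$ versus $t \geq 2|\eta|$. The preliminary observation is that $D(t,\eta) \geq \tfrac{\nu}{3}|\eta|^3$ always, and also $D(t,\eta) \geq \tfrac{\nu t^3}{24}$ always: in the regime $t \geq 2|\eta|$ the sum collapses to exactly $\tfrac{\nu t^3}{24}$, while in the regime $t \leq 2|\eta|$ one has $t^3 \leq 8|\eta|^3$ so $\tfrac{\nu t^3}{24} \leq \tfrac{\nu|\eta|^3}{3} \leq D(t,\eta)$. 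Averaging the two gives $D(t,\eta) \gtrsim \nu(|\eta|^3 + t^3) \gtrsim \nu \max\{|\eta|^3, t^3\}$, which is the first claim.

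For the ratio bound, the same formula yields the matching upper estimate $D(t,\xi) \leq \tfrac{\nu}{3}|\xi|^3 + \tfrac{\nu t^3}{24} \lesssim \nu(|\xi|^3 + t^3)$, so combining it with the lower bound just established reduces the ratio to $(|\xi|^3 + t^3)/(|\eta|^3 + t^3)$. Expanding $|\xi|^3 \leq 4|\eta|^3 + 4|\xi-\eta|^3$ bounds this by $4 + 4|\xi-\eta|^3/(|\eta|^3 + t^3)$; since $t \geq 1$ the denominator is at least $1$, so the second summand is at most $|\xi - \eta|^3 \leq \langle \xi - \eta \rangle^3$, closing this estimate.

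For the difference bound, the key mechanism is that $x \mapsto x_+$ is $1$-Lipschitz, which gives
\begin{equation*}
\bigl|D(t,\xi) - D(t,\eta)\bigr| \leq \tfrac{\nu}{3}\bigl||\xi|^3 - |\eta|^3\bigr| + \tfrac{\nu}{24}\cdot 8\bigl||\xi|^3 - |\eta|^3\bigr| = \tfrac{2\nu}{3}\bigl||\xi|^3 - |\eta|^3\bigr|.
\end{equation*}
Factoring $|\xi|^3 - |\eta|^3 = (|\xi| - |\eta|)(|\xi|^2 + |\xi||\eta| + |\eta|^2)$ and using $|\xi|^2 + |\xi||\eta| + |\eta|^2 \lesssim (\langle \xi \rangle + \langle \eta \rangle)^2$ reduces the target to
\begin{equation*}
\nu |\xi - \eta|\bigl(\langle \xi \rangle + \langle \eta \rangle\bigr)^3 \lesssim D(t,\xi)\, \langle \xi - \eta \rangle^3.
\end{equation*}
I would handle this with a further split into the comparable regime $\langle \eta \rangle \lesssim \langle \xi \rangle$ (where the lower bound $D(t,\xi) \gtrsim \nu \langle \xi \rangle^3 \gtrsim \nu(\langle \xi \rangle + \langle \eta \rangle)^3$ absorbs the bracket factor directly, leaving only $|\xi - \eta| \leq \langle \xi - \eta \rangle^3$) and the separated regime $\langle \eta \rangle \gg \langle \xi \rangle$ (where $\langle \xi - \eta \rangle \approx \langle \eta \rangle$, so the cubic factor $\langle \xi - \eta \rangle^3$ supplies the needed powers of $\langle \eta \rangle$, once one invokes the lower bound $D(t,\xi) \gtrsim \nu t^3$ that follows from $t \geq 1$).

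The main obstacle I expect is the separated regime: when $|\xi|$ is much smaller than $|\eta|$, the right-hand side is controlled only by the small quantity $D(t,\xi)$, whereas the left-hand side sees all of $D(t,\eta)$, so every available power of $\langle \xi - \eta \rangle \approx \langle \eta \rangle$ and every unit of $t \geq 1$ must be exploited to close the estimate. The bookkeeping---choosing in each sub-regime whether to use $D(t,\xi) \gtrsim \nu |\xi|^3$ or $D(t,\xi) \gtrsim \nu t^3$, and matching the powers against $(\langle \xi \rangle + \langle \eta \rangle)^3$---is the delicate point, but no harmonic analysis beyond these elementary manipulations and the single case split above $2|\eta|$ versus below should be required.
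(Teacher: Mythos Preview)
The paper does not supply its own proof of this lemma; it is quoted from \cite{BMV}. So there is no in-paper argument to compare against, and I evaluate your proposal on its own merits.

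Your treatment of the first two inequalities is correct and efficient: the two-sided bound $D(t,\eta)\approx \nu(|\eta|^3+t^3)$ that you extract from the case split $t\lessgtr 2|\eta|$ gives both the lower bound and, after the elementary cube inequality $|\xi|^3\le 4|\eta|^3+4|\xi-\eta|^3$ and the use of $t\ge 1$, the ratio estimate.

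For the difference bound, your reduction to
\[
\nu\,|\xi-\eta|\,(\langle\xi\rangle+\langle\eta\rangle)^3 \;\lesssim\; D(t,\xi)\,\langle\xi-\eta\rangle^3
\]
is fine, and your ``comparable'' regime $\langle\eta\rangle\lesssim\langle\xi\rangle$ closes exactly as you say. The gap is in the ``separated'' regime $\langle\eta\rangle\gg\langle\xi\rangle$. There your sketch needs $\nu|\eta|\lesssim D(t,\xi)$, and you propose to use $D(t,\xi)\gtrsim\nu t^3$ with $t\ge 1$. But that only yields $D(t,\xi)\gtrsim\nu$, which cannot control $\nu|\eta|$ for large $|\eta|$. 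Concretely, take $t=1$, $\xi=0$, $\eta=N$ with $N$ large: then $D(1,0)=\nu/24$, $D(1,N)=\nu N^3/3$, so the left side of the claimed inequality is $\sim \nu N^3$, while the right side is $\tfrac{\nu/24}{1+\langle N\rangle}\langle N\rangle^3\sim \nu N^2$. The ratio blows up like $N$, so the inequality \emph{as literally written} is false for all $\xi,\eta$.

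What is actually true (and what the applications in the paper use, e.g.\ the commutator term $T_N^{0,2}$ in Section~\ref{sec: f-low-enha}, where the Littlewood--Paley support forces $|\eta-\xi|\ll|\xi|$) is the version with the \emph{larger} of the two weights on the right,
\[
|D(t,\xi)-D(t,\eta)|\;\lesssim\;\frac{\max\{D(t,\xi),D(t,\eta)\}}{\langle\xi\rangle+\langle\eta\rangle}\,\langle\eta-\xi\rangle^3,
\]
equivalently the stated inequality under the additional hypothesis $|\eta|\lesssim|\xi|$. Your ``comparable'' argument already proves exactly this. So the defect is not in your method but in the transcription of the statement; you should flag the needed correction rather than try to close the separated case, which is impossible.
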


Next lemma we will introduce the product lemma related to $D$ which is a Sobolev type estimates comparing to the Lemma 3.7 in \cite{BMV}. 
\begin{lemma}\label{lem: product}
The following holds for all $q^1$ and $q^2$ and $\g>1$, 
\beno
\|D(q^1q^2)\|_{H^{\g}}\lesssim \|q^1\|_{H^{\g+3}}\|Dq^2\|_{H^{\g}\g},
\eeno
and
\beno
\|D(\na^{\bot} q^1\cdot \na q^2)\|_{H^{\g}}\lesssim \|q^1\|_{H^{\g+5}}\|Dq^2\|_{H^{\g}}+\|D q_1\|_{H^{\g}}\|q_2\|_{H^{\g+5}}
\eeno
\end{lemma}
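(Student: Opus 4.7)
The plan is to work in Fourier space and reduce both estimates to Moser-type product inequalities, exploiting the comparison bounds of Lemma \ref{lem: D-D} to transfer the weight $D(t,\cdot)$ between convolution factors. A paraproduct decomposition $q^1 q^2 = T_{q^1}q^2 + T_{q^2}q^1 + R(q^1, q^2)$ splits the analysis by the relative sizes of the two input frequencies, and in each regime we place $D$ on whichever factor sits at the lower frequency, absorbing the resulting multiplicative loss into the asymmetrically-weighted other factor.

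For the first estimate, expand $\widehat{q^1 q^2}(\xi) = \hat q^1 \ast \hat q^2(\xi)$. In the low-high regime (frequency of $q^2$ dominates), the difference estimate $|D(\xi) - D(\zeta)| \lesssim D(\xi)\langle \xi-\zeta\rangle^3 / (\langle \xi\rangle + \langle \zeta\rangle)$ from Lemma \ref{lem: D-D} gives $D(\xi) \approx D(\zeta)$, and the contribution is bounded by $\|q^1\|_{L^\infty}\|Dq^2\|_{H^\gamma}$, absorbed into $\|q^1\|_{H^{\gamma+3}}\|Dq^2\|_{H^\gamma}$ via Sobolev embedding (which is valid in our two-dimensional setting as long as $\gamma + 3 > 1$). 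In the high-low regime, the first inequality $D(\xi) \lesssim \langle \xi - \zeta\rangle^3 D(\zeta)$ shifts $D$ onto $q^2$ at the cost of three extra derivatives on $q^1$; the low-frequency factor $Dq^2$ is then extracted in $L^2$ via Bernstein. The diagonal piece (comparable frequencies) is controlled by either of these routes.

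For the second estimate, the Fourier symbol of $\nabla^\bot q^1 \cdot \nabla q^2$ is $-(\xi-\zeta)^\perp \cdot \zeta$, with modulus at most $|\xi-\zeta||\zeta|$. Repeating the paraproduct analysis: in the high-low regime, the three-derivative loss $\langle \xi-\zeta\rangle^3$ from $D$, together with the gradient factor $|\xi-\zeta|$ on $q^1$ and the factor $|\zeta|$ on $q^2$ (which can be bounded by $|\xi-\zeta|$ since $|\zeta| \ll |\xi-\zeta|$, or alternatively traded via Bernstein at the low-frequency side), contribute in total five extra derivatives on $q^1$, producing $\|q^1\|_{H^{\gamma+5}}\|Dq^2\|_{H^\gamma}$. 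In the low-high regime the symmetric bound $D(\xi) \lesssim \langle \zeta\rangle^3 D(\xi-\zeta)$ (also a consequence of Lemma \ref{lem: D-D}) places $D$ on $q^1$, and the analogous accounting delivers $\|Dq^1\|_{H^\gamma}\|q^2\|_{H^{\gamma+5}}$. Summing the two regimes gives the claimed estimate.

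The main obstacle is the bookkeeping in the high-low paraproducts: ensuring that the three-derivative loss from $D$ plus the two gradient factors are absorbed entirely into the asymmetric $H^{\gamma+5}$ norm, while the low-frequency factor is extracted in $L^2$ using only $H^\gamma$ regularity. This works because each low-frequency Littlewood--Paley block is spectrally supported in a set whose size is controlled by the output scale, so Bernstein's inequality trades $L^1$ and $L^2$ norms without further cost; the hypothesis $\gamma > 1$ is precisely what is needed to sum the resulting dyadic estimates through the Sobolev algebra property in the two-dimensional $(k,\eta)$ phase space.
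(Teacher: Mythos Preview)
Your proposal is correct and follows essentially the same approach as the paper: both arguments work on the Fourier side, split according to the relative sizes of the two input frequencies, and use Lemma~\ref{lem: D-D} to transfer the weight $D$ onto the appropriate factor (always onto $q^2$ for the first estimate; onto the low-frequency factor for the second), with the $\gamma>1$ condition used to control the $L^1$ norm of a Fourier transform via $\|\hat q\|_{L^1}\lesssim \|q\|_{H^{\gamma}}$. The only cosmetic difference is that the paper employs a direct dual pairing with a test function $\varphi$ and a single dichotomy $1_{|k-l,\eta-\xi|\le |l,\xi|}+1_{|k-l,\eta-\xi|> |l,\xi|}$ rather than the full paraproduct $T_{q^1}q^2+T_{q^2}q^1+\mathcal R(q^1,q^2)$, but the frequency bookkeeping and the derivative counts ($3$ from $D$, $2$ from the gradients, landing on the high-frequency factor) are identical.
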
 
\begin{proof}
We use the dual method. By Lemma \ref{lem: D-D}, we get
\begin{align*}
&\|D(q^1q^2)\|_{H^{\g}}
=\|\langle \na\rangle^{\g}D(q^1q^2)\|_{L^2}\\
&=\sup_{\|\varphi\|_{L^2}=1}
\left|\sum_{k,l}\int_{\eta,\xi}\hat{\varphi}_{k}(\eta)
\langle k,\eta \rangle^{\g}D(\eta)\hat{q}^1_{k-l}(\eta-\xi)\hat{q}^2_{l}(\xi)d\xi d\eta\right|\\
&\lesssim \sup_{\|\varphi\|_{L^2}=1}
\sum_{k,l}\int_{\eta,\xi}|\hat{\varphi}_{k}(\eta)|
\langle k,\eta \rangle^{\g}\langle\xi-\eta\rangle^3 |\hat{q}^1_{k-l}(\eta-\xi)||D(\xi)\hat{q}^2_{l}(\xi)|d\xi d\eta\\
&\lesssim \sup_{\|\varphi\|_{L^2}=1}
\sum_{k,l}\int_{\eta,\xi}1_{|k-l,\eta-\xi|\leq |l,\xi|}|\hat{\varphi}_{k}(\eta)|
\langle\xi-\eta\rangle^3 |\hat{q}^1_{k-l}(\eta-\xi)||D(\xi)\langle l,\xi \rangle^{\g}\hat{q}^2_{l}(\xi)|d\xi d\eta\\
&\quad+\sup_{\|\varphi\|_{L^2}=1}
\sum_{k,l}\int_{\eta,\xi}1_{|k-l,\eta-\xi|>|l,\xi|}|\hat{\varphi}_{k}(\eta)|
\langle k-l,\eta-\xi \rangle^{\g}\langle\xi-\eta\rangle^3 |\hat{q}^1_{k-l}(\eta-\xi)||D(\xi)\hat{q}^2_{l}(\xi)|d\xi d\eta\\
&\lesssim \|\varphi\|_{L^2}\|Dq^2\|_{H^{\g}}\|q^1\|_{H^{\g+3}}.
\end{align*}
The last inequality we use the fact that $\|\widehat{q}\|_{L^{1}}\lesssim \|\langle k,\eta\rangle^{\g}\widehat{q}\|_{L^2}\|\langle k,\eta\rangle^{-\g}\|_{L^2}\lesssim \|q\|_{H^{\g}}$ for $\g>1$. 

We also have
\begin{align*}
&\|D(\na^{\bot}q^1\cdot\na q^2)\|_{H^{\g}}\\
&=\sup_{\|\varphi\|_{L^2}=1}
\left|\sum_{k,l}\int_{\eta,\xi}\hat{\varphi}_{k}(\eta)
\langle k,\eta \rangle^{\g}D(\eta)\hat{q}^1_{k-l}(\eta-\xi)\hat{q}^2_{l}(\xi)(-\eta+\xi,k-l)\cdot (l,\xi)d\xi d\eta\right|\\
&\lesssim \sup_{\|\varphi\|_{L^2}=1}
\sum_{k,l}\int_{\eta,\xi}1_{|k-l,\eta-\xi|\leq |l,\xi|}|\hat{\varphi}_{k}(\eta)|
 |D(\eta-\xi)\hat{q}^1_{k-l}(\eta-\xi)||\langle\xi\rangle^3\langle l,\xi \rangle^{\g+2}\hat{q}^2_{l}(\xi)|d\xi d\eta\\
&\quad+\sup_{\|\varphi\|_{L^2}=1}
\sum_{k,l}\int_{\eta,\xi}1_{|k-l,\eta-\xi|>|l,\xi|}|\hat{\varphi}_{k}(\eta)|
\langle k-l,\eta-\xi \rangle^{\g+2}\langle\xi-\eta\rangle^3 |\hat{q}^1_{k-l}(\eta-\xi)||D(\xi)\hat{q}^2_{l}(\xi)|d\xi d\eta\\
&\lesssim \|\varphi\|_{L^2}\|Dq^2\|_{H^{\g}}\|q^1\|_{H^{\g+5}}+\|\varphi\|_{L^2}\|Dq^1\|_{H^{\g}}\|q^2\|_{H^{\g+5}}.
\end{align*}
Thus we proved the lemma. 
\end{proof}

\section{Elliptic estimate}\label{Elliptical estimate}
The purpose of this section is to provide a thorough analysis of $\Delta_t$. 

\begin{lemma}\label{lem: lin-inv-dam}
Under the bootstrap hypotheses, for $\nu$ sufficiently small and $s'\in [0,2]$, it holds that for $2\leq \g\leq \s-1$
\beno
\|P_{\neq}\phi\|_{H^{\g-s'}}\lesssim \f{1}{\langle t\rangle^{s'}}\|\langle\pa_z\rangle^{-s'}f_{\neq}\|_{H^{\g}},
\eeno
and for $\gamma\leq \s-1$
\beno
\|\Delta_L\Delta_t^{-1}P_{\neq}f\|_{H^{\gamma}}=\|\Delta_LP_{\neq}\phi\|_{H^{\gamma}}\lesssim \|P_{\neq}f\|_{H^{\gamma}}.
\eeno
\end{lemma}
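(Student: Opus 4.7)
The plan is to view $\Delta_t$ as a small perturbation of the Couette-adapted Laplacian $\Delta_L \eqdef \pa_{zz} + (\pa_v - t\pa_z)^2$. Decomposing
\[
\Delta_t = \Delta_L + E, \qquad E \eqdef (h^2+2h)(\pa_v-t\pa_z)^2 + v''(\pa_v-t\pa_z),
\]
with $h = v'-1$ and $v'' = (1+h)\pa_v h$, the bootstrap hypotheses on $h$ yield $\|h\|_{H^{\s}} + \|v''\|_{H^{\s-1}} \lesssim \ep\nu^{1/6}$, so $E$ is a genuinely small perturbation of $\Delta_L$ at any regularity $H^{\g}$ with $\g \leq \s-1$.

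The first step is the purely linear estimate for $\Delta_L^{-1}$. On the Fourier side $\widehat{\Delta_L\phi}_k(\eta) = -(k^2+(\eta-kt)^2)\hat\phi_k(\eta)$, so $\|\Delta_L \Delta_L^{-1}P_{\neq}g\|_{H^{\g}} = \|P_{\neq}g\|_{H^{\g}}$ is automatic, and the linear inviscid damping reduces to the pointwise multiplier bound
\[
\frac{\langle k,\eta\rangle^{-2s'}}{(k^2+(\eta-kt)^2)^2} \lesssim \frac{1}{\langle t\rangle^{2s'}\langle k\rangle^{2s'}}, \qquad k \neq 0,\; s' \in [0,2],\; t \geq 1.
\]
I would verify this by the two-regime split $|\eta|\leq 2|k|t$ (where $\langle k,\eta\rangle \lesssim |k|\langle t\rangle$ and $(k^2+(\eta-kt)^2)^2 \geq k^4 \geq 1$) versus $|\eta| \geq 2|k|t$ (where $|\eta-kt|\geq |\eta|/2$ gives $(k^2+(\eta-kt)^2)^2 \gtrsim \langle k,\eta\rangle^4$ and the inequality follows for $s' \leq 2$ from $\langle\eta\rangle \geq |k|\langle t\rangle$).

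The second step is the perturbative closure. Using $(\pa_v-t\pa_z)^2 = \Delta_L - \pa_{zz}$ and $k^2+(\eta-kt)^2 \geq 1$ on non-zero modes yields the Fourier identities $\|(\pa_v-t\pa_z)^2\phi_{\neq}\|_{H^{\g}} \leq 2\|\Delta_L\phi_{\neq}\|_{H^{\g}}$ and $\|(\pa_v-t\pa_z)\phi_{\neq}\|_{H^{\g}} \leq \|\Delta_L\phi_{\neq}\|_{H^{\g}}$. Combined with the Sobolev algebra at $\g \geq 2$ this gives
\[
\|E\phi_{\neq}\|_{H^{\g}} \lesssim (\|h\|_{H^{\g}} + \|v''\|_{H^{\g}})\|\Delta_L\phi_{\neq}\|_{H^{\g}} \lesssim \ep\nu^{1/6}\|\Delta_L\phi_{\neq}\|_{H^{\g}}.
\]
Rewriting $\Delta_t\phi = f$ as $\Delta_L\phi = f - E\phi$ and absorbing the small term on the left then yields the second inequality of the lemma. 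For the first inequality I would exploit the structural fact that $h$ and $v''$ depend only on $v$, so the $k$-multiplier $\langle\pa_z\rangle^{-s'}$ commutes with multiplication by $h$ or $v''$; repeating the argument above with $\langle\pa_z\rangle^{-s'}$ inserted throughout gives $\|\langle\pa_z\rangle^{-s'}\Delta_L\phi_{\neq}\|_{H^{\g}} \lesssim \|\langle\pa_z\rangle^{-s'}f_{\neq}\|_{H^{\g}}$, and combining with the linear inviscid damping from Step 1 yields the first inequality.

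The main obstacle I anticipate is the clean commutation of the weight $\langle\pa_z\rangle^{-s'}$ through the coefficients in $E$: this works only because $h$ and $v''$ are $z$-independent, which is the structural reason this elliptic bound holds in Sobolev rather than requiring Gevrey regularity, and why the $\ep\nu^{1/6}$ smallness already guaranteed by the bootstrap on the coordinate system is enough to close the estimate uniformly in $s'\in[0,2]$ and $\g\leq \s-1$.
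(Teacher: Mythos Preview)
Your approach is correct and essentially identical to the paper's: write $\Delta_t=\Delta_L+E$, absorb $E$ via the bootstrap smallness $\|h\|_{H^\s}+\|v''\|_{H^{\s-1}}\lesssim\ep\nu^{1/6}$, and combine with the explicit Fourier multiplier estimate for $\Delta_L^{-1}$; your remark that $\langle\pa_z\rangle^{-s'}$ commutes with multiplication by the $z$-independent coefficients $h,v''$ is exactly the structural point the paper invokes immediately after the lemma.

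The only place to tighten is your Case~1 of the multiplier bound. The two facts you record there, $\langle k,\eta\rangle\lesssim|k|\langle t\rangle$ and $(k^2+(\eta-kt)^2)^2\ge k^4$, do not by themselves yield the inequality: for instance at $\eta=0$, $k=1$ one has $\langle k,\eta\rangle^{-2s'}\approx 1$, so the required $\langle t\rangle^{-2s'}$ must come from the denominator $(1+t^2)^2$, which your stated lower bound $k^4=1$ discards. The paper sidesteps the case split with the single algebraic estimate
\[
(k^{2}+\eta^{2})\,(k^{2}+(\eta-kt)^{2})\ \gtrsim\ k^{4}(1+t^{2}),
\]
equivalently $\langle\eta/k\rangle^{2}\bigl(1+(\eta/k-t)^{2}\bigr)\gtrsim 1+t^{2}$, which for $s'\le 2$ gives your multiplier bound in one stroke.
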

\begin{proof}
We get that for $s'\in [0,2]$ and $s\geq 0$ that
\begin{align}
\|P_{\neq}\phi\|_{H^{s}}^2
\nonumber&=\sum_{k\neq 0}\int_{\eta}\langle k,\eta\rangle^{2s} |\hat{\phi}(k,\eta)|^2d\eta\\
\nonumber&\leq \sum_{k\neq 0}\int_{\eta}\f{\langle k,\eta\rangle^{2s}\langle \f{\eta}{k}\rangle^{2s'}}{\langle \f{\eta}{k}\rangle^{2s'}(k^2+(\eta-kt)^2)^2} |\widehat{\Delta_L\phi}(k,\eta)|^2d\eta\\
\nonumber&\lesssim \sum_{k\neq 0}\int_{\eta}\f{\langle k,\eta\rangle^{2s+2s'}}{k^{2s'}(1+t^2)^{s'}} |\widehat{\Delta_L\phi}(k,\eta)|^2d\eta\\
&\lesssim \f{1}{(1+t^2)^{s'}}\|\langle\pa_z\rangle^{-s'}{\Delta_LP_{\neq}\phi}\|_{H^{s+s'}}^2.\label{eq: Lin-inv-dam}
\end{align}
We write $\Delta_t$ as a perturbation of $\Delta_L$ via
\beno
\Delta_LP_{\neq}\phi=P_{\neq}f
+(1-(v')^2)(\pa_v-t\pa_z)^2P_{\neq}\phi
-v''(\pa_v-t\pa_z)P_{\neq}\phi.
\eeno
Thus we get 
\begin{align*}
\|\Delta_LP_{\neq}\phi\|_{H^{\g}}
&\leq \|P_{\neq}f\|_{H^{\g}}
+C\|(1-(v')^2)(\pa_v-t\pa_z)^2P_{\neq}\phi\|_{H^{\g}}
+C\|v''(\pa_v-t\pa_z)P_{\neq}\phi\|_{H^{\g}}.
\end{align*}
then by using the fact that $v''=(h+1)\pa_vh$, \eqref{eq: prod1} and the bootstrap hypotheses, we get
\begin{align*}
\|\Delta_LP_{\neq}\phi\|_{H^{\g}}
&\leq \|P_{\neq}f\|_{H^{\g}}+
C\|h\|_{H^{\s-1}}(1+\|h\|_{\s-1})\|\Delta_LP_{\neq}\phi\|_{H^{\g}}\\
&\quad+C(1+\|h\|_{\s-1})\|h\|_{H^{\s}}\|\Delta_LP_{\neq}\phi\|_{H^{\g}}\\
&\lesssim \|P_{\neq}f\|_{H^{\g}}+C\ep\nu^{\f16}\|\Delta_LP_{\neq}\phi\|_{H^{\g}}
\end{align*}
which implies $\|\Delta_LP_{\neq}\phi\|_{H^{\g}}\lesssim \|P_{\neq}f\|_{H^{\g}}$. The lemma follows from \eqref{eq: Lin-inv-dam} with $s=\s-2-s'$.
\end{proof}

As $(1-(v')^2)$ and $v''$ are zero mode, by the same argument as the proof, we can easily get that for $\g\leq \s-1$
\beq\label{eq: ellip-z}
\|\langle\pa_z\rangle^{\s-\g}\langle\pa_v\rangle^{\g}\Delta_{L}\Delta_t^{-1}f_{\neq}\|_{2}\lesssim \|f_{\neq}\|_{H^{\s}}\lesssim \|\As f\|_2.
\eeq

\begin{lemma}\label{lem: tu}
Under the bootstrap hypotheses, it holds that
\beno
\|\na_{L}P_{\neq}\phi\|_{H^{\s-2}}+\|\tu_{\neq}\|_{H^{\s-2}}\lesssim \f{1}{\langle t\rangle }\|f_{\neq}\|_{H^{\s-1}},
\eeno
and $\g\leq \s-1$
\beno
\|\na_{L}\tu_{\neq}\|_{H^{\g}}\lesssim \|f_{\neq}\|_{H^{\g}}. 
\eeno
\end{lemma}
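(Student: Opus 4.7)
The starting point is to decompose $\tu = -v'(\pa_v - t\pa_z)\phi = -(1+h)(\pa_v - t\pa_z)\phi$, where $h = v'-1$ is a zero mode in $v$. Since $\pa_z\phi = \pa_z P_{\neq}\phi$ and $\pa_v\phi - \pa_v\phi_0 = \pa_v P_{\neq}\phi$, the nonzero mode satisfies $\tu_{\neq} = -(1+h)(\pa_v - t\pa_z)P_{\neq}\phi$. Both target estimates will therefore follow from (i) bounds on $\na_L P_{\neq}\phi$ and $\na_L^2 P_{\neq}\phi$ coming from the second part of Lemma~\ref{lem: lin-inv-dam}, and (ii) Sobolev-algebra estimates for the product with $1+h$, using the bootstrap smallness of $h$ (e.g.\ $\|h\|_{H^{\s-1}}\lesssim 1$ from \eqref{eq:assistant} and $\|h\|_{H^{\s}}\lesssim \ep\nu^{1/6}\ll 1$ from \eqref{eq: B1-Coo-High}).

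On the Fourier side $\widehat{\Delta_L \phi} = -(k^2 + (\eta - kt)^2)\hat\phi$, so $|\widehat{\na_L \phi}|^2 = (k^2 + (\eta - kt)^2)|\hat\phi|^2$ and $|\widehat{\na_L^2 \phi}|^2 \approx (k^2 + (\eta - kt)^2)^2|\hat\phi|^2 = |\widehat{\Delta_L\phi}|^2$. A short case split ($|\eta|\leq 2|k|t$ versus $|\eta|>2|k|t$) gives, for $k\neq 0$, the pointwise bound
\[
\langle k,\eta\rangle^2 \lesssim \langle t\rangle^2\bigl(k^2+(\eta-kt)^2\bigr),
\]
which when inserted in the Plancherel representation yields
\[
\|\na_L P_{\neq}\phi\|_{H^{\s-2}}\lesssim \f{1}{\langle t\rangle}\|\Delta_L P_{\neq}\phi\|_{H^{\s-1}},\qquad \|\na_L^2 P_{\neq}\phi\|_{H^{\g}}\lesssim \|\Delta_L P_{\neq}\phi\|_{H^{\g}}.
\]
Combined with Lemma~\ref{lem: lin-inv-dam}, this produces $\|\na_L P_{\neq}\phi\|_{H^{\s-2}}\lesssim \langle t\rangle^{-1}\|f_{\neq}\|_{H^{\s-1}}$ together with $\|\na_L^2 P_{\neq}\phi\|_{H^{\g}}\lesssim \|f_{\neq}\|_{H^{\g}}$ for $\g\leq \s-1$. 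For the first claim of the lemma, since $\s-2\geq 38>1$, Sobolev algebra gives
\[
\|\tu_{\neq}\|_{H^{\s-2}}\lesssim (1+\|h\|_{H^{\s-2}})\|\na_L P_{\neq}\phi\|_{H^{\s-2}}\lesssim \f{\|f_{\neq}\|_{H^{\s-1}}}{\langle t\rangle}.
\]

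For $\|\na_L \tu_{\neq}\|_{H^{\g}}$ with $\g\leq \s-1$, apply $\na_L$ via Leibniz; since $h$ depends only on $v$, $\pa_z h=0$ and $\na_L h=(0,\pa_v h)$, whence
\[
\na_L \tu_{\neq} = -(0,\pa_v h)(\pa_v-t\pa_z)P_{\neq}\phi - (1+h)\,\na_L(\pa_v - t\pa_z)P_{\neq}\phi.
\]
Sobolev algebra gives
\[
\|\na_L \tu_{\neq}\|_{H^{\g}}\lesssim \|h\|_{H^{\g+1}}\|\na_L P_{\neq}\phi\|_{H^{\g}} + (1+\|h\|_{H^{\g}})\|\na_L^2 P_{\neq}\phi\|_{H^{\g}}\lesssim \|f_{\neq}\|_{H^{\g}},
\]
where the first summand is absorbed via $\|h\|_{H^{\g+1}}\leq \|h\|_{H^{\s}}\lesssim \ep\nu^{1/6}$ and the trivial bound $\|\na_L P_{\neq}\phi\|_{H^{\g}}\leq \|\Delta_L P_{\neq}\phi\|_{H^{\g}}\lesssim \|f_{\neq}\|_{H^{\g}}$ (coming from $k^2+(\eta-kt)^2\geq 1$ for $k\neq 0$). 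The only nontrivial step is the symbol-level inviscid-damping inequality $\langle k,\eta\rangle^2\lesssim \langle t\rangle^2(k^2+(\eta-kt)^2)$; everything else is routine product bookkeeping made possible by the bootstrap smallness of $h$.
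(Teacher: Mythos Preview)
Your approach mirrors the paper's almost exactly: both start from $\tu_{\neq}=-(1+h)(\pa_v-t\pa_z)P_{\neq}\phi$, both reduce to a symbol-level inviscid-damping inequality combined with the elliptic bound $\|\Delta_L P_{\neq}\phi\|_{H^{\g}}\lesssim \|f_{\neq}\|_{H^{\g}}$ from Lemma~\ref{lem: lin-inv-dam}, and both handle the $h$-factor by product estimates using the bootstrap smallness.

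There is, however, a slip in the \emph{direction} of your key pointwise inequality. The bound $\langle k,\eta\rangle^2\lesssim \langle t\rangle^2(k^2+(\eta-kt)^2)$ is true, but it does not by itself produce the $\langle t\rangle^{-1}$ decay: writing
\[
\|\na_L P_{\neq}\phi\|_{H^{\s-2}}^2=\sum_{k\neq 0}\int \f{\langle k,\eta\rangle^{2\s-4}}{k^2+(\eta-kt)^2}\,|\widehat{\Delta_L\phi}_k(\eta)|^2\,d\eta,
\]
your inequality would put $\langle t\rangle^2$ in the numerator, not the denominator. What is actually needed is the reverse-type bound
\[
\langle t\rangle^2\lesssim \langle k,\eta\rangle^2\bigl(k^2+(\eta-kt)^2\bigr)\qquad(k\neq 0),
\]
which follows from the same case split (or simply from $|k|t\leq |\eta|+|\eta-kt|$ together with $k^2+(\eta-kt)^2\geq 1$). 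This is precisely what the paper encodes as $\langle\eta/k\rangle^2(1+(\eta/k-t)^2)\gtrsim 1+t^2$ in \eqref{eq: Lin-inv-dam2}. Once corrected, your argument goes through.

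A minor second point: for $\|\na_L\tu_{\neq}\|_{H^{\g}}$, your product estimate $\|\pa_v h\cdot G\|_{H^{\g}}\lesssim\|h\|_{H^{\g+1}}\|G\|_{H^{\g}}$ tacitly requires $\g>1$ for the algebra property. The paper instead places $h$ (resp.\ $\pa_v h$) in the fixed space $H^{\s-1}$ (resp.\ $H^{\s-2}$), yielding $(1+\|h\|_{H^{\s-1}})\|\Delta_L P_{\neq}\phi\|_{H^{\g}}$, which is valid for all $\g\leq\s-1$. Since every application in the paper has $\g\geq 2$, this distinction is harmless in practice.
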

\begin{proof}
By the definition of $\tu$ we get 
\begin{align*}
\tu_{\neq}=-(1+h)(\pa_v-t\pa_z)P_{\neq}\phi. 
\end{align*}
Here we use the same argument as \eqref{eq: Lin-inv-dam} and get that
\begin{align}\label{eq: Lin-inv-dam2}
\|(\pa_v-t\pa_z)P_{\neq}\phi\|_{H^{s}}^2
\nonumber&=\sum_{k\neq 0}\int_{\eta}\langle k,\eta\rangle^{2s}|\eta-kt|^2 |\hat{\phi}(k,\eta)|^2d\eta\\
\nonumber&\leq \sum_{k\neq 0}\int_{\eta}
\f{\langle k,\eta\rangle^{2s}\langle \f{\eta}{k}\rangle^{2}
|\eta-kt|^2}
{\langle \f{\eta}{k}\rangle^{2}(k^2+(\eta-kt)^2)^2} |\widehat{\Delta_L\phi}(k,\eta)|^2d\eta\\
\nonumber&\leq \sum_{k\neq 0}\int_{\eta}
\f{\langle k,\eta\rangle^{2s}\langle \f{\eta}{k}\rangle^{2}}
{\langle \f{\eta}{k}\rangle^{2}(k^2+(\eta-kt)^2)} |\widehat{\Delta_L\phi}(k,\eta)|^2d\eta\\
\nonumber&\lesssim \sum_{k\neq 0}\int_{\eta}\f{\langle k,\eta\rangle^{2s+2}}{k^{2}(1+t^2)} |\widehat{\Delta_L\phi}(k,\eta)|^2d\eta\\
&\lesssim \f{1}{1+t^2}\|\langle\pa_z\rangle^{-1}{\Delta_LP_{\neq}\phi}\|_{H^{s+1}}^2.
\end{align}
Then by Lemma \ref{lem: lin-inv-dam} and the bootstrap hypotheses, we have 
\begin{align*}
\|\tu_{\neq}\|_{H^{\s-2}}
&\lesssim (1+\|h\|_{H^{\s-2}})\|(\pa_v-t\pa_z)P_{\neq}\phi\|_{H^{\s-2}}\\
&\lesssim \f{(1+\|h\|_{H^{\s-2}})}{\langle t\rangle}\|\Delta_LP_{\neq}\phi\|_{H^{\s-1}}
\lesssim \f{1}{\langle t\rangle}\|f_{\neq}\|_{H^{\s-1}}.
\end{align*}
The first inequality follows from \eqref{eq: Lin-inv-dam2} with $s=\s-2$. 

We also have
\beno
&&\pa_z\tu_{\neq}=-(1+h)(\pa_v-t\pa_z)\pa_zP_{\neq}\phi,\\
&&(\pa_v-t\pa_z)\tu_{\neq}=-(1+h)(\pa_v-t\pa_z)^2P_{\neq}\phi-\pa_vh(\pa_v-t\pa_z)P_{\neq}\phi.
\eeno
Therefore by Lemma \ref{lem: lin-inv-dam} and the bootstrap hypotheses, we get
\begin{align*}
\|\na_{L}\tu_{\neq}\|_{H^{\g}}
\lesssim (1+\|h\|_{H^{\s-1}})\|\Delta_{L}P_{\neq}\phi\|_{H^{\g}}\lesssim \|f_{\neq}\|_{H^{\g}}.
\end{align*}
Thus we proved the lemma.  
\end{proof}

\begin{lemma}\label{lem: elliptical highest}
Under the bootstrap hypotheses, it holds that
\beno
\|\na_{L}\tu_{\neq}\|_{H^{\s}}\lesssim \|\Delta_{L}\Delta_t^{-1}f_{\neq}\|_{H^{\s}}\lesssim \|f_{\neq}\|_{H^{\s}}+\f{\ep\nu^{\f13}}{\langle t\rangle \langle \nu t^3\rangle}\|\pa_vh\|_{H^{\s}},
\eeno
and
\beno
\left\|\sqrt{\f{\pa_tw}{w}}\chi_R\Delta_{L}\Delta_t^{-1}f_{\neq}\right\|_{H^{\s}}\lesssim \left\|\sqrt{\f{\pa_tw}{w}}f_{\neq}\right\|_{H^{\s}}+\f{\ep^2\nu^{\f12}}{\langle \nu t^3\rangle}.
\eeno
\end{lemma}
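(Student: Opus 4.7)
I would start from the elliptic identity already exploited in the proof of Lemma \ref{lem: lin-inv-dam}, namely
\[
\Delta_L P_{\neq}\phi = P_{\neq}f + (1-(v')^2)(\pa_v-t\pa_z)^2 P_{\neq}\phi - v''(\pa_v-t\pa_z)P_{\neq}\phi,
\]
and take the $H^{\s}$ norm of both sides. Using $1-(v')^2=-h(2+h)$ and $v''=(1+h)\pa_vh$, the two error terms are bilinear in $h$ (or $\pa_vh$) and derivatives of $\phi$, so I would attack them with a Bony paraproduct decomposition into low--high (transport-like), high--low (reaction-like), and diagonal remainder pieces. The transport and remainder pieces carry the coefficient in $L^{\infty}$; by the assistant bootstrap \eqref{eq:assistant}, $\|h\|_{L^{\infty}}+\|\pa_vh\|_{L^{\infty}}\lesssim \ep\nu^{1/3}$, so they are $O(\ep\nu^{1/3})\|\Delta_L P_{\neq}\phi\|_{H^{\s}}$ and can be absorbed into the left-hand side for $\ep$ small.

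The critical contributions are the reaction pieces. For the $v''$ term the top $\s$ derivatives fall on $\pa_vh$, producing the factor $\|\pa_vh\|_{H^{\s}}$ paired with $\|(\pa_v-t\pa_z)P_{\neq}\phi\|_{L^{\infty}}$. Redoing the calculation leading to \eqref{eq: Lin-inv-dam2} in Lemma \ref{lem: tu} at $L^{\infty}\hookrightarrow H^2$ level together with the low-regularity enhanced-dissipation bootstrap $\|f_{\neq}\|_{H^s}\lesssim \ep\nu^{1/3}/\langle\nu t^3\rangle$ gives
\[
\|(\pa_v-t\pa_z)P_{\neq}\phi\|_{L^{\infty}}\lesssim \langle t\rangle^{-1}\|\Delta_L P_{\neq}\phi\|_{H^2}\lesssim \frac{\ep\nu^{1/3}}{\langle t\rangle\langle\nu t^3\rangle},
\]
which is exactly the stated remainder factor. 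For the $(1-(v')^2)$ reaction piece I would rewrite the $\phi$-factor using $(\pa_v-t\pa_z)^2=\Delta_L-\pa_{zz}$, so that its $L^{\infty}$ norm is controlled by $\|f_{\neq}\|_{H^2}+\|\pa_{zz}P_{\neq}\phi\|_{H^2}\lesssim \ep\nu^{1/3}/\langle\nu t^3\rangle$ via Lemma \ref{lem: lin-inv-dam}. The coefficient factor $\|h(2+h)\|_{H^{\s}}$ I split as $\lesssim (1+\|h\|_{L^{\infty}})(\|h\|_{L^2}+\|\pa_vh\|_{H^{\s-1}})$; the $\|h\|_{L^2}$-part combined with the enhanced dissipation of $f_{\neq}$ is dominated by $\|f_{\neq}\|_{H^{\s}}$ using bootstrap smallness, while the $\|\pa_vh\|_{H^{\s-1}}\le \|\pa_vh\|_{H^{\s}}$ part contributes to the claimed remainder.

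For the second inequality, with ghost weight $\sqrt{\pa_tw/w}$ and resonant cut-off $\chi_R$, I would run the same paraproduct argument with the weight attached to the top-derivative factor. The point to verify is that, on $\mathrm{supp}(\chi_R)$, the Fourier multiplier $\sqrt{\pa_tw/w}$ is essentially shared between $f_{\neq}$ and $\Delta_L P_{\neq}\phi$ --- this follows from the localization \eqref{eq: w_t/w} of $\pa_tw/w$ on the resonant interval, where the active frequencies dominate and the elliptic relation $\Delta_L P_{\neq}\phi\approx f_{\neq}$ passes under the weight up to a harmless commutator. The reaction piece of $(1-(v')^2)$ then produces the residual $\ep^2\nu^{1/2}/\langle\nu t^3\rangle$, using the higher-regularity bootstrap $\|h\|_{H^{\s}}\lesssim \ep\nu^{1/6}$ from \eqref{eq: B1-Coo-High} paired against $\|f_{\neq}\|_{H^2}\lesssim \ep\nu^{1/3}/\langle\nu t^3\rangle$.

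The main obstacle is the reaction contribution of $(1-(v')^2)(\pa_v-t\pa_z)^2P_{\neq}\phi$: unlike the $v''$ term, it does not naturally carry an extra $\langle t\rangle^{-1}$ factor, because $(\pa_v-t\pa_z)^2P_{\neq}\phi\simeq\Delta_L P_{\neq}\phi - \pa_{zz}P_{\neq}\phi$ is not improved by inviscid damping. One therefore has to pay the smallness from $\|h\|_{H^{\s}}$ combined with the enhanced-dissipation decay of $f_{\neq}$, and carefully separate the low- and high-frequency parts of $h$ to recover the $\|\pa_vh\|_{H^{\s}}$ expression on the right-hand side. Ensuring that the ghost multiplier $\sqrt{\pa_tw/w}$ does not degrade this elliptic gain in the weighted estimate is the other delicate technical point.
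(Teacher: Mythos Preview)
Your overall strategy --- elliptic identity plus paraproduct high--low split, absorb the low--high pieces, and extract the remainder from the high--low pieces --- is exactly the paper's approach. The treatment of the $v''$ reaction piece and of the weighted estimate is also on target. There is, however, one real miscalculation in how you handle the $(1-(v')^2)$ reaction piece.

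You propose to bound the low-frequency factor $(\pa_v-t\pa_z)^2P_{\neq}\phi$ by enhanced dissipation, $\lesssim \ep\nu^{1/3}/\langle\nu t^3\rangle$, and then split the high-frequency coefficient as $\|h\|_{L^2}+\|\pa_vh\|_{H^{\s-1}}$, sending the second part to the remainder $\|\pa_vh\|_{H^{\s}}$. But this produces $\tfrac{\ep\nu^{1/3}}{\langle\nu t^3\rangle}\|\pa_vh\|_{H^{\s}}$, which is missing the extra $\langle t\rangle^{-1}$ in the stated remainder; and you cannot recover it, because $(\pa_v-t\pa_z)^2P_{\neq}\phi$ carries no inviscid damping gain. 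So as written this piece does not close.

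The paper avoids this detour entirely: it does \emph{not} try to route the $(1-(v')^2)$ reaction term through $\|\pa_vh\|_{H^{\s}}$ at all. Instead it keeps the $\phi$-factor as $\|(\pa_v-t\pa_z)^2P_{\neq}\phi\|_{H^{3}}\le\|\Delta_L\Delta_t^{-1}f_{\neq}\|_{H^{3}}\lesssim\|f_{\neq}\|_{H^{3}}$ (Lemma~\ref{lem: lin-inv-dam}), and uses the coordinate bootstrap $\|1-(v')^2\|_{H^{\s}}\lesssim\|h\|_{H^{\s}}\lesssim\ep\nu^{1/6}$ on the coefficient. The resulting $\ep\nu^{1/6}\|f_{\neq}\|_{H^{3}}$ is simply absorbed into $\|f_{\neq}\|_{H^{\s}}$ (or, equivalently, one keeps $\|\Delta_L\Delta_t^{-1}f_{\neq}\|_{H^{\s}}$ and absorbs into the left-hand side). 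The $\tfrac{\ep\nu^{1/3}}{\langle t\rangle\langle\nu t^3\rangle}\|\pa_vh\|_{H^{\s}}$ remainder in the statement comes \emph{only} from the $v''=(1+h)\pa_vh$ reaction piece, where the first-order factor $(\pa_v-t\pa_z)P_{\neq}\phi$ genuinely carries the extra $\langle t\rangle^{-1}$ from \eqref{eq: Lin-inv-dam2}. Once you make this simplification, your argument goes through and matches the paper's.
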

\begin{proof}
We have 
\beno
\Delta_{L}\Delta_t^{-1}f_{\neq}=\Delta_LP_{\neq}\phi=P_{\neq}f
+(1-(v')^2)(\pa_v-t\pa_z)^2P_{\neq}\phi
-v''(\pa_v-t\pa_z)P_{\neq}\phi.
\eeno
Thus we get 
\begin{align*}
\|\Delta_{L}\Delta_t^{-1}f_{\neq}\|_{H^{\s}}
&\lesssim \|f_{\neq}\|_{H^{\s}}
+\|(1-(v')^2)\|_{H^3}\|(\pa_v-t\pa_z)^2P_{\neq}\phi\|_{H^{\s}}\\
&\quad+\|(1-(v')^2)\|_{H^{\s}}\|(\pa_v-t\pa_z)^2P_{\neq}\phi\|_{H^{3}}\\
&\quad+\|(1+h)\pa_vh\|_{H^3}\|\na_{L}P_{\neq}\phi\|_{H^{\s}}
+\|(1+h)\pa_vh\|_{H^{\s}}\|\na_{L}P_{\neq}\phi\|_{H^{3}}\\
&\lesssim \|f_{\neq}\|_{H^{\s}}
+\ep\nu^{\f13}\|\Delta_{L}\Delta_t^{-1}f_{\neq}\|_{H^{\s}}
+\ep\nu^{\f16}\|f_{\neq}\|_{H^3}
+\ep\nu^{\f16}\|\na_{L}P_{\neq}\phi\|_{H^{3}}\\
&\quad+\|\pa_vh\|_{H^{\s}}\|\na_{L}P_{\neq}\phi\|_{H^{3}}\\
&\lesssim \|f_{\neq}\|_{H^{\s}} 
+\ep\nu^{\f16}\|\Delta_{L}\Delta_t^{-1}f_{\neq}\|_{H^{\s}}
+\langle t\rangle^{-1}\|\pa_vh\|_{H^{\s}}\|\Delta_{L}P_{\neq}\phi\|_{H^{4}}\\
&\lesssim \|f_{\neq}\|_{H^{\s}} 
+\ep\nu^{\f16}\|\Delta_{L}\Delta_t^{-1}f_{\neq}\|_{H^{\s}}
+\f{\nu^{\f13}}{\langle t\rangle\langle\nu t^3\rangle}\|\pa_vh\|_{H^{\s}}.
\end{align*}
We also have
\beno
&&\pa_z\tu_{\neq}=-(1+h)(\pa_v-t\pa_z)\pa_zP_{\neq}\phi,\\
&&(\pa_v-t\pa_z)\tu_{\neq}=-(1+h)(\pa_v-t\pa_z)^2P_{\neq}\phi-\pa_vh(\pa_v-t\pa_z)P_{\neq}\phi.
\eeno
Therefore by Lemma \ref{lem: lin-inv-dam} and the bootstrap hypotheses, we get
\begin{align*}
\|\na_{L}\tu_{\neq}\|_{H^{\s}}
\lesssim (1+\|h\|_{H^{\s}})\|\Delta_{L}P_{\neq}\phi\|_{H^{\s}}.
\end{align*}
By taking $\ep$ small enough, we get the first inequality. 

In what follows we use the shorthand 
\beno
G(\xi)=\widehat{1-(v')^2}(\xi).
\eeno
and then 
\begin{align*}
&\sqrt{\f{\pa_tw_k(t,\eta)}{w_{k}(t,\eta)}}1_{t\in I_{k,\eta}}1_{k\neq 0}(k^2+(\eta-kt)^2)\phi_k(t,\eta)\\
&=\sqrt{\f{\pa_tw_k(t,\eta)}{w_{k}(t,\eta)}}1_{t\in I_{k,\eta}}1_{k\neq 0}f_{k}(t,\eta)\\
&\quad-\sqrt{\f{\pa_tw_k(t,\eta)}{w_{k}(t,\eta)}}1_{t\in I_{k,\eta}}1_{k\neq 0}
\int_{|\xi|\geq |\eta-\xi|}{G}(\xi)(\eta-\xi-kt)^2\widehat{\phi}_k(t,\eta-\xi)d\xi\\
&\quad-\sqrt{\f{\pa_tw_k(t,\eta)}{w_{k}(t,\eta)}}1_{t\in I_{k,\eta}}1_{k\neq 0}
\int_{|\xi|\geq |\eta-\xi|}{G}(\eta-\xi)(\xi-kt)^2\widehat{\phi}_k(t,\xi)d\xi\\
&\quad-i\sqrt{\f{\pa_tw_k(t,\eta)}{w_{k}(t,\eta)}}1_{t\in I_{k,\eta}}1_{k\neq 0}
\int_{|\xi|\geq |\eta-\xi|}\widehat{v''}(\xi)(\eta-\xi-kt)\widehat{\phi}_k(t,\eta-\xi)d\xi\\
&\quad-i\sqrt{\f{\pa_tw_k(t,\eta)}{w_{k}(t,\eta)}}1_{t\in I_{k,\eta}}1_{k\neq 0}
\int_{|\xi|\geq |\eta-\xi|}\widehat{v''}(\eta-\xi)(\xi-kt) \widehat{\phi}_k(t,\xi)d\xi\\
&=\sqrt{\f{\pa_tw_k(t,\eta)}{w_{k}(t,\eta)}}1_{t\in I_{k,\eta}}1_{k\neq 0}f_{k}(t,\eta)
+E_{HL}^1+E_{LH}^1
+E_{HL}^2+E_{LH}^2
\end{align*}
We have $t\approx t_{k,\eta}\approx \f{\eta}{k}$ and then
\beno
\f{\pa_tw_{k}(t,\eta)}{w_{k}(t,\eta)}=\f{\langle\nu^{\f13}t_{k,\eta}\rangle^{-(1+\b)}\nu^{\f13}\f{\eta}{k^2} }{1+(\f{\eta}{k}-t)^2}\lesssim \f{1}{k}\langle\nu^{\f13}t\rangle^{-(1+\b)}\nu^{\f13}t\lesssim \f1k.
\eeno
Thus we get
\begin{align*}
\|E_{HL}^1\|_{H^{\s}}
&\lesssim \|G\|_{H^{\s}}\|(\pa_v-t\pa_z)^2P_{\neq}\phi\|_{H^4}\\
&\lesssim \|h\|_{H^{\s}}(\|h\|_{H^3}+1)\|f_{\neq}\|_{H^4}\\
&\lesssim \f{\ep^2\nu^{\f12}}{\langle\nu t^3\rangle}.
\end{align*}
For $E_{LH}^1$, we get
\begin{align*}
\f{\pa_tw_{k}(t,\eta)}{w_{k}(t,\eta)}\f{w_{k}(t,\xi)}{\pa_tw_{k}(t,\xi)}\approx \f{1+|\f{\xi}{k}-t|^2}{1+|\f{\eta}{k}-t|^2}\lesssim \langle\eta-\xi\rangle^2
\end{align*}
Then we get 
\begin{align*}
\|E_{LH}^1\|_{H^{\s}}
&\lesssim \left\|\langle k,\eta\rangle^{\s}\sqrt{\f{\pa_tw_k(t,\eta)}{w_{k}(t,\eta)}}1_{t\in I_{k,\eta}}1_{k\neq 0}
\int_{|\xi|\geq |\eta-\xi|}\widehat{G}(\eta-\xi)(\xi-kt)^2\widehat{\phi}_k(t,\xi)d\xi\right\|_{L^2}\\
&\lesssim \left\|1_{t\in I_{k,\eta}}1_{k\neq 0}
\int_{|\xi|\geq |\eta-\xi|}\langle\eta-\xi\rangle\widehat{G}(\eta-\xi)(\xi-kt)^2\langle k,\xi\rangle^{\s}\sqrt{\f{\pa_tw_k(t,\xi)}{w_{k}(t,\xi)}}\widehat{\phi}_k(t,\xi)d\xi\right\|_{L^2}\\
&\lesssim \|G\|_{H^6}\left\|\sqrt{\f{\pa_tw}{w}}\chi_R\Delta_{L}\Delta_t^{-1}f_{\neq}\right\|_{H^{\s}}
\lesssim \ep \nu^{\f13}\left\|\sqrt{\f{\pa_tw}{w}}\chi_R\Delta_{L}\Delta_t^{-1}f_{\neq}\right\|_{H^{\s}},
\end{align*}
similarly we have
\begin{align*}
\|E_{LH}^2\|_{H^{\s}}
&\lesssim \left\|\langle k,\eta\rangle^{\s}\sqrt{\f{\pa_tw_k(t,\eta)}{w_{k}(t,\eta)}}1_{t\in I_{k,\eta}}1_{k\neq 0}
\int_{|\xi|\geq |\eta-\xi|}\widehat{v''}(\eta-\xi)|\xi-kt|\widehat{\phi}_k(t,\xi)d\xi\right\|_{L^2}\\
&\lesssim \left\|1_{t\in I_{k,\eta}}1_{k\neq 0}
\int_{|\xi|\geq |\eta-\xi|}\langle\eta-\xi\rangle\widehat{v''}(\eta-\xi)|\xi-kt|\langle k,\xi\rangle^{\s}\sqrt{\f{\pa_tw_k(t,\xi)}{w_{k}(t,\xi)}}\widehat{\phi}_k(t,\xi)d\xi\right\|_{L^2}\\
&\lesssim \|v''\|_{H^6}\left\|\sqrt{\f{\pa_tw}{w}}\chi_R\Delta_{L}\Delta_t^{-1}f_{\neq}\right\|_{H^{\s}}
\lesssim \ep \nu^{\f13}\left\|\sqrt{\f{\pa_tw}{w}}\chi_R\Delta_{L}\Delta_t^{-1}f_{\neq}\right\|_{H^{\s}}.
\end{align*}
At last we deal with $T_{HL}^2$, we have $\sqrt{\f{\pa_tw_k(t,\xi)}{w_{k}(t,\xi)}}\lesssim \f{kt}{\eta}$ and then get
\begin{align*}
\|E_{HL}^2\|_{H^{\s}}
&\lesssim \left\|\langle k,\eta\rangle^{\s}\f{kt}{\eta}1_{t\in I_{k,\eta}}1_{k\neq 0}
\int_{|\xi|\geq |\eta-\xi|}\widehat{v''}(\xi)(\eta-\xi-kt) \widehat{\phi}_k(t,\eta-\xi)d\xi\right\|_{L^2}\\
&\lesssim \left\|1_{t\in I_{k,\eta}}1_{k\neq 0}
\int_{|\xi|\geq |\eta-\xi|}\langle \xi\rangle^{\s-1}|\widehat{v''}(\xi)||kt||\eta-\xi-kt||\widehat{\phi}_k(t,\eta-\xi)|d\xi\right\|_{L^2}\\
&\lesssim t\|v''\|_{H^{\s-1}}\|\na_{L}\phi_{\neq}\|_{H^4}\\
&\lesssim  \|h\|_{H^{\s}}\|f_{\neq}\|_{H^{5}}
\lesssim \f{\ep^2\nu^{\f12}}{\langle \nu t^3\rangle}.
\end{align*}
Thus we proved the lemma. 
\end{proof}

By the fact that $u=(0,g)^{T}+(1+h)\na^{\bot}_{z,v}P_{\neq}\phi$, Lemma \ref{lem: lin-inv-dam} and under the bootstrap hypotheses, it holds that
\beq\label{eq: u H^s}
\|u\|_{H^s}\lesssim \|g\|_{H^s}+\|P_{\neq}\phi\|_{H^{s+1}}\lesssim \f{\ep\nu^{\f13}}{\langle t^2\rangle}.
\eeq

\begin{lemma}\label{lem: loss-elliptic-A_E^s}
Under the bootstrap hypotheses for $\ep$ sufficiently small, for $s\leq \s-7$ it holds that
\beno
\|A^s_{E}(P_{\neq}\phi)\|_2\lesssim \f{1}{\langle t\rangle^2}(\|A^s_{E}f\|_2+\|f\|_{H^{\s}}). 
\eeno
\end{lemma}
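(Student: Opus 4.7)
The plan is to start from the Poisson-type identity
\beno
\Delta_L P_{\neq}\phi = P_{\neq}f + E,\qquad E := (1-(v')^2)(\pa_v-t\pa_z)^2 P_{\neq}\phi - v''(\pa_v-t\pa_z)P_{\neq}\phi,
\eeno
and to invert $\Delta_L$ carefully with the multiplier $A^s_E$ attached. The $\langle t\rangle^{-2}$ gain is to come from the same pointwise inequality that powers the proof of Lemma \ref{lem: lin-inv-dam}: for $k\neq 0$,
\beno
k^2+(\eta-kt)^2 \geq \f{k^2(1+t^2)}{2\langle \eta/k\rangle^2}.
\eeno

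For the main contribution $\|A^s_E\Delta_L^{-1}P_{\neq}f\|_2$, I would split $D(t,\eta) = \f{\nu}{3}|\eta|^3 + \f{\nu}{24}(t^3-8|\eta|^3)_+$ into its two natural pieces and analyze phase space according to whether $|\eta|\leq t/2$ or $|\eta|>t/2$. In the region $|\eta|\leq t/2$ (where the second piece of $D$ may be active), for $k\neq 0$ one has $|\eta-kt|\geq |k|t/2$, hence $k^2+(\eta-kt)^2\gtrsim k^2 t^2\gtrsim \langle t\rangle^2$, and therefore the symbol $\f{A^s_E}{k^2+(\eta-kt)^2}$ is pointwise dominated by $A^s_E/\langle t\rangle^2$, yielding a contribution of size $\langle t\rangle^{-2}\|A^s_E f\|_2$; the first piece of $D$ in the same region is handled by $|\eta|^3\leq |\eta|t^2/4$ and becomes a Sobolev cost $\nu\langle t\rangle^{-2}\|f\|_{H^{s+3}}$. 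In the region $|\eta|>t/2$ the second piece of $D$ vanishes, and the displayed pointwise inequality gives
\beno
\f{\langle k,\eta\rangle^s D(t,\eta)}{k^2+(\eta-kt)^2}\lesssim \f{\nu\langle k,\eta\rangle^{s+5}}{\langle t\rangle^2},
\eeno
contributing $\langle t\rangle^{-2}\|f\|_{H^{s+5}}\leq \langle t\rangle^{-2}\|f\|_{H^{\s}}$ thanks to $s+5\leq \s$.

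For the nonlinear correction $A^s_E\Delta_L^{-1}E$, applying the same elliptic inversion to $E$ gives $\|A^s_E\Delta_L^{-1}E\|_2\lesssim \langle t\rangle^{-2}(\|A^s_E E\|_2+\|E\|_{H^{\s}})$. I would then bound $\|A^s_E E\|_2$ via Lemma \ref{lem: product} together with the pointwise Fourier dominations $|\eta-kt|^2\leq k^2+(\eta-kt)^2$ and $|\eta-kt|\leq \sqrt{k^2+(\eta-kt)^2}$ (valid for $k\neq 0$), producing
\beno
\|A^s_E E\|_2 \lesssim \big(\|h\|_{H^{s+4}}+\|v''\|_{H^{s+3}}\big)\|A^s_E\Delta_L P_{\neq}\phi\|_2 \lesssim \ep\nu^{\f16}\big(\|A^s_E f\|_2+\|A^s_E E\|_2\big),
\eeno
where the bootstrap hypothesis \eqref{eq: B1-Coo-High} on $h$ has been used, and the last term is absorbed for $\ep$ small. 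The $H^{\s}$-norm of $E$ is handled in the same way with Lemma \ref{lem: elliptical highest} replacing the $A^s_E$-weighted estimate.

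\textbf{Main obstacle.} The delicate step is the asymmetric decomposition of the symbol $\f{A^s_E(t,k,\eta)}{k^2+(\eta-kt)^2}$: the growing factor $D(t,\eta)$ in the numerator must be either matched against $A^s_E f$ (possible only when $|\eta|\leq t/2$, away from the Orr critical time, where the denominator is as large as $k^2 t^2$) or traded for Sobolev regularity of $f$ (necessary when $|\eta|>t/2$, where the resonance can drive $k^2+(\eta-kt)^2$ down to $k^2$). The restriction $s\leq \s-7$ reflects the worst subcase, which loses five derivatives through $\langle k,\eta\rangle^{s+5}$ and leaves a small margin for the nonlinear error $E$.
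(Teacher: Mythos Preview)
Your proposal is correct, and the phase-space dichotomy $|\eta|\lessgtr t/2$ is exactly the split the paper uses. The organizational difference lies in how the elliptic perturbation is absorbed. You decompose $\Delta_L P_{\neq}\phi = P_{\neq}f + E$, apply the symbol bound to each piece, and then control $\|A^s_E E\|_2$ via Lemma~\ref{lem: product} with a separate absorption. The paper instead works directly with $\phi$: using $D(t,\eta)\approx\nu\max\{|\eta|^3,t^3\}$ (Lemma~\ref{lem: D-D}) it bounds the region $\{2|\eta|\geq t\}$ by $\|P_{\neq}\phi\|_{H^{s+3}}^2$ and the region $\{2|\eta|<t\}$ by $\langle t\rangle^{-4}\|\nu t^3\Delta_L\phi_{\neq}\|_{H^s}^2$, and then in each region simply invokes Lemma~\ref{lem: lin-inv-dam} (which already contains the absorption of the $\Delta_t-\Delta_L$ correction) to pass from $\phi$ or $\Delta_L\phi$ to $f$. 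This is shorter because the perturbative step is reused from the shelf rather than redone at the $A^s_E$ level; your route is more self-contained but otherwise equivalent.

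One small imprecision: for the Sobolev piece you only need $\|E\|_{H^{s+5}}$, and since $s+5\leq\s-2\leq\s-1$ this already falls under Lemma~\ref{lem: lin-inv-dam}, giving $\|E\|_{H^{s+5}}\lesssim\ep\nu^{1/6}\|\Delta_LP_{\neq}\phi\|_{H^{s+5}}\lesssim\|f_{\neq}\|_{H^{s+5}}$ directly. Invoking Lemma~\ref{lem: elliptical highest} at the top regularity $H^{\s}$ is unnecessary and would insert the stray term $\f{\ep\nu^{1/3}}{\langle t\rangle\langle\nu t^3\rangle}\|\pa_v h\|_{H^{\s}}$, which does not fit the stated conclusion.
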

\begin{proof}
By Lemma \ref{lem: D-D}, we have
\begin{align*}
\|A^s_{E}(\na^{\bot}P_{\neq}\phi)\|_2^2
&\approx \|\nu\max\{t^3,\eta^3\}\hat{\phi}_{\neq}\|_{H^{s}}^2\\
&\lesssim \nu^2\sum_{k\neq 0}\int_{2|\eta|\geq t} \langle k,\eta\rangle^{2s+6}|\hat{\phi}_k(t,\eta)|^2d\eta
+\nu^2\sum_{k\neq 0}\int_{2|\eta|< t} t^6\langle k,\eta\rangle^{2s}|\hat{\phi}_k(t,\eta)|^2d\eta\\
&=\Pi_1+\Pi_2.
\end{align*}
By Lemma \ref{lem: lin-inv-dam}, we get 
\begin{align*}
|\Pi_1|\lesssim \|P_{\neq}\phi\|_{H^{s+3}}^2\lesssim \f{1}{\langle t\rangle^4}\|f_{\neq}\|_{H^{s+5}}^2\lesssim \f{1}{\langle t\rangle^4}\|f_{\neq}\|_{H^{\s}}^2,
\end{align*}
and 
\begin{align*}
|\Pi_2|\lesssim \nu^2t^6\|P_{\neq}\phi\|_{H^{s}}^2
&\lesssim \nu^2\sum_{k\neq 0}\int_{2|\eta|< t} t^6\f{\langle k,\eta\rangle^{2s}}{(k^2+|\eta-kt|^2)^2}|\widehat{\Delta_{L}\phi}_k(t,\eta)|^2d\eta\\
&\lesssim \nu^2\sum_{k\neq 0}\int_{2|\eta|< t} t^6\f{\langle k,\eta\rangle^{2s}}{(k^2+|\eta|^2+k^2t^2)^2}|\widehat{\Delta_{L}\phi}_k(t,\eta)|^2d\eta\\
&\lesssim \f{1}{\langle t\rangle^4}\|\nu t^3\Delta_{L}\phi_{\neq}\|_{H^{s}}^2.
\end{align*}
By Lemma \ref{lem: lin-inv-dam}, we then obtain that
\begin{align*}
|\Pi_2|\lesssim
\f{1}{\langle t\rangle^4}\|\nu t^3f_{\neq}\|_{H^{s}}^2\lesssim \f{1}{\langle t\rangle^4}\|A_E^sf_{\neq}\|_2^2.
\end{align*}
Thus we proved the lemma.
\end{proof}

\section{Dissipation error term}\label{Sec: dissipation}
In this section, we will deal with the dissipation error term in \eqref{eq: E_0andEneq}. 

\subsection{Treatment of the zero mode}
By the fact that $\As_0(\eta)\approx \langle \eta\rangle^{\s}\approx 1+|\eta|^{\s}$ and $|\eta|\leq |\xi|+|\eta-\xi|\lesssim \max\{|\xi|,|\eta-\xi|\}$, we get that
\begin{align*}
|E^0|&\lesssim \int_{\xi,\eta}\langle \eta\rangle^{2\s}
|\bar{\hat{f}}_0(\eta)|\left|(\widehat{1-(v')^2}(\eta-\xi))|\xi|^2\hat{f}_0(\xi)\right|d\xi d\eta\\
&\lesssim \int_{\xi,\eta}1_{|\eta|\leq 1}
|\bar{\hat{f}}_0(\eta)|\left|(\widehat{1-(v')^2}(\eta-\xi))|\xi|(|\eta|+|\xi-\eta|)\hat{f}_0(\xi)\right| d\eta d\xi\\
&\quad+\int_{\xi,\eta}1_{|\eta|\geq 1}1_{|\xi-\eta|\geq |\xi|}|\eta|^{2\s}
|\bar{\hat{f}}_0(\eta)|\left|(\widehat{1-(v')^2}(\eta-\xi))|\xi|^2\hat{f}_0(\xi)\right|d\xi d\eta\\
&\quad+\int_{\xi,\eta}1_{|\eta|\geq 1}1_{|\xi-\eta|<|\xi|}|\eta|^{2\s}
|\bar{\hat{f}}_0(\eta)|\left|(\widehat{1-(v')^2}(\eta-\xi))|\xi|^2\hat{f}_0(\xi)\right|d\xi d\eta\\
&\lesssim \int_{\xi,\eta}1_{|\eta|\leq 1}
|\bar{\hat{\pa_vf}}_0(\eta)|\left|(\widehat{1-(v')^2}(\eta-\xi))|\xi|\hat{f}_0(\xi)\right| d\eta d\xi\\
&\quad+\int_{\xi,\eta}1_{|\eta|\leq 1}
|\bar{\hat{f}}_0(\eta)|\left|(\widehat{\pa_v(1-(v')^2)}(\eta-\xi))|\xi|\hat{f}_0(\xi)\right| d\eta d\xi\\
&\quad+\int_{\xi,\eta}1_{|\eta|\geq 1}1_{|\xi-\eta|< |\xi|}
|\eta|^{\s+1}
|\bar{\hat{f}}_0(\eta)|\left|(\widehat{1-(v')^2}(\eta-\xi))|\xi|^{\s+1}\hat{f}_0(\xi)\right|d\xi d\eta\\
&\quad+\int_{\xi,\eta}1_{|\eta|\geq 1}1_{|\xi-\eta|\geq |\xi|}|\eta|^{\s+1}
|\bar{\hat{f}}_0(\eta)|\left|(\widehat{1-(v')^2}(\eta-\xi))|\eta-\xi|^{\s-1}|\xi|^2\hat{f}_0(\xi)\right|d\xi d\eta\\
&\lesssim \nu\|\pa_vf_0\|_{H^2}^2\|1-(v')^2\|_{2}
+\nu\|\pa_vf_0\|_{H^2}\|\pa_v(1-(v')^2)\|_{2}\|f_0\|_{2}\\
&\quad+\nu\|\pa_vf_0\|_{H^{\s}}^2\|1-(v')^2\|_{H^2}
+\nu\|\pa_vf_0\|_{H^{\s}}\|\pa_v(1-(v')^2)\|_{H^{\s-2}}\|f_0\|_{H^4}.
\end{align*}
The purpose of above estimate is to obtain the homogeneous derivative. 
By the fact that $(v')^2-1=(1-(v'))^2+2(v'-1)=h^2+2h$ and 
\beno
\|h^2\|_{H^{s}}\lesssim \|h\|_{H^1}\|h\|_{H^{s}},\quad \|\pa_vh^2\|_{H^s}\lesssim \|h\|_{H^1}\|\pa_vh\|_{H^{s}}\quad s\geq 1,
\eeno
we obtain by the bootstrap hypotheses that
\beq\label{eq:E^0}
\begin{split}
|E^0|&\lesssim \nu (\|h\|_{H^2}+1)\Big(\|\pa_v \As f_{0}\|_2^2\|h\|_{H^{2}}
+\|\pa_vh\|_{H^{\s-2}}\|\pa_vf_0\|_{H^{\s}}\|f_0\|_{H^4}\Big).\\
&\lesssim \ep\nu^{\f13}\nu\|\pa_v \As f_{0}\|_2^2+\ep\nu^{\f13}\nu\|\pa_vh\|_{H^{\s-1}}^2.
\end{split}
\eeq

\subsection{Treatment of the non-zero mode}
We use a paraproduct decomposition in $v$. Then we have 
\begin{align*}
E^{\neq}=E^{\neq}_{LH}+E^{\neq}_{HL}+E^{\neq}_{HH},
\end{align*}
where
\begin{align*}
&E^{\neq}_{LH}=-\sum_{M\geq 8}\nu \int \As f_{\neq}\As \left((1-(v')^2)_{<M/8}(\pa_v-t\pa_z)^2(f_{\neq})_{M}\right)dzdv\\
&E^{\neq}_{HL}=-\sum_{M\geq 8}\nu \int \As f_{\neq}\As \left((1-(v')^2)_{M}(\pa_v-t\pa_z)^2(f_{\neq})_{<M/8}\right)dzdv\\
&{E}^{\neq}_{HH}=-\nu \sum_{M\in \mathbb{D}}\sum_{\f18M\leq M'\leq 8M}\int \As f_{\neq}\As \left((1-(v')^2)_{M}(\pa_v-t\pa_z)^2(f_{\neq})_{M'}\right)dzdv.
\end{align*}
\subsubsection{Treatment of $E^{\neq}_{LH}$}
We have 
\begin{align*}
E^{\neq}_{LH}\lesssim \nu\sum_{M\geq 8} \sum_{k\neq 0}\int_{\eta,\xi}\As |\bar{\hat{f}}_k(\eta)|\As_{k}(\eta)
|\widehat{(1-(v')^2)}(\eta-\xi)_{<M/8}|\xi-kt|^2f_{k}(\xi)_{M}d\xi d\eta.
\end{align*}
By the fact that $\xi\approx \eta\approx M$, $|k,\eta|\approx |k,\xi|$ and 
\beno
|\xi-kt|\lesssim |\xi-\eta|+|\eta-kt|\lesssim \langle \xi-\eta\rangle \sqrt{k^2+|\eta-kt|^2},
\eeno
we have 
\begin{align*}
E^{\neq}_{LH}&\lesssim 
\nu\sum_{M\geq 8} \sum_{k\neq 0}\int_{\eta,\xi}\sqrt{k^2+|\eta-kt|^2}\As |\bar{\hat{f}}_k(\eta)|
|\widehat{\langle\pa_v\rangle(1-(v')^2)}(\eta-\xi)_{<M/8}|\xi-kt|\As_{k}(\xi)f_{k}(\xi)_{M}d\xi d\eta\\
&\lesssim \nu\sum_{M\geq 8}\|(\sqrt{-\Delta_L}\As f_{\neq})_{\sim M}\|_2\|(\sqrt{-\Delta_L}\As f_{\neq})_{M}\|_2\|(1-(v')^2)\|_{H^4},
\end{align*}
which gives 
\begin{align*}
E^{\neq}_{LH}&\lesssim \nu\|(\sqrt{-\Delta_L}\As f_{\neq})\|_2^2\|(1-(v')^2)\|_{H^4},
\end{align*} 

\subsubsection{Treatment of $E^{\neq}_{HL}$}
We have 
\begin{align*}
E^{\neq}_{HL}&\lesssim \nu\sum_{M\geq 8} \sum_{k\neq 0}\int_{\eta,\xi}
\left[1_{|\eta|\leq 16|k|}+1_{|\eta|>16|k|}\right]
\As |\bar{\hat{f}}_k(\eta)|\As_{k}(\eta)\\
&\quad\quad\quad\quad\quad\quad\times 
|\widehat{(1-(v')^2)}(\eta-\xi)_{M}|\xi-kt|^2f_{k}(\xi)_{<M/8}d\xi d\eta\\
&=E^{\neq,z}_{HL}+E^{\neq,v}_{HL}
\end{align*}
For $E^{\neq,z}_{HL}$, we have $|k,\eta|\approx |k|\approx |k,\xi|$ and 
\beno
|\xi-kt|\lesssim |\xi-\eta|+|\eta-kt|\lesssim \langle \xi-\eta\rangle \sqrt{k^2+|\eta-kt|^2},
\eeno
which then implies
\begin{align*}
E^{\neq,z}_{HL}&\lesssim \nu\sum_{M\geq 8} \sum_{k\neq 0}\int_{\eta,\xi}
1_{|\eta|\leq 16|k|}
\As\sqrt{k^2+|\eta-kt|^2} |\bar{\hat{f}}_k(\eta)|\\
&\quad\quad\quad\quad\quad\quad\times 
|\langle\eta-\xi\rangle\widehat{(1-(v')^2)}(\eta-\xi)_{M}|\xi-kt||k|^{\s}f_{k}(\xi)_{<M/8}d\xi d\eta\\
&\lesssim \nu\sum_{M\geq 8} M^{-2}\|(1-(v')^2)_{M}\|_{H^5}\|(\sqrt{-\Delta_L}\As f_{\neq})\|_2^2.
\end{align*}
Thus we have
\beno
E^{\neq,z}_{HL}\lesssim \nu\|(1-(v')^2)\|_{H^5}\|(\sqrt{-\Delta_L}\As f_{\neq})\|_2^2
\eeno

We turn to $E^{\neq,v}_{HL}$. In this case 
$|k,\eta|\approx |\eta|\approx |\eta-\xi|\approx M$, then we get
\begin{align*}
|E^{\neq,v}_{HL}|&\lesssim \nu\sum_{M\geq 8} \sum_{k\neq 0}\int_{\eta,\xi}
1_{|\eta|>16|k|}
\As |\bar{\hat{f}}_k(\eta)||\eta-\xi|\langle\eta-\xi\rangle^{\s-1}|\widehat{(1-(v')^2)}(\eta-\xi)_{M}|\\
&\quad\quad\quad\quad\quad\quad\times 
|\xi-kt|^2f_{k}(\xi)_{<M/8}d\xi d\eta\\
&\lesssim \nu\sum_{M\geq 8} 
\|(f_{\neq})_{\sim M}\|_{H^{\s}}\|\pa_v(1-(v')^2)_{M}\|_{H^{\s-1}}
\langle t\rangle^2\|f_{\neq}\|_{H^{5}}\\
&\lesssim \nu
\|f_{\neq}\|_{H^{\s}}\|\pa_v(1-(v')^2)\|_{H^{\s-1}}
\langle t\rangle^2\|f_{\neq}\|_{H^{5}}.
\end{align*}
\subsubsection{Treatment of $E^{\neq}_{HH}$}
In this case, it holds that $|\eta-\xi|\approx |\xi|\approx M'$. 
We divide into two parts: 
\begin{align*}
|{E}^{\neq}_{HH}|&\lesssim 
\nu \sum_{M\in \mathbb{D}}\sum_{\f18M\leq M'\leq 8M}
\sum_{k\neq 0}\int_{\eta,\xi}
\left[1_{|k|\geq 16|\xi|}+1_{|k|<16|\xi|}\right]\\
&\quad\quad\quad\quad\quad\quad\times
\As |\bar{\hat{f}}_k(\eta)|\As_{k}(\eta)
|\widehat{(1-(v')^2)}(\eta-\xi)_{M'}|\xi-kt|^2f_{k}(\xi)_{M}d\xi d\eta\\
&={E}^{\neq,z}_{HH}+{E}^{\neq,v}_{HH}.
\end{align*}

To treat ${E}^{\neq,z}_{HH}$, we have 
\beno
|k|\lesssim |k,\eta| \lesssim |k|+|\eta-\xi|+|\xi|\lesssim |k|,
\eeno
and
\beno
|\xi-kt|\lesssim |\xi-\eta|+|\eta-kt|\lesssim \langle \xi-\eta\rangle \sqrt{k^2+|\eta-kt|^2}.
\eeno
Therefore we get that
\begin{align*}
{E}^{\neq,z}_{HH}&\lesssim 
\nu \sum_{M\in \mathbb{D}}\sum_{\f18M\leq M'\leq 8M}
\sum_{k\neq 0}\int_{\eta,\xi}
1_{|k|\geq 16|\xi|}
\As\sqrt{k^2+|\eta-kt|^2} |\bar{\hat{f}}_k(\eta)|\\
&\quad\quad\quad\quad\quad\quad\quad\times
\langle \xi-\eta\rangle|\widehat{(1-(v')^2)}(\eta-\xi)_{M'}|\xi-kt||k|^{\s}f_{k}(\xi)_{M}d\xi d\eta\\
&\lesssim \nu \sum_{M\in \mathbb{D}}\|\sqrt{-\Delta_L}\As f\|_{2}\|(\sqrt{-\Delta_L}\As f)_{M}\|_{2}\|(1-(v')^2)_{\sim M}\|_{H^3}\\
&\lesssim \nu \|\sqrt{-\Delta_L}\As f\|_{2}^2\|(1-(v')^2)\|_{H^3}.
\end{align*}

Next we turn to ${E}^{\neq,v}_{HH}$, in which case we have 
\beno
|k,\eta|\lesssim |k|+|\eta|\lesssim |k|+|\eta-\xi|+|\xi| \approx |k|+|\xi|\lesssim |\xi|\approx |\xi-\eta|,
\eeno
and
\beno
|\xi-kt|\lesssim |\xi-\eta|+|\eta-kt|\lesssim \langle \xi-\eta\rangle \sqrt{k^2+|\eta-kt|^2}.
\eeno
Therefore we get that
\begin{align*}
{E}^{\neq,v}_{HH}&\lesssim 
\nu \sum_{M\in \mathbb{D}}\sum_{\f18M\leq M'\leq 8M}
\sum_{k\neq 0}\int_{\eta,\xi}
1_{|k|< 16|\xi|}
\As\sqrt{k^2+|\eta-kt|^2} |\bar{\hat{f}}_k(\eta)|\\
&\quad\quad\quad\quad\quad\quad\quad\times
\langle \xi-\eta\rangle|\widehat{(1-(v')^2)}(\eta-\xi)_{M'}|\xi-kt||\xi|^{\s}f_{k}(\xi)_{M}d\xi d\eta\\
&\lesssim \nu \sum_{M\in \mathbb{D}}\|\sqrt{-\Delta_L}\As f\|_{2}\|(\sqrt{-\Delta_L}\As f)_{M}\|_{2}\|(1-(v')^2)_{\sim M}\|_{H^3}\\
&\lesssim \nu \|\sqrt{-\Delta_L}\As f\|_{2}^2\|(1-(v')^2)\|_{H^3}.
\end{align*}

By the fact that $(v')^2-1=h^2+2h$, the bootstrap hypotheses and \eqref{eq: enha}, we obtain that
\beq\label{eq: E_neq}
\begin{split}
|E^{\neq}|&\lesssim \nu
\|f_{\neq}\|_{H^{\s}}\|\pa_v(1-(v')^2)\|_{H^{\s-1}}
\langle t\rangle^2\|f_{\neq}\|_{H^{5}}
+\nu \|\sqrt{-\Delta_L}\As f\|_{2}^2\|(1-(v')^2)\|_{H^4}\\
&\lesssim \nu(1+\|h\|_{H^2})
\Big(\|f_{\neq}\|_{H^{\s}}\|\pa_vh\|_{H^{\s-1}}
\langle t\rangle^2\|f_{\neq}\|_{H^{5}}
+\|\sqrt{-\Delta_L}\As f\|_{2}^2\|h\|_{H^4}\Big)\\
&\lesssim \nu \ep\nu^{\f13}\|\sqrt{-\Delta_L}\As f\|_{2}^2
+(\ep\nu^{\f13})^2\nu\|\pa_vh\|_{H^{\s-1}}\f{\langle t\rangle^2}{\langle \nu t^3\rangle}.
\end{split}
\eeq

We end the section by proving Proposition \ref{prop: dissipation}. 
\begin{proof}
We get by \eqref{eq: E_0andEneq} that
\begin{align*}
&\int_1^t\left(\nu \int \As f(t')\As \left(\tilde{\Delta}_tf(t')\right)dzdv\right)dt'\\
&\leq -\int_1^t\nu \left\|\sqrt{-\Delta_L}\As f(t')\right\|_2^2dt'
+\int_1^t|E^{\neq}(t')|+|E^0(t')|dt'. 
\end{align*} 
Then by \eqref{eq:E^0} and \eqref{eq: E_neq}, we obtain that
\begin{align*}
&\int_1^t\left(\nu \int \As f(t')\As \left(\tilde{\Delta}_tf(t')\right)dzdv\right)dt'\\
&\leq -\int_1^t\nu \left\|\sqrt{-\Delta_L}\As f(t')\right\|_2^2dt'+C\ep\nu^{\f13}\int_1^t\nu \left\|\sqrt{-\Delta_L}\As f(t')\right\|_2^2dt'\\
&\quad+C\ep\nu^{\f13}\nu\|\pa_vh\|_{L^2_T(H^{\s-1})}^2
+C\int_1^t(\ep\nu^{\f13})^2\nu\|\pa_vh(t')\|_{H^{\s-1}}\f{\langle t\rangle^2}{\langle \nu t'^3\rangle}dt'.
\end{align*} 
Thus by taking $\ep$ small enough and using Proposition \ref{prop: basic estimate}, we get
\begin{align*}
&\int_1^t\left(\nu \int \As f(t')\As \left(\tilde{\Delta}_tf(t')\right)dzdv\right)dt'\\
&\leq -\f78\int_1^t\nu \left\|\sqrt{-\Delta_L}\As f(t')\right\|_2^2dt'
+\ep^2\nu  \|\pa_vh(t')\|_{L^2_{T}H^{\s-1}}\left(\int_1^t\f{1}{\langle \nu^{\f13} t'\rangle^2}dt'\right)^{\f12}\\
&\leq -\f78\int_1^t\nu \left\|\sqrt{-\Delta_L}\As f(t')\right\|_2^2dt'+C\ep^3\nu^{\f23}.
\end{align*}
Thus we proved the proposition. 
\end{proof}

\section{Transport}\label{Sec: Transport}
To treat the transport term, we need consider the commutator. The following lemma gives the key commutator estimate. 
\begin{lemma}\label{lem: w-w}
Assume that $|\xi-\eta|\leq \f{1}{10}|\eta|$, then it holds that
\beno
|w(t,\eta)-w(t,\xi)|\lesssim \f{|\xi-\eta|}{\langle\eta\rangle}\times\left\{\begin{aligned}
&\nu^{-\f13},\quad t\lesssim \nu^{-\f13},\\
&\nu^{\f13\b}t^{1-\b},\quad t\gtrsim \nu^{-\f13}.
\end{aligned}\right.
\eeno
\end{lemma}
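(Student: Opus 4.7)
Since $w(t,\eta)\approx 1$ uniformly by Lemma \ref{lem: total-growth}, the elementary inequality $|e^a-e^b|\lesssim |a-b|$ when $a,b$ are bounded reduces the problem to a Lipschitz-type bound on $\eta\mapsto\log w(t,\eta)$ along the segment from $\xi$ to $\eta$. The plan is to write
\[
\log w(t,\eta)=\sum_{j}L_j(t,\eta),
\]
where $L_j(t,\eta)$ is the contribution of the $j$-th resonant interval $I_{j,\eta}$ up to time $t$. From the definition of $g_j$ in \eqref{eq: g_m} and the recursive construction of $w$,
\[
L_j(t,\eta)=1_{t\ge t_{j,\eta}}\,\langle\nu^{\f13}t_{j,\eta}\rangle^{-(1+\b)}\f{\nu^{\f13}\eta}{j^2}\bigl[\arctan(\min(t,t_{j-1,\eta})-\tfrac{\eta}{j})+\arctan(D_{j,\eta}^-)\bigr],
\]
for $j\eta>0$, $|j|\in[1,E(\sqrt{|\eta|})]$, and $L_j\equiv 0$ otherwise. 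I would then estimate $|\pa_\eta L_j|$ pointwise and sum.

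Three sources of $\eta$-dependence must be tracked in $L_j$: the prefactor $\langle\nu^{\f13}t_{j,\eta}\rangle^{-(1+\b)}\nu^{\f13}\eta/j^2$, the upper arctan argument (through both $\eta/j$ and $t_{j-1,\eta}=\f{2\eta}{2j-1}$), and the lower argument $D_{j,\eta}^-=\f{\eta}{(2j+1)j}$. For the unique \emph{active} index $m^*=m^*(t,\eta)$ with $t\in I_{m^*,\eta}$, the dominant contribution comes from differentiating $\arctan(t-\eta/m^*)$ in $\eta$, giving
\[
|\pa_\eta L_{m^*}|\lesssim \langle\nu^{\f13}t\rangle^{-(1+\b)}\f{\nu^{\f13}\eta/(m^*)^3}{1+(t-\eta/m^*)^2}+\f{\nu^{\f13}}{(m^*)^2}.
\]
For \emph{completed} indices $|j|>|m^*|$, the arctans are $O(1)$ and the derivative is dominated by the prefactor, yielding $|\pa_\eta L_j|\lesssim \nu^{\f13}\langle\nu^{\f13}t_{j,\eta}\rangle^{-(1+\b)}/j^2$ (the derivatives hitting the arctans there produce lower-order $1/\eta$ terms). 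Summing and using $m^*\approx \eta/t$ on the resonant range $t\in[\sqrt{|\eta|},2|\eta|]$, I expect
\[
|\pa_\eta\log w(t,\eta)|\lesssim \f{1}{\langle\eta\rangle}\cdot\begin{cases}\nu^{-1/3},& t\lesssim \nu^{-1/3},\\[1mm]\nu^{\b/3}t^{1-\b},& t\gtrsim \nu^{-1/3},\end{cases}
\]
where both cases are saturated by the $\pa_\eta L_{m^*}$ term at the resonant spike $t\approx \eta/m^*$. Integrating over the interval from $\xi$ to $\eta$ then produces the desired estimate, using the hypothesis $|\xi-\eta|\le |\eta|/10$ to ensure that $\eta$ and $\xi$ are comparable so this supremum governs the whole segment.

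The main obstacle will be the boundary behavior when $\eta$ and $\xi$ lie in different resonant intervals. The partition of $[\sqrt{|\eta|},2|\eta|]$ into the intervals $I_{j,\eta}$ depends on $\eta$, so $\log w(\cdot,\eta)$ is only piecewise-$C^1$ in $\eta$ and can acquire corner contributions from $\eta$ crossing a critical boundary $t=t_{j,\eta}$ or $t=t_{j-1,\eta}$. Here Lemma \ref{lem: 3.2} is crucial: the condition $|\xi-\eta|\le |\eta|/10$ combined with the spacing of the $t_{j,\eta}$'s forces $\eta,\xi$ to sit in the same resonant interval, or in two adjacent ones separated by at most one critical crossing, so that only finitely many endpoint corrections need to be analyzed. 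I expect each such boundary correction to be of order $\pa_\eta L_j\cdot|\xi-\eta|$ at worst, hence already absorbed by the above pointwise estimate on one-sided derivatives.
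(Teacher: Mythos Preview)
Your approach is essentially the same as the paper's: both reduce to a Lipschitz bound on $\eta\mapsto\log w(t,\eta)$, decompose into contributions $L_j$ from each resonant interval, and identify the derivative of $\arctan(t-\eta/m^*)$ at the active index $m^*$ as the dominant term producing the $\nu^{-1/3}$ (resp.\ $\nu^{\pm\b/3}t^{1-\b}$) loss. The paper carries this out as a five-case direct-difference computation (Cases~1--5 in the proof, with Case~3 further split via Lemma~\ref{lem: 3.2}), whereas you propose the equivalent pointwise bound on $|\pa_\eta\log w|$ followed by integration over $[\xi,\eta]$; the two are interchangeable and the algebraic ingredients (the prefactor estimates \eqref{eq: F_1,F_2}, the arctan Lipschitz bound) are identical.

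One correction to your last paragraph: your reading of Lemma~\ref{lem: 3.2} is off. The hypothesis $|\xi-\eta|\le|\eta|/10$ does \emph{not} force the active indices to coincide or be adjacent --- alternative~(c) of that lemma permits $|\eta-\xi|\gtrsim|\eta|/|n|$, so the active indices for $\eta$ and $\xi$ can differ by as many as $O(|\xi-\eta|/t)$ steps. More to the point, for your derivative approach Lemma~\ref{lem: 3.2} is not needed at all: the pointwise bound on $|\pa_\eta\log w(t,\zeta)|$ you derive holds at every $\zeta$ using whatever $m^*(\zeta)$ is active there, and integrating over $\zeta\in[\xi,\eta]$ is insensitive to how many interval boundaries are crossed. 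The paper invokes Lemma~\ref{lem: 3.2} only because it works with the difference $w(t,\eta)/w(t,\xi)-1$ directly and must therefore align the products $\prod G_m(\eta)$ and $\prod G_m(\xi)$, which requires knowing how the two active indices $k,l$ relate (Cases~3a--3c). Your route sidesteps this bookkeeping; the trade-off is that you must verify the piecewise-$C^1$ structure (corners at $t=t_{j,\eta}$ and at values of $\eta$ where $E(\sqrt{|\eta|})$ jumps) does not spoil the fundamental theorem of calculus, which the paper's explicit case analysis handles by direct inspection.
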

Let us admit the lemma and finish the estimate of transport term first. Then proof of the lemma will be present at the end of this section. 

We write 
\begin{align*}
T_{N}&=i\sum_{k,l}\int_{\eta,\xi}\As_k(\eta)\bar{\hat{f}}_k(\eta)
\hat{u}_{k-l}(\eta-\xi)_{<N/8}\cdot (l,\xi)\As_l(\xi)\hat{f}_l(\xi)_{N}\left(\f{\As_k(\eta)}{\As_l(\xi)}-1\right)d\xi d\eta\\
&=i\sum_{k,l}\int_{\eta,\xi}\As_k(\eta)\bar{\hat{f}}_k(\eta)
\hat{u}_{k-l}(\eta-\xi)_{<N/8}\cdot (l,\xi)\As_l(\xi)\hat{f}_l(\xi)_{N}\left(\f{\langle k,\eta\rangle^{\s}}{\langle l,\xi\rangle^{\s}}-1\right)\f{w_{l}(t,\xi)}{w_{k}(t,\eta)}d\xi d\eta\\
&\quad+i\sum_{k,l}\int_{\eta,\xi}\As_k(\eta)\bar{\hat{f}}_k(\eta)
\hat{u}_{k-l}(\eta-\xi)_{<N/8}\cdot (l,\xi)\As_l(\xi)\hat{f}_l(\xi)_{N}\left(\f{w_l(t,\xi)}{w_k(t,\eta)}-1\right)d\xi d\eta\\
&=T_N^1+T_N^2.
\end{align*}
For the first term, we get 
\beno
\left|\f{\langle k,\eta\rangle^{\s}}{\langle l,\xi\rangle^{\s}}-1\right|\lesssim \f{\langle k-l,\eta-\xi\rangle}{\langle l,\xi\rangle},
\eeno
which gives 
\beno
|T_N^1|\lesssim \|\As f_{\sim N}\|_2\|\As f_{N}\|_2\|u\|_{H^{4}}. 
\eeno

Next we will deal with $T_{N}^2$. 
By the support of the integrand, we get 
\beno
N/16\leq |k-l,\xi-\eta|\leq 3N/16,\quad
N/2\leq |l,\xi|\leq 3N/2. 
\eeno
We then set more restrictions on the support of the integrand to make $k,\eta$ and $l,\xi$ be closer. We get 
\begin{align*}
T_{N}^2&=i\sum_{k,l}\int_{\eta,\xi}(\chi^D+(1-\chi^D))
A_k(\eta)\bar{\hat{f}}_k(\eta)
\hat{u}_{k-l}(\eta-\xi)_{<N/8}\cdot (l,\xi)A_l(\xi)\hat{f}_l(\xi)_{N}\left(\f{w_l(t,\xi)}{w_k(t,\eta)}-1\right)d\xi d\eta\\
&=T_{N,D}^2+T_{N,*}^2,
\end{align*}
where $\chi^D$ is a characteristic function (the indicator function) of the set 
\beno
D=\left\{(k,l,\xi,\eta):\  |k-l,\xi-\eta|\leq \f{1}{1000}|l,\xi|\right\}.
\eeno 
Then we get by $|\f{w(t,\xi)}{w(t,\eta)}|\lesssim 1$ that
\beno
|T_{N,*}^2|\lesssim \|A^{\s}f_{\sim N}\|_2\|A^{\s}f_{N}\|_2\|u\|_{H^{4}}. 
\eeno

We rewrite $T_{N,D}^2$ as follows. 
\begin{align*}
T_{N,D}^2&=i\sum_{k\neq l}\int_{\eta,\xi}\chi^D
A_k(\eta)\bar{\hat{f}}_k(\eta)
\hat{u}_{k-l}(\eta-\xi)_{<N/8}\cdot (l,\xi)A_l(\xi)\hat{f}_l(\xi)_{N}\left(\f{w_l(t,\xi)}{w_k(t,\eta)}-1\right)d\xi d\eta\\
&\quad+i\sum_{l}\int_{\eta,\xi}\chi^D
A_k(\eta)\bar{\hat{f}}_l(\eta)
\hat{u}_{0}(\eta-\xi)_{<N/8}\cdot (l,\xi)A_l(\xi)\hat{f}_l(\xi)_{N}\left(\f{w_l(t,\xi)}{w_l(t,\eta)}-1\right)d\xi d\eta\\
&=T_{N,\neq}^2+T_{N,=}^2.
\end{align*}

\subsection{Treatment of $T_{N,=}^2$}
We get by the fact that $u_0=(0,g)$ and that for $|l|\geq 20\max\{|\xi|,|\eta|\}$, $w_l(t,\xi)=w_l(t,\eta)=w(t,\f{l}{20})$ and then 
\begin{align*}
T_{N,=}^2=i\sum_{0\neq l\leq 20\max\{|\xi|,|\eta|\}}\int_{\eta,\xi}\chi^D
A_k(\eta)\bar{\hat{f}}_l(\eta)
\hat{g}(\eta-\xi)_{<N/8}\cdot \xi A_l(\xi)\hat{f}_l(\xi)_{N}\left(\f{w_l(t,\xi)}{w_l(t,\eta)}-1\right)d\xi d\eta. 
\end{align*}
Due to the fact that $0\neq l\leq 20\max\{|\xi|,|\eta|\}\approx |\xi|$, we get $\varrho(l,\eta)\approx |\eta|$. Thus by Lemma \ref{lem: w-w} and Lemma \ref{lem: rho}, we obtain that
\begin{align*}
\left|\f{w_l(t,\xi)}{w_l(t,\eta)}-1\right|
&\lesssim |w(t,\varrho(l,\xi))-w(t,\varrho(l,\eta))|\\
&\lesssim \left(\nu^{-\f13}\chi_{t\lesssim \nu^{-\f13}}(t)+\nu^{\f13\b}t^{1-\b}\chi_{t\gtrsim \nu^{-\f13}}(t)\right)\f{|\varrho(l,\xi)-\varrho(l,\eta)|}{|\varrho(l,\eta)|}\\
&\lesssim \left(\nu^{-\f13}\chi_{t\lesssim \nu^{-\f13}}(t)+\nu^{\f13\b}t^{1-\b}\chi_{t\gtrsim \nu^{-\f13}}(t)\right)\f{|\eta-\xi|}{\eta}.
\end{align*}
Therefore we get
\beno
\begin{split}
|T_{N,=}^2|&\lesssim \|\As f_{\sim N}\|_2\|\As f_{N}\|_2\|g\|_{H^{4}}\nu^{-\f13}\chi_{t\lesssim \nu^{-\f13}}(t)\\
&\quad +\|\As f_{\sim N}\|_2\|\As f_{N}\|_2\| g\|_{H^{4}}\nu^{\f13\b}t^{1-\b}\chi_{t\gtrsim \nu^{-\f13}}(t). 
\end{split}
\eeno

\subsection{Treatment of $T_{N,\neq}^2$}
By the definition of $\varrho(k,\eta)$, we have for $(l,k,\xi,\eta)\in D$ that
\beno
|\varrho(k,\eta)|\approx |\varrho(l,\xi)|\approx |l,\xi|. 
\eeno
We get by Lemma \ref{lem: w-w} and Lemma \ref{lem: rho} that
\begin{align*}
\left|\f{w_l(t,\xi)}{w_k(t,\eta)}-1\right|
&\lesssim |w(t,\varrho(l,\xi))-w(t,\varrho(k,\eta))|\\
&\lesssim \f{|\varrho(l,\xi)-\varrho(k,\eta)|}{|\varrho(k,\eta)|}\left(\nu^{-\f13}\chi_{t\lesssim \nu^{-\f13}}(t)+\nu^{\f13\b}t^{1-\b}\chi_{t\gtrsim \nu^{-\f13}}(t)\right)\\
&\lesssim \f{|l-k,\xi-\eta|}{|l,\xi|}\left(\nu^{-\f13}\chi_{t\lesssim \nu^{-\f13}}(t)+\nu^{\f13\b}t^{1-\b}\chi_{t\gtrsim \nu^{-\f13}}(t)\right),
\end{align*}
which implies that
\begin{align*}
|T_{N,\neq}^2|
\lesssim 
\|\As f_{\sim N}\|_2\|\As f_{N}\|_2
\|u_{\neq}\|_{H^4}\left(\nu^{-\f13}\chi_{t\lesssim \nu^{-\f13}}(t)+\nu^{\f13\b}t^{1-\b}\chi_{t\gtrsim \nu^{-\f13}}(t)\right).
\end{align*}

By the fact that 
\beno
u_{\neq}=h\na_{z,v}^{\bot}P_{\neq}\phi+\na_{z,v}^{\bot}P_{\neq}\phi.
\eeno
We then get by Lemma \ref{lem: lin-inv-dam}(by taking $s'=2$ in the lemma) that
\begin{align*}
\|u_{\neq}\|_{H^4}\lesssim (1+\|h\|_{H^4})\|\na_{z,v}^{\bot}P_{\neq}\phi\|_{H^4}\lesssim \f{1}{\langle t\rangle^2}(1+\|h\|_{H^4})\|f_{\neq}\|_{H^7}.
\end{align*}
Therefore we get 
\beq\label{eq: T_N-est}
\begin{split}
|T_{N}|
&\lesssim 
\|\As f_{\sim N}\|_2\|\As f_{N}\|_2
\big(\|g\|_{H^4}+\|u_{\neq}\|_{H^4}\big)\left(\nu^{-\f13}\chi_{t\lesssim \nu^{-\f13}}(t)+\nu^{\f13\b}t^{1-\b}\chi_{t\gtrsim \nu^{-\f13}}(t)\right)\\
&\lesssim \ep\|\As f_{\sim N}\|_2\|\As f_{N}\|_2
\left(\f{\chi_{t\lesssim \nu^{-\f13}}(t)}{\langle t\rangle^2}+\f{\nu^{\f13\b+\f13}\chi_{t\gtrsim \nu^{-\f13}}(t)}{\langle t\rangle^{1+\b}}\right).
\end{split}
\eeq

Now we are able to prove Proposition \ref{Prop: transport}. 
\begin{proof}
We get by \eqref{eq: T_N-est} and Proposition \ref{prop: basic estimate}, \eqref{eq: L-P-ortho} and \eqref{eq: L-P-ortho2} that
\begin{align*}
\int_1^t\sum_{N\geq 8}|T_N(t')|dt'
&\lesssim 
\ep\int_1^t\sum_{N\geq 8}\|\As f_{\sim N}\|_2\|\As f_{N}\|_2
\left(\f{\chi_{t'\lesssim \nu^{-\f13}}(t')}{\langle t'\rangle^2}+\f{\nu^{\f13\b+\f13}\chi_{t'\gtrsim \nu^{-\f13}}(t')}{\langle t'\rangle^{1+\b}}\right)dt'\\
&\lesssim \ep \sup_{t'\in[1,t]}\|\As f(t')\|_2^2.
\end{align*}
Thus we proved the proposition.
\end{proof}

\subsection{Proof of Lemma \ref{lem: w-w}}
We end this section by proving Lemma \ref{lem: w-w}. 
\begin{proof}
Without loss of the generality, we assume $0<\eta<\xi$. Then according to the relation between $t$ and $\xi,\eta$, we need to consider following 5 cases.

\no{\bf Case 1.} For $t\leq t(\eta)$, $w(t,\eta)=w(t,\xi)=1$. 

\no{\bf Case 2.} For $t(\eta)\leq t\leq t(\xi)$, there exists $l\in [1,E(\sqrt{|\eta|})]$ such that $t\in I_{l,\eta}$, then $|l-E(\sqrt{|\eta|})|\lesssim \sqrt{\xi}-\sqrt{\eta}$ and
\begin{align*}
&|w(t,\eta)-w(t,\xi)|=|w(t,\eta)-1|\\
&\leq \left|\prod_{m=E(\sqrt{|\eta|})}^{l+1}G_m(\eta)\exp\Big( \langle\nu^{\f13}t_{l,\eta}\rangle^{-(1+\b)}\f{\nu^{\f13}\eta}{l^2}\big(\arctan(t-\f{\eta}{l})+\arctan(D_{l,\eta}^-)\big)\Big)-1\right|\\
&\leq\left\{\begin{aligned}
&\exp\left(\sum_{m=E(\sqrt{|\eta|})}^{l}\f{C\nu^{\f13}\eta}{m^2}\right)-1
\leq \nu^{\f13}|\sqrt{\xi}-\sqrt{\eta}|,\quad \sqrt{\eta}\leq  \nu^{-\f13},\\
&\exp\left(\sum_{m=E(\sqrt{|\eta|})}^{l}\f{Cl^{-1+\b}}{(\nu^{\f13}\eta)^{\b}}\right)-1\leq \f{|\sqrt{\xi}-\sqrt{\eta}|}{\sqrt{|\xi|}}\left(\f{\sqrt{|\xi|}}{\nu^{\f13}\eta}\right)^{\b}, \quad \sqrt{\eta}>  \nu^{-\f13},
\end{aligned}\right.\\
&\lesssim {|\eta-\xi|}{\langle \xi\rangle^{-1}}.
\end{align*}
Here we use the fact that $|e^{x}-1|\lesssim |x|$ for $|x|\lesssim 1$. 

\no{\bf Case 3.} For $t(\xi)\leq t\leq 2\eta$, there exist $k,l$ such that $t\in I_{k,\eta}\cap I_{l,\xi}$. By Lemma \ref{lem: 3.2}, we need to consider the following three cases. 

(3a.) $k=l$, let $F_1(m,\eta)=\nu^{\f13} \langle\nu^{\f13}t_{m,\eta}\rangle^{-(1+\b)}\f{\eta}{m^2}$ and $F_2^{\pm}(m,\eta)=\nu^{\f13} \langle\nu^{\f13}t_{m,\eta}\rangle^{-(1+\b)}\f{\eta}{m^2}\arctan(D_{m,\eta}^{\pm})$ then $G_m(\eta)=e^{F^{+}(m,\eta)+F^{-}(m,\eta)}$ and then we get
\beq\label{eq: F_1,F_2}
\begin{split}
&|\pa_{\eta}F_1(m,\eta)|\lesssim \f{F_1(m,\eta)}{\langle\eta\rangle},\quad
|\pa_{\eta}F_2^{\pm}(m,\eta)|\lesssim \f{F_2^{\pm}(m,\eta)}{\langle\eta\rangle},\\
&|\arctan(t-\f{\eta}{l})-\arctan(t-\f{\xi}{l})\big)|
\lesssim \min\left\{\f{|\xi-\eta|}{l},1\right\},\\
&e^{x}-1\leq (e^{x}+1)|x| \lesssim |x|,\quad\text{for}\quad |x|\lesssim 1.
\end{split}
\eeq
Therefore, we obtain that
\begin{align*}
&|w(t,\eta)-w(t,\xi)|=w(t,\xi)\left|\f{w(t,\eta)}{w(t,\xi)}-1\right|\\
&\lesssim 
\left|\prod_{m=E(\sqrt{|\xi|})}^{m=E(\sqrt{|\eta|})+1}
\f{1}{G_m(\xi)}
\prod_{m=E(\sqrt{|\eta|})}^{l+1}
\f{G_m(\eta)}{G_m(\xi)}
\f{\exp\Big(\nu^{\f13} \langle\nu^{\f13}t_{l,\eta}\rangle^{-(1+\b)}\f{\eta}{l^2}\big(\arctan(t-\f{\eta}{l})+\arctan(D_{l,\eta}^-)\big)\Big)}{\exp\Big(\nu^{\f13} \langle\nu^{\f13}t_{l,\xi}\rangle^{-(1+\b)}\f{\xi}{l^2}\big(\arctan(t-\f{\xi}{l})+\arctan(D_{l,\xi}^-)\big)\Big)}-1\right|\\
&\lesssim \left|\prod_{m=E(\sqrt{|\xi|})}^{m=E(\sqrt{|\eta|})+1}
\f{1}{G_m(\xi)}-1\right|
+\left|\prod_{m=E(\sqrt{|\eta|})}^{l+1}
\f{G_m(\eta)}{G_m(\xi)}-1\right|\\
&\quad+\left|\f{\exp\Big(\nu^{\f13} \langle\nu^{\f13}t_{l,\eta}\rangle^{-(1+\b)}\f{\eta}{l^2}\big(\arctan(t-\f{\eta}{l})+\arctan(D_{l,\eta}^-)\big)\Big)}{\exp\Big(\nu^{\f13} \langle\nu^{\f13}t_{l,\xi}\rangle^{-(1+\b)}\f{\xi}{l^2}\big(\arctan(t-\f{\xi}{l})+\arctan(D_{l,\xi}^-)\big)\Big)}-1\right|\\
&\lesssim \left|\prod_{m=E(\sqrt{|\xi|})}^{m=E(\sqrt{|\eta|})+1}\f{1}{G_m(\xi)}-1\right|
+\left|\prod_{m=E(\sqrt{|\eta|})}^{l+1}\f{G_m(\eta)}{G_m(\xi)}-1\right|\\
&\quad+\left|\f{\exp\Big(\nu^{\f13} \langle\nu^{\f13}t_{l,\eta}\rangle^{-(1+\b)}\f{\eta}{l^2}\arctan(D_{l,\eta}^-)\Big)}{\exp\Big(\nu^{\f13} \langle\nu^{\f13}t_{l,\xi}\rangle^{-(1+\b)}\f{\xi}{l^2}\arctan(D_{l,\xi}^-)\Big)}-1\right|
+\left|\f{\exp\Big(\nu^{\f13} \langle\nu^{\f13}t_{l,\eta}\rangle^{-(1+\b)}\f{\eta}{l^2}\Big)}{\exp\Big(\nu^{\f13} \langle\nu^{\f13}t_{l,\xi}\rangle^{-(1+\b)}\f{\xi}{l^2}\Big)}-1\right|\\
&\quad+\left|\exp\Big(\nu^{\f13} \langle\nu^{\f13}t_{l,\eta}\rangle^{-(1+\b)}\f{\eta}{l^2}\big(\arctan(t-\f{\eta}{l})-\arctan(t-\f{\xi}{l})\big)\Big)-1\right|.
\end{align*}
Here and the rest of the proof we will always use the fact that for $x,y\lesssim 1$, $|xy-1|\lesssim |x||y-1|+|x-1|\lesssim |x-1|+|y-1|$.

Then the lemma follows from the following inequalities which follow from \eqref{eq: F_1,F_2} and the fact that $|f(x)-f(y)|\lesssim \|f'(z)\|_{L^{\infty}}|x-y|$.
\beq\label{eq:sum E-E}
\left|\prod_{m=E(\sqrt{|\xi|})}^{m=E(\sqrt{|\eta|})+1}\f{1}{G_m(\xi)}-1\right|
\lesssim |\sqrt{\xi}-\sqrt{\eta}|\f{\nu^{\f13}}{\langle \nu^{\f13}\eta^{\f12}\rangle^{1+\b}}\lesssim |\eta-\xi|\langle\xi\rangle^{-1},
\eeq
and
\beq\label{eq:sum E-l}
\left|\prod_{m=E(\sqrt{|\eta|})}^{l+1}\f{G_m(\eta)}{G_m(\xi)}-1\right|
\lesssim \exp\left(C\f{|\xi-\eta|}{\langle\eta\rangle}\sum_{m=E(\sqrt{|\eta|})}^{l+1}(F^+_2+F^-_2)(m,\eta)\right)-1\lesssim \f{|\xi-\eta|}{\langle\eta\rangle},
\eeq
and
\begin{align*}
&\left|\f{\exp\Big(\nu^{\f13} \langle\nu^{\f13}t_{l,\eta}\rangle^{-(1+\b)}\f{\eta}{l^2}\arctan(D_{l,\eta}^-)\Big)}{\exp\Big(\nu^{\f13} \langle\nu^{\f13}t_{l,\xi}\rangle^{-(1+\b)}\f{\xi}{l^2}\arctan(D_{l,\xi}^-)\Big)}-1\right|
\lesssim \exp\left(C\f{|\xi-\eta|}{\langle\eta\rangle}\right)-1\lesssim \f{|\xi-\eta|}{\langle\eta\rangle},\\
&\left|\f{\exp\Big(\nu^{\f13} \langle\nu^{\f13}t_{l,\eta}\rangle^{-(1+\b)}\f{\eta}{l^2}\Big)}{\exp\Big(\nu^{\f13} \langle\nu^{\f13}t_{l,\xi}\rangle^{-(1+\b)}\f{\xi}{l^2}\Big)}-1\right|
\lesssim \exp\left(C\f{|\xi-\eta|}{\langle\eta\rangle}\right)-1\lesssim \f{|\xi-\eta|}{\langle\eta\rangle},\\
&\left|\exp\Big(\nu^{\f13} \langle\nu^{\f13}t_{l,\eta}\rangle^{-(1+\b)}\f{\eta}{l^2}\big(\arctan(t-\f{\eta}{l})-\arctan(t-\f{\xi}{l})\big)\Big)-1\right|\\
&\lesssim \nu^{\f13} \langle\nu^{\f13}t_{l,\eta}\rangle^{-(1+\b)}\f{\eta}{l^2}\big(\arctan(t-\f{\eta}{l})-\arctan(t-\f{\xi}{l})\big)\\
&\lesssim \nu^{\f13} \langle\nu^{\f13}t_{l,\eta}\rangle^{-(1+\b)}\f{\eta}{l^2}\min\left\{\f{|\xi-\eta|}{l},1\right\}
\lesssim \left\{\begin{aligned}
&\nu^{-\f13}\f{|\xi-\eta|}{\eta},\quad t\approx \f{\eta}{l}\lesssim \nu^{-\f13},\\
&\nu^{\f13\b}t^{1-\b}\f{|\xi-\eta|}{\eta},\quad t\approx \f{\eta}{l}\gtrsim \nu^{-\f13}.
\end{aligned}\right. 
\end{align*}
 
(3b.) $k\neq l$, $|t-\f{\eta}{k}|\gtrsim \f{\eta}{k^2}$ and $|t-\f{\xi}{l}|\gtrsim \f{\xi}{l^2}$ with $k<l$. We have  
\begin{align}\label{eq:w-w}
\nonumber&|w(t,\eta)-w(t,\xi)|=w(t,\xi)\left|\f{w(t,\eta)}{w(t,\xi)}-1\right|\\
\nonumber&\lesssim \left|\prod_{m=E(\sqrt{|\xi|})}^{m=E(\sqrt{|\eta|})+1}\f{1}{G_m(\xi)}-1\right|+
\left|\prod_{m=E(\sqrt{|\eta|})}^{\max\{l,k\}+1}\f{G_m(\eta)}{G_m(\xi)}-1\right|
+\left|\prod_{m=\max\{l,k\}}^{\min\{k,l\}+1}G_m(\eta)-1\right|\\
&\quad+\left|\f{\exp\Big(\nu^{\f13} \langle\nu^{\f13}t_{k,\eta}\rangle^{-(1+\b)}\f{\eta}{k^2}\big(\arctan(t-\f{\eta}{k})+\arctan(D_{k,\eta}^-)\big)\Big)}{\exp\Big(\nu^{\f13} \langle\nu^{\f13}t_{l,\xi}\rangle^{-(1+\b)}\f{\xi}{l^2}\big(\arctan(t-\f{\xi}{l})+\arctan(D_{l,\xi}^-)\big)\Big)}-1\right|.
\end{align}
Let $F_3(t,k,\eta)=\nu^{\f13} \langle\nu^{\f13}t_{k,\eta}\rangle^{-(1+\b)}\f{\eta}{k^2}\big(\arctan(t-\f{\eta}{k})+\arctan(D_{k,\eta}^-)\big)$, we get 
\beno
|F_3|\lesssim k^{-1},\quad 
|\pa_kF_3|\lesssim \f{1}{\langle k\rangle},\quad 
|\pa_{\eta}F_3|\lesssim \f{1}{\langle \eta\rangle},
\eeno
where we use the fact that $\eta\gtrsim k^2$. Then the lemma follows from \eqref{eq:sum E-E}, \eqref{eq:sum E-l} and the following two inequalities
\begin{align*}
&\left|\f{\exp\Big(\nu^{\f13} \langle\nu^{\f13}t_{k,\eta}\rangle^{-(1+\b)}\f{\eta}{k^2}\big(\arctan(t-\f{\eta}{k})+\arctan(D_{k,\eta}^-)\big)\Big)}{\exp\Big(\nu^{\f13} \langle\nu^{\f13}t_{l,\xi}\rangle^{-(1+\b)}\f{\xi}{l^2}\big(\arctan(t-\f{\xi}{l})+\arctan(D_{l,\xi}^-)\big)\Big)}-1\right|\\
&\lesssim e^{F_3(t,k,\eta)-F_3(t,l,\xi)}-1\lesssim |F_3(t,k,\eta)-F_3(t,l,\xi)|\\
&\lesssim \f{1}{\langle k\rangle}|k-l|+\f{1}{\langle \eta\rangle}|\xi-\eta|\lesssim \f{|\xi-\eta|}{\langle \eta\rangle},
\end{align*}
and 
\begin{align*}
\left|\prod_{m=\max\{k,l\}}^{\min\{k,l\}+1}G_m(\eta)-1\right|
\lesssim |l-k|\f{\f{\nu^{\f13}\eta}{l^2}}{\langle\f{\nu^{\f13}\eta}{l}\rangle^{1+\b}}\lesssim \f{|k-l|}{l}\lesssim \f{|\xi-\eta|}{\langle \eta\rangle},
\end{align*}
where we use the fact that $|k-l|\lesssim \langle \f{\xi-\eta}{t}\rangle$, $l\approx \f{\eta}{t}$. 

(3c.) $|\xi-\eta|\gtrsim \f{\xi}{l}\approx\f{\eta}{k}$. 
In this case, we have
\begin{align*}
|w(t,\xi)-w(t,\eta)|\lesssim 1\lesssim k\lesssim \f{|\xi-\eta|}{\langle \eta\rangle}. 
\end{align*}

\no{\bf Case 4.} For $2\eta\leq t\leq 2\xi$, then $t\in I_{1,\xi}$ and 
\begin{align*}
&|w(t,\eta)-w(t,\xi)|=w(t,\xi)\left|\f{w(2\eta,\eta)}{w(t_{l,\xi},\xi)g_{l}(t-\f{\xi}{l},\xi)}-1\right|\\
&\lesssim \left|\f{w(2\eta,\eta)}{w(t_{l,\xi},\xi)g_{l}(t-\f{\xi}{l},\xi)}-1\right|\\
&\lesssim \left|\prod_{m=E(\sqrt{|\xi|})}^{m=E(\sqrt{|\eta|})+1}\f{1}{G_m(\xi)}-1\right|
+\left|\prod_{m=E(\sqrt{|\eta|})}^{2}\f{G_m(\eta)}{G_m(\xi)}-1\right|\\
&\quad+\left|\f{\exp\Big(\nu^{\f13} \langle\nu^{\f13}t_{1,\eta}\rangle^{-(1+\b)}\eta\big(\arctan(\eta)+\arctan(\f13\eta)\big)\Big)}{\exp\Big(\nu^{\f13} \langle\nu^{\f13}t_{1,\xi}\rangle^{-(1+\b)}\xi\big(\arctan(t-\xi)+\arctan(\f13\xi)\big)\Big)}-1\right|.
\end{align*}
Thus the lemma follows from \eqref{eq:sum E-E}, \eqref{eq:sum E-l} and the following inequalities 
\begin{align*}
&\left|\exp\Big(\nu^{\f13} \langle\nu^{\f13}t_{1,\xi}\rangle^{-(1+\b)}\xi\big(\arctan(t-\xi)+\arctan(D_{1,\xi}^-)\big)\Big)-1\right|\\
&\lesssim \bigg|\nu^{\f13} \langle\nu^{\f13}t_{1,\xi}\rangle^{-(1+\b)}\xi\big(\arctan(t-\xi)+\arctan(\f13\xi)\big)\\
&\quad-\Big(\nu^{\f13} \langle\nu^{\f13}t_{1,\eta}\rangle^{-(1+\b)}\eta\big(\arctan(\eta)+\arctan(\f13\eta)\big)\Big)\bigg|\\
&\lesssim \left|\arctan(t-\xi)-\arctan(\eta)\right|+|\xi-\eta|\langle \xi\rangle^{-1}\\
&\lesssim \max\{\arctan(\xi)-\arctan(\eta),\arctan(\eta)-\arctan(2\eta-\xi)\}+|\xi-\eta|\langle \xi\rangle^{-1}\\
&\lesssim |\xi-\eta|\langle \xi\rangle^{-1}. 
\end{align*}
\no{\bf Case 5.} For $t\geq 2\xi$. We get by \eqref{eq:sum E-E} and \eqref{eq:sum E-l} that
\begin{align*}\label{eq:w-w}
\nonumber&|w(2\eta,\eta)-w(2\xi,\xi)|=w(2\xi,\xi)\left|\f{w(2\eta,\eta)}{w(2\xi,\xi)}-1\right|\\
\nonumber&\lesssim \left|\prod_{m=E(\sqrt{|\xi|})}^{m=E(\sqrt{|\eta|})+1}\f{1}{G_m(\xi)}-1\right|+
\left|\prod_{m=E(\sqrt{|\eta|})}^{1}\f{G_m(\eta)}{G_m(\xi)}-1\right|\\
&\lesssim |\xi-\eta|\langle \xi\rangle^{-1}.
\end{align*}
Thus we proved the lemma. 
\end{proof}

\section{Remainder}\label{Sec: Remainder}
In this section we deal with the remainder and prove Proposition \ref{Prop: Remainder}. Now the commutator can not gain us anything so we may as well treat each term separately. We rewrite both terms on the Fourier side:
\begin{align*}
\mathcal{R}&=\sum_{N\in \mathbb{D}}\sum_{N'\approx N}\sum_{k,l}\int_{\eta,\xi} \As\bar{\hat{f}}_{k}(\eta)\As_{k}(\eta)\hat{u}_{l}(\xi)_{N}\widehat{\na f}_{k-l}(\eta-\xi)_{N'}d\eta d\xi\\
&\quad+\sum_{N\in \mathbb{D}}\sum_{N'\approx N}\sum_{k,l}\int_{\eta,\xi} \As\bar{\hat{f}}_{k}(\eta)\hat{u}_{l}(\xi)_{N}\As_{k-l}(\eta-\xi)\widehat{\na f}_{k-l}(\eta-\xi)_{N'}d\eta d\xi.
\end{align*}
On the support of the integrand, $|l,\xi|\approx |k-l,\eta-\xi|$ thus 
\begin{align*}
\As_{k}(\eta)\approx \langle k,\eta\rangle^{\s}\lesssim \langle l,\xi\rangle^{\s}+\langle k-l,\eta-\xi\rangle^{\s}\approx \langle l,\xi\rangle\langle k-l,\eta-\xi\rangle^{\s-1}\approx \As_{k-l}(\eta-\xi),
\end{align*}
which implies that
\begin{align*}
|\mathcal{R}|\lesssim \sum_{N\in \mathbb{D}}\|\As f\|_2\|u_N\|_{H^{3}}\|f_{\sim N}\|_{H^{\s}}. 
\end{align*}

Therefore we get by \eqref{eq: u H^s}
\beq\label{eq: Reminder}
|\mathcal{R}|\lesssim \|f\|_{H^{\s}}^2\|u\|_{H^{3}}\lesssim \f{\ep\nu^{\f13}}{\langle t^2\rangle}\|\As f\|_{2}^2,
\eeq
which gives Proposition \ref{Prop: Remainder}.

\section{Reaction}\label{Sec: reaction}
In this section, we deal with the reaction term and prove Proposition \ref{prop: reaction}. 
Focus first on an individual frequency shell and divide each into several natural pieces
\begin{align*}
R_{N}=R_{N}^1+R_{N}^{\ep,1}+R_{N}^2+R_{N}^3,
\end{align*}
where 
\begin{align*}
R_{N}^1&=\sum_{k,l\neq 0}\int_{\eta,\xi}\As \bar{\hat{f}}_{k}(\eta)\As_{k}(\eta)(\eta l-\xi k)\hat{\phi}_{l}(\xi)_{N}\hat{f}_{k-l}(\eta-\xi)_{<N/8}d\eta d\xi\\
R_{N}^{\ep,1}&=-\sum_{k,l\neq 0}\int_{\eta,\xi}\As\bar{\hat{f}}_{k}(\eta)\As_{k}(\eta)\left[\widehat{(1-v')\na^{\bot}\phi_l}\right](\xi)_N\cdot\widehat{\na f}_{k-l}(\eta-\xi)_{<N/8}d\eta d\xi\\
R^2_N&=\sum_{k}\int_{\eta,\xi}\As\bar{\hat{f}}_{k}(\eta)\As_{k}(\eta)\widehat{g}(\xi)_N\widehat{\pa_vf}_k(\eta-\xi)_{<N/8}d\eta d\xi\\
R_N^3&=-\sum_{k,l}\int_{\eta,\xi}\As\bar{\hat{f}}_{k}(\eta)\As_{k-l}(\eta-\xi)\hat{u}_{l}(\xi)_{N}\widehat{\na f}_{k-l}(\eta-\xi)_{<N/8}d\eta d\xi.
\end{align*}
\subsection{Main contribution}
The main contribution comes from $R_{N}^1$. We subdivide this integral depending on whether or not $(l,\xi)$ and/or $(k,\eta)$ are resonant as each combination requires a slightly different treatment. Define the partition: 
\begin{align*}
1&=1_{t\notin I_{l,\eta}, t\notin I_{l,\xi}}+1_{t\notin I_{l,\eta}, t\in I_{l,\xi}}+1_{t\in I_{l,\eta}, t\notin I_{l,\xi}}+1_{t\in I_{l,\eta}, t\in I_{l,\xi}}\\
&=\chi^{NR,NR}+\chi^{NR,R}+\chi^{R,NR}+\chi^{R,R},
\end{align*}
where the $NR$ and $R$ denotes 'non-resonant' and 'resonant' respectively referring to $(k,\eta)$ and $(l,\xi)$. Correspondingly, denote 
\begin{align*}
R_{N}^1&=\underbrace{\sum_{l\neq 0}\int_{\eta,\xi}\chi^D
 \As\bar{\hat{f}}_{l}(\eta)\As_{l}(\eta)(\eta l-\xi l)\hat{\phi}_{l}(\xi)_{N}\hat{f}_{0}(\eta-\xi)_{<N/8}d\eta d\xi}_{R_{N,D}}\\
&\quad+\sum_{l\neq 0}\int_{\eta,\xi}\left[\chi^{NR,NR}+\chi^{NR,R}+\chi^{R,NR}+\chi^{R,R}\right](1-\chi^D)\\
&\quad\quad\underbrace{\quad\quad\quad \times
 \As\bar{\hat{f}}_{l}(\eta)\As_{l}(\eta)(\eta l-\xi l)\hat{\phi}_{l}(\xi)_{N}\hat{f}_{0}(\eta-\xi)_{<N/8}d\eta d\xi}_{R_{N,=}^{NR,NR}+R_{N,=}^{NR,R}+R_{N,=}^{R,NR}+R_{N,=}^{R,R}}\\
&\quad+\underbrace{\sum_{k,l\neq 0,k\neq l}\int_{\eta,\xi}
(1-\chi^{D_1})\As\bar{\hat{f}}_{k}(\eta)\As_{k}(\eta)(\eta l-\xi k)\hat{\phi}_{l}(\xi)_{N}\hat{f}_{k-l}(\eta-\xi)_{<N/8}d\eta d\xi}_{R_{N,\neq,*}}\\
&\quad+\sum_{k,l\neq 0,k\neq l}\int_{\eta,\xi}
\left[\chi^{NR,NR}+\chi^{NR,R}+\chi^{R,NR}+\chi^{R,R}\right]\chi^{D_1}\\
&\quad\quad\underbrace{\quad\quad\quad \times \As\bar{\hat{f}}_{k}(\eta)\As_{k}(\eta)(\eta l-\xi k)\hat{\phi}_{l}(\xi)_{N}\hat{f}_{k-l}(\eta-\xi)_{<N/8}d\eta d\xi}_{R_{N}^{NR,NR}+R_{N}^{NR,R}+R_{N}^{R,NR}+R_{N}^{R,R}}\\
&=R_{N,D}+R_{N,=}^{NR,NR}+R_{N,=}^{NR,R}+R_{N,=}^{R,NR}+R_{N,=}^{R,R}\\
&\quad +R_{N,\neq,*}+R_{N}^{NR,NR}+R_{N}^{NR,R}+R_{N}^{R,NR}+R_{N}^{R,R},
\end{align*}
where $\chi^D$ is a characteristic function (the indicator function) of the set 
\beno
D=\left\{(l,\xi):\ |l|\geq \f54|\xi|\right\}, 
\eeno
and $\chi^{D_1}$ is a characteristic function (the indicator function) of the set 
\beno
D_1=\left\{(l,k,\xi,\eta):\ |l|\leq |\xi|,\ |l-k,\xi-\eta|\leq \f{1}{1000}|l,\xi|\right\}, 
\eeno

\subsubsection{Treatment of $R_{N,D}$}
For the case $|l|\geq \f54|\xi|$, we get for $t\geq 1$, 
\beno
\f{\As_{l}(\eta)|l|}{l^2+|lt-\xi|^2}\lesssim \f{\As_l(\xi)|l|}{l^2+t^2l^2}\lesssim \f{|l|^{\s-1}}{1+t^2},
\eeno
which implies that
\ben\label{eq: R_N,D}
\begin{split}
|R_{N,D}|&\lesssim \f{1}{\langle t^2\rangle}\|\As f_{\sim N}\|_{L^2}\|\langle\pa_z\rangle^{\s-1} \Delta_L\Delta_t^{-1}P_{\neq}f_N\|_{L^2}
\|f_{0}\|_{H^{3}}\\
&\lesssim \f{\ep \nu^{\f13}}{\langle t^2\rangle}\|\As f\|_2^2.
\end{split}
\een

The next 4 subsections we will use the fact that for $|l|\leq \f54|\xi|$ and $|\xi-\eta|\leq \f{3}{8}|l,\xi|\leq \f{27}{40}|\xi|$, we have $|\eta|\geq \f{13}{40}|\xi|\geq \f{13}{50}|l|$, which gives that 
\beq\label{eq: w_l=w}
w_l(t,\eta)=w(t,\eta), \quad w_l(t,\xi)=w(t,\xi). 
\eeq
\subsubsection{Treatment of the zero mode $R_{N,=}^{NR,NR}$}
We have 
\begin{align*}
R_{N,=}^{NR,NR}&\lesssim 
\sum_{l\neq 0}\int_{\eta,\xi}\chi^{NR,NR}(1-\chi^D)
\left|\As\bar{\hat{f}}_{l}(\eta)\f{\As_{l}(\eta)|l|}{l^2+|lt-\xi|^2}\widehat{\Delta_L\Delta_t^{-1}f}_{l}(\xi)_{N}\widehat{\pa_vf}_{0}(\eta-\xi)_{<N/8}\right|d\eta d\xi
\end{align*}

According to the relation between $t$ and $\xi$, we have the following 5 cases. 

{\bf Case 1. } $t\leq \max\{t(\xi),t(\eta)\}\approx \sqrt{|\xi|}\approx \sqrt{N}$. Then in this case
\ben\label{eq: tbd}
\f{\As_{l}(\eta)|l|}{l^2+|lt-\xi|^2}\lesssim \f{\As_l(\xi)|l|}{l^2(1+\f{|\xi|^2}{l^4})}\lesssim \f{\As_l(\xi)}{\sqrt{N}},
\een
which implies
\begin{align*}
|R_{N,=}^{NR,NR}|
&\lesssim \f{1}{\langle t\rangle}\|\As f_{\sim N}\|_{L^2}\|\As\Delta_L\Delta_t^{-1}P_{\neq}f_N\|_{L^2}\|\pa_vf_{0}\|_{H^{3}}\\
&\lesssim \f{\ep\nu^{\f13}}{\langle \nu^{\f12}t^{\f12}\rangle\langle t\rangle}\|\As f_{\sim N}\|_{L^2}\|\As\Delta_L\Delta_t^{-1}P_{\neq}f_N\|_{L^2}.
\end{align*}

{\bf Case 2. } $t\geq 2|\xi|$ or $t\geq 2|\eta|$. Then in this case, 
\ben\label{eq:big1}
\f{|l|}{l^2+|lt-\xi|^2}\lesssim \f{1}{1+t^2},
\een
and
\begin{align}
\f{|l|}{l^2+|lt-\xi|^2}
\nonumber&\lesssim \f{|l|}{l^2+|lt-\xi|^2}-\f{|l|}{l^2+|lt-\eta|^2}+\f{|l|}{l^2+|lt-\eta|^2}\\
\nonumber&\lesssim \f{|l||\eta-\xi|(2|\xi-lt|+|\xi-\eta|)}{(l^2+|lt-\eta|^2)(l^2+|lt-\xi|^2)}+\f{|l|}{l^2+|lt-\eta|^2}\\
&\lesssim \f{|l|\langle\xi-\eta\rangle^2}{l^2+|lt-\eta|^2}
\lesssim \f{\langle\xi-\eta\rangle^2}{1+t^2},\label{eq:big2}
\end{align}
which implies that
\begin{align*}
|R_{N,=}^{NR,NR}|
&\lesssim \f{1}{\langle t^2\rangle}\|\As f_{\sim N}\|_{L^2}\|\As\Delta_L\Delta_t^{-1}P_{\neq}f_N\|_{L^2}
\|f_{0}\|_{H^{5}}\\
&\lesssim \f{\ep\nu^{\f13}}{\langle t^2\rangle}\|\As f_{\sim N}\|_{L^2}\|\As\Delta_L\Delta_t^{-1}P_{\neq}f_N\|_{L^2}.
\end{align*}

{\bf Case 3. } $t\in I_t(\xi)\cap I_t(\eta)$. In this case there exists $k,l'$ so that $t\in I_{k,\eta}\cap I_{l',\xi}$. By Lemma \ref{lem: 3.2}, we get $k\approx l'$. 

If $l\leq \f12\min\{|l'|,|k|\}$, then 
\begin{align*}
\f{|l|}{l^2(1+|t-\f{\xi}{l}|^2)}
&\lesssim \sqrt{\f{|l|}{l^2(1+|t-\f{\eta}{l}|^2)}}\sqrt{\f{|l|}{l^2(1+|t-\f{\xi}{l}|^2)}}
\langle\xi-\eta\rangle\\
&\lesssim \f{\langle\xi-\eta\rangle}{l(1+\f{\xi^2}{l^2})}
\lesssim \f{\langle\xi-\eta\rangle}{1+\f{\xi^2}{l'^2}}\lesssim  \f{\langle\xi-\eta\rangle}{1+t^2},
\end{align*}
here we use the fact that $t\approx \f{\xi}{l'}$. 

If $l>\f12\min\{|l'|,|k|\}$, 
then $\f{1}{l^2}\lesssim\f{1}{\min\{|l'|,|k|\}^2}\lesssim \f{1}{|l'k|}$ 
and by the fact that for $t\in I_{l',\xi}\cap I_{k,\eta}$ and $t\notin I_{l,\xi}\cup I_{l,\eta}$, 
it holds that $|t-\f{\eta}{l}|\geq |t-\f{\eta}{k}|$ and $|t-\f{\xi}{l}|\geq |t-\f{\xi}{l'}|$. 
Thus we get by \eqref{eq: w_l=w}, 
\begin{align*}
\f{|l|\langle\nu^{\f12}(\f{\xi}{k})^{\f12}\rangle^{-1}}{l^2(1+|t-\f{\xi}{l}|^2)}
&\lesssim \sqrt{\f{|l|}{l^2(1+|t-\f{\eta}{l}|^2)}}\sqrt{\f{|l|}{l^2(1+|t-\f{\xi}{l}|^2)}}
\langle\xi-\eta\rangle \langle\nu^{\f12}(\f{\xi}{k})^{\f12}\rangle^{-1}\\
&\lesssim \sqrt{\f{1}{k(1+|t-\f{\eta}{k}|^2)}}\sqrt{\f{1}{l'(1+|t-\f{\xi}{l'}|^2)}}
\langle\xi-\eta\rangle \langle\nu^{\f12}(\f{\xi}{k})^{\f12}\rangle^{-1}\\
&\lesssim \sqrt{\f{\eta}{k^2(1+|t-\f{\eta}{k}|^2)}}\sqrt{\f{\xi}{l'^2(1+|t-\f{\xi}{l'}|^2)}}
\langle\xi-\eta\rangle \langle\nu^{\f13}(\f{\xi}{k})\rangle^{-(1+\b)}\\
&\lesssim \sqrt{\f{\pa_tw(t,\eta)}{w(t,\eta)}}\sqrt{\f{\pa_tw(t,\xi)}{w(t,\xi)}}
\langle\xi-\eta\rangle \nu^{-\f13}
\end{align*}
In the third inequality, we use the fact that for $0<\b\leq \f12$, 
\beq\label{eq: beta<1/2}
\langle\nu^{\f12}(\f{\xi}{k})^{\f12}\rangle^{-1}\lesssim \f{\xi}{k}\langle\nu^{\f13}(\f{\xi}{k})\rangle^{-(1+\b)}.
\eeq

Thus by the fact that $t\approx \f{\xi}{k}$, we get 
\begin{align*}
|R_{N,=}^{NR,NR}|
&\lesssim \f{1}{\langle t^2\rangle}\|\As f_{\sim N}\|_{L^2}\|\As\Delta_L\Delta_t^{-1}f_N\|_{L^2}
\| f_{0}\|_{H^{4}}\\
&\quad+\left\|\sqrt{\f{\pa_tw}{w}}\As f_{\sim N}\right\|_{L^2}
\left\|\sqrt{\f{\pa_tw}{w}}\chi_R\As \Delta_L\Delta_t^{-1}f_N\right\|_{L^2}
\left\|\nu^{-\f13}(\nu^{\f12}t^{\f12}\pa_v) f_{0}\right\|_{H^{3}}\\
&\lesssim \f{\ep\nu^{\f13}}{\langle t^2\rangle}\|\As f_{\sim N}\|_{L^2}\|\As\Delta_L\Delta_t^{-1}f_N\|_{L^2}
+\ep\left\|\sqrt{\f{\pa_tw}{w}}\As f_{\sim N}\right\|_{L^2}
\left\|\sqrt{\f{\pa_tw}{w}}\chi_R\As \Delta_L\Delta_t^{-1}P_{\neq}f_N\right\|_{L^2}.
\end{align*}

Putting together all the above estimates, we conclude that
\beq\label{eq: R_N,=^NR,NR}
\begin{split}
|R_{N,=}^{NR,NR}|
&\lesssim \f{\ep\nu^{\f13}}{\langle \nu^{\f12}t^{\f32}\rangle}\|\As f_{\sim N}\|_{L^2}\|\As\Delta_L\Delta_t^{-1}f_N\|_{L^2}
+\f{\ep\nu^{\f13}}{\langle t^2\rangle}\|\As f_{\sim N}\|_{L^2}\|\As\Delta_L\Delta_t^{-1}f_N\|_{L^2}
\\
&\quad+\ep\left\|\sqrt{\f{\pa_tw}{w}}\As f_{\sim N}\right\|_{L^2}
\left\|\sqrt{\f{\pa_tw}{w}}\chi_R\As \Delta_L\Delta_t^{-1}P_{\neq}f_N\right\|_{L^2}.
\end{split}
\eeq

\subsubsection{Treatment of the zero mode $R_{N,=}^{NR,R}$}
Since $t\in I_{l,\xi}$, if $t\leq t(\eta)\approx \sqrt{|\xi|}\approx \sqrt{N}$, then $l\approx \sqrt{|\xi|}$, thus by the fact that 
\beno
\f{\As_{l}(\eta)|l|}{l^2+|lt-\xi|^2}\lesssim \f{\As_l(\xi)}{|l|}\lesssim \f{\As_l(\xi)}{\sqrt{N}},
\eeno
we obtain that
\begin{align*}
|R_{N,=}^{NR,R}|
&\lesssim \f{1}{\langle t\rangle}
\|\As f_{\sim N}\|_{L^2}
\|\As\Delta_L\Delta_t^{-1}f_N\|_{L^2}
\|\pa_vf_{0}\|_{H^{3}}\\
&\lesssim \f{\ep\nu^{\f13}}{\langle \nu^{\f12}t^{\f32}\rangle}
\|\As f_{\sim N}\|_{L^2}
\|\As\Delta_L\Delta_t^{-1}f_N\|_{L^2}.
\end{align*}

If $t\geq 2|\eta|$, then we use the same argument as  \eqref{eq:big1} and \eqref{eq:big2} and get that
\beno
\f{|l|\As_k(\eta)}{l^2+(\xi-lt)^2}\lesssim \f{\langle \xi-\eta\rangle^2}{\langle t\rangle^2}\As_l(\xi).
\eeno
Therefore we get
\begin{align*}
|R_{N,=}^{NR,R}|
&\lesssim \f{1}{\langle t^2\rangle}\|\As f_{\sim N}\|_{L^2}\|\As \Delta_L\Delta_t^{-1}f_N\|_{L^2}
\| f_{0}\|_{H^{5}}\\
&\lesssim \f{\ep\nu^{\f13}}{\langle t^2\rangle}\|\As f_{\sim N}\|_{L^2}\|\As \Delta_L\Delta_t^{-1}f_N\|_{L^2}.
\end{align*}

If $t\in I_t(\eta)$, then there is $k\neq l$ such that $t\in I_{k,\eta}\cap I_{l,\xi}$.  By the fact that for $t\in I_{k,\eta}$ and $t\notin I_{l,\eta}$, $|t-\f{\eta}{l}|> |t-\f{\eta}{k}|$, then we get by \eqref{eq: beta<1/2} that
\begin{align*}
\f{|l|\langle\nu^{\f12}(\f{\xi}{k})^{\f12}\rangle^{-1}}{l^2(1+|t-\f{\xi}{l}|^2)}
&\lesssim \sqrt{\f{|l|}{l^2(1+|t-\f{\eta}{l}|^2)}}\sqrt{\f{|l|}{l^2(1+|t-\f{\xi}{l}|^2)}}
\langle\xi-\eta\rangle \langle\nu^{\f12}(\f{\xi}{k})^{\f12}\rangle^{-1}\\
&\lesssim \sqrt{\f{1}{k(1+|t-\f{\eta}{k}|^2)}}\sqrt{\f{1}{l(1+|t-\f{\xi}{l}|^2)}}
\langle\xi-\eta\rangle \langle\nu^{\f12}(\f{\xi}{k})^{\f12}\rangle^{-1}\\
&\lesssim \sqrt{\f{\eta}{k^2(1+|t-\f{\eta}{k}|^2)}}\sqrt{\f{\xi}{l^2(1+|t-\f{\xi}{l}|^2)}}
\langle\xi-\eta\rangle \langle\nu^{\f13}(\f{\xi}{k})\rangle^{-(1+\b)}\\
&\lesssim \sqrt{\f{\pa_tw(t,\eta)}{w(t,\eta)}}\sqrt{\f{\pa_tw(t,\xi)}{w(t,\xi)}}
\langle\xi-\eta\rangle \nu^{-\f13}.
\end{align*}
Therefore, we get 
\begin{align*}
|R_{N,=}^{NR,R}|
&\lesssim 
\left\|\sqrt{\f{\pa_tw}{w}}\As f_{\sim N}\right\|_{L^2}
\left\|\sqrt{\f{\pa_tw}{w}}\As \chi_R\Delta_L\Delta_t^{-1}f_N\right\|_{L^2}
\left\|\nu^{-\f13}(\nu^{\f12}t^{\f12}\pa_v) f_{0}\right\|_{H^{3}}\\
&\lesssim 
\ep \left\|\sqrt{\f{\pa_tw}{w}}\As f_{\sim N}\right\|_{L^2}
\left\|\sqrt{\f{\pa_tw}{w}}\As \chi_R\Delta_L\Delta_t^{-1}f_N\right\|_{L^2}.
\end{align*}

Putting together all the above estimates, we conclude that
\beq\label{eq: R_N,=^NR,R}
\begin{split}
|R_{N,=}^{NR,R}|
&\lesssim \f{\ep\nu^{\f13}}{\langle \nu^{\f12}t^{\f32}\rangle}
\|\As f_{\sim N}\|_{L^2}
\|\As\Delta_L\Delta_t^{-1}f_N\|_{L^2}
+\f{\ep\nu^{\f13}}{\langle t^2\rangle}\|\As f_{\sim N}\|_{L^2}\|\As \Delta_L\Delta_t^{-1}f_N\|_{L^2}\\
&\quad+\ep\left\|\sqrt{\f{\pa_tw}{w}}\As f_{\sim N}\right\|_{L^2}
\left\|\sqrt{\f{\pa_tw}{w}}\chi_R\As \Delta_L\Delta_t^{-1}f_N\right\|_{L^2}.
\end{split}
\eeq

\subsubsection{Treatment of the zero mode $R_{N,=}^{R,NR}$}
Since $t\in I_{l,\eta}$, if $t\leq t(\xi)\approx \sqrt{|\xi|}$, then $l\approx \sqrt{|\xi|}$. By using \eqref{eq: tbd}, we get 
\beno
|R_{N,=}^{R,NR}|\lesssim \f{1}{\langle t^2\rangle}\|\As f_{\sim N}\|_{L^2}\|\As\Delta_L\Delta_t^{-1}f_N\|_{L^2}
\| f_{0}\|_{H^{5}}
\lesssim \f{\ep\nu^{\f13}}{\langle t^2\rangle}\|\As f_{\sim N}\|_{L^2}\|\As\Delta_L\Delta_t^{-1}f_N\|_{L^2}.
\eeno

Similarly if $t\geq 2|\xi|$, then by \eqref{eq:big2}, we get  
\beno
|R_{N,=}^{R,NR}|\lesssim \f{1}{\langle t^2\rangle}\|\As f_{\sim N}\|_{L^2}\|\As\Delta_L\Delta_t^{-1}f_N\|_{L^2}
\|f_{0}\|_{H^{5}}
\lesssim \f{\ep\nu^{\f13}}{\langle t^2\rangle}\|\As f_{\sim N}\|_{L^2}\|\As\Delta_L\Delta_t^{-1}f_N\|_{L^2}.
\eeno
If $t\in I_t(\xi)$, then there is $k\neq l$ such that $t\in I_{k,\xi}\cap I_{l,\eta}$.  By the fact that for $t\in I_{k,\xi}$ and $t\notin I_{l,\xi}$, $|t-\f{\xi}{l}|> |t-\f{\xi}{k}|$, then we get by \eqref{eq: beta<1/2} that
\begin{align*}
\f{|l|\langle\nu^{\f12}(\f{\xi}{k})^{\f12}\rangle^{-1}}{l^2(1+|t-\f{\xi}{l}|^2)}
&\lesssim \sqrt{\f{|l|}{l^2(1+|t-\f{\xi}{l}|^2)}}\sqrt{\f{|l|}{l^2(1+|t-\f{\eta}{l}|^2)}}
\langle\xi-\eta\rangle \langle\nu^{\f12}(\f{\xi}{k})^{\f12}\rangle^{-1}\\
&\lesssim \sqrt{\f{1}{k(1+|t-\f{\xi}{k}|^2)}}\sqrt{\f{1}{l(1+|t-\f{\eta}{l}|^2)}}
\langle\xi-\eta\rangle \langle\nu^{\f12}(\f{\xi}{k})^{\f12}\rangle^{-1}\\
&\lesssim \sqrt{\f{\xi}{k^2(1+|t-\f{\xi}{k}|^2)}}\sqrt{\f{\eta}{l^2(1+|t-\f{\eta}{l}|^2)}}
\langle\xi-\eta\rangle \langle\nu^{\f13}(\f{\xi}{k})\rangle^{-(1+\b)}\\
&\lesssim \sqrt{\f{\pa_tw(t,\eta)}{w(t,\eta)}}\sqrt{\f{\pa_tw(t,\xi)}{w(t,\xi)}}
\langle\xi-\eta\rangle \nu^{-\f13}.
\end{align*}
Therefore, we get 
\begin{align*}
|R_{N,=}^{R,NR}|
&\lesssim 
\left\|\sqrt{\f{\pa_tw}{w}}\As f_{\sim N}\right\|_{L^2}
\left\|\sqrt{\f{\pa_tw}{w}}\chi_R\As \Delta_L\Delta_t^{-1}f_N\right\|_{L^2}
\left\|\nu^{-\f13}(\nu^{\f12}t^{\f12}\pa_v) f_{0}\right\|_{H^{3}}\\
&\lesssim 
\ep \left\|\sqrt{\f{\pa_tw}{w}}\As f_{\sim N}\right\|_{L^2}
\left\|\sqrt{\f{\pa_tw}{w}}\chi_R\As \Delta_L\Delta_t^{-1}f_N\right\|_{L^2}.
\end{align*}

Putting together all the above estimates, we conclude that
\beq\label{eq: R_N,=^R,NR}
\begin{split}
|R_{N,=}^{R,NR}|
&\lesssim 
\f{\ep\nu^{\f13}}{\langle t^2\rangle}\|\As f_{\sim N}\|_{L^2}\|\As \Delta_L\Delta_t^{-1}f_N\|_{L^2}\\
&\quad+\ep\left\|\sqrt{\f{\pa_tw}{w}}\As f_{\sim N}\right\|_{L^2}
\left\|\sqrt{\f{\pa_tw}{w}}\chi_R\As \Delta_L\Delta_t^{-1}f_N\right\|_{L^2}.
\end{split}
\eeq

\subsubsection{Treatment of the zero mode $R_{N,=}^{R,R}$}
We get by \eqref{eq: beta<1/2} that
\begin{align*}
\f{|l|\langle\nu^{\f12}(\f{\xi}{l})^{\f12}\rangle^{-1}}{l^2(1+|t-\f{\xi}{l}|^2)}
&\lesssim \sqrt{\f{|l|}{l^2(1+|t-\f{\xi}{l}|^2)}}\sqrt{\f{|l|}{l^2(1+|t-\f{\eta}{l}|^2)}}
\langle\xi-\eta\rangle \langle\nu^{\f12}(\f{\xi}{l})^{\f12}\rangle^{-1}\\
&\lesssim \sqrt{\f{\xi}{l^2(1+|t-\f{\xi}{l}|^2)}}\sqrt{\f{\eta}{l^2(1+|t-\f{\eta}{l}|^2)}}
\langle\xi-\eta\rangle \langle\nu^{\f13}(\f{\xi}{l})\rangle^{-(1+\b)}\\
&\lesssim \sqrt{\f{\pa_tw(t,\eta)}{w(t,\eta)}}\sqrt{\f{\pa_tw(t,\xi)}{w(t,\xi)}}
\langle\xi-\eta\rangle \nu^{-\f13},
\end{align*}
which gives 
\begin{align}\label{eq: R_N,=^R,R}
\begin{split}
|R_{N,=}^{R,R}|
&\lesssim 
\left\|\sqrt{\f{\pa_tw}{w}}\As f_{\sim N}\right\|_{L^2}
\left\|\sqrt{\f{\pa_tw}{w}}\chi_R\As \Delta_L\Delta_t^{-1}f_N\right\|_{L^2}
\left\|\nu^{-\f13}(\nu^{\f12}t^{\f12}\pa_v)f_{0}\right\|_{H^{3}}\\
&\lesssim 
\ep\left\|\sqrt{\f{\pa_tw}{w}}\As f_{\sim N}\right\|_{L^2}
\left\|\sqrt{\f{\pa_tw}{w}}\chi_R\As \Delta_L\Delta_t^{-1}f_N\right\|_{L^2}.
\end{split}
\end{align}

\subsubsection{Treatment of $R_{N,\neq,*}$}
In this case, we get $(l,k,\xi,\eta)\notin D_1$ which means that at least one of the following two $|l|\geq |\xi|$, $|l-k,\xi-\eta|\geq \f{1}{1000}|l,\xi|$ holds. Thus we get
\beno
\f{\As_{k}(\eta)|l,\xi|}{l^2+|lt-\xi|^2}\lesssim \As_l(\xi)\langle l-k,\xi-\eta\rangle,
\eeno
which implies 
\ben\label{eq: R_N,neq,*}
\begin{split}
|R_{N,\neq,*}|
&\lesssim \|\As f_{\sim N}\|_{2}
\|\As\Delta_L\Delta_t^{-1}f_{N}\|_{2}\|f_{\neq}\|_{H^{4}}\\
&\lesssim \f{\ep\nu^{\f13}}{\langle \nu t^3\rangle}\|\As f_{\sim N}\|_{2}
\|\As\Delta_L\Delta_t^{-1}f_{N}\|_{2}
\end{split}
\een

The next 4 subsections, we restriction $(l,k,\xi,\eta)\in D_1$, which gives $|\eta|\geq \f{1}{20}|k|$ and $|\xi|\geq \f{1}{20}|l|$, thus 
\beq
w_k(t,\eta)=w(t,\eta),\quad w_l(t,\xi)=w(t,\xi). 
\eeq

\subsubsection{Treatment of $R_{N}^{NR,NR}$}
Since we restrict the integrand in $D_1$, it holds that 
\beq\label{eq: k-l<k}
\left||l,\xi|-|k,\eta|\right|\leq |k-l,\eta-\xi|\leq \f{1}{1000}|l,\xi|.
\eeq
It follows form the fact that
\beno
(\eta l-\xi k)=(\eta-\xi)l +(l-k) \xi, 
\eeno 
and Lemma \ref{lem: total-growth} that 
\begin{align*}
|R_{N}^{NR,NR}|&\lesssim \sum_{k,l\neq 0,\, k\neq l}\int_{\eta,\xi}1_{t\notin I_{k,\eta}, t\notin I_{l,\xi}}
|\As\bar{\hat{f}}_{k}(\eta)|
\left|\f{\As_{l}(\xi)|l,\xi|}{l^2+(\xi-lt)^2}\widehat{\Delta_L\Delta_t^{-1}f}_{l}(\xi)_{N}\widehat{\na f}_{k-l}(\eta-\xi)_{<N/8}\right|d\eta d\xi\\
&\lesssim \sum_{k,l\neq 0,\, k\neq l}\int_{\eta,\xi}1_{t\notin I_{k,\eta}, t\notin I_{l,\xi}}\As|\bar{\hat{f}}_{k}(\eta)|
\f{\As_{l}(\xi)|l,\xi|}{l^2(1+\f{\xi^2}{l^4})}
|\widehat{\Delta_L\Delta_t^{-1}f}_{l}(\xi)_{N}\widehat{\na f}_{k-l}(\eta-\xi)_{<N/8}|d\eta d\xi.
\end{align*}
Therefore we have 
\beq\label{R_N^NR,NR}
\begin{split}
|R_{N}^{NR,NR}|
&\lesssim \|\As f_{\sim N}\|_{2}\|\As\Delta_L\Delta_t^{-1}f_{N}\|_{2}\|f_{\neq}\|_{H^{3}}\\
&\lesssim \f{\ep\nu^{\f13}}{\langle \nu t^3\rangle}\|\As f_{\sim N}\|_{2}
\|\As\Delta_L\Delta_t^{-1}f_{N}\|_{2}.
\end{split}
\eeq

\subsubsection{Treatment of $R_{N}^{NR,R}$} 
By \eqref{eq: k-l<k}, we have $\langle k,\eta\rangle^{\s}\approx \langle l,\xi\rangle^{\s}$ and $A_l(\xi)\approx A_k(\eta)$ which gives
\begin{align*}
|R_{N}^{NR,R}|
&\lesssim \sum_{k,l\neq 0,\, k\neq l}\int_{|\eta-\xi|\leq \f{|\eta|}{100}}\chi^{NR,R}
|\As\bar{\hat{f}}_{k}(\eta)|
\f{\As_l(\xi)|l,\xi|}{l^2+(\xi-lt)^2}
|\widehat{\Delta_{L}\Delta_t^{-1}f}_{l}(\xi)_{N}\widehat{\na f}_{k-l}(\eta-\xi)_{<N/8}|d\eta d\xi\\
&\quad \sum_{k,l\neq 0,\, k\neq l}\int_{|\eta-\xi|> \f{|\eta|}{100}}\chi^{NR,R}
|\As\bar{\hat{f}}_{k}(\eta)|\f{\As_l(\xi)|l,\xi|}{l^2+(\xi-lt)^2}
|\widehat{\Delta_{L}\Delta_t^{-1}f}_{l}(\xi)_{N}\widehat{\na f}_{k-l}(\eta-\xi)_{<N/8}|d\eta d\xi\\
&=R_{N,1}^{NR,R}+R_{N,2}^{NR,R}.
\end{align*}
Let us first deal with $R_{N,1}^{NR,R}$, so that $|\eta-\xi|\leq \f{|\eta|}{100}$. 

If $t\leq t(\eta)\approx \sqrt{|\eta|}$, then $w_{k}(\eta)=1$ and by the fact that $t\geq t(\xi)\approx \sqrt{|\eta|}$, we get that $t\approx \sqrt{|\xi|}$, $|l|\approx\sqrt{|\xi|}$ and then 
\begin{align*}
|R_{N,1}^{NR,R}|
&\lesssim \sum_{k,l\neq 0,\, k\neq l}\int_{\eta,\xi}1_{t\notin I_{k,\eta}, t\in I_{l,\xi}}
|\As\bar{\hat{f}}_{k}(\eta)|
\As|\widehat{\Delta_{L}\Delta_t^{-1}f}_{l}(\xi)_{N}\widehat{\na f}_{k-l}(\eta-\xi)_{<N/8}|d\eta d\xi\\
&\lesssim \|\As f_{\sim N}\|_{2}\|\As\Delta_{L}\Delta_t^{-1}f_N\|_{2}\|f_{\neq}\|_{H^{3}}.
\end{align*}

If $t\geq 2|\eta|$, and for $|\eta-\xi|\leq \f{1}{100}|\eta|$, $|l|t-|\xi|\geq 2|\eta|-|\xi|\geq |\eta|-\big||\eta|-|\xi|\big|\gtrsim |\xi|$, which implies 
\begin{align*}
|R_{N,1}^{NR,R}|
\lesssim \|\As f_{\sim N}\|_{L^2}\|\As\Delta_{L}\Delta_t^{-1}f_N\|_{H^{\s}}\|f_{\neq}\|_{H^{3}}.
\end{align*}

If $t\in I_t(\eta)$, there exists $k'\in [1, E(\sqrt{|\eta|})]$ such that $t\in I_{k',\eta}\cap I_{l,\xi}$. Then by Lemma \ref{lem: 3.2}, we need to consider the following three cases: 

(a.) $k'=l$. In this case, by using the fact that 
\begin{align*}
\f{|\xi|}{l^2+(\xi-lt)^2}
&\lesssim \sqrt{\f{|\xi|}{l^2+(\xi-lt)^2}}\sqrt{\f{|\eta|}{l^2+(\eta-lt)^2}}\langle \xi-\eta\rangle\\
&\lesssim \langle\nu^{\f13}t\rangle^{1+\b}\nu^{-\f13}\langle \xi-\eta\rangle \sqrt{\f{\pa_tw(t,\xi)}{w(t,\xi)}}\sqrt{\f{\pa_tw(t,\eta)}{w(t,\eta)}},
\end{align*}
and $|l|\lesssim \sqrt{|\xi|}\lesssim |\xi|$, we get
\begin{align*}
&|R_{N,1}^{NR,R}|
\lesssim \sum_{k,l\neq 0,\, k\neq l}\int_{|\eta-\xi|\leq \f{|\eta|}{100}}1_{t\in I_{k',\eta}\cap I_{l,\xi}}
|\As\bar{\hat{f}}_{k}(\eta)|
\f{\As_l(t,\xi)|\xi|}{l^2+(\xi-lt)^2}
|\widehat{\Delta_{L}\Delta_t^{-1}f}_{l}(\xi)_{N}\widehat{\na f}_{k-l}(\eta-\xi)_{<N/8}|d\eta d\xi\\
&\lesssim \sum_{k,l\neq 0,\, k\neq l}\int_{|\eta-\xi|\leq \f{1}{100}|\eta|}1_{t\in I_{l,\eta}\cap I_{l,\xi}}\left|\sqrt{\f{\pa_t{w}(\eta)}{{w}(\eta)}}\As\bar{\hat{f}}_{k}(\eta)\right|\\
&\quad \quad \quad \quad\quad \quad\quad \quad
\times \left|\sqrt{\f{\pa_t{w}(\eta)}{{w}(\eta)}}\As\widehat{\Delta_{L}\Delta_t^{-1}f}_{l}(\xi)_{N}
(1+\nu^{\f13}t)^2\nu^{-\f13}\widehat{\langle\na\pa_v\rangle f}_{k-l}(\eta-\xi)_{<N/8}\right|d\eta d\xi\\
&\lesssim \left\|\sqrt{\f{\pa_t{w}(\eta)}{{w}(\eta)}}\As f_{\sim N}\right\|_{2}
\left\|\sqrt{\f{\pa_t{w}(\xi)}{{w}(\xi)}}\chi_R\As\Delta_{L}\Delta_t^{-1}f_{N}\right\|_{2}
\left\|\langle\nu^{\f13}t\rangle^{1+\b}\nu^{-\f13} f_{\neq}\right\|_{H^{4}}.
\end{align*}

(b.) $|t-\f{\eta}{k'}|\gtrsim\f{\eta}{k'^2}$ and $|t-\f{\xi}{l}|\gtrsim \f{\xi}{l^2}$. In this case, by using the fact that
\begin{align}\label{eq: bd-Delta^-1}
\f{|l,\xi|}{l^2+(\xi-lt)^2}\lesssim \f{|\xi|/l^2}{1+(\xi/l-t)^2}
\lesssim \f{|\xi|/l^2}{1+\f{\xi^2}{l^4}}\lesssim 1, 
\end{align}
we obtain that
\begin{align*}
&|R_{N,1}^{NR,R}|
\lesssim \|\As f_{\sim N}\|_{L^2}\|\As\Delta_{L}\Delta_t^{-1}P_{\neq}f_N\|_{L^2}\|f_{\neq}\|_{H^{3}}
\end{align*}

(c.) $|\eta-\xi|\gtrsim \f{|\eta|}{|l|}\approx \f{|\xi|}{|l|}$. In this case, we get 
\beno
\f{|\xi|}{l^2+(\xi-lt)^2}\lesssim \f{|\xi|}{l^2}\lesssim |\eta-\xi|,
\eeno
which gives
\begin{align*}
R_{N,1}^{NR,R}
\lesssim \|\As f_{\sim N}\|_{L^2}\|\As\Delta_{L}\Delta_t^{-1}P_{\neq}f_N\|_{L^2}\|f_{\neq}\|_{H^{4}}.
\end{align*}
Next we deal with $R_{N,2}^{NR,R}$, which we will use the fact that 
\beno
\f{|l,\xi|}{l^2+(\xi-lt)^2}\lesssim 1+\f{|\xi|}{l^2}\lesssim \langle \xi-\eta\rangle.
\eeno
Thus we get 
\begin{align*}
R_{N,2}^{NR,R}&\lesssim \sum_{k,l\neq 0,\, k\neq l}\int_{|\eta-\xi|>\f{|\eta|}{100}}\chi^{NR,R}
|\As\bar{\hat{f}}_{k}(\eta)|
\f{\As_l(\xi)|l,\xi|}{l^2+(\xi-lt)^2}
|\widehat{\Delta_{L}\Delta_t^{-1}f}_{l}(\xi)_{N}\widehat{\na f}_{k-l}(\eta-\xi)_{<N/8}|d\eta d\xi\\
&\lesssim \|\As f_{\sim N}\|_{2}\|\As \Delta_{L}\Delta_t^{-1}P_{\neq}f_N\|_{L^2}\|f_{\neq}\|_{H^{4}}.
\end{align*}

Therefore we conclude that
\beq\label{eq: R_N^NR,R}
\begin{split}
|R_{N}^{NR,R}|\lesssim &\|\As f_{\sim N}\|_{L^2}\|\As \Delta_{L}\Delta_t^{-1}f_N\|_{L^2}\| f_{\neq}\|_{H^{4}}\\
&+\left\|\sqrt{\f{\pa_t{w}}{{w}}}\As f_{\sim N}\right\|_{2}
\left\|\sqrt{\f{\pa_t{w}}{{w}}}\chi_R\As \Delta_{L}\Delta_t^{-1}f_{N}\right\|_{2}
\left\|\langle\nu^{\f13}t\rangle^{1+\b}\nu^{-\f13} f_{\neq}\right\|_{H^{4}}\\
\lesssim &\f{\ep\nu^{\f13}}{\langle \nu t^3\rangle}\|\As f_{\sim N}\|_{L^2}\|\As \Delta_{L}\Delta_t^{-1}f_N\|_{L^2}
+\ep\left\|\sqrt{\f{\pa_t{w}}{{w}}}\As f_{\sim N}\right\|_{2}
\left\|\sqrt{\f{\pa_t{w}}{{w}}}\chi_R\As \Delta_{L}\Delta_t^{-1}f_{N}\right\|_{2}.
\end{split}
\eeq

\subsubsection{Treatment of $R_{N}^{R,NR}$} In this case we have 
\beno
\f{|l,\xi|}{l^2+(\xi-lt)^2}\lesssim 1+\f{|\xi|}{l^2(1+\f{\xi^2}{l^4})}\lesssim 1,
\eeno 
which implies 
\begin{align}\label{R_N^R,NR}
R_{N}^{R,NR}
\nonumber&\lesssim \sum_{k,l\neq 0,\, k\neq l}\int_{\eta,\xi}\chi^{R,NR}
|\As\bar{\hat{f}}_{k}(\eta)|\f{\As_{l}(\xi)|l,\xi|}{l^2+(\xi-lt)^2}\left|\widehat{\Delta_L\Delta_t^{-1}f}_{l}(\xi)_{N}\widehat{\na f}_{k-l}(\eta-\xi)_{<N/8}\right|d\eta d\xi\\
\nonumber&\lesssim \|\As f_{\sim N}\|_{2}\|\As \Delta_{L}\Delta_t^{-1}P_{\neq}f_N\|_{L^2}\|f_{\neq}\|_{H^{3}}\\
&\lesssim \f{\ep\nu^{\f13}}{\langle \nu t^3\rangle}\|\As f_{\sim N}\|_{2}\|\As \Delta_{L}\Delta_t^{-1}P_{\neq}f_N\|_{L^2}.
\end{align}

\subsubsection{Treatment of $R_{N}^{R,R}$} 
In this case $t\in I_{k,\eta}\cap I_{l,\xi}$ with $k\neq l$, by Lemma \ref{lem: 3.2}, we only need to deal with the following two cases. 

(b.) $|t-\f{\eta}{k}|\gtrsim\f{\eta}{k^2}$ and $|t-\f{\xi}{l}|\gtrsim \f{\xi}{l^2}$. In this case, by the fact that $\As_l(\xi)\approx \As_k(\eta)$ and 
\beno
\f{|l,\xi|}{l^2+(\xi-lt)^2}\lesssim 1+\f{|\xi|}{l^2(1+\f{\xi^2}{l^4})}\lesssim 1,
\eeno 
we get
\begin{align*}
|R_{N}^{R,R}|&\lesssim \sum_{k,l\neq 0,\, k\neq l}\int_{\eta,\xi}\chi^{R,R}
\As|\bar{\hat{f}}_{k}(\eta)|\f{\As_{l}(\xi)|l,\xi|}{l^2+(\xi-lt)^2}
|\widehat{\Delta_L\Delta_t^{-1}f}_{l}(\xi)_{N}\widehat{\na f}_{k-l}(\eta-\xi)_{<N/8}|d\eta d\xi\\
&\lesssim \|\As f_{\sim N}\|_{L^2}\|\As\Delta_{L}\Delta_t^{-1}f_N\|_{L^2}\| f_{\neq}\|_{H^{3}}.
\end{align*}
(c.) $|\eta-\xi|\gtrsim \f{|\eta|}{|l|}\approx \f{|\xi|}{|l|}$. In this case, by using the fact that $\As_l(\xi)\approx \As_k(\eta)$ and 
\beno
\f{|l,\xi|}{l^2+(\xi-lt)^2}\lesssim 1+\f{|\xi|}{l^2}\lesssim |\eta-\xi|,
\eeno
we get
\begin{align*}
|R_{N}^{R,R}|&\lesssim \sum_{k,l\neq 0,\, k\neq l}\int_{\eta,\xi}\chi^{R,R} \As|\bar{\hat{f}}_{k}(\eta)|
\f{A_{l}(\xi)|l,\xi|}{l^2+(\xi-lt)^2}
|\widehat{\Delta_L\Delta_t^{-1}f}_{l}(\xi)_{N}\widehat{\na f}_{k-l}(\eta-\xi)_{<N/8}|d\eta d\xi\\
&\lesssim \|\As f_{\sim N}\|_{2}\|\As\Delta_{L}\Delta_t^{-1}f_N\|_{2}\|f_{\neq}\|_{H^{4}}.
\end{align*}

Therefore we conclude that
\beq\label{eq: R_N^R,R}
\begin{split}
|R_{N}^{R,R}|&\lesssim \|\As f_{\sim N}\|_{2}
\|\As\Delta_{L}\Delta_t^{-1}P_{\neq}f_N\|_{2}\|f_{\neq}\|_{H^{4}}\\
&\lesssim \f{\ep\nu^{\f13}}{\langle \nu t^3\rangle}\|\As f_{\sim N}\|_{2}\|\As\Delta_{L}\Delta_t^{-1}P_{\neq}f_N\|_{2}.
\end{split}
\eeq

Combining \eqref{eq: R_N,D}, \eqref{eq: R_N,=^NR,NR}, \eqref{eq: R_N,=^NR,R}, \eqref{eq: R_N,=^R,NR}, \eqref{eq: R_N,=^R,R}, \eqref{eq: R_N,neq,*}, \eqref{R_N^NR,NR}, \eqref{eq: R_N^NR,R}, \eqref{R_N^R,NR} and \eqref{eq: R_N^R,R} we deduce 
\beq\label{eq: R_N^1}
\begin{split}
|R_{N}^{1}|
&\lesssim \left(\f{\ep\nu^{\f13}}{\langle \nu^{\f12}t^{\f32}\rangle}
+\f{\ep\nu^{\f13}}{\langle t^2\rangle}
+\f{\ep\nu^{\f13}}{\langle \nu t^3\rangle}\right)
\|\As f_{\sim N}\|_{L^2}
\|\As\Delta_L\Delta_t^{-1}P_{\neq}f_N\|_{L^2}
\\
&\quad+\ep\left\|\sqrt{\f{\pa_tw}{w}}\As f_{\sim N}\right\|_{2}
\left\|\sqrt{\f{\pa_tw}{w}}\chi_R\As \Delta_L\Delta_t^{-1}P_{\neq}f_N\right\|_{2}.
\end{split}
\eeq

\subsection{Treatment of $R_N^2$}
We recall that 
\begin{align*}
R^2_N&=\sum_{k\neq 0}\int_{\eta,\xi}\As\bar{\hat{f}}_{k}(\eta)\As_{k}(\eta)\widehat{g}(\xi)_N\widehat{\pa_vf}_k(\eta-\xi)_{<N/8}d\eta d\xi\\
&\quad+\int_{\eta,\xi}\As\bar{\hat{f}}_{0}(\eta)\As_{0}(\eta)\widehat{g}(\xi)_N\widehat{\pa_vf}_0(\eta-\xi)_{<N/8}d\eta d\xi\\
&=R^2_{N,\neq}+R^2_{N,0}.
\end{align*}
By the fact that $|k,\eta-\xi|\leq \f{3}{16}N\leq \f{3}{8}|\xi|\approx |k,\eta|$, we have
\begin{align*}
|R^2_{N,\neq}|\lesssim \|\As(f_{\neq})_{\sim N}\|_{2}\|\langle\pa_v\rangle^{\s} g_N\|_2\|f_{\neq}\|_{H^{3}},
\end{align*}
and
\beno
|R^2_{N,0}|\lesssim \|\As(f_{0})_{\sim N}\|_{2}\|\langle\pa_v\rangle^{\s} g_N\|_2\|f_{0}\|_{H^{3}}.
\eeno

Thus we obtain that
\beq\label{eq: R_N^2}
|R^2_{N}|\lesssim \|\As f_{\sim N}\|_{2}\|g_N\|_{H^{\s}}\|f\|_{H^3}. 
\eeq

\subsection{Treatment of $R_N^3$}
$R_N^3$ is easy to dealt with, because the derivatives land on the low frequency. We then get that
\begin{align*}
|R_N^3|
&\leq \left|\sum_{k,l}\int_{\eta,\xi}\As\bar{\hat{f}}_{k}(\eta)\As_{k-l}(\eta-\xi)\hat{u}_{l}(\xi)_{N}\widehat{\na f}_{k-l}(\eta-\xi)_{<N/8}d\eta d\xi\right|\\
&\leq \sum_{k,l}\int_{\eta,\xi}\As|\bar{\hat{f}}_{k}(\eta)||l,\xi||\hat{u}_{l}(\xi)_{N}|\As_{k-l}(\eta-\xi)|\widehat{f}_{k-l}(\eta-\xi)_{<N/8}|d\eta d\xi,
\end{align*}
which gives
\beq\label{eq: R_N^3}
|R_N^3|
\lesssim \|\As f_{\sim N}\|_{2}\|u_N\|_{H^{3}}\|f\|_{H^{\s}}.
\eeq

\subsection{Corrections}\label{sec: correction}
In this section we treat $R_N^{\ep,1}$ which is higher order in $\nu^{\f13}$ than $R_N^1$. We expand $(1-v')\phi_l$ with a paraproduct only in $v$: 
\begin{align*}
R_{N}^{\ep,1}&=-\f{1}{2\pi}\sum_{M\geq 8}\sum_{k,l\neq 0}\int_{\eta,\xi,\xi'}
\As\bar{\hat{f}}_{k}(\eta)
\As_{k}(\eta)\Big((\xi-\eta)l-\xi'(k-l)\Big)\chi_N(l,\xi)\\
&\quad \quad \times\left[\widehat{(1-v')}(\xi'-\xi)\right]_{<M/8}
\widehat{\phi}_l(\xi')_M\widehat{f}_{k-l}(\eta-\xi)_{<N/8}d\eta d\xi d\xi'\\
&\quad-\f{1}{2\pi}\sum_{M\geq 8}\sum_{k,l\neq 0}\int_{\eta,\xi,\xi'}
\As\bar{\hat{f}}_{k}(\eta)
\As_{k}(\eta)\Big((\xi-\eta)l-\xi'(k-l)\Big)\chi_N(l,\xi)\\
&\quad \quad \times\left[\widehat{(1-v')}(\xi'-\xi)\right]_{M}
\widehat{\phi}_l(\xi')_{<M/8}\widehat{f}_{k-l}(\eta-\xi)_{<N/8}d\eta d\xi d\xi'\\
&\quad-\f{1}{2\pi}\sum_{M\in \mathbb{D}}\sum_{\f18M\leq M'\leq M}\sum_{k,l\neq 0}\int_{\eta,\xi,\xi'}
\As\bar{\hat{f}}_{k}(\eta)
\As_{k}(\eta)\Big((\xi-\eta)l-\xi'(k-l)\Big)\\
&\quad \quad \times\chi_N(l,\xi)\left[\widehat{(1-v')}(\xi'-\xi)\right]_{M'}
\widehat{\phi}_l(\xi')_{M}\widehat{f}_{k-l}(\eta-\xi)_{<N/8}d\eta d\xi d\xi'\\
&=R_{N,LH}^{\ep,1}+R_{N,HL}^{\ep,1}+R_{N,HH}^{\ep,1}. 
\end{align*}
We recall that $\chi_N$ denotes the Littlewood-Paley cut-off to the $N$-th dyadic shell in $\mathbb{Z}\times\R$; see Section \ref{Sec: L-P}. 

Begin first with $R_{N,LH}^{\ep,1}$. On the support of the integrand 
\beno
&&||k,\eta|-|l,\xi||\leq |k-l,\eta-\xi|\leq \f{3}{8}|l,\xi|,\\
&&||l,\xi'|-|l,\xi||\leq |\xi-\xi'|\leq \f{3}{8}|l,\xi'|.
\eeno
Thus $\As_k(\eta)\approx \As_l(\xi')$ and 
\begin{align*}
|R_{N,LH}^{\ep,1}|
&\lesssim \sum_{M\geq 8}\sum_{k,l\neq 0}\int_{\eta,\xi,\xi'}
|\As\bar{\hat{f}}_{k}(\eta)|
\Big|(\xi-\eta)l-\xi'(k-l)\Big|\chi_N(l,\xi)\\
&\quad \quad \times\Big|\left[\widehat{(1-v')}(\xi'-\xi)\right]_{<M/8}
\As_l(\xi')\widehat{\phi}_l(\xi')_M\widehat{f}_{k-l}(\eta-\xi)_{<N/8}\Big|d\eta d\xi d\xi'
\end{align*}
From here we may proceed analogous to treatment of $R_N^1$ with $(l,\xi')$ playing the role of $(l,\xi)$. We omit the details and simply conclude the result is 
\beq\label{R_N,LH^ep,1}
\begin{split}
|R_{N,LH}^{\ep,1}|
&\lesssim \|h\|_{H^{3}}\bigg[
\left(\f{\ep\nu^{\f13}}{\langle t^{\f32}\nu^{\f12}\rangle}
+\f{\ep\nu^{\f13}}{\langle t^2\rangle}
+\f{\ep\nu^{\f13}}{\langle t^{3}\nu\rangle}\right)
\|\As f_{\sim N}\|_{L^2}
\|\As\Delta_L\Delta_t^{-1}P_{\neq}f_N\|_{L^2}\\
&\quad\quad\quad\quad
+\ep\left\|\sqrt{\f{\pa_tw}{w}}\As f_{\sim N}\right\|_{2}
\left\|\sqrt{\f{\pa_tw}{w}}\As \chi_R\Delta_L\Delta_t^{-1}P_{\neq}f_N\right\|_{2}\bigg].
\end{split}
\eeq

Turn now to $R_{N,HL}^{\ep,1}$. On the support of the integrand, it holds that
\begin{align*}
\langle k,\eta\rangle^{\s}\approx \langle l,\xi\rangle^{\s}\approx \langle l,\xi'-\xi\rangle^{\s},
\end{align*}
Thus we get that
\begin{align*}
|R_{N,HL}^{\ep,1}|&\lesssim \sum_{M\geq 8}\sum_{k,l\neq 0}\int_{\eta,\xi,\xi'}\chi_{|l|\geq \f{|\xi|}{16}}
|\As\bar{\hat{f}}_{k}(\eta)|
\chi_N(l,\xi)
\bigg|\left[\widehat{(1-v')}(\xi'-\xi)\right]_{M}\\
&\quad\quad\quad
\times\langle l\rangle^{\s}|l,\xi'|\widehat{\phi}_l(\xi')_{<M/8}\widehat{\na f}_{k-l}(\eta-\xi)_{<N/8}\bigg|d\eta d\xi d\xi'\\
&+\sum_{M\geq 8}\sum_{k,l\neq 0}\int_{\eta,\xi,\xi'}
\chi_{|l|< \f{|\xi|}{16}}
|\As\bar{\hat{f}}_{k}(\eta)|
\chi_N(l,\xi)\langle \xi'-\xi\rangle^{\s}
\bigg|\left[\widehat{(1-v')}(\xi'-\xi)\right]_{M}\\
&\quad\quad\quad
\times |l,\xi'|\widehat{\phi}_l(\xi')_{<M/8}\widehat{\na f}_{k-l}(\eta-\xi)_{<N/8}\bigg|d\eta d\xi d\xi'\\
&=R_{N,HL}^{\ep,1,z}+R_{N,HL}^{\ep,1,v}. 
\end{align*}

First consider $R_{N,HL}^{\ep,1,z}$, where on the support of the integrand, $16|l|\geq |\xi|$. 
\beno
&&||k,\eta|-|l,\xi||\leq |k-l,\xi-\eta|\leq \f{3}{16}|l,\xi|,\\
&&||l,\xi|-|l,\xi'||\leq |\xi-\xi'|\leq 38|\xi|/32\lesssim |l|.
\eeno
 
Thus we divide $R_{N,HL}^{\ep,1,z}$ into two parts
\begin{align*}
R_{N,HL}^{\ep,1,z}&\lesssim
\sum_{M\geq 8}\sum_{k,l\neq 0}\int_{\eta,\xi,\xi'}\chi_{|l|\geq {16}{|\xi|}}
|\As\bar{\hat{f}}_{k}(\eta)|
\chi_N(l,\xi)
\bigg|\left[\widehat{(1-v')}(\xi'-\xi)\right]_{M}\\
&\quad\quad\quad
\times \left[\langle l\rangle^{\s+1}\widehat{\phi}_l(\xi')_{<M/8}\right]_{\sim N}\widehat{\na f}_{k-l}(\eta-\xi)_{<N/8}\bigg|d\eta d\xi d\xi'\\
&\quad+ \sum_{M\geq 8}\sum_{k,l\neq 0}\int_{\eta,\xi,\xi'}\chi_{16|\xi|\geq |l|\geq \f{|\xi|}{16}}
|\As\bar{\hat{f}}_{k}(\eta)|
\chi_N(l,\xi)
\bigg|\left[\widehat{(1-v')}(\xi'-\xi)\right]_{M\sim N}\\
&\quad\quad\quad
\times\langle l\rangle^{\s}|l,\xi'|\widehat{\phi}_l(\xi')_{<M/8}\widehat{\na f}_{k-l}(\eta-\xi)_{<N/8}\bigg|d\eta d\xi d\xi'\\
&=R_{N,HL,1}^{\ep,1,z}+R_{N,HL,2}^{\ep,1,z}
\end{align*}
To make it summable in $M$ we need more 'derivative' in higher(in $M$) frequency, luckily the all of the 'derivates' lands on the lower(in $M$) frequency. 

If $|l|\geq 16|\xi|$, then in fact $38|\xi|/32\leq |l|/4$, therefore $||k,\eta|-|l,\xi'||\leq |l|/2\leq |l,\xi'|/2$, which gives
\beno
|k,\eta|\approx |l,\xi'|\approx |l|\approx N.
\eeno
For $|l|\geq 16|\xi|\geq 16|\xi'|$, we get 
\beno
\f{|l|}{l^2(1+(\xi'/l-t)^2)}\lesssim 
\f{|l|^{-1}\langle \f{|\xi'|}{|l|}\rangle^2}{\langle \f{|\xi'|}{|l|}\rangle^2(1+(\f{\xi'}{l}-t)^2)}\lesssim \f{|l|^{-1}}{\langle t\rangle^2}
\eeno
We then get that
\begin{align*}
R_{N,HL,1}^{\ep,1,z}&\lesssim
\sum_{M\geq 8}\sum_{k,l\neq 0}\int_{\eta,\xi,\xi'}\chi_{|l|\geq {16}{|\xi|}}
|\As\bar{\hat{f}}_{k}(\eta)|
\chi_N(l,\xi)
\bigg|\left[\widehat{(1-v')}(\xi'-\xi)\right]_{M}\\
&\quad\quad\quad
\times \left[\f{\langle l\rangle^{\s+1}}{l^2+(\xi'-lt)^2}\widehat{\Delta_L\Delta_t^{-1}f}_l(\xi')_{<M/8}\right]_{\sim N}\widehat{\na f}_{k-l}(\eta-\xi)_{<N/8}\bigg|d\eta d\xi d\xi'\\
&\lesssim
\langle t\rangle^{-2}\sum_{M\geq 8}\|\As f_{\sim N}\|_2M^{-2}\|(v'-1)_{M}\|_{H^5}
\|(\Delta_L\Delta_t^{-1}P_{\neq}f)_{\sim N}\|_{H^{\s}}\|f\|_{H^3},
\end{align*}
which gives
\begin{align*}
R_{N,HL,1}^{\ep,1,z}
&\lesssim
\langle t\rangle^{-2}\|\As f_{\sim N}\|_2\|h\|_{H^5}
\|\As(\Delta_L\Delta_t^{-1}P_{\neq}f)_{\sim N}\|_2\|f\|_{H^3}\\
&\lesssim \f{\ep^2\nu^{\f23}}{\langle t\rangle^2}\|\As f_{\sim N}\|_2
\|\As(\Delta_L\Delta_t^{-1}P_{\neq}f)_{\sim N}\|_2.
\end{align*}

Next we turn to $R_{N,HL,2}^{\ep,1,z}$. 
 If $\f{1}{16}|\xi|\leq |l|\leq 16|\xi|$, then $|l|\approx |l,\xi|\approx |\xi-\xi'|\approx M\approx N$.

\begin{align*}
R_{N,HL,2}^{\ep,1,z}
&\lesssim \sum_{M\geq 8}
\|\As f_{\sim N}\|_2
M^{-1}\|(1-v')_{M\sim N}\|_{H^{\s-1}}
\|P_{\neq} \phi\|_{H^{4}}\|f\|_{H^{3}}\\
&\lesssim \|\As f_{\sim N}\|_2\|h_{\sim N}\|_{H^{\s-1}}
\|P_{\neq} \phi\|_{H^{4}}\|f\|_{H^{3}}\\
&\lesssim \f{\ep^2\nu^{\f23}}{\langle t\rangle^2} \|\As f_{\sim N}\|_2\|h_{\sim N}\|_{H^{\s-1}}
\end{align*}

Next we turn to $R_{N,HL}^{\ep,1,v}$, in which case we can consider all of the 'derivates' to be landing on $1-v'$. On the support of the integrand,
\beno
&&||k,\eta|-|l,\xi||\leq |k-l,\xi-\eta|\leq \f{3}{16}|l,\xi|,\\
&&||\xi-\xi'|-|l,\xi||\leq |l,\xi'|\leq |\xi|/16+|\xi'|\leq 67|\xi'-\xi|/100.
\eeno

Since $|l,\xi|\approx |\xi-\xi'|$, the sum only includes boundedly many terms. Therefore
\begin{align*}
R_{N,HL}^{\ep,1,v}&\lesssim \|\As f_{\sim N}\|_{2}\|\pa_vh_{\sim N}\|_{H^{\s-1}}\|\Delta_L\Delta_t^{-1}f_{\neq}\|_{H^{4}}\|f\|_{H^{3}}\\
&\lesssim \f{\ep^2\nu^{\f23}}{\langle \nu t^3\rangle}\|\As f_{\sim N}\|_{2}\|\pa_vh_{\sim N}\|_{H^{\s-1}}.
\end{align*}

We turn to the remainder term $R_{N,HH}^{\ep,1}$. 
In the case, we have 
\beno
|\xi-\xi'|\approx |\xi'|\approx M \approx M'.
\eeno
Thus we divide into two cases according to the relationship between $l$ and $\xi'$: 
\begin{align*}
|R_{N,HH}^{\ep,1}|
&\lesssim \sum_{M\in \mathbb{D}}\sum_{\f18M\leq M'\leq M}\sum_{k,l\neq 0}\int_{\eta,\xi,\xi'} 1_{|l|\geq 3|\xi'|}
\As\bar{\hat{f}}_{k}(\eta)
\chi_N(l,\xi)\left[\widehat{(1-v')}(\xi'-\xi)\right]_{M'}\\
&\quad \quad \times
\f{|l|\As_{l}(\xi)}{l^2+(\xi'-lt)^2}|\widehat{\Delta_L\Delta_t^{-1}f}_l(\xi')_{M}||\widehat{\na f}_{k-l}(\eta-\xi)_{<N/8}|d\eta d\xi d\xi'\\
&+\sum_{M\in \mathbb{D}}\sum_{\f18M\leq M'\leq M}\sum_{k,l\neq 0}\int_{\eta,\xi,\xi'} 1_{|l|<3|\xi'|}
\As\bar{\hat{f}}_{k}(\eta)
\chi_N(l,\xi)\left[\widehat{(1-v')}(\xi'-\xi)\right]_{M'}\\
&\quad \quad \times
\f{|l|\As_{l}(\xi)}{l^2+(\xi'-lt)^2}
|\widehat{\Delta_L\Delta_t^{-1}f}_l(\xi')_{M}\widehat{\na f}_{k-l}(\eta-\xi)_{<N/8}|d\eta d\xi d\xi'\\
&=R_{N,HH}^{\ep,1,z}+R_{N,HH}^{\ep,1,v}.
\end{align*}
First we consider $R_{N,HH}^{\ep,1,z}$. In this case, we have $\langle \xi'-\xi\rangle\approx \langle \xi'\rangle$, 
\beno
\As_k(\eta)\approx \As_l(\xi)\approx 
\langle l,\xi\rangle^{\s}\approx 
\langle l\rangle^{\s}+\langle \xi\rangle^{\s}
\lesssim \langle l\rangle^{\s}+\langle \xi'\rangle^{\s}+\langle \xi'-\xi\rangle^{\s}
\lesssim \langle l\rangle^{\s},
\eeno
and $N\approx |k,\eta|\approx \langle l\rangle$, then 
\beno
\f{|l|\As_{l}(\xi)}{l^2+(\xi'-lt)^2}\lesssim \f{\langle l\rangle^{\s-1}}{\langle t\rangle^{2}}. 
\eeno
Therefore, we get
\begin{align*}
R_{N,HH}^{\ep,1,z}&\lesssim \langle t\rangle^{-2}\sum_{M\in \mathbb{D}} \|\As f_{\sim N}\|_2M^{-2}\|h_{\sim M}\|_{H^4}\|\Delta_L\Delta_t^{-1}f_{\sim N}\|_{H^{\s}}\|f\|_{H^3}\\
&\lesssim \langle t\rangle^{-2}\|\As f_{\sim N}\|_2\|h\|_{H^4}\|\Delta_L\Delta_t^{-1}P_{\neq}f_{\sim N}\|_{H^{\s}}\|f\|_{H^3}\\
&\lesssim \f{\ep^2\nu^{\f23}}{\langle t\rangle^2}\|\As f_{\sim N}\|_2\|\Delta_L\Delta_t^{-1}P_{\neq}f_{\sim N}\|_{H^{\s}}.
\end{align*}

For $|l|<3|\xi'|$, we have $\langle \xi'-\xi\rangle\approx \langle \xi'\rangle$, 
\beno
\As_k(\eta)\approx \As_l(\xi)\approx 
\langle l,\xi\rangle^{\s}\approx 
\langle l\rangle^{\s}+\langle \xi\rangle^{\s}
\lesssim \langle l\rangle^{\s}+\langle \xi'\rangle^{\s}+\langle \xi'-\xi\rangle^{\s}
\lesssim \langle \xi'\rangle^{\s},
\eeno
and $N\approx \langle l,\xi\rangle\approx \langle \xi'-\xi\rangle$. Thus 
\begin{align*}
R_{N,HH}^{\ep,1,v}&\lesssim 
\sum_{M\in \mathbb{D}}\sum_{\f18M\leq M'\leq M}\sum_{k,l\neq 0}\int_{\eta,\xi,\xi'} 1_{|l|<3|\xi'|}
\As\bar{\hat{f}}_{k}(\eta)
\chi_N(l,\xi)\left[\langle \xi'-\xi\rangle^{\s-1}\widehat{(1-v')}(\xi'-\xi)\right]_{M'}\\
&\quad \quad \times
\langle \xi'\rangle^2
|\widehat{\phi}_l(\xi')_{M}\widehat{\na f}_{k-l}(\eta-\xi)_{<N/8}|d\eta d\xi d\xi'\\
&\lesssim \sum_{M\in \mathbb{D}} \|\As f_{\sim N}\|_2M^{-2}\|h_{\sim N}\|_{H^{\s-1}}\|(\Delta_L\Delta_t^{-1}f_{\neq})_{\sim M}\|_{H^{4}}\|f\|_{H^3}\\
&\lesssim \|\As f_{\sim N}\|_2\|h_{\sim N}\|_{H^{\s-1}}\|P_{\neq}\phi\|_{H^{4}}\|f\|_{H^3}\\
&\lesssim \f{\ep^2\nu^{\f23}}{\langle t\rangle^2}\|\As f_{\sim N}\|_2\|h_{\sim N}\|_{H^{\s-1}}.
\end{align*}

Therefore, we conclude by the bootstrap hypotheses and Lemma \ref{lem: lin-inv-dam}, that
\beq\label{eq: R_N^1,ep}
\begin{split}
|R_N^{1,\ep}|&\lesssim \ep\nu^{\f13}\bigg[
\left(\f{\ep\nu^{\f13}}{\langle t^{\f32}\nu^{\f12}\rangle}
+\f{\ep\nu^{\f13}}{\langle t^2\rangle}
+\f{\ep\nu^{\f13}}{\langle t^{3}\nu\rangle}\right)
\|\As f_{\sim N}\|_{L^2}
\|\As\Delta_L\Delta_t^{-1}P_{\neq}f_N\|_{L^2}\\
&\quad\quad\quad\quad
+\ep\left\|\sqrt{\f{\pa_tw}{w}}\As f_{\sim N}\right\|_{2}
\left\|\sqrt{\f{\pa_tw}{w}}\As \chi_R\Delta_L\Delta_t^{-1}P_{\neq}f_N\right\|_{2}\bigg]\\
&\quad + \f{\ep^2\nu^{\f23}}{\langle t\rangle^2} \|\As f_{\sim N}\|_{2}\bigg[\|\pa_vh_{\sim N}\|_{H^{\s-1}}
+
\|\As(\Delta_L\Delta_t^{-1}P_{\neq}f_{\neq})_{\sim N}\|_2
+\|h_{\sim N}\|_{H^{\s-1}}\bigg].
\end{split}
\eeq

We end this section by proving Proposition \ref{prop: reaction}
\begin{proof}
By \eqref{eq: R_N^1}, \eqref{eq: R_N^2}, \eqref{eq: R_N^3}, \eqref{eq: R_N^1,ep}, Lemma \ref{lem: lin-inv-dam}, Proposition \ref{Rmk: basic estimate}, \eqref{eq: L-P-ortho2} and the bootstrap hypotheses, we get that
\begin{align*}
\sum_{N\geq 8}|R_N|
&\lesssim 
\left(\f{\ep\nu^{\f13}}{\langle \nu^{\f12} t^{\f32}\rangle}
+\f{\ep\nu^{\f13}}{\langle t^2\rangle}
+\f{\ep\nu^{\f13}}{\langle \nu t^3\rangle}\right)\|f\|_{H^{\s}}\|\As(\Delta_L\Delta_t^{-1}P_{\neq}f)\|_2\\
&\quad+\ep \left\|\sqrt{\f{\pa_t{w}}{{w}}}\As f\right\|_{2}
\left\|\sqrt{\f{\pa_tw}{w}}\As \chi_R\Delta_L\Delta_t^{-1}P_{\neq}f_N\right\|_{2}\\
&\quad+\ep\nu^{\f13}\|\As f\|_{2}\|g\|_{H^{\s}}
+ \f{\ep^2\nu^{\f23}}{\langle t\rangle^2} \|\As f\|_{2}\bigg[\|\pa_vh\|_{H^{\s-1}}
+\|\As(\Delta_L\Delta_t^{-1}P_{\neq}f)\|_2
+\|h\|_{H^{\s-1}}\bigg].
\end{align*}
By Lemma \ref{lem: elliptical highest}, we have
\begin{align*}
\sum_{N\geq 8}|R_N|
&\lesssim 
\left(\f{\ep\nu^{\f13}}{\langle \nu^{\f12} t^{\f32}\rangle}
+\f{\ep\nu^{\f13}}{\langle t^2\rangle}
+\f{\ep\nu^{\f13}}{\langle \nu t^3\rangle}\right)\|f\|_{H^{\s}}\Big(\|f_{\neq}\|_{H^{\s}}+\f{\ep\nu^{\f13}}{\langle t\rangle \langle \nu t^3\rangle}\|\pa_vh\|_{H^{\s}}\Big)\\
&\quad+\ep \left\|\sqrt{\f{\pa_t{w}}{{w}}}\As f\right\|_{2}
\Big(\left\|\sqrt{\f{\pa_tw}{w}}f_{\neq}\right\|_{H^{\s}}+\f{\ep^2\nu^{\f12}}{\langle \nu t^3\rangle}\Big)\\
&\quad+\ep\nu^{\f13}\|\As f\|_{2}\|g\|_{H^{\s}}
+ \f{\ep^2\nu^{\f23}}{\langle t\rangle^2} \|\As f\|_{2}\big[\|\pa_vh\|_{H^{\s-1}}
+\|h\|_{H^{\s-1}}\big],
\end{align*}
and then by the bootstrap hypotheses and the Young's inequality, it holds that
\begin{align*}
\int_1^t\sum_{N\geq 8}|R_N(t')|dt'
&\lesssim \ep\sup_{t'\in [1,t]}\|\As f(t')\|_2^2+\ep\int_1^tCK_w(t')dt'\\
&\quad+\ep\nu^{\f13}\sup_{t'\in [1,t]}\|\As f(t')\|_2
\|\pa_vh\|_{L^2_T(H^{\s})}
\left(\int_1^t\Big(\f{\ep\nu^{\f13}}{\langle t'\rangle \langle \nu t'^3\rangle}\Big)^2dt'\right)^{\f12}\\
&\quad+\ep\left(\int_1^tCK_w(t')dt'\right)^{\f12}
\left(\int_1^t\Big(\f{\ep^2\nu^{\f12}}{\langle \nu t'^3\rangle}\Big)^2dt'\right)^{\f12}\\
&\quad +\ep\nu^{\f13}\sup_{t'\in [1,t]}\|\As f(t')\|_2\|g\|_{L^1_T(H^{\s})}
+\ep^3\nu\sup_{t'\in [1,t]}\|\As f(t')\|_2\\
&\quad+\ep^2\nu^{\f23}\sup_{t'\in [1,t]}\|\As f(t')\|_2
\|\pa_vh\|_{L^2_T(H^{\s-1})}\\
&\lesssim \ep\sup_{t'\in [1,t]}\|\As f(t')\|_2^2+\ep\int_1^tCK_w(t')dt'\\
&\quad+\ep^3\nu^{\f13}\sup_{t'\in [1,t]}\|\As f(t')\|_2
+\ep^3\nu^{\f13}\left(\int_1^tCK_w(t')dt'\right)^{\f12}\\
&\lesssim \ep\sup_{t'\in [1,t]}\|\As f(t')\|_2^2+\ep\int_1^tCK_w(t')dt'+\ep^{5}\nu^{\f23}
\end{align*}
Thus we have proved Proposition \ref{prop: reaction}.
\end{proof}

\section{Coordinate system}
\subsection{Higher regular controls}\label{Sec: higher regular controls}
In this subsection we will study the energy estimate for $g$ in $H^{\s}$ and $h,\bar{h}$ in $H^{\s-1}$ and $H^{\s}$. 
\subsubsection{Energy estimate of $g$}\label{sec: g-high}
In this section, we will prove \eqref{eq: aim2}. We need to mention that the result of \eqref{eq: aim2} is not optimal, however it is enough. It is natural to computer the time evolution of $\left\|g\right\|_{H^{\s}}^2$. 
We get 
\begin{align*}
\f{d}{dt}\left\|\langle \pa_v\rangle^{\s}g\right\|_2^2
&=2\int \langle \pa_v\rangle^{\s}g \langle \pa_v\rangle^{\s}\pa_tgdv\\
&=-\f{4}{t}\|\langle \pa_v\rangle^{\s}g\|_2^2
-2\int \langle \pa_v\rangle^{\s}g \langle \pa_v\rangle^{\s}(g\pa_vg)dv\\
&\quad-\f{2}{t}\int \langle \pa_v\rangle^{\s}g \langle \pa_v\rangle^{\s} (v'\langle \na_{z,v}^{\bot}P_{\neq}\phi\cdot\na_{z,v}\tu\rangle)dv\\
&\quad+2\nu \int \langle \pa_v\rangle^{\s}g \langle \pa_v\rangle^{\s}((v')^2\pa_v^2g)dv\\
&=-\f{4}{t}\|\langle \pa_v\rangle^{\s}g\|_2^2
+V^{H,g}_1+V^{H,g}_2+V^{H,g}_3
\end{align*}

To treat $V^{H,g}_1$, we get by using integration by parts,
\begin{align*}
|V^{H,g}_1|&\lesssim \left|\int |\pa_vg| |\langle \pa_v\rangle^{\s}g|^2dv\right|
+\|g\|_{H^{\s}}\|[\langle \pa_v\rangle^{\s},g]\pa_vg\|_{2}.
\end{align*}
By Lemma \ref{lem: commutator} and the Sobolev embedding theory, we get
\begin{align*}
|V^{H,g}_1|
\lesssim \|g\|_{H^{2}}\|\langle \pa_v\rangle^{\s}g\|_{2}^2.
\end{align*}

Next we treat $V^{H,g}_2$. We now use the fact that 
\begin{align*}
<\na_{z,v}^{\bot}P_{\neq}\phi\cdot\na_{z,v}\tu>
= <\na_{L}^{\bot}P_{\neq}\phi\cdot \na_{L}\tu_{\neq}>.
\end{align*}
Then by the bootstrap hypotheses, we get
\begin{align*}
|V^{H,g}_2|&\lesssim 
\f{2}{t}\int \langle \pa_v\rangle^{\s}g \langle \pa_v\rangle^{\s} 
\left[(v'-1)< \na^{\bot}P_{\neq}\phi\cdot \na\tu_{\neq}>\right] dv\\
&\quad+\f{2}{t}\int \langle \pa_v\rangle^{\s}g \langle \pa_v\rangle^{\s} \left[<\na_{L}^{\bot}P_{\neq}\phi\cdot \na_{L}\tu_{\neq}>\right] dv\\
&\lesssim 
\f{1}{t}\|\langle\pa_v\rangle^{\s} g\|_{2}\|\langle \pa_v\rangle^{\s}(v'-1)\|_2
\|< \na^{\bot}P_{\neq}\phi\cdot \na\tu_{\neq}>\|_{H^{\s}}\\
&\quad+\f{1}{t}\|\langle\pa_v\rangle^{\s} g\|_{2}
\|\langle\pa_v\rangle^{\s}< \na_L^{\bot}P_{\neq}\phi\cdot \na_L\tu_{\neq}>\|_{2}\\
&\lesssim 
\f{1}{t}\|\langle\pa_v\rangle^{\s} g\|_{2}
\|h\|_{H^{\s}}
\|\na_{L}^{\bot}P_{\neq}\phi\|_{H^{\s}}
\|\na_{L}\tu_{\neq}\|_{H^{2}}\\
&\quad+\f{1}{t}\|\langle\pa_v\rangle^{\s} g\|_{2}
\|h\|_{H^{\s}}
\|\na_L^{\bot}P_{\neq}\phi\|_{H^{2}}
\|\na_L\tu_{\neq}\|_{H^{\s}}\\
&\quad+\f{1}{t}\|\langle\pa_v\rangle^{\s} g\|_{2}
\| \na_L^{\bot}P_{\neq}\phi\|_{H^{\s}}
\| \na_L\tu_{\neq}\|_{H^{2}}
+\f{1}{t}\|\langle\pa_v\rangle^{\s} g\|_{2}
\|\na_L^{\bot}P_{\neq}\phi\|_{H^{2}}
\|\na_L\tu_{\neq}\|_{H^{\s}}\\
&\lesssim 
\f{1}{t}\|\langle\pa_v\rangle^{\s} g\|_{2}
\| \na_L^{\bot}P_{\neq}\phi\|_{H^{\s}}
\| \na_L\tu_{\neq}\|_{H^{2}}
+\f{1}{t}\|\langle\pa_v\rangle^{\s} g\|_{2}
\|\na_L^{\bot}P_{\neq}\phi\|_{H^{2}}
\|\na_L\tu_{\neq}\|_{H^{\s}}.
\end{align*}
At last we treat the dissipation term $V_3^{H,g}$. We have  
\begin{align*}
V_3^{H,g}
&=2\nu \int \langle \pa_v\rangle^{\s}g \langle \pa_v\rangle^{\s}(\pa_v^2g)dv\\
&\quad+2\nu \int \langle \pa_v\rangle^{\s}g \langle \pa_v\rangle^{\s}\big(((v')^2-1)\pa_v^2g\big)dv\\
&=-2\nu\|\pa_v\langle \pa_v\rangle^{\s}g \|_2^2\\
&\quad+2\nu \int \langle \pa_v\rangle^{\s}g \langle \pa_v\rangle^{\s}\big(((v')^2-1)\pa_v^2g\big)dv\\
&=-2\nu\|\pa_v\langle \pa_v\rangle^{\s}g \|_2^2+V_{3,\ep}^{H,g}
\end{align*}

The term $V_{3,\ep}^{H,g}$ is similar to $E^0$. 
We then obtain by Young's inequality that
\beq\label{V_3,ep^H,g}
\begin{split}
|V_{3,\ep}^{H,g}|
&\lesssim \nu (1+\|h\|_{H^2})\left(\|\pa_vg\|_{H^{\s}}^2\|h\|_{H^{2}}+\|\pa_vh\|_{H^{\s-2}}\|\pa_vg\|_{H^{\s}}\|g\|_{H^4}\right)\\
&\lesssim \nu (1+\|h\|_{H^2})\left(\|\pa_vg\|_{H^{\s}}^2\|h\|_{H^{2}}+\|h\|_{H^{\s-1}}\|\pa_vg\|_{H^{\s}}^2+\|\pa_vh\|_{H^{\s-2}}\|g\|_{H^4}^2\right).
\end{split}
\eeq

By the bootstrap assumption, we get 
\begin{align*}
\f{d}{dt}\|g\|_{H^{\s}}^2
&\leq -\f{4}{t}\|g\|_{H^{\s}}^2-\nu\|\pa_vg \|_{H^{\s}}^2+C\bigg(\|g\|_{H^2}\|g\|_{H^{\s}}^2\\
&\quad+\f{1}{t}\|g\|_{H^{\s}}\Big(
\| \na_L^{\bot}P_{\neq}\phi\|_{H^{\s}}
\| \na_L\tu_{\neq}\|_{H^{2}}
+\|\na_L^{\bot}P_{\neq}\phi\|_{H^{2}}
\|\na_L\tu_{\neq}\|_{H^{\s}}\Big)\\
&\quad
+\nu \|\pa_vh\|_{H^{\s-2}}\|g\|_{H^4}^2\bigg)\\
&\leq -\f{4}{t}\|g\|_{H^{\s}}^2+C\bigg[
\|g\|_{H^2}\|g\|_{H^{\s}}^2
+\nu \|\pa_vh\|_{H^{\s-2}}\|g\|_{H^{\s}}\|g\|_{H^{4}}\\
&\quad+\f{1}{t}\|g\|_{H^{\s}}\Big(
\| \Delta_{L}P_{\neq}\phi\|_{H^{\s}}
\| \na_L\tu_{\neq}\|_{H^{2}}
+\|\na_L^{\bot}P_{\neq}\phi\|_{H^{2}}
\|\na_L\tu_{\neq}\|_{H^{\s}}\Big)\bigg],
\end{align*}
which gives that for $t\geq 1$,
\begin{align*}
&\sup_{t'\in [1,t]}\Big(t'\|g(t')\|_{H^{\s}}\Big)+\int_1^{t}\|g(t')\|_{H^{\s}}dt'\\
&\leq \|g(1)\|_{H^{\s}}+C\bigg(
\|g\|_{L^1_TH^2}\sup_{t'\in [1,T]}t'\|g(t')\|_{H^{\s}}
+\nu^{\f12}\|\pa_vh\|_{L^2_T(H^{\s-2})}\nu^{\f12}\|t g\|_{L^2_{T}(H^4)}\\
&\quad+ \|f_{\neq}\|_{L^{\infty}_{T}(H^{\s})}\| \na_L\tu_{\neq}\|_{L^1_{T}H^{2}}
+\left\|\f{\ep\nu^{\f13}}{\langle t\rangle\langle \nu t^3\rangle}\|\pa_vh\|_{H^{\s}}\right\|_{L^1_{T}}\| \na_L\tu_{\neq}\|_{L^{\infty}_{T}(H^{2})}\\
&\quad +\|\na_L^{\bot}P_{\neq}\phi\|_{L^{1}_{T}(H^{2})}\| f_{\neq}\|_{L^{\infty}_{T}H^{\s}}
 + \|\na_L^{\bot}P_{\neq}\phi\|_{L^{\infty}_{T}(H^{2})} \left\|\f{\ep\nu^{\f13}}{\langle t\rangle\langle \nu t^3\rangle}\|\pa_vh\|_{H^{\s}}\right\|_{L^1_{T}}\bigg)\\
&\leq \|g(1)\|_{H^{\s}}+C\bigg(
\|g\|_{L^1_TH^2}\sup_{t'\in [1,T]}t'\|g(t')\|_{H^{\s}}
+\nu^{\f12}\|\pa_vh\|_{L^2_T(H^{\s-2})}\nu^{\f12}\|t g\|_{L^2_{T}(H^4)}\\
&\quad+ \ep\nu^{\f13}\| \na_L\tu_{\neq}\|_{L^1_{T}H^{2}}
+\ep\nu^{\f13}\|\na_L^{\bot}P_{\neq}\phi\|_{L^{1}_{T}(H^{2})}
+\ep^2\nu^{\f23}\|\pa_vh\|_{L^2_{T}(H^{\s})}\bigg).
\end{align*}

Thus by the bootstrap hypotheses, we get that
\begin{align*}
\sup_{t'\in [1,t]}\Big(t'\|g(t')\|_{H^{\s}}\Big)
&+\int_1^{t}\|g(t')\|_{H^{\s}}dt'\\
&\leq \|g(1)\|_{H^{\s}}+C\bigg(
\ep\nu^{\f13}\sup_{t'\in [1,T]}t'\|g(t')\|_{H^{\s}}+\ep^2\nu^{\f13}\bigg).
\end{align*}
By taking $\ep$ small enough, we proved \eqref{eq: aim2}. 

\subsubsection{Energy estimate of $\bar{h}$ and $h$}\label{sec: barh and h-high}
We get that
\begin{align*}
\f12\f{d}{dt}\|\bar{h}\|_{H^{\s-1}}^2
&=-\int \langle \pa_v\rangle^{\s-1}\bar{h}\langle \pa_v\rangle^{\s-1}(g\pa_v \bar{h})dv
-\f{2}{t}\|\bar{h}\|_{H^{\s-1}}^2\\
&\quad+\f{1}{t}\int \langle \pa_v\rangle^{\s-1}\bar{h}\langle \pa_v\rangle^{\s-1}\Big(v'< \na_{z,v}^{\bot}P_{\neq}\phi\cdot\na_{z,v}f_{\neq}>\Big)dv\\
&\quad+\nu\int \langle \pa_v\rangle^{\s-1}\bar{h}\langle \pa_v\rangle^{\s-1}(((v')^2-1)\pa_{vv}\bar{h})dv
-\nu\|\pa_v \bar{h}\|_{H^{\s-1}}\\
&=-\f{2}{t}\|\bar{h}\|_{H^{\s-1}}^2-\nu\|\pa_v \bar{h}\|_{H^{\s-1}}+V^{H,\bar{h}}_{1}+V^{H,\bar{h}}_{2}+V^{H,\bar{h}}_{3},
\end{align*}
and
\begin{align*}
\f12\f{d}{dt}\|{h}\|_{H^{\s-1}}^2
&=-\int \langle \pa_v\rangle^{\s-1}h\langle \pa_v\rangle^{\s-1}\Big(g\pa_v h-\bar{h}-\nu (v')^2\pa_v^2h\big)dv\\
&=-\int \langle \pa_v\rangle^{\s-1}h\langle \pa_v\rangle^{\s-1}(g\pa_v h)dv\\
&\quad+\int \langle \pa_v\rangle^{\s-1}h \langle \pa_v\rangle^{\s-1}\bar{h}dv\\
&\quad +\nu \int \langle \pa_v\rangle^{\s-1}h \langle \pa_v\rangle^{\s-1}\big(((v')^2-1)\pa_v^2h\big)dv
-\nu\|\pa_v h\|_{H^{\s-1}}^2\\
&=-\nu\|\pa_v h\|_{H^{\s-1}}^2+V^{H,{h}}_{1}+V^{H,{h}}_{2}+V^{H,{h}}_{3}
\end{align*}

We use the same argument as in the treatment of $V^{H,g}_1$ and get that
\beno
|V^{H,\bar{h}}_{1}|\lesssim \|g\|_{H^{\s-1}}\|\bar{h}\|_{H^{\s-1}}^2,
\eeno
and
\beno
|V^{H,{h}}_{1}|\lesssim \|g\|_{H^{\s-1}}\|\bar{h}\|_{H^{\s-1}}^2. 
\eeno

We also have 
\beno
|V^{H,{h}}_{2}|\leq \|{h}\|_{H^{\s-1}}\|\bar{h}\|_{H^{\s-1}}.
\eeno

To treat $V^{H,\bar{h}}_{2}$, we have
\begin{align*}
V^{H,\bar{h}}_{2}&=\f{1}{t}\int \langle \pa_v\rangle^{\s-1}\bar{h}\langle \pa_v\rangle^{\s-1}\Big(< \na_{z,v}^{\bot}P_{\neq}\phi\cdot\na_{z,v}f_{\neq}>\Big)dv\\
&\quad+\f{1}{t}\int \langle \pa_v\rangle^{\s-1}\bar{h}\langle \pa_v\rangle^{\s-1}\Big(h< \na_{z,v}^{\bot}P_{\neq}\phi\cdot\na_{z,v}f_{\neq}>\Big)dv.
\end{align*}
Thus by the fact that
\beno
<-\pa_vF\pa_zG+\pa_zF\pa_vG>=\pa_v<G\pa_zF>,
\eeno
we get 
\begin{align*}
|V^{H,\bar{h}}_{2}|
&\lesssim \f{1}{t}\|\bar{h}\|_{H^{\s-1}}(1+\|h\|_{H^2})\|< \na_{z,v}^{\bot}P_{\neq}\phi\cdot\na_{z,v}f_{\neq}>\|_{H^{\s-1}}\\
&\quad+\f{1}{t}\|\bar{h}\|_{H^{\s-1}}\|h\|_{H^{\s-1}}
\|< \na_{z,v}^{\bot}P_{\neq}\phi\cdot\na_{z,v}f_{\neq}>\|_{H^{1}}\\
&\lesssim \f{1}{t}\|\bar{h}\|_{H^{\s-1}}(1+\|h\|_{H^2})\|<\pa_{z}P_{\neq}\phi f_{\neq}>\|_{H^{\s}}\\
&\quad+\f{1}{t}\|\bar{h}\|_{H^{\s-1}}\|h\|_{H^{\s-1}}
\|< \pa_{z}P_{\neq}\phi f_{\neq}>\|_{H^{2}}\\
&\lesssim \f{1}{t}\|\bar{h}\|_{H^{\s-1}}(1+\|h\|_{H^2})
\|P_{\neq}\phi\|_{H^{\s}}\|\pa_{z}f_{\neq}\|_{H^1}\\
&\quad+\f{1}{t}\|\bar{h}\|_{H^{\s-1}}(1+\|h\|_{H^2})
\|\pa_{z}P_{\neq}\phi\|_{H^{1}}\|f_{\neq}\|_{H^{\s}}\\
&\quad+\f{1}{t}\|\bar{h}\|_{H^{\s-1}}\|h\|_{H^{\s-1}}
\|\pa_{z}P_{\neq}\phi\|_{H^2}\|f_{\neq}\|_{H^{2}}. 
\end{align*}

Next we turn to $V_3^{H,\bar{h}}$ and $V_3^{H,{h}}$
\begin{align*}
|V_{3,\ep}^{H,\bar{h}}|
&\lesssim \int_{\xi,\eta}\langle \eta\rangle^{2\s-2}
|\hat{\bar{h}}(\eta)|\left|(\widehat{1-(v')^2}(\eta-\xi))|\xi|^2\hat{\bar{h}}(\xi)\right|d\xi d\eta\\
&\lesssim \int_{\xi,\eta}1_{|\eta|\leq 1}
|\hat{\bar{h}}(\eta)|\left|(\widehat{1-(v')^2}(\eta-\xi))|\xi|(|\eta|+|\xi-\eta|)\hat{\bar{h}}(\xi)\right| d\eta d\xi\\
&\quad+\int_{\xi,\eta}1_{|\eta|\geq 1}1_{|\xi-\eta|\geq |\xi|}|\eta|^{2\s-2}
|\hat{\bar{h}}(\eta)|\left|(\widehat{1-(v')^2}(\eta-\xi))|\xi|^2\hat{\bar{h}}(\xi)\right|d\xi d\eta\\
&\quad+\int_{\xi,\eta}1_{|\eta|\geq 1}1_{|\xi-\eta|<|\xi|}|\eta|^{2\s-2}
|\hat{\bar{h}}(\eta)|\left|(\widehat{1-(v')^2}(\eta-\xi))|\xi|^2\hat{\bar{h}}(\xi)\right|d\xi d\eta\\
&\lesssim \int_{\xi,\eta}1_{|\eta|\leq 1}
|\eta||\hat{\bar{h}}(\eta)|\left|(\widehat{(1-(v')^2)}(\eta-\xi))|\xi|\hat{f}_0(\xi)\right| d\eta d\xi\\
&\quad+\int_{\xi,\eta}1_{|\eta|\leq 1}
|\hat{\bar{h}}(\eta)|\left|(\widehat{\pa_v(1-(v')^2)}(\eta-\xi))|\xi|\hat{\bar{h}}(\xi)\right| d\eta d\xi\\
&\quad+\int_{\xi,\eta}1_{|\eta|\geq 1}1_{|\xi-\eta|< |\xi|}
|\eta|^{\s}
|\hat{\bar{h}}(\eta)|\left|(\widehat{1-(v')^2}(\eta-\xi))|\xi|^{\s}\hat{\bar{h}}(\xi)\right|d\xi d\eta\\
&\quad+\int_{\xi,\eta}1_{|\eta|\geq 1}1_{|\xi-\eta|\geq |\xi|}|\eta|^{\s-1}
|\hat{\bar{h}}(\eta)|\left|(\widehat{1-(v')^2}(\eta-\xi))|\eta-\xi|^{\s-1}|\xi|^2\hat{\bar{h}}(\xi)\right|d\xi d\eta\\
&\lesssim \|(1-(v')^2)\|_{H^2}\|\pa_v\bar{h}\|_{H^{\s-1}}
+\|\bar{h}\|_{H^{\s-1}}\|(1-(v')^2)\|_{H^{\s-1}}\|\bar{h}\|_{H^4}\\
&\lesssim (1+\|h\|_{H^2})\Big(\|h\|_{H^2}\|\pa_v\bar{h}\|_{H^{\s-1}}
+\|\bar{h}\|_{H^{\s-1}}\|\pa_vh\|_{H^{\s-2}}\|\bar{h}\|_{H^4}\Big),
\end{align*}
and
\begin{align*}
|V_{3,\ep}^{H,{h}}|
&\lesssim \nu (1+\|h\|_{H^2})\|\pa_v{h}\|_{H^{\s-1}}^2\|h\|_{H^{4}}.
\end{align*}

Putting together and using the bootstrap assumption, we get
\begin{align*}
\f12\f{d}{dt}\|\bar{h}\|_{H^{\s-1}}^2
&\leq -\f{2}{t}\|\bar{h}\|_{H^{\s-1}}^2+C\bigg(
\|g\|_{H^2}\|\bar{h}\|_{H^{\s-1}}^2\\
&\quad+\f{1}{t}\|\bar{h}\|_{H^{\s-1}}(1+\|h\|_{H^2})
\|P_{\neq}\phi\|_{H^{\s}}\|\pa_{z}f_{\neq}\|_{H^1}\\
&\quad+\f{1}{t}\|\bar{h}\|_{H^{\s-1}}(1+\|h\|_{H^2})
\|\pa_{z}P_{\neq}\phi\|_{H^{1}}\|f_{\neq}\|_{H^{\s}}\\
&\quad+\f{1}{t}\|\bar{h}\|_{H^{\s-1}}\|h\|_{H^{\s-1}}
\|\pa_{z}P_{\neq}\phi\|_{H^2}\|f_{\neq}\|_{H^{2}}
+\nu\|\pa_vh\|_{H^{\s-2}}\|\bar{h}\|_{H^{\s-1}}\|\bar{h}\|_{H^4}
\bigg).
\end{align*}

We also get 
\begin{align*}
\f{d}{dt}\big(t\|\bar{h}\|_{H^{\s-1}}\big)
&\leq -\|\bar{h}\|_{H^{\s-1}}+C\bigg(
t\|g\|_{H^2}\|\bar{h}\|_{H^{\s-1}}\\
&\quad+\|P_{\neq}\phi\|_{H^{\s}}\|\pa_{z}f_{\neq}\|_{H^1}
+\|\pa_{z}P_{\neq}\phi\|_{H^{2}}\|f_{\neq}\|_{H^{\s}}
+\nu t\|\pa_vh\|_{H^{\s-2}}\|\bar{h}\|_{H^4}
\bigg),
\end{align*}
which then implies
\begin{align*}
&\sup_{t\in [1,T]}\big(t\|\bar{h}(t)\|_{H^{\s-1}}\big)
+\int_{0}^{T}\|\bar{h}(t')\|_{H^{\s-1}}dt'\\
&\leq \|\bar{h}(1)\|_{H^{\s-1}}
+C\bigg(\|g\|_{L^1_{T}(H^2)}\sup_{t\in [1,T]}\big(t\|\bar{h}(t)\|_{H^{\s-1}}\big)
+\|P_{\neq}\phi\|_{L^{\infty}_{T}H^{\s}}\|\pa_{z}f_{\neq}\|_{L^1_{T}H^1}\\
&\quad\quad\quad\quad\quad\quad\quad\quad\quad
+\|\pa_{z}P_{\neq}\phi\|_{L^1_{T}H^{2}}\|f_{\neq}\|_{L^{\infty}(H^{\s})}+\nu\|\pa_vh\|_{L^2_TH^{\s-2}}
\left\|\langle t\rangle\|\bar{h}\|_{H^4}\right\|_{L^2_T}\bigg)\\
&\leq \|\bar{h}(1)\|_{H^{\s-1}}+C\bigg(\ep\nu^{\f13}\sup_{t\in [1,T]}\big(t\|\bar{h}(t)\|_{H^{\s-1}}\big)
+\ep^2\nu^{\f13}+\ep^2\nu^{\f76}\bigg). 
\end{align*}
Then by taking $\ep$ small enough, we get \eqref{eq: aim3}. 

We also get by Young's inequality and the bootstrap hypotheses that
\begin{align*}
&\sup_{t\in [1,T]}\big(\|{h}(t)\|_{H^{\s-1}}^2\big)
+\nu\|\pa_v{h}\|_{L^2_TH^{\s-1}}^2\\
&\leq \|{h}(1)\|_{H^{\s-1}}^2+\|\bar{h}\|_{L^1_T(H^{\s-1})}\sup_{t\in [1,T]}\big(\|{h}(t)\|_{H^{\s-1}}\big)
+C\|g\|_{L^1_{T}(H^2)}\sup_{t\in [1,T]}\big(\|{h}(t)\|_{H^{\s-1}}^2\big)\\
&\leq \|{h}(1)\|_{H^{\s-1}}^2+4\|\bar{h}\|_{L^1_T(H^{\s-1})}^2
+\Big(\f14+C\|g\|_{L^1_{T}(H^2)}\Big)\sup_{t\in [1,T]}\big(\|{h}(t)\|_{H^{\s-1}}^2\big)\\
&\leq \|{h}(1)\|_{H^{\s-1}}^2+4\|\bar{h}\|_{L^1_T(H^{\s-1})}^2+\f{3}{8}\sup_{t\in [1,T]}\big(\|{h}(t)\|_{H^{\s-1}}^2\big),
\end{align*}
which implies \eqref{eq: aim4}. 

\subsection{Energy estimates of $\bar{h}$ and $h$ in $H^{\s}$}\label{sec: high-h-barh}
\begin{align*}
\f12\f{d}{dt}\|\As\bar{h}\|_2^2
&=-\int\f{\pa_tw(t,\eta)}{w(t,\eta)}\left|\f{\langle \eta\rangle^{\s}\widehat{\bar{h}}(t,\eta)}{w(t,\eta)}\right|^2d\eta
-\f{2}{t}\|\As\bar{h}\|_2^2\\
&\quad \underbrace{-\int \As \bar{h}\left[\As(g\pa_v\bar{h})-g\pa_v\As\bar{h}\right]dv}_{V_{1,\s}^{\bar{h}}}
+\f{1}{2}\int g'|\As \bar{h}|^2dv\\
&\quad +\underbrace{\f{1}{t}\sum_{l\neq 0}\int 
\f{\langle \eta\rangle^{\s}\widehat{\bar{h}}(t,\eta)}{w(t,\eta)} 
\f{\langle \eta\rangle^{\s}\eta l}{w(t,\eta)}\overline{\hat{\phi}_{-l}(\eta-\xi)\hat{f}_l(\xi)}d\xi d\eta}_{V_{2,\s}^{\bar{h}}}\\
&\quad +\underbrace{\f{1}{t}\sum_{l\neq 0}\int 
\f{\langle \eta\rangle^{\s}\overline{\widehat{\bar{h}}(t,\eta)}}{w(t,\eta)} 
\f{\langle \eta\rangle^{\s}(\eta-\xi') l}{w(t,\eta)} \hat{h}(\xi')\hat{\phi}_{-l}(\eta-\xi)\hat{f}_{l}(\xi-\xi')d\xi'd\xi d\eta}_{V_{2,\s}^{\bar{h},\ep}}\\
&\quad -\nu\|\pa_v\As\bar{h}\|_2^2
+\underbrace{\nu \int \As \bar{h} \As \left(((v')^2-1)\pa_{vv}\bar{h}\right)dv}_{V_{3,\s}^{\bar{h}}}\\
&=-CK_{w}^{\bar{h}}-\f{2}{t}\|\As\bar{h}\|_2^2 -\nu\|\pa_v\As\bar{h}\|_2^2+V_{1,\s}^{\bar{h}}+V_{2,\s}^{\bar{h}}+V_{2,\s}^{\bar{h},\ep}+V_{3,\s}^{\bar{h}}
\end{align*}
We the get 
\begin{align*}
V_{1,\s}^{\bar{h}}
&=-\sum_{M\geq 8}\int \As \bar{h}\left[\As(g_{M}\pa_v\bar{h}_{<M/8})-g_{M}\pa_v\As\bar{h}_{<M/8}\right]dv\\
&\quad -\sum_{M\geq 8}\int \As \bar{h}\left[\As(g_{<M/8}\pa_v\bar{h}_{M})-g_{<M/8}\pa_v\As\bar{h}_{M}\right]dv\\
&\quad -\sum_{M\in\mathbb{D}}\sum_{\f18M\leq M'\leq 8M}\int \As \bar{h}\left[\As(g_{M}\pa_v\bar{h}_{M'})-g_{M}\pa_v\As\bar{h}_{M'}\right]dv\\
&=V_{1,\s}^{\bar{h},HL}+V_{1,\s}^{\bar{h},LH}+V_{1,\s}^{\bar{h},HH}.
\end{align*}
We have
\begin{align*}
|V_{1,\s}^{\bar{h},HL}|&\lesssim 
\sum_{M\geq 8}
\int \As \widehat{\bar{h}}(\eta)\left[\As(\hat{g}(\xi)_{M}\widehat{\pa_v\bar{h}}(\eta-\xi)_{<M/8})-\hat{g}(\xi)_{M}\As\widehat{\pa_v\bar{h}}(\eta-\xi)_{<M/8}\right]d\xi d\eta\\
&\lesssim \sum_{M\geq 8}
\int \As \widehat{\bar{h}}(\eta)\langle\xi\rangle^{\s}\hat{g}(\xi)_{M}\widehat{\pa_v\bar{h}}(\eta-\xi)_{<M/8}d\xi d\eta\\
&\lesssim \sum_{M\geq 8} \|\As\bar{h}_{\sim M}\|_2
\|g_{M}\|_{H^{\s}}\|\pa_v\bar{h}\|_{H^1}\\
&\lesssim \|\As\bar{h}\|_2
\|g\|_{H^{\s}}\|\pa_v\bar{h}\|_{H^1}.
\end{align*}

Next we treat $V_{1,\s}^{\bar{h},LH}+V_{1,\s}$, we get
\begin{align*}
|V_{1,\s}^{\bar{h},HL}|&\lesssim 
\sum_{M\geq 8}
\int_{|\xi-\eta|\leq \f{1}{10}|\eta|} \As \widehat{\bar{h}}(\eta)\left[\As(\hat{g}(\eta-\xi)_{<M/8}\widehat{\pa_v\bar{h}}(\xi)_{M})-\hat{g}(\eta-\xi)_{<M/8}\As\widehat{\pa_v\bar{h}}(\xi)_{M}\right]d\xi d\eta\\
&\quad+\sum_{M\geq 8}
\int_{|\xi-\eta|\geq \f{1}{10}|\eta|} \As \widehat{\bar{h}}(\eta)\left[\As(\hat{g}(\eta-\xi)_{<M/8}\widehat{\pa_v\bar{h}}(\xi)_{M})-\hat{g}(\eta-\xi)_{<M/8}\As\widehat{\pa_v\bar{h}}(\xi)_{M}\right]d\xi d\eta,
\end{align*}
For the first term, by Lemma \ref{lem: w-w}, we have $|\xi|\approx |\eta|$ and then
\begin{align*}
&\left|\sum_{M\geq 8}
\int_{|\xi-\eta|\leq \f{1}{10}|\eta|} \As \widehat{\bar{h}}(\eta)\left[\As(\hat{g}(\eta-\xi)_{<M/8}\widehat{\pa_v\bar{h}}(\xi)_{M})-\hat{g}(\eta-\xi)_{<M/8}\As\widehat{\pa_v\bar{h}}(\xi)_{M}\right]d\xi d\eta\right|\\
&\lesssim 
\sum_{M\geq 8}
\int_{|\xi-\eta|\leq \f{1}{10}|\eta|} \As |\widehat{\bar{h}}(\eta)|\left|\f{\langle\eta\rangle\eta}{w(t,\eta)}-\f{\langle\xi\rangle\xi}{w(t,\xi)}\right]|\hat{g}(\eta-\xi)_{<M/8}\widehat{\bar{h}}(\xi)_{M}|d\xi d\eta\\
&\lesssim 
\sum_{M\geq 8}
\int_{|\xi-\eta|\leq \f{1}{10}|\eta|} \As |\widehat{\bar{h}}(\eta)|\left|\langle\eta\rangle^{\s}\eta-\langle\xi\rangle^{\s}\xi\right]|\hat{g}(\eta-\xi)_{<M/8}\widehat{\bar{h}}(\xi)_{M}|d\xi d\eta\\
&\quad+\sum_{M\geq 8}
\int_{|\xi-\eta|\leq \f{1}{10}|\eta|} \As |\widehat{\bar{h}}(\eta)|\left|\f{1}{w(t,\eta)}-\f{1}{w(t,\xi)}\right]\langle\xi\rangle^{\s+1}|\hat{g}(\eta-\xi)_{<M/8}\widehat{\bar{h}}(\xi)_{M}|d\xi d\eta\\
&\lesssim 
\sum_{M\geq 8}
\int_{|\xi-\eta|\leq \f{1}{10}|\eta|} \As |\widehat{\bar{h}}(\eta)|\langle\eta-\xi\rangle\langle\xi\rangle^{\s}|\hat{g}(\eta-\xi)_{<M/8}\widehat{\bar{h}}(\xi)_{M}|d\xi d\eta\\
&\quad+\sum_{M\geq 8}
\int_{|\xi-\eta|\leq \f{1}{10}|\eta|} \As |\widehat{\bar{h}}(\eta)|\left|\f{1}{w(t,\eta)}-\f{1}{w(t,\xi)}\right]\langle\xi\rangle^{\s+1}|\hat{g}(\eta-\xi)_{<M/8}\widehat{\bar{h}}(\xi)_{M}|d\xi d\eta\\
&\lesssim 
\sum_{M\geq 8}
\int_{|\xi-\eta|\leq \f{1}{10}|\eta|} \As |\widehat{\bar{h}}(\eta)|\langle\eta-\xi\rangle\langle\xi\rangle^{\s}|\hat{g}(\eta-\xi)_{<M/8}\widehat{\bar{h}}(\xi)_{M}|d\xi d\eta\\
&\quad+\sum_{M\geq 8}
\int_{|\xi-\eta|\leq \f{1}{10}|\eta|} \As |\widehat{\bar{h}}(\eta)|\f{\langle\eta-\xi\rangle}{\eta}\left(\nu^{-\f13}\chi_{t'\lesssim \nu^{-\f13}}(t')+\nu^{\f13\b}t'^{1-\b}\chi_{t'\gtrsim \nu^{-\f13}}(t')\right)\\
&\quad\quad\quad\quad \quad \quad
\times\langle\xi\rangle^{\s+1}|\hat{g}(\eta-\xi)_{<M/8}\widehat{\bar{h}}(\xi)_{M}|d\xi d\eta\\
&\lesssim \sum_{M\geq 8}\|\As\bar{h}_{\sim M}\|_{2}\|g_{M}\|_{H^{3}}\|\bar{h}\|_{H^{\s}}\left(\nu^{-\f13}\chi_{t'\lesssim \nu^{-\f13}}(t')+\nu^{\f13\b}t'^{1-\b}\chi_{t'\gtrsim \nu^{-\f13}}(t')\right)\\
&\lesssim  \|\As\bar{h}\|_{2}\|g\|_{H^{3}}\|\bar{h}\|_{H^{\s}}\left(\nu^{-\f13}\chi_{t'\lesssim \nu^{-\f13}}(t')+\nu^{\f13\b}t'^{1-\b}\chi_{t'\gtrsim \nu^{-\f13}}(t')\right). 
\end{align*}
For the second term, we have $|\xi-\eta|\geq \f{1}{10}|\eta|\approx |\xi|$, thus we get 
\begin{align*}
&\sum_{M\geq 8}
\left|\int_{|\xi-\eta|\geq \f{1}{10}|\eta|} \As \widehat{\bar{h}}(\eta)\left[\As(\hat{g}(\eta-\xi)_{<M/8}\widehat{\pa_v\bar{h}}(\xi)_{M})-\hat{g}(\eta-\xi)_{<M/8}\As\widehat{\pa_v\bar{h}}(\xi)_{M}\right]d\xi d\eta\right|\\
&\lesssim \sum_{M\geq 8}
\int \As \widehat{\bar{h}}(\eta)\langle\xi\rangle^{\s}\hat{g}(\xi)_{M}\widehat{\pa_v\bar{h}}(\eta-\xi)_{<M/8}d\xi d\eta\\
&\lesssim \sum_{M\geq 8} \|\As\bar{h}_{\sim M}\|_2
\|g_{M}\|_{H^{\s}}\|\pa_v\bar{h}\|_{H^1}
\lesssim \|\As\bar{h}\|_2
\|g\|_{H^{\s}}\|\pa_v\bar{h}\|_{H^1}.
\end{align*}
Now we turn to $V_{1,\s}^{\bar{h},HH}$, we have
\beno
|\eta|\lesssim |\eta-\xi|+|\xi|\approx |\xi|\approx|\eta-\xi|
\eeno
and then
\begin{align*}
|V_{1,\s}^{\bar{h},HH}|
&\lesssim \sum_{M\in \mathbb{D}}\|\As\bar{h}\|_2\|g_{M}\|_{H^{\s}}\|\bar{h}_{\sim M}\|_{H^3}\\
&\lesssim \|\As\bar{h}\|_2\|g\|_{H^{\s}}\|\bar{h}\|_{H^3}
\end{align*}
As suggested by Section \ref{sec: correction}, $V_{2,\s}^{\bar{h},\ep}$ is not significantly harder then $V_{2,\s}^{\bar{h}}$, in fact the primary complications that arise in the treatment of $R_{N}^{\ep,1}$ do not arise in the treatment of $V_{2,\s}^{\bar{h},\ep}$. 
Hence we focus only on $V_{2,\s}^{\bar{h}}$; the control of $V_{2,\s}^{\bar{h},\ep}$ results in, at worst, similar contributions with an additional power of $\ep$. 
Now we turn to $V_{2,\s}^{\bar{h}}$, we use Littlewood-Paley decomposition in $v$ and get that
\begin{align*}
V_{2,\s}^{\bar{h}}
&=\f{1}{t}\sum_{l\neq 0}\int \chi_{|\eta|\leq 100|l|}
\f{\langle \eta\rangle^{\s}\widehat{\bar{h}}(t,\eta)}{w(t,\eta)} 
\f{\langle \eta\rangle^{\s}\eta l}{w(t,\eta)}\overline{\hat{\phi}_{-l}(\eta-\xi)\hat{f}_l(\xi)}d\xi d\eta\\
&\quad+\sum_{M\geq 8}\f{1}{t}\sum_{l\neq 0}\int \chi_{|\eta|\geq 100|l|}
\f{\langle \eta\rangle^{\s}\widehat{\bar{h}}(t,\eta)}{w(t,\eta)} 
\f{\langle \eta\rangle^{\s}\eta l}{w(t,\eta)}\overline{\hat{\phi}_{-l}(\eta-\xi)_{<M/8}\hat{f}_l(\xi)_{M}}d\xi d\eta\\
&\quad+\sum_{M\geq 8}\f{1}{t}\sum_{l\neq 0}\int \chi_{|\eta|\geq 100|l|}
\f{\langle \eta\rangle^{\s}\widehat{\bar{h}}(t,\eta)}{w(t,\eta)} 
\f{\langle \eta\rangle^{\s}\eta l}{w(t,\eta)}\overline{\hat{\phi}_{l}(\xi)_{M}\hat{f}_{-l}(\eta-\xi)_{<M/8}}d\xi d\eta\\
&\quad+\sum_{M\in \mathbb{D}}\sum_{\f18M\leq M'\leq 8M}\f{1}{t}\sum_{l\neq 0}\int \chi_{|\eta|\geq 100|l|}
\f{\langle \eta\rangle^{\s}\widehat{\bar{h}}(t,\eta)}{w(t,\eta)} 
\f{\langle \eta\rangle^{\s}\eta l}{w(t,\eta)}\overline{\hat{\phi}_{-l}(\eta-\xi)_{M'}\hat{f}_l(\xi)_{M}}d\xi d\eta\\
&=V_{2,\s}^{\bar{h},z}
+\sum_{M\geq 8}T_{M}^{\bar{h}}
+\sum_{M\geq 8}R_{M}^{\bar{h}}
+\mathcal{R}^{\bar{h}}. 
\end{align*}
It is easy to obtain that
\begin{align*}
|V_{2,\s}^{\bar{h},z}|\lesssim \f1t\|\bar{h}\|_{H^{\s}}\|\phi_{\neq}\|_{H^{4}}\|f_{\neq}\|_{H^{\s}}
\lesssim \f{1}{t^3}\|\bar{h}\|_{H^{\s}}\|f_{\neq}\|_{H^{\s}}\|f_{\neq}\|_{H^6}
\end{align*}
There is a loss of derivate in $T_{M}^{\bar{h}}$. By using the fact that 
\beno
|\eta|=|(\eta-\xi+lt)|+|(\xi-lt)|\lesssim \langle \eta-\xi+lt\rangle \langle \xi-lt\rangle
\eeno
We get 
\begin{align*}
|T_{M}^{\bar{h}}|&\lesssim
\f{1}{t}\sum_{l\neq 0}\int 1_{|\eta|\geq 100|l|}
\f{\langle \eta\rangle^{\s}|\widehat{\bar{h}}(t,\eta)|}{w(t,\eta)} 
\f{\langle \xi\rangle^{\s}|l|}{w(t,\xi)}|\langle\eta-\xi+lt\rangle\hat{\phi}_{l}(\eta-\xi)_{<M/8}||\langle \xi-lt\rangle\hat{f}_{-l}(\xi)_{M}|d\xi d\eta\\
&\lesssim \f1t\|\As\bar{h}_{\sim M}\|_{2}\|(\sqrt{-\Delta_{L}}\As f_{\neq})_{M}\|_2\|\pa_z\na_{L}\phi_{\neq}\|_{H^2}\\
&\lesssim \f{1}{t^2}\|\As\bar{h}_{\sim M}\|_{2}\|(\sqrt{-\Delta_{L}}\As f_{\neq})_{M}\|_2\|f_{\neq}\|_{H^3}.
\end{align*}

For $R_{M}^{\bar{h}}$, we have
\begin{align*}
|R_{M}^{\bar{h}}|
&\lesssim \f{1}{t}\sum_{l\neq 0}\int 1_{|\eta|\geq 100|l|}
|\As\widehat{\bar{h}}(t,\eta)|
\As(t,\xi)\f{\xi/l^2}{\langle\f{\xi}{l}-t\rangle^2}\widehat{\Delta_{L}\Delta_t^{-1}f}_{l}(\xi)_{M}\widehat{\pa_zf}_{-l}(\eta-\xi)_{<M/8}d\xi d\eta\\
&\lesssim \f{1}{t}\sum_{l\neq 0}\int 1_{|\eta|\geq 100|l|}\chi_{|l|\geq \f{1}{10}\sqrt{|\xi|}}
|\As\widehat{\bar{h}}|
\As(t,\xi)\f{\xi/l^2}{\langle\f{\xi}{l}-t\rangle^2}\widehat{\Delta_{L}\Delta_t^{-1}f}_{l}(\xi)_{M}\widehat{\pa_zf}_{-l}(\eta-\xi)_{<M/8}d\xi d\eta\\
&\quad+\f{1}{t}\sum_{l\neq 0}\int 1_{|\eta|\geq 100|l|}
1_{|l|\leq \f{1}{10}\sqrt{|\xi|}} \left[1_{t\in I_{l,\xi}}+1_{t\notin I_{l,\xi}}\right]
|\As\widehat{\bar{h}}|\\
&\quad\quad \quad \quad\quad \quad \times
\As(t,\xi)\f{\xi/l^2}{\langle\f{\xi}{l}-t\rangle^2}\widehat{\Delta_{L}\Delta_t^{-1}f}_{l}(\xi)_{M}\widehat{\pa_zf}_{-l}(\eta-\xi)_{<M/8}d\xi d\eta\\
&=R_{M}^{\bar{h},z}+R_{M,R}^{\bar{h}}+R_{M,NR}^{\bar{h}}
\end{align*}
We have
\begin{align*}
|R_{M}^{\bar{h},z}|\lesssim \f1t\|\As\bar{h}_{\sim M}\|_{2}
\|\As(\Delta_{L}\Delta_t^{-1}f_{\neq})_{M}\|_2\|f_{\neq}\|_{H^3}
\end{align*}

Next we treat $R_{M,NR}^{\bar{h}}$, we have 
\begin{align*}
|R_{M,NR}^{\bar{h}}|\lesssim 
\f{1}{t}\left\|\As\bar{h}_{\sim M}\right\|_{2}
\left\|\As(\Delta_{L}\Delta_t^{-1}f_{\neq})_{M}\right\|_2
\|f_{\neq}\|_{H^3}
\end{align*}

If $t\in I_{l,\xi}$ with $|l|\leq \f{1}{10}\sqrt{|\xi|}$, then according the integrand we get $|\xi-\eta|\leq \f{3}{16}|\xi|$, and thus $t\in I_{k,\eta}$. By Lemma \ref{lem: 3.2}, we have the follow three cases. 

a.) $k=l$. We get 
\begin{align*}
|R_{M,R}^{\bar{h}}|\lesssim 
\f{1}{t}\left\|\sqrt{\f{\pa_tw}{w}}\As\bar{h}_{\sim M}\right\|_{2}
\left\|\sqrt{\f{\pa_tw}{w}}\chi_{R}\As(\Delta_{L}\Delta_t^{-1}f_{\neq})_{M}\right\|_2
\|\nu^{-\f13}\langle \nu^{\f13}t\rangle^{1+\b}f_{\neq}\|_{H^6}
\end{align*}

b.) $|\f{\eta}{k}-t|\gtrsim \f{\eta}{k^2}$ and $|\f{\xi}{k}-t|\gtrsim \f{\xi}{k^2}$. Then the estimate is similar to $R_{M,NR}^{\bar{h}}$ and we get
\begin{align*}
|R_{M,R}^{\bar{h}}|\lesssim 
\f{1}{t}\left\|\As\bar{h}_{\sim M}\right\|_{2}
\left\|\As(\Delta_{L}\Delta_t^{-1}f_{\neq})_{M}\right\|_2
\|f_{\neq}\|_{H^3}
\end{align*}

c.) $|\eta-\xi|\gtrsim \f{\xi}{l^2}$, then we get 
\begin{align*}
|R_{M,R}^{\bar{h}}|\lesssim 
\f{1}{t}\left\|\As\bar{h}_{\sim M}\right\|_{2}
\left\|\As(\Delta_{L}\Delta_t^{-1}f_{\neq})_{M}\right\|_2
\|f_{\neq}\|_{H^4}
\end{align*}

The remainder term is easy to dealt with. We use the fact that $|\eta|\lesssim |\eta-\xi|+|\xi|$ and $|\xi|\approx |\eta-\xi|$ and get that
\begin{align*}
|\mathcal{R}^{\bar{h}}|
&\lesssim \sum_{M\in \mathbb{D}}\sum_{\f18M\leq M'\leq 8M}\f{1}{t}\sum_{l\neq 0}\int \chi_{|\eta|\geq 100|l|}
|\As\widehat{\bar{h}}(t,\eta)|
\langle \eta\rangle^{\s-2}|\hat{\phi}_{-l}(\eta-\xi)_{M'}||\xi|^3|l||\hat{f}_l(\xi)_{M}|d\xi d\eta\\
&\lesssim \sum_{M\in \mathbb{D}}\f{1}{t}\|\As\bar{h}\|_2\|P_{\neq}\phi_{\sim M}\|_{H^{\s-2}}\|(f_{\neq})_{M}\|_{H^7}\\
&\lesssim \f{1}{t}\|\As\bar{h}\|_2\|P_{\neq}\phi\|_{H^{\s-2}}\|f_{\neq}\|_{H^7}\lesssim \f{1}{t}\|\As\bar{h}\|_2\|f_{\neq}\|_{H^{\s-2}}\|f_{\neq}\|_{H^7}. 
\end{align*}
To treat the dissipation error term $V_{3,\s}^{\bar{h}}$, we have
\begin{align*}
|V_{3,\s}^{\bar{h}}|&\lesssim 
\int \langle \eta \rangle^{\s}|\widehat{\bar{h}}(\eta)|\langle \eta \rangle^{\s}|G(\eta-\xi)||\xi|^2|\widehat{\bar{h}}(\xi)|d\xi d\eta\\
&\lesssim 
\int_{|\eta-\xi|\leq |\xi|}1_{|\eta|\leq 1} |\widehat{\bar{h}}(\eta)||G(\eta-\xi)||\xi|^2|\widehat{\bar{h}}(\xi)|d\xi d\eta\\
&\quad+\int_{|\eta-\xi|\leq |\xi|} 1_{|\eta|\geq  1}\langle \eta \rangle^{\s}|\widehat{\bar{h}}(\eta)|\langle \eta \rangle^{\s}|G(\eta-\xi)||\xi|^2|\widehat{\bar{h}}(\xi)|d\xi d\eta\\
&\quad+\int_{|\eta-\xi|\geq |\xi|}1_{|\eta|\geq 1} \langle \eta \rangle^{\s}|\widehat{\bar{h}}(\eta)|\langle \eta \rangle^{\s}|G(\eta-\xi)||\xi|^2|\widehat{\bar{h}}(\xi)|d\xi d\eta\\
&=V_{3,\s}^{\bar{h},<1}+V_{3,\s}^{\bar{h},LH}+V_{3,\s}^{\bar{h},HL}.
\end{align*}
Now we treat $V_{3,\s}^{\bar{h},<1}$, 
\begin{align*}
|V_{3,\s}^{\bar{h},<1}|\lesssim \nu\|\bar{h}\|_{L^2}\|\bar{h}\|_{H^2}\|G\|_{H^{2}}\lesssim \nu\|\bar{h}\|_{L^2}\|\bar{h}\|_{H^2}\|h\|_{H^{2}}.
\end{align*}
Next we turn to $V_{3,\s}^{\bar{h},LH}$, in which case it holds that
\beno
|\eta|\leq |\eta-\xi|+|\xi|\lesssim |\xi|. 
\eeno
Then we have
\begin{align*}
V_{3,\s}^{\bar{h},LH}
&\lesssim \nu\int_{|\eta-\xi|\leq |\xi|} |\eta|\langle \eta \rangle^{\s}|\widehat{\bar{h}}(\eta)|\langle \xi \rangle^{\s-1}|G(\eta-\xi)||\xi|^2|\widehat{\bar{h}}(\xi)|d\xi d\eta\\
&\lesssim \nu\|h\|_{H^3}\|\pa_v\As\bar{h}\|_2^2
\lesssim \ep\nu^{\f13}\nu\|\pa_v\As\bar{h}\|_2^2.
\end{align*}
Next we treat $V_{3,\s}^{\bar{h},HL}$ and get that
\begin{align*}
|V_{3,\s}^{\bar{h},HL}|
&\lesssim \nu\int_{|\eta-\xi|\geq |\xi|} 
|\eta|\langle \eta \rangle^{\s}|\widehat{\bar{h}}(\eta)|\langle \eta-\xi \rangle^{\s-1}|G(\eta-\xi)||\xi|^2|\widehat{\bar{h}}(\xi)|d\xi d\eta\\
&\lesssim \nu\|\pa_v\bar{h}\|_{H^{\s}}\|h\|_{H^{\s-1}}\|\pa_v\bar{h}\|_{H^3}.
\end{align*}

Thus we conclude that
\begin{align*}
&\f12\f{d}{dt}\|\As\bar{h}\|_2^2+CK_{w}^{\bar{h}}+\f{2}{t}\|\As\bar{h}\|_2^2 +\f12\nu\|\pa_v\As\bar{h}\|_2^2\\
&\lesssim 
\|\As\bar{h}\|_2^2
\|g\|_{H^{\s}}
+\|\As\bar{h}\|_{2}^2\|g\|_{H^{3}}\left(\nu^{-\f13}\chi_{t'\lesssim \nu^{-\f13}}(t')+\nu^{\f13\b}t'^{1-\b}\chi_{t'\gtrsim \nu^{-\f13}}(t')\right)\\
&\quad+\f{1}{t}\|\bar{h}\|_{H^{\s}}\|f_{\neq}\|_{H^{\s}}\|f_{\neq}\|_{H^7}
+\f{1}{t^2}\|\As\bar{h}\|_{2}\|(\sqrt{-\Delta_{L}}\As f_{\neq})\|_2\|f_{\neq}\|_{H^3}\\
&\quad+\f{1}{t}\left\|\As\bar{h}\right\|_{2}
\left\|\As(\Delta_{L}\Delta_t^{-1}f_{\neq})\right\|_2
\|f_{\neq}\|_{H^3}\\
&\quad +\f{1}{t}\left\|\sqrt{\f{\pa_tw}{w}}\As\bar{h}\right\|_{2}
\left\|\sqrt{\f{\pa_tw}{w}}\chi_{R}\As(\Delta_{L}\Delta_t^{-1}f_{\neq})\right\|_2
\|\nu^{-\f13}\langle \nu^{\f13}t\rangle^{1+\b}f_{\neq}\|_{H^6}\\
&\quad +\nu\|\bar{h}\|_{L^2}\|\bar{h}\|_{H^2}\|h\|_{H^{2}},
\end{align*}
which gives by the bootstrap hypotheses that
\begin{align*}
&(t^{\f32}\|\As\bar{h}(t)\|_2)^2
+\int_1^tt'^{3}CK_{w}^{\bar{h}}dt'
+\f12\int_1^tt'^{2}\|\As\bar{h}(t')\|_2^2dt'
+\f12\nu\int_1^tt'^{3}\|\pa_v\As\bar{h}(t')\|_2^2dt'\\
&\leq \|\As\bar{h}(1)\|_2^2\\
&\quad+C\left[\|g\|_{L^1_T(H^{\s})}
+\int_{1}^t\f{\ep\nu^{\f13}}{t'^2}\left(\nu^{-\f13}\chi_{t'\lesssim \nu^{-\f13}}(t')+\nu^{\f13\b}t'^{1-\b}\chi_{t'\gtrsim \nu^{-\f13}}(t')\right)dt'\right]\|t^{\f32}\|\As\bar{h}(t)\|_2\|_{L^{\infty}_T}^2\\
&\quad+C\int_{1}^{t}\f{\ep^2\nu^{\f23}t^{\f12}}{\langle\nu t'^3\rangle}dt'\|t^{\f32}\|\As\bar{h}(t)\|_2\|_{L^{\infty}_T}\\
&\quad+ C\|t^{\f32}\|\As\bar{h}(t)\|_2\|_{L^{\infty}_T}\left(\int_1^t\left[\f{\ep t'^{\f12}}{t'}\f{\nu^{-\f16}}{\langle \nu t'^3\rangle}\right]^2dt'\right)^{\f12}\nu^{\f12}\|\sqrt{-\Delta_{L}}f_{\neq}\|_{L^2_{T}(H^{\s})}\\
&\quad +C\|t^{\f32}\|\As\bar{h}(t)\|_2 \left(\int_1^t\left[t'^{\f12}\f{\ep\nu^{-\f16}}{\langle t\rangle \langle \nu t^3\rangle}\right]^2dt'\right)^{\f12}
\nu^{\f12}\|\pa_vh(t')\|_{L^2_{T}H^{\s}}\\
&\quad +C\left\|t'^{\f32}\sqrt{\f{\pa_tw}{w}}\As\bar{h}(t')\right\|_{L^2_{T}(L^2)}
\left\|\sqrt{\f{\pa_tw}{w}}f_{\neq}\right\|_{L^2_T(H^{\s})}\left\|\f{\ep^2t^{\f12}\langle \nu^{\f13}t\rangle^{1+\b}}{\langle\nu t^3\rangle}\right\|_{L^{\infty}}\\
&\quad+C\left\|t'^{\f32}\sqrt{\f{\pa_tw}{w}}\As\bar{h}(t')\right\|_{L^2_{T}(L^2)}
\left\|\f{\ep^3\nu^{\f12}t^{\f12}}{\langle \nu t^3\rangle}\right\|_{L^2_T}\\
&\leq \|\As\bar{h}(1)\|_2^2
+\f{1}{100} \|t^{\f32}\|\As\bar{h}(t)\|_2\|_{L^{\infty}_T}^2+\f{1}{100} \int_1^tt'^{3}CK_w^{\bar{h}}dt'
+C\ep^4\nu^{\f13}. 
\end{align*}

The energy for $h$ in $H^{\s}$ is similar to the estimate of $H^{\s-1}$. We have
\begin{align*}
\f12\f{d}{dt}\|{h}\|_{H^{\s}}^2
&=-\int \langle \pa_v\rangle^{\s}h\langle \pa_v\rangle^{\s}\Big(g\pa_v h-\bar{h}-\nu (v')^2\pa_v^2h\big)dv\\
&=-\int \langle \pa_v\rangle^{\s}h\langle \pa_v\rangle^{\s}(g\pa_v h)dv\\
&\quad+\int \langle \pa_v\rangle^{\s}h \langle \pa_v\rangle^{\s}\bar{h}dv\\
&\quad +\nu \int \langle \pa_v\rangle^{\s}h \langle \pa_v\rangle^{\s}\big(((v')^2-1)\pa_v^2h\big)dv
-\nu\|\pa_v h\|_{H^{\s}}^2\\
&=-\nu\|\pa_v h\|_{H^{\s}}^2+V^{H,{h}}_{1,\s}+V^{H,{h}}_{2,\s}+V^{H,{h}}_{3,\s},
\end{align*}
We get that
\beno
|V^{H,{h}}_{1,\s}|\lesssim \|g\|_{H^{\s}}\|h\|_{H^{\s}}^2,
\eeno
and
\beno
|V^{H,{h}}_{2,\s}|\lesssim \|h\|_{H^{\s}}\|\bar{h}\|_{H^{\s}},
\eeno
and
\beno
|V^{H,{h}}_{3,\s}|\lesssim \ep\nu^{\f13}\nu \|\pa_vh\|_{H^{\s}}^2.
\eeno
Thus we conclude that
\begin{align*}
\|{h}(t)\|_{H^{\s}}^2+\nu\|\pa_vh\|_{L^2_T(H^{\s})}^2
&\leq \|{h}(1)\|_{H^{\s}}^2
+C\|g\|_{L^1_{T}H^{\s}}\|{h}\|_{L^{\infty}_{T}H^{\s}}^2
+C\|{h}(1)\|_{H^{\s}}\|\bar{h}\|_{L^1_{T}(H^{\s})}\\
&\leq \|{h}(1)\|_{H^{\s}}^2+\f{1}{100} \|{h}\|_{L^{\infty}_{T}H^{\s}}^2+C\ep^3\nu^{\f13}. 
\end{align*}

\subsection{Lower energy estimate}\label{Sec: Lower energy estimate}
\subsubsection{Energy estimate of $g$ in $H^{\s-6}$}\label{sec: g-low}
\begin{align*}
\f{d}{dt}\Big( t^{4}\|\langle \pa_v\rangle^{\s-6}g\|_{2}^2\Big)
&=(4) t^{3}\|\langle \pa_v\rangle^{\s-6} g\|_2^2
+ t^{4}\f{d}{dt}\|\langle \pa_v\rangle^{\s-6}g\|_{2}^2\\
&=(4)t^{3}\|\langle \pa_v\rangle^{\s-6} g\|_2^2
+2 t^{4}\int \langle \pa_v\rangle^{\s-6}g \langle \pa_v\rangle^{\s-6}\pa_tgdv\\
&=-2t^{4}\int \langle \pa_v\rangle^{\s-6}g \langle \pa_v\rangle^{\s-6}(g\pa_vg)dv\\
&\quad-2t^{3}\int \langle \pa_v\rangle^{\s-6}g \langle \pa_v\rangle^{\s-6} (v'<\na_{z,v}^{\bot}P_{\neq}\phi\cdot\na_{z,v}\tu>)dv\\
&\quad+2t^{4}\nu \int \langle \pa_v\rangle^{\s-6}g \langle \pa_v\rangle^{\s-6}(((v')^2-1)\pa_v^2g)dv
-2t^{4}\nu \|\pa_vg\|_{H^{\s-6}}^2\\
&=-2t^{4}\nu \|\pa_vg\|_{H^{\s-6}}^2+V_1^{L,g}+V_2^{L,g}+V_3^{L,g}.
\end{align*}

As before we deal with $V_1^{L,g}$ by commutator estimate and integration by parts. 
\begin{align*}
|V_1^{L,g}|
&\lesssim \left|t^{4}\int \pa_vg |\langle \pa_v\rangle^{\s-6}g|^2 dv\right|
+\left|t^{4}\int \langle \pa_v\rangle^{\s-6}g [\langle \pa_v\rangle^{\s-6},g]\pa_vg dv\right|\\
&\lesssim t^{4}\|g\|_{H^{3}}\|g\|_{H^{\s-6}}^2.
\end{align*}

For $V_2^{L,g}$, by the fact that $<\na_{z,v}^{\bot}P_{\neq}\phi\cdot\na_{z,v}\tu>=\pa_v<\pa_zP_{\neq} \phi \tu>$, we get
\begin{align*}
|V_2^{L,g}|
&\lesssim t^{3}\|g\|_{H^{\s-6}}
\Big(\left\|((v'-1)<\na_{z,v}^{\bot}P_{\neq}\phi\cdot\na_{z,v}\tu>)\right\|_{H^{\s-6}}^2
+\left\|<\na_{z,v}^{\bot}P_{\neq}\phi\cdot\na_{z,v}\tu>\right\|_{H^{\s-6}}^2\Big)\\
&\lesssim t^{3}\|g\|_{H^{\s-6}}
(1+\|h\|_{H^{\s-6}})\|<\pa_zP_{\neq}\phi\tu> \|_{H^{\s-5}}\\
&\lesssim t^{3}\|g\|_{H^{\s-6}}
(1+\|h\|_{H^{\s-6}})
\|P_{\neq}\phi\|_{H^{\s-4}}\|\tu\|_{H^{\s-5}}.
\end{align*}
Then by the bootstrap assumption and Lemma \ref{lem: lin-inv-dam}, we get 
\begin{align*}
|V_2^{L,g}|
&\lesssim \|g\|_{H^{\s-6}}
\|f_{\neq}\|_{H^{\s-2}}\|f_{\neq}\|_{H^{\s-4}}.
\end{align*}
For the dissipation error term. We have 
\begin{align*}
|V_3^{L,g}|
&\lesssim \left|t^{4}\nu \int \langle \pa_v\rangle^{\s-6}g \langle \pa_v\rangle^{\s-6}(((v')^2-1)\pa_v^2g)dv\right|\\
&\lesssim t^{4}\nu\|((v')^2-1)\|_{H^{\s-6}}\|\pa_vg\|_{H^{\s-6}}^2 \\
&\quad+ t^4\nu \|\pa_v((v')^2-1)\|_{2}\|\pa_vg\|_{2}\|g\|_{H^1}
+ t^4\nu \|((v')^2-1)\|_{2}\|\pa_vg\|_{2}^2\\
&\lesssim t^{4}\nu(1+\|h\|_{H^2})\|h\|_{H^{\s-6}}\|\pa_vg\|_{H^{\s-6}}^2 
+ t^4\nu (1+\|h\|_{H^2})\|\pa_v h\|_{H^{2}}\|\pa_vg\|_{H^2}\|g\|_{H^1}.
\end{align*}

Thus by the bootstrap assumption, we get that 
\begin{align*}
&\sup_{t\in [1,T]}t^4\|g(t)\|_{H^{\s-6}}^2
+\nu\int_1^Tt'^4 \|\pa_vg(t')\|_{H^{\s-6}}^2dt'\\
&\leq \|g(1)\|_{H^{\s-6}}^2+C\bigg(\|g\|_{L^1_T(H^{\s-6})}\sup_{t\in [1,T]}t^4\|g(t)\|_{H^{\s-6}}^2
+\sup_{t'\in [1,t]}t'^2\|g(t')\|_{H^{\s-6}}\int_1^T\f{\|f_{\neq}\|_{L^{\infty}_T(H^{\s-2})}^2}{t^2}dt\\
&\quad +\nu\|\pa_vh\|_{L^2_T(H^2)}\|t^2\pa_vg\|_{L^2_T(H^2)}\sup_{t'\in [1,t]}t'^2\|g(t')\|_{H^{\s-6}}\bigg)\\
&\leq \|g(1)\|_{H^{\s-6}}^2+C\ep\nu^{\f13}\sup_{t\in [1,T]}t^4\|g(t)\|_{H^{\s-6}}^2+\ep^2\nu^{\f23}\sup_{t'\in [1,t]}t'^2\|g(t')\|_{H^{\s-6}}. 
\end{align*}

Thus by taking $\ep$ small enough, we proved \eqref{eq: aim6}.

\subsubsection{Energy estimate of $\bar{h}$ in $H^{\s-6}$}\label{sec: bar h}
The estimate is same as the estimates of $g$ in lower Sobolev spaces. We have
\begin{align*}
\f{d}{dt}\Big( t^{4}\|\langle \pa_v\rangle^{\s-6}\bar{h}\|_{2}^2\Big)
&=4 t^{3}\|\langle \pa_v\rangle^{\s-6} \bar{h}\|_2^2
+ t^{4}\f{d}{dt}\|\langle \pa_v\rangle^{\s-6}\bar{h}\|_{2}^2\\
&=(4)t^{3}\|\langle \pa_v\rangle^{\s-6} \bar{h}\|_2^2
+2 t^{4}\int \langle \pa_v\rangle^{\s-6}\bar{h} \langle \pa_v\rangle^{\s-6}\pa_t\bar{h}dv\\
&=-2t^{4}\int \langle \pa_v\rangle^{\s-6}\bar{h} \langle \pa_v\rangle^{\s-6}(g\pa_v\bar{h})dv\\
&\quad-2t^{3}\int \langle \pa_v\rangle^{\s-6}\bar{h} \langle \pa_v\rangle^{\s-6} (v'<\na_{z,v}^{\bot}P_{\neq}\phi\cdot\na_{z,v}f>)dv\\
&\quad+2t^{4}\nu \int \langle \pa_v\rangle^{\s-6}\bar{h} \langle \pa_v\rangle^{\s-6}(((v')^2-1)\pa_v^2\bar{h})dv
-2t^{4}\nu \|\pa_v\bar{h}\|_{H^{\s-6}}^2\\
&=-2t^{4}\nu \|\pa_v\bar{h}\|_{H^{\s-6}}^2+V_1^{L,\bar{h}}+V_2^{L,\bar{h}}+V_3^{L,\bar{h}}.
\end{align*}
Then we have 
\beno
|V_1^{L,\bar{h}}|\lesssim \|g\|_{H^{\s-6}}\|t^2\bar{h}\|_{H^{\s-6}}^2. 
\eeno
For $V_2^{L,\bar{h}}$, we get by Lemma \ref{lem: lin-inv-dam} that, 
\begin{align*}
|V_2^{L,\bar{h}}|
&\lesssim t\|t^2\bar{h}\|_{H^{\s-6}}(1+\|h\|_{H^{\s-6}})\|<\pa_zP_{\neq}\phi f_{\neq}>\|_{H^{\s-5}}\\
&\lesssim t\|t^2\bar{h}\|_{H^{\s-6}}(1+\|h\|_{H^{\s-6}})\|P_{\neq}\phi\|_{H^{\s-6}}\|f_{\neq}\|_{H^{2}}\\
&\quad+t\|t^2\bar{h}\|_{H^{\s-6}}(1+\|h\|_{H^{\s-6}})\|P_{\neq}\phi\|_{H^3} \|f_{\neq}\|_{H^{\s-5}}\\
&\lesssim t^{-1}\|t^2\bar{h}\|_{H^{\s-6}}(1+\|h\|_{H^{\s-6}})\|P_{\neq}f\|_{H^{\s-4}}\|f_{\neq}\|_{H^{2}}\\
&\quad+t^{-1}\|t^2\bar{h}\|_{H^{\s-6}}(1+\|h\|_{H^{\s-6}})\|P_{\neq}f\|_{H^5} \|f_{\neq}\|_{H^{\s-5}}.
\end{align*}
At last for the dissipation error term, we have 
\begin{align*}
|V_3^{L,\bar{h}}|\lesssim t^{4}\nu(1+\|h\|_{H^2})\|h\|_{H^{\s-6}}\|\pa_v\bar{h}\|_{H^{\s-6}}^2 
+ t^4\nu (1+\|h\|_{H^2})\|\pa_v h\|_{H^{2}}\|\pa_v\bar{h}\|_{H^2}\|\bar{h}\|_{H^1}.
\end{align*}
Thus by the bootstrap hypotheses, we obtain that
\begin{align*}
&\sup_{t\in [1,T]}t^4\|\bar{h}(t)\|_{H^{\s-6}}^2
+\nu\int_1^Tt'^4 \|\pa_v\bar{h}(t')\|_{H^{\s-6}}^2dt'\\
&\leq \|\bar{h}(1)\|_{H^{\s-6}}^2+C\bigg(\ep\nu^{\f13}\sup_{t\in [1,T]}t^4\|\bar{h}(t)\|_{H^{\s-6}}^2
+\ep^2\nu^{\f13}\sup_{t\in [1,T]}t^2\|\bar{h}(t)\|_{H^{\s-6}}\bigg)\\
&\leq \|\bar{h}(1)\|_{H^{\s-6}}^2+\f{1}{100}\sup_{t\in [1,T]}t^4\|\bar{h}(t)\|_{H^{\s-6}}^2+C\ep^4\nu^{\f23}. 
\end{align*}
Therefore by taking $\ep$ small enough, we proved \eqref{eq: aim7}.

\section{Decay estimate of vorticity}\label{Sec: Decay estimate of vorticity}

\subsection{Decay estimate of nonzero mode: Enhanced dissipation}\label{sec: f-low-enha}
Up to an adjustment of the constants in the bootstrap argument, it suffices to consider only $t$ such that $\nu t^3\geq 1$ (say), as otherwise the decay estimate follows trivially from the higher regularity energy estimate. 

Recall that
\beno
\|A^s_Ef\|_2^2=\sum_{k\neq 0}\int_{\eta} \langle k,\eta\rangle^{2s} |D(t,\eta)\hat{f}_k(t,\eta)|^2d\eta.
\eeno

Computing the time evolution of $\|A^s_Ef\|_2$ 
\begin{align*}
\f12\f{d}{dt}\|A^s_Ef\|_2^2
&=\sum_{k\neq 0}\int_{\eta} \f{\pa_tD(t,\eta)}{D(t,\eta)}|A^s_E\hat{f}_k(t,\eta)|^2d\eta\\
&\quad-\int A^s_EfA^s_E(u\cdot\na f)dvdz
+\nu\int A^s_EfA^s_E(\tilde{\Delta}_tf)dvdz\\
&\leq \f{1}{8}\nu t^2\left\|1_{t\geq 2|\eta|}A^s_E\widehat{f}_k(t,\eta)\right\|_2^2\\
&\quad-\int A^s_EfA^s_E(u\cdot\na f)dvdz
+\nu\int A^s_EfA^s_E(\tilde{\Delta}_tf)dvdz
\end{align*}
We write the dissipation term as follows
\begin{align*}
\nu\int A^s_EfA^s_E(\tilde{\Delta}_tf)dvdz
&=-\nu\left\|\sqrt{-\Delta_L}A^s_Ef\right\|_2^2
-\nu\int A^s_EfA^s_E\left(((v')^2-1)(\pa_v^2-t\pa_z^2)f\right)dvdz\\
&=-\nu\left\|\sqrt{-\Delta_L}A^s_Ef\right\|_2^2+E^{\nu}
\end{align*}
First, we need to cancel the growing term cause by $D(t,\eta)$. Indeed, we have
\begin{align*}
&\f{1}{8}\nu t^2\left\|1_{t\geq 2|\eta|}A^s_E\widehat{f}_k(t,\eta)\right\|_2^2-\nu\left\|\sqrt{-\Delta_L}A^s_Ef\right\|_2^2\\
&=\sum_{k\neq 0}\int \nu\left(\f18 t^21_{t\geq 2\eta}-k^2-(\eta-kt)^2\right)|A^s_E\hat{f}_k(\eta)|^2d\eta\\
&\leq -\f{1}{8}\nu\left\|\sqrt{-\Delta_L}A^s_Ef\right\|_2^2.
\end{align*}
which gives that
\beq
\f12\f{d}{dt}\|A^s_Ef\|_2^2\leq -\int A^s_EfA^s_E(u\cdot\na f)dvdz-\f{1}{8}\nu\left\|\sqrt{-\Delta_L}A^s_Ef\right\|_2^2+E^{\nu}
\eeq

\subsubsection{Euler nonlinearity}
We first divide into zero and non-zero frequency contributions, as they will be treated differently: 
\begin{align*}
-\int A^s_EfA^s_E(u\cdot\na f)dvdz
&=-\int A^s_EfA^s_E(g\pa_vf)dvdz
-\int A^s_EfA^s_E\left(v'\na^{\bot}P_{\neq}\phi\cdot\na f\right)dvdz\\
&=E_1+E_2. 
\end{align*}
For $E_1$ we use the commutator trick and the paraproduct (in both $z$ and $v$)
\begin{align*}
E_1&=\f12\int \pa_vg|A^s_Ef|^2dvdz
+\int A^s_Ef\left[g\pa_vA^s_Ef-A^s_E(g\pa_vf)\right]dvdz\\
&=\f12\int \pa_vg|A^s_Ef|^2dvdz
+\sum_{N\geq 8}T_N^0+\sum_{N\geq 8}R_N^0+\mathcal{R}^0,
\end{align*}
where
\begin{align*}
&T_N^0=\int A^s_Ef\left[g_{<N/8}\pa_vA^s_Ef_N-A^s_E(g_{<N/8}\pa_vf_N)\right]dvdz\\
&R_N^0=\int A^s_Ef\left[g_{N}\pa_vA^s_Ef_{<N/8}-A^s_E(g_{N}\pa_vf_{<N/8})\right]dvdz\\
&\mathcal{R}^0=\sum_{N\in\mathbb{D}}\sum_{N/8\leq N'\leq 8N}\int A^s_Ef\left[g_{N'}\pa_vA^s_Ef_{N}-A^s_E(g_{N'}\pa_vf_{N})\right]dvdz
\end{align*}

\no{\it Treatment of $T_N^0$.}\\
We get that
\begin{align*}
T_N^0
&=-i\sum_{k\neq 0}\int_{\eta,\xi}
A^s_E\bar{\hat{f}}_k(\eta)
D(\eta)(\langle k,\eta\rangle^{s}-\langle k,\xi\rangle^{s})
\hat{g}(\eta-\xi)_{<N/8}\xi\hat{f}_k(\xi)_Nd\eta d\xi\\
&\quad-i\sum_{k\neq 0}\int_{\eta,\xi}
A^s_E\bar{\hat{f}}_k(\eta)
\langle k,\xi\rangle^{s}(D(\eta)-D(\xi))
\hat{g}(\eta-\xi)_{<N/8}\xi\hat{f}_k(\xi)_Nd\eta d\xi\\
&=T_N^{0,1}+T_N^{0,2}.
\end{align*}

For the term $T_N^{0,1}$, by Lemma \ref{lem: D-D} and the fact that $|k,\eta|\approx |k,\xi|$ and 
\beno
|\langle k,\eta\rangle^{s}-\langle k,\xi\rangle^{s}|\lesssim \f{|\xi-\eta|}{\langle \eta\rangle+\langle \xi\rangle}\langle k,\xi\rangle^{s}
\eeno
we have
\begin{align*}
|T_N^{0,1}|&\lesssim \sum_{k\neq 0}\int_{\eta,\xi}
|A^s_E\bar{\hat{f}}_k(\eta)|
\langle\eta-\xi\rangle^4|\hat{g}(\eta-\xi)_{<N/8}|\f{|\xi|}{\langle \xi\rangle} A_E^s\hat{f}_k(\xi)_Nd\eta d\xi\\
&\lesssim \|A_E^sf_{\sim N}\|_{2}\|A_E^sf_{N}\|_2\|g\|_{H^5}
\end{align*}
We turn to $T_N^{0,2}$, by Lemma \ref{lem: D-D}, we get 
\begin{align*}
|T_N^{0,2}|&\lesssim \sum_{k\neq 0}\int_{\eta,\xi}
|A^s_E\bar{\hat{f}}_k(\eta)|
\langle\eta-\xi\rangle|\hat{g}(\eta-\xi)_{<N/8}|\f{|\xi|}{\langle \xi\rangle} A_E^s\hat{f}_k(\xi)_Nd\eta d\xi\\
&\lesssim \|A_E^sf_{\sim N}\|_{2}\|A_E^sf_{N}\|_2\|g\|_{H^2}.
\end{align*}

Thus we get by \eqref{eq: L-P-ortho} and \eqref{eq: L-P-ortho2} that
\beq\label{eq: T_N^0}
\sum_{N\geq 8}|T_N^0|\leq \|g\|_{H^5}\|A_E^sf\|_2^2
\eeq

\no{\it Treatment of $R_N^0$. }\\
The ‘reaction’ term $R_N^0$ is dealt with easily by 'moving' the derivative to $g$. We have 
\begin{align*}
\left|\int A^s_Efg_{N}\pa_vA^s_Ef_{<N/8}dvdz\right|
\lesssim \|A^s_Ef_{\sim N}\|_{2}\|g_{N}\|_{H^2}\|A^s_Ef\|_{2},
\end{align*}
and by Lemma \ref{lem: D-D} and the fact that $A^s_E(g_{N}\pa_v(f)_{<N/8})=A^s_E(g_{N}\pa_v(f_{\neq})_{<N/8})$, we get 
\begin{align*}
&\left|\int A^s_EfA^s_E(g_{N}\pa_v(f_{\neq})_{<N/8})dvdz\right|\\
&\lesssim \left|\sum_k\int_{\eta,\xi} A^s_E|\hat{f}_k(\eta)|
\langle\eta-\xi\rangle^3\langle k,\eta\rangle^{s}
|\hat{g}_{N}(\xi-\eta)||\xi||\hat{f}_k(\xi)_{<N/8}|d\eta d\xi\right|.
\end{align*}
On the support of the integrand, we have $|k,\eta|\approx |\xi-\eta|\gtrsim |k,\xi|$ and thus
\begin{align*}
\left|\int A^s_EfA^s_E(g_{N}\pa_v(f_{\neq})_{<N/8})dvdz\right|
\lesssim \|A^s_Ef_{\sim N}\|_2\|A^s_Ef\|_{2}\|g_{N}\|_{H^{s+4}}. 
\end{align*}

The treatment of the remainder terms is similar to the reaction term. We have 
\begin{align*}
\mathcal{R}^0
&\lesssim \sum_{N\in \mathbb{D}}\|A^s_Ef\|_2\|A^s_Ef_{N}\|_{2}\|g_{\sim N}\|_{H^{s+4}}\\
&\lesssim \|g\|_{H^{s+4}}\|A^s_Ef\|_2^2.
\end{align*}

Therefore by the bootstrap hypotheses, we conclude that
\beq\label{eq: E_1}
|E_1|\lesssim \|g\|_{H^{s+4}}\|A^s_Ef\|_2^2\leq \f{\ep\nu^{\f13}}{\langle t\rangle^2}\|A^s_Ef\|_2^2. 
\eeq

\no{\it Treatment of $E_2$.}\\
Next turn to $E_2$. Now we need use the inviscid damping to obtain decay in time. Roughly speaking, if $f$ is of zero mode, we will land the operator $A^s_E$ on $P_{\neq}\phi$ and use the lossy elliptic estimate for $A^s_E$. 

Thus we get by Lemma \ref{lem: product} that
\begin{align*}
|E_2|&\lesssim \|A_E^sf\|_2\|A_E^s((v'-1)\na^{\bot}P_{\neq}\phi\cdot\na f)\|_2
+\|A_E^sf\|_2\|A_E^s(\na^{\bot}P_{\neq}\phi\cdot\na f)\|_2\\
&\lesssim \|A_E^sf\|_2(1+\|h\|_{H^{s+3}})\|A_E^s(\na^{\bot}P_{\neq}\phi \cdot\na f_{\neq})\|_2\\
&\lesssim \|A_E^sf\|_2(1+\|h\|_{H^{s+3}})\|A_E^sP_{\neq}\phi\|_{2}\|f\|_{H^{s+5}}+\|A_E^sf\|_2^2(1+\|h\|_{H^{s+3}})\|P_{\neq}\phi\|_{H^{s+5}}. 
\end{align*}
Thus by the bootstrap hypotheses and Lemma \ref{lem: loss-elliptic-A_E^s}, we have
\begin{align}\label{eq: E_2}
|E_2|\lesssim \f{\ep\nu^{\f13}}{\langle t\rangle^2}(\|A_E^sf\|_2^2+\|\As f\|_2\|A_E^sf\|_2). 
\end{align}
\subsubsection{Dissipation error term}
By Lemma \ref{lem: D-D} and the fact that
\beno
|\xi-kt|\leq |\xi-\eta|+|\eta-kt|\leq \langle \xi-\eta\rangle \sqrt{k^2+|\eta-kt|^2},
\eeno
we have
\begin{align*}
|E^{\nu}|
&\lesssim 
\nu \sum_{k\neq 0}\int_{\eta,\xi}\left|
A_E^s\hat{f}_k(\eta)A_E^s(k,\eta)\widehat{(1-(v')^2)}(\eta-\xi)|\xi-kt|^2\hat{f}_k(\xi)\right|d\eta d\xi\\
&\lesssim 
\nu \sum_{k\neq 0}\int_{\eta,\xi}\left|
A_E^s\sqrt{k^2+|\eta-kt|^2}\hat{f}_k(\eta)A_E^s(k,\eta)\langle \xi-\eta\rangle\widehat{(1-(v')^2)}(\eta-\xi)|\xi-kt|\hat{f}_k(\xi)\right|d\eta d\xi\\
&\lesssim 
\nu \sum_{k\neq 0}\int_{\eta,\xi}\left|
A_E^s\sqrt{k^2+|\eta-kt|^2}\hat{f}_k(\eta)\langle \xi-\eta\rangle^4\widehat{(1-(v')^2)}(\eta-\xi)|\xi-kt|A_E^s(k,\xi)\hat{f}_k(\xi)\right|d\eta d\xi\\
&\lesssim \nu\left\|\sqrt{-\Delta_L}A_E^sf\right\|_2^2\|(1-(v')^2)\|_{H^6}\lesssim \nu(1+\|h\|_{H^2})\|h\|_{H^6}\left\|\sqrt{-\Delta_L}A_E^sf\right\|_2^2.
\end{align*}

Thus we get that
\begin{align*}
\f12\f{d}{dt}\|A^s_Ef\|_2^2
&\leq E_1+E_2-\f{1}{8}\nu\left\|\sqrt{-\Delta_L}A^s_Ef\right\|_2^2+E^{\nu}\\
&\leq \f{C\ep\nu^{\f13}}{\langle t\rangle^2}\|A^s_Ef\|_2^2
+\f{C\ep\nu^{\f13}}{\langle t\rangle^2}\|\As f\|_2^2\\
&\quad-\f{1}{8}\nu\left\|\sqrt{-\Delta_L}A^s_Ef\right\|_2^2
+C\nu\ep\nu^{\f13}\left\|\sqrt{-\Delta_L}A^s_Ef\right\|_2^2, 
\end{align*}
which gives that
\beno
\|A^s_Ef(t)\|_2^2
+\int_1^t\f{1}{5}\nu\left\|\sqrt{-\Delta_L}A^s_Ef(t')\right\|_2^2dt'
\leq \|A^s_Ef(1)\|_2^2+C\ep\nu^{\f13}\|A^s_Ef(t)\|_2^2+C\ep^3\nu.
\eeno
Thus by taking $\ep$ small enough, we proved \eqref{eq: aim5}.

\subsection{Decay of zero mode}\label{sec: f-low-0}
Here we start the proof of \eqref{eq: aim8}. The zero mode $f_0$ satisfies 
\beq
\pa_tf_0+g\pa_v f_0+v'<\na^{\bot}_{z,v}P_{\neq}\phi \cdot \na_{z,v}f>-\nu (v')^2\pa_v^2f_0=0.
\eeq 

We want to prove that the zero mode slightly decays. It is nature to study the time evolution of 
\beno
\mathcal{E}_{L,0}(t)=\|\langle \pa_v\rangle^{s}f_0\|_2^2+\f{t\nu}{2}\|\langle \pa_v\rangle^{s}\pa_vf_0\|_2^2.
\eeno 
We get 
\begin{align*}
\f{d}{dt}\mathcal{E}_{L,0}(t)
&=\f12\nu \|\langle \pa_v\rangle^{s}\pa_vf_0\|_2^2
+t\nu\f12\f{d}{dt}\left(\|\langle \pa_v\rangle^{s}\pa_vf_0\|_2^2\right)+\f{d}{dt}\left(\|\langle \pa_v\rangle^{s}f_0\|_2^2\right)\\
&=-\f32\nu \|\langle \pa_v\rangle^{s}\pa_vf_0\|_2^2
-\nu^2 t\|\pa_v^2f_0\|_{H^s}^2\\
&\quad-\nu t \int \langle \pa_v\rangle^{s}\pa_vf_0 \langle \pa_v\rangle^{s}\pa_v(g\pa_v f_0)dv
-2\int \langle \pa_v\rangle^{s}f_0 \langle \pa_v\rangle^{s}(g\pa_v f_0)dv\\
&\quad-\nu t\int \langle \pa_v\rangle^{s}\pa_vf_0 \langle \pa_v\rangle^{s}\pa_v\left( v'< \na^{\bot}_{z,v}P_{\neq}\phi \cdot \na_{z,v}f>\right) dv \\
&\quad-2\int \langle \pa_v\rangle^{s}f_0 \langle \pa_v\rangle^{s}\left( v'< \na^{\bot}_{z,v}P_{\neq}\phi \cdot \na_{z,v}f>\right) dv \\
&\quad+\nu^2 t\int \langle \pa_v\rangle^{s}\pa_vf_0  \langle \pa_v\rangle^{s}\pa_v( ((v')^2-1)\pa_v^2f_0)dv\\
&\quad+2\nu \int \langle \pa_v\rangle^{s}f_0  \langle \pa_v\rangle^{s}(((v')^2-1)\pa_v^2f_0)dv\\
&=-\f32\nu \|\langle \pa_v\rangle^{s}\pa_vf_0\|_2^2
-\nu^2 t\|\pa_v^2f_0\|_{H^s}^2\\
&\quad+ V_{1,1}+V_{1,2}+V_{2,1}+V_{2,2}+V_{3,1}+V_{3,2}. 
\end{align*}

To treat $V_{1,1}$ and $V_{1,2}$, we use commutator estimate and integration by part. 
\begin{align*}
|V_{1,1}|+|V_{1,2}|
&\lesssim \|\pa_v g\|_{L^{\infty}}(\|f_0\|_{H^s}^2+\nu t\|\pa_vf_0\|_{H^s}^2)\\
&\quad+\|f_0\|_{H^s}\|[\langle \pa_v\rangle^{s},g]\pa_vf_0\|_{L^2}
+\nu t\|\pa_vf_0\|_{H^s}\|[\langle \pa_v\rangle^{s}\pa_v,g]\pa_vf_0\|_{L^2}\\
&\lesssim \| g\|_{H^{s}}\left(\|f_0\|_{H^s}^2+\f12\nu t\|\pa_vf_0\|_{H^s}^2\right).
\end{align*}

Next we turn to $V_{2,1}, V_{2,2}$, by using the fact that 
\beno
< \na^{\bot}_{z,v}P_{\neq}\phi \cdot \na_{z,v}f>=\pa_v<\pa_z P_{\neq}\phi f_{\neq}>,
\eeno
we get that
\begin{align*}
|V_{2,1}|&\lesssim \nu t\|\pa_vf_0\|_{H^s}
\left(\left\|h\pa_v<\pa_z P_{\neq}\phi f_{\neq}>\right\|_{H^{s+1}}+\left\|\pa_v<\pa_z P_{\neq}\phi f_{\neq}>\right\|_{H^{s+1}}\right)\\
&\lesssim \nu t(1+\|h\|_{H^{s+1}})\|\pa_vf_0\|_{H^s}
\left\|P_{\neq}\phi\right\|_{H^{s+3}} \left\|f_{\neq}\right\|_{H^{s+2}},
\end{align*}
and similarly 
\begin{align*}
|V_{2,2}|&\lesssim \|f_0\|_{H^s}
\left(\left\|h\pa_v<\pa_z P_{\neq}\phi f_{\neq}>\right\|_{H^{s}}+\left\|\pa_v<\pa_z P_{\neq}\phi f_{\neq}>\right\|_{H^{s+1}}\right)\\
&\lesssim (1+\|h\|_{H^{s}})\|f_0\|_{H^s}
\left\|P_{\neq}\phi\right\|_{H^{s+2}} \left\|f_{\neq}\right\|_{H^{s+1}}.
\end{align*}

Finally we turn to $V_{3,1},V_{3,2}$, as before, we have
\begin{align*}
|V_{3,1}|\lesssim 
&\nu^2t \|\pa_v^2f_0\|_{H^s}^2\|(v')^2-1\|_{H^{s+1}}
+\nu^2t \|\pa_v((v')^2-1)\|_{2}\|\pa_v^2f_0\|_{2}\|\pa_vf_0\|_{H^1},
\end{align*}
and
\begin{align*}
|V_{3,2}|\lesssim 
&\nu\|\pa_v^2f_0\|_{H^s}^2\|(v')^2-1\|_{H^{s}}+
\nu\|\pa_v((v')^2-1)\|_{2}\|\pa_vf_0\|_{2}\|f_0\|_{H^1}
\end{align*}

Thus by the bootstrap assumption, we get that
\begin{align*}
&\sup_{t'\in [1,t]}\mathcal{E}_{L,0}(t')
+\nu\int_1^T\|\pa_vf_0(t)\|_{H^s}^2dt\\
&\leq \left(\|\langle \pa_v\rangle^{s}f_0(1)\|_2^2+\f{\nu}{2}\|\langle \pa_v\rangle^{s}\pa_vf_0(1)\|_2^2\right)\\
&\quad
+C\bigg[\|g\|_{L^1_{T}(H^s)}\sup_{t'\in [1,t]}\mathcal{E}_{L,0}(t')
+\left[\sup_{t'\in [1,t]}\mathcal{E}_{L,0}(t')\right]^{\f12}\|f_{\neq}\|_{L^{\infty}_T(H^{s+5})}^2\int_1^T\f{\sqrt{\nu t}+1}{t^2}dt\\
&\quad+\nu \|\pa_vh\|_{L^2_{T}H^1} \|\sqrt{\nu t}\pa_v^2f_0\|_{L^2_{T}(L^2)}\|\sqrt{\nu t}\pa_vf_0\|_{L^{\infty}_{T}(H^1)}
+\nu\|\pa_vh\|_{L^2_T(L^2)}\|\pa_vf_0\|_{L^2_T(L^2)}\|f_0\|_{L^{\infty}_T(H^1)}\bigg]\\
&\leq \left(\|\langle \pa_v\rangle^{s}f_0(1)\|_2^2+\f{\nu}{2}\|\langle \pa_v\rangle^{s}\pa_vf_0(1)\|_2^2\right)
+C\ep\nu^{\f13}\sup_{t'\in [1,t]}\mathcal{E}_{L,0}(t')
+C\ep^2\nu^{\f23}\left[\sup_{t'\in [1,t]}\mathcal{E}_{L,0}(t')\right]^{\f12}
+C\ep^3\nu\\
&\leq \left(\|\langle \pa_v\rangle^{s}f_0(1)\|_2^2+\f{\nu}{2}\|\langle \pa_v\rangle^{s}\pa_vf_0(1)\|_2^2\right)
+C\ep\sup_{t'\in [1,t]}\mathcal{E}_{L,0}(t')+C\ep^3\nu.
\end{align*}
Thus by taking $\ep$ small enough, we proved \eqref{eq: aim8}.

\section{Appendix}
\subsection{Littlewood-Paley decomposition and paraproducts}\label{Sec: L-P}
In this section we fix conventions and notation regarding Fourier analysis, Littlewood-Paley and paraproduct decompositions. See e.g. \cite{BCD-book,Bony} for more details. 

For $f(z,v)$ in the Schwartz space, we define the Fourier transform $\hat{f}_k(\eta)$ where $(k,\eta)\in \mathbb{Z}\times \R$, 
\beno
\hat{f}_k(\eta)=\f{1}{2\pi}\int_{\mathbb{T}\times R}f(z,v)e^{-ikz-iv\eta}dzdv,
\eeno
and the Fourier inversion formula,
\beno
f(z,v)=\f{1}{2\pi}\sum_{k\in\mathbb{Z}}\int_{\R}\hat{f}_k(\eta)e^{ikz+iv\eta}d\eta.
\eeno 
With this definition, we have
\beno
&&\int f(z,v)g(z,v)dzdv=\sum_{k}\int \hat{f}_k(\eta)\hat{g}_k(\eta)d\eta,\\
&&\hat{fg}=\hat{f}\ast \hat{g}. 
\eeno

This work makes heavy use of the Littlewood-Paley dyadic decomposition. Here we fix conventions and review the basic properties of this classical theory, see e.g. \cite{BCD-book} for more details. First we define the Littlewood-Paley decomposition only in the $v$ variable. Let $\psi\in C_0^{\infty}(\R; \R)$ be such that $\psi(\xi)=1$ for $|\xi|\leq \f12$ and $\psi(\xi)=0$ for $|\xi|\geq \f34$ and define $\chi(\xi)=\psi(\xi/2)-\psi(\xi)$ supported in the range $\xi\in (\f12,\f32)$. Then we have the partition of unity
\beno
1=\psi(\xi)+\sum_{M\in 2^{\mathbb{N}}}\chi_{M}(\xi)
\eeno
where we mean that the sum runs over the dyadic numbers $M=1,2,4,8,...,2^j,...$ and $\chi_{M}(\xi)=\chi(M^{-1}\xi)$ which has the compact support $M/2\leq |\xi|\leq 3M/2$. For $f\in L^2(\R)$, we define
\beno
&&f_M=\big(\chi_{M}(\xi)\hat{f}(\xi)\big)^{\vee},\\
&&f_{\f12}= \big(\psi(\xi)\hat{f}(\xi)\big)^{\vee},\\
&&f_{<M}=f_{\f12}+\sum_{K\in 2^{\mathbb{N}},K<M}f_K
\eeno
which defines the decomposition
\beno
f=f_{\f12}+\sum_{K\in 2^{\mathbb{N}}}f_K.
\eeno
There holds the almost orthogonality and the approximate projection property
\beq\label{eq: L-P-ortho}
\begin{split}
&\|f\|_2^2\approx \sum_{K\in \mathbb{D}}\|f_K\|_2^2,\\
&\|f_{M}\|_2^2\approx \|(f_{M})_{M}\|_2^2. 
\end{split}
\eeq
The following is also clear for $M\geq 1$
\beno
\||\pa_v|f_{M}\|_2^2\approx M\|f_{M}\|_2^2.
\eeno
We make use of the notation
\beno
f_{\sim M}=\sum_{K\in \mathbb{D}:\ \f{1}{C}M\leq K\leq CM}f_{K},
\eeno
for some constant $C$ which is independent of $M$. Generally the exact value of $C$ which is being used is not important; what is important is that it is finite and independent of $M$. With this notation, we also have 
\beq\label{eq: L-P-ortho2}
\|f\|_2^2\approx_{C} \sum_{K\in \mathbb{D}}\|f_{\sim K}\|_2^2
\eeq
During much of the proof we are also working with Littlewood-Paley decompositions defined in the $(z,v)$ variables, with the notation conventions being analogous. Our convention is to use $N$ to denote Littlewood-Paley projections in $(z,v)$ and $M$ to denote projections only in the $v$ direction.

Another key Fourier analysis tool employed in this work is the paraproduct decomposition, introduced by Bony \cite{Bony}(see also \cite{BCD-book}).  Given suitable functions $f,g$ we may define the paraproduct decomposition (in either $(z, v)$ or just $v$), 
\begin{align*}
fg&=T_fg+T_gf+\mathcal{R}(f,g)\\
&=\sum_{N\geq 8}f_{<N/8}g_N+\sum_{N\geq 8}f_{N}g_{<N/8}
+\sum_{N\in \mathbb{D}}\sum_{N/8\leq N'\leq 8N}g_{N'}f_N,
\end{align*}
where all the sums are understood to run over $\mathbb{D}$. In our work we do not employ the notation in the first line since at most steps in the proof we are forced to explicitly write the sums and treat them term-by-term anyway. This is due to the fact that we are working in non-standard regularity spaces and, more crucially, are usually applying multipliers which do not satisfy any version of $AT_fg\approx T_fAg$. Hence, we have to prove almost everything ‘from scratch’ and can only rely on standard para-differential calculus as a guide. 

We also show some product estimates(or Young's inequality) based on Sobolev embedding. It holds for $s>1$ that
\beq\label{eq: prod1}
\begin{split}
&\|fg\|_{H^s(\mathbb{T}\times R)}
\lesssim \|f\|_{H^s(\mathbb{T}\times R)}\|g\|_{H^{s}(\mathbb{T}\times R)},\\
&\|f\ast g\|_{2}\lesssim \|f\|_2\|g\|_{H^s(\mathbb{T}\times R)},\\
&\|f\ast g\ast h\|_2\lesssim \|f\|_2\|g\|_2\|h\|_{H^s(\mathbb{T}\times R)}. 
\end{split}
\eeq 

We end this subsection by introducing the commutator estimate which can be found in \cite{KatoPonce}. 
\begin{lemma}[\cite{KatoPonce}]\label{lem: commutator}
Let $J=(1-\Delta)^{\f12}$, for $1<p<\infty$ and $s\geq 0$, it holds that
\beno
\|J^s(fg)-f(J^sg)\|_p\lesssim_{p,s} 
\|\na f\|_{\infty}\|J^{s-1}g\|_p+\|J^sf\|_p\|g\|_{\infty}.
\eeno
\end{lemma}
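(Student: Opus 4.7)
The plan is to prove this via the paraproduct decomposition described in Section 11.1 of the paper, following the standard Coifman--Meyer/Bony strategy. Decompose
\beno
fg=T_fg+T_gf+\mathcal{R}(f,g)=\sum_{N\geq 8}f_{<N/8}g_N+\sum_{N\geq 8}g_{<N/8}f_N+\sum_{N\in\mathbb{D}}\sum_{N/8\le N'\le 8N}f_{N'}g_N,
\eeno
and correspondingly split the commutator
\beno
J^s(fg)-f\,J^sg=\bigl[J^s(T_fg)-T_f(J^sg)\bigr]+\bigl[J^s(T_gf)+\mathcal{R}(J^sg,\,\cdot\,)\text{ pieces}\bigr]+\text{remainder},
\eeno
where one rewrites the low--frequency part of $f$ that multiplies $J^sg$ as $T_f(J^sg)$ plus easily controlled error.

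First, I would treat the low--high piece $C_{LH}=\sum_N\bigl(J^s(f_{<N/8}g_N)-f_{<N/8}J^s g_N\bigr)$, which is the term where the commutator structure really matters. On the Fourier side each summand is localized at frequencies $\sim N$, and the symbol of the commutator is $\langle\xi\rangle^s-\langle\xi-\zeta\rangle^s$ with $|\zeta|\ll|\xi|\sim N$; Taylor expansion gives a bound by $|\zeta|\langle\xi\rangle^{s-1}$. This manifests as the pointwise estimate $|C_{LH,N}(x)|\lesssim M(\nabla f)(x)\cdot M(J^{s-1}g_N)(x)$ (via a standard kernel representation where the kernel of the commuted multiplier is integrable with $\int|y||K_N(y)|\,dy\lesssim 1$ uniformly), so Littlewood--Paley square function together with the $L^p$-boundedness of the Hardy--Littlewood maximal operator $M$ for $1<p<\infty$ yields
\beno
\|C_{LH}\|_p\lesssim \|M(\nabla f)\|_\infty\bigl\|(\textstyle\sum_N|M J^{s-1}g_N|^2)^{1/2}\bigr\|_p\lesssim \|\nabla f\|_\infty\|J^{s-1}g\|_p.
\eeno

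Next, for the high--low piece $\sum_N J^s(g_{<N/8}f_N)$ (and the analogous $f\cdot J^s$-half), I would not try to exploit cancellation: frequencies of the product are $\sim N$, so $J^s$ is bounded, $g_{<N/8}$ is uniformly controlled by $\|g\|_\infty$, and Littlewood--Paley square-function theory gives the $\|J^sf\|_p\|g\|_\infty$ bound. The remainder $\mathcal{R}(f,g)$ is similar once one notes that the output frequency is no larger than $\sim N$ (comparable summands), so $J^s$ lands on the high-frequency factor and is bounded by $\min\{\|\nabla f\|_\infty\|J^{s-1}g\|_p,\|J^sf\|_p\|g\|_\infty\}$ depending on which factor one chooses to put in $L^\infty$. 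Summing these three contributions yields the claimed Kato--Ponce inequality.

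The main obstacle is the low--high commutator: one must justify the pointwise kernel bound leading to the maximal function domination, which requires knowing that $|y|K_N(y)$ has a uniformly integrable envelope (after rescaling $K_N(y)=N^{d+1}K_1(Ny)$ with $K_1\in L^1$ with $|y|K_1(y)\in L^1$). Once that kernel fact is in hand, the rest is bookkeeping with Littlewood--Paley square functions and the Fefferman--Stein vector-valued maximal inequality, both standard for $1<p<\infty$. No new ideas beyond those already encoded in Section 11.1 are needed.
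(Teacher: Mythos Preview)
The paper does not give a proof of this lemma at all; it is stated with a citation to Kato--Ponce \cite{KatoPonce} and used as a black box. There is therefore no ``paper's own proof'' to compare your proposal against.

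Your paraproduct sketch is a correct and standard modern route to the Kato--Ponce commutator estimate. The key step---identifying the low--high piece $J^s(T_fg)-T_f(J^sg)$ as the only place genuine cancellation is needed, and extracting the factor $\|\nabla f\|_\infty\|J^{s-1}g\|_p$ via the mean-value bound $|\langle\xi\rangle^s-\langle\xi-\zeta\rangle^s|\lesssim |\zeta|\langle\xi\rangle^{s-1}$ for $|\zeta|\ll|\xi|$---is exactly right. One minor point of bookkeeping you gloss over: when you subtract $fJ^sg$ you also produce the pieces $T_{J^sg}f$ and $\mathcal{R}(f,J^sg)$, and these are not quite ``the analogous $f\cdot J^s$-half'' of the high--low term; they require a Bernstein-type argument ($\|(J^sg)_{<N/8}\|_\infty\lesssim N^s\|g\|_\infty$) to land on $\|J^sf\|_p\|g\|_\infty$. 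This is routine, but worth making explicit since it is where the second term on the right-hand side actually comes from.
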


\subsection{Composition lemma}
According to the coordinate transform, we need the following composition lemma. 
\begin{lemma}\label{lem: composition}
Suppose that $\g>1$, let $F\in H^{\g }:\mathbb{T}\times\R\to \R$, $G:\ \mathbb{T}\times\R\to\mathbb{T}\times\R$ be such that $\|\na G-I_{2\times 2}\|_{L^{\infty}}\leq \f14$ and $\na G-I_{2\times 2}\in H^{\g }:\mathbb{T}\times\R\to \mathcal{M}_{2\times 2}$. Then there exists $C=C(\|\na G-I_{2\times 2}\|_{H^\g },\g )$ such that
\beno
\|F\circ G\|_{H^\g }\leq C\|F\|_{H^\g }.
\eeno
\end{lemma}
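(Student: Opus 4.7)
The strategy is to first establish the bound for integer Sobolev orders by induction on the order, and then pass to general $\gamma>1$ by interpolation. The $L^{2}$ estimate is essentially free: since $\|\nabla G-I\|_{L^{\infty}}\le 1/4$, every eigenvalue of $\nabla G$ lies in $[3/4,5/4]$, so $|\det\nabla G|\ge 9/16$ and $G$ is a global $C^{1}$-diffeomorphism of $\mathbb{T}\times\R$ with $\|\nabla G^{-1}\|_{L^{\infty}}\lesssim 1$. A change of variables then gives
\beno
\|F\circ G\|_{L^{2}}^{2}=\int |F(y)|^{2}|\det\nabla G^{-1}(y)|\,dy\le C\|F\|_{L^{2}}^{2}.
\eeno

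Next, I would prove the integer case $\|F\circ G\|_{H^{n}}\le C_{n}\|F\|_{H^{n}}$ for all integers $1\le n\le \lceil\gamma\rceil$ by induction on $n$, where $C_{n}$ depends only on $\|\nabla G-I\|_{H^{n}\cap L^{\infty}}$. The inductive step uses the chain rule $\nabla(F\circ G)=(\nabla F\circ G)\cdot \nabla G$ iteratively, so that the Fa\`a di Bruno formula writes $\partial^{\alpha}(F\circ G)$ as a finite sum of terms of the form $(\partial^{\beta}F\circ G)\cdot P_{\alpha,\beta}(\nabla G,\ldots,\nabla^{|\alpha|-|\beta|+1}G)$ with $1\le |\beta|\le |\alpha|$ and $P_{\alpha,\beta}$ a polynomial. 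Each such term is bounded in $L^{2}$ by combining (i) the change-of-variables bound from the first step applied to $\partial^{\beta}F\in H^{n-|\beta|}$, (ii) Moser/Gagliardo--Nirenberg estimates for the polynomial $P_{\alpha,\beta}$ using $\nabla G-I\in H^{n}\cap L^{\infty}$, and (iii) H\"older's inequality with the Sobolev embeddings $H^{s}(\mathbb{T}\times\R)\hookrightarrow L^{p}$ (valid thanks to $\gamma>1=d/2$) to distribute regularity between the composed factor and the polynomial factor.

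Finally, to reach the non-integer orders, I would exploit the crucial observation that for a fixed $G$ the operator $T_{G}:F\mapsto F\circ G$ is \emph{linear} in $F$. The integer estimates yield $T_{G}:H^{n_{0}}\to H^{n_{0}}$ and $T_{G}:H^{n_{1}}\to H^{n_{1}}$ with $n_{0}=\lfloor\gamma\rfloor$ and $n_{1}=\lceil\gamma\rceil$ (both $\ge 1$ since $\gamma>1$), with operator norms depending only on $\|\nabla G-I\|_{H^{n_{1}}}\le\|\nabla G-I\|_{H^{\gamma}}$. Complex interpolation between these endpoints (using the identification $[H^{n_{0}},H^{n_{1}}]_{\theta}=H^{\gamma}$ for the appropriate $\theta\in[0,1]$) gives $T_{G}:H^{\gamma}\to H^{\gamma}$ with the desired norm bound $C=C(\|\nabla G-I\|_{H^{\gamma}},\gamma)$.

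The main obstacle is the bookkeeping in the integer step when $\gamma$ is only slightly larger than $1$: several of the polynomial factors $P_{\alpha,\beta}$ involve high-order derivatives of $G$ that live in $H^{s}$ for small $s>0$ but not in $L^{\infty}$, so one has to carefully assign exponents $(p,p')$ via Sobolev embedding and combine them with change-of-variables $L^{p'}$ bounds on $\partial^{\beta}F\circ G$. This borderline regime is where the dependence of the constant on $\gamma$ and $\|\nabla G-I\|_{H^{\gamma}}$ truly enters; the higher $n$ cases in the induction are much easier because $H^{n-1}(\mathbb{T}\times\R)$ becomes an algebra for $n\ge 2$.
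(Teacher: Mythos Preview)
Your approach via integer-order estimates plus complex interpolation is valid and genuinely different from the paper's. The paper instead iterates the chain rule $[\gamma]$ times, using the product estimate $\|fg\|_{H^{s-1}}\lesssim\|f\|_{H^{s-1}}\|g\|_{H^{s}}$ at each step to reduce $\|F\circ G\|_{H^{\gamma}}$ to $\|(\nabla^{[\gamma]}F)\circ G\|_{H^{\{\gamma\}}}$, and then handles the fractional remainder $\{\gamma\}\in[0,1)$ directly via the Gagliardo--Slobodeckij double-integral seminorm, using only the bi-Lipschitz property of $G$ to change variables in that integral. Your route trades this explicit fractional characterization for interpolation theory; the paper's route is more elementary but relies on the integral representation of $H^{\{\gamma\}}$.

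One statement in your write-up is wrong as written: the inequality $\|\nabla G-I\|_{H^{n_{1}}}\le\|\nabla G-I\|_{H^{\gamma}}$ goes the wrong way, since $n_{1}=\lceil\gamma\rceil\ge\gamma$. What the argument actually requires---and what your final paragraph correctly sketches---is that the $H^{n_{1}}\to H^{n_{1}}$ bound for $T_{G}$ holds with constant depending only on $\|\nabla G-I\|_{H^{\gamma}}$, not on the unavailable $\|\nabla G-I\|_{H^{n_{1}}}$. In the Fa\`a di Bruno expansion the worst term pairs $\nabla F\circ G$ (with $\nabla F\in H^{n_{0}}$) against $\nabla^{n_{1}}G\in H^{\gamma-n_{0}}$, $\gamma-n_{0}\in(0,1)$; the 2D embeddings $H^{\gamma-n_{0}}\hookrightarrow L^{2/(n_{1}-\gamma)}$ and $H^{n_{0}}\hookrightarrow L^{2/(\gamma-n_{0})}$ then close the H\"older estimate. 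So your scheme works once you rephrase the induction so that $C_{n_{1}}=C_{n_{1}}(\|\nabla G-I\|_{H^{\gamma}},\gamma)$ and delete the reversed inequality.
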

\begin{proof}
First we have $\|F\circ G\|_2^2\approx \|F\|_2^2$. Then 
by the fact that $\na (F\circ G)=[(\na F)\circ G](\na G-I_{2\times 2})+(\na F)\circ G$, we have
\beno
\|F\circ G\|_{H^\g }\lesssim \|(\na F)\circ G\|_{H^{\g -1}}\|(\na G-I_{2\times 2})\|_{H^\g }+\|(\na F)\circ G\|_{H^{\g -1}}\lesssim \|(\na F)\circ G\|_{H^{\g -1}}. 
\eeno
Let $\g =[\g ]+\{\g \}$ with $\{\g \}\in [0,1)$, then by the equivalent definition of the fractional order Sobolev spaces, we get that
\beno
\|F\|_{H^\g }\approx \|F\|_{H^{[\g ]}}+\sum_{\g_1+\g_2=[\g]}\left(\int_{(\mathbb{T}\times\R)^2}\f{|\pa_{x}^{\g_1}\pa_{y}^{\g_2}F(x_1,y_1)-\pa_{x}^{\g_1}\pa_{y}^{\g_2}F(x_2,y_2)|^2}{((x_1-x_2)^2+(y_1-y_2)^2)^{1+\{\g\}}}dx_1dy_1dx_2dy_2\right)^{\f12}.
\eeno 
Therefore, we only need to prove 
\beno
\|F\circ G\|_{H^{\{\g\}}}\leq C\|F\|_{H^{\{\g\}}}.
\eeno
Indeed, we have
\begin{align*}
&\|F\circ G\|_{H^{\{\g\}}}^2\approx \int_{\mathbb{T}\times\R}\int_{\mathbb{T}\times\R}\f{|F(G(x_1,y_1))-F(G(x_2,y_2))|^2}{((x_1-x_2)^2+(y_1-y_2)^2)^{1+\{\g\}}}dx_1dy_1dx_2dy_2\\
&\lesssim \int_{\mathbb{T}\times\R}\int_{\mathbb{T}\times\R}\f{|F(G(x_1,y_1))-F(G(x_2,y_2))|^2}{|G(x_1,y_1)-G(x_1,y_1)|^{2+2\{\g\}}}\f{|G(x_1,y_1)-G(x_1,y_1)|^{2+2\{\g\}}}{((x_1-x_2)^2+(y_1-y_2)^2)^{1+\{\g\}}}dx_1dy_1dx_2dy_2\\
&\lesssim \|\na G\|_{L^{\infty}}\int_{\mathbb{T}\times\R}\int_{\mathbb{T}\times\R}\f{|F(G(x_1,y_1))-F(G(x_2,y_2))|^2}{|G(x_1,y_1)-G(x_1,y_1)|^{2+2\{\g\}}}dx_1dy_1dx_2dy_2.
\end{align*}

By assumption $\|\na G-I_{2\times 2}\|_{L^{\infty}}\leq \f14$, we have $(x,y)\to (z,v)=G(x,y)$ is invertible and thus
 \begin{align*}
\|F\circ G\|_{H^{\{\g\}}}^2
&\lesssim \|\na G\|_{L^{\infty}}\int_{\mathbb{T}\times\R}\int_{\mathbb{T}\times\R}\f{|F(z_1,v_1)-F(z_1,v_1)|^2}{((z_1-z_2)^2+(v_1-v_2)^2)^{1+\{\g\}}}dz_1dv_1dz_2dv_2\\
&\lesssim (\|\na G-I_{2\times 2}\|_{H^2}+1)\|F\|_{H^{\{\g\}}}^2.
\end{align*}
Thus we proved the lemma. 
\end{proof}

\end{document}